\newtheorem{thm}{Theorem}[section]
\newtheorem*{thm*}{Theorem}
\newtheorem*{cor*}{Corollary}
\newtheorem*{prop*}{Proposition}
\newtheorem{cor}[thm]{Corollary}
\newtheorem{prop}[thm]{Proposition}
\newtheorem{lem}[thm]{Lemma}
\theoremstyle{definition}
\newtheorem{defn}[thm]{Definition}
\newtheorem{exmp}[thm]{Example}
\newtheorem{notn}[thm]{Notation}
\newtheorem*{notn*}{Notation}
\theoremstyle{remark}
\newtheorem{rem}[thm]{Remark}
\newtheorem*{idea*}{Idea}
\let\c@equation\c@thm
\numberwithin{thm}{section}
\numberwithin{equation}{section}
\title[Tame parahoric nonabelian Hodge correspondence]{Tame parahoric nonabelian Hodge correspondence on curves}
\author{Pengfei Huang, Georgios Kydonakis, Hao Sun and Lutian Zhao}
\begin{document}
\pagenumbering{arabic}
\maketitle
\begin{abstract}
The nonabelian Hodge correspondence for vector bundles over noncompact curves is adequately described by implementing a weighted filtration on the objects involved. In order to establish a full correspondence between a Dolbeault and a de Rham space for a general complex reductive group $G$, we introduce torsors given by parahoric group schemes in the sense of Bruhat--Tits. Combined with existing results on the Riemann--Hilbert correspondence for logarithmic parahoric connections, this gives a full nonabelian Hodge correspondence from Higgs bundles to fundamental group representations over a noncompact curve beyond the $\text{GL}_n(\mathbb{C})$-case.\\

\end{abstract}
	
\flushbottom
	
	
	
\renewcommand{\thefootnote}{\fnsymbol{footnote}}
\footnotetext[1]{Key words: logahoric Higgs torsor, logahoric connection, fundamental group representation, harmonic metric, nonabelian Hodge correspondence}
\footnotetext[2]{MSC2020: 14D20, 14L15, 14H60, 53C07 (Primary), 32G13 (Secondary)}
	
\section{Introduction}

Classical Hodge theory involves the study of the abelian sheaf cohomology of a complex algebraic variety. The underlying complex, algebraic and topological structures of the variety give rise to the Dolbeault, de Rham and Betti cohomology groups with coefficients in the abelian group of the complex numbers. Nonabelian Hodge theory on the other hand concerns the cohomology with nonabelian coefficients and essentially occurs only in degree one, since the homotopy groups in higher dimensions are abelian. Unlike the classical first cohomologies (Dolbeault, de Rham, Betti) which are vector spaces, the nonabelian Hodge cohomologies are stacks. The formulation and development of the correct analogues of classical notions from the usual Hodge theory to the nonabelian setting is largely due to Simpson \cite{Simp1988, Simp, Simp-naH, Simp Higgs Local, Simp2, Simp3} building on prototypical works by several authors. The implementation of suitable stability conditions for the objects in these stacks leads to the construction of moduli spaces and these give, among a profound field of relevance, important examples of integrable systems.

The nonabelian Hodge correspondence on smooth projective curves $X$ is a fundamental prototype where one can see how the algebraic objects in these moduli spaces are related, namely, stable Higgs bundles, stable $D_X$-modules and irreducible representations of the fundamental group into the group $\text{GL}_n(\mathbb{C})$. Stable Higgs bundles bijectively correspond to solutions of a certain system of nonlinear PDEs now called the Hitchin equations. This correspondence is an instance of a Kobayashi--Hitchin correspondence and was established by Hitchin \cite{Hit87} and more generally by Simpson \cite{Simp1988, Simp Higgs Local}. Another instance of a Kobayashi--Hitchin correspondence appears in the works of Donaldson \cite{Donaldson} and more generally of Corlette \cite{Corlette}, where the stability condition for a $D_X$-module can be interpreted as the condition for the existence of a harmonic bundle, and the nonlinear equations for a harmonic bundle are yet another way of writing the Hitchin equations. The classical Riemann--Hilbert correspondence for stable $D_X$-modules on a smooth projective curve completes the passage to irreducible representations.

In the present article, we are concerned with the problem of establishing a complete correspondence on a punctured curve $X_{\boldsymbol D}:=X \backslash \boldsymbol D$ between the Dolbeault, de Rham and Betti spaces, for a general complex reductive group under the tameness condition, where $\boldsymbol D$ is a reduced effective divisor on $X$, in other words, a set of punctures on $X$. In the $\text{GL}_n(\mathbb{C})$-version of this story, a categorical equivalence was explained by Simpson in \cite{Simp} and the right objects have been identified in this case as stable filtered Higgs bundles, stable filtered $D_X$-modules, both with regular singularities at the punctures, and irreducible representations of fundamental group; an analytic construction of the relevant moduli space was provided later on by Konno \cite{Konno}. In the work of Simpson, the passage from the Dolbeault to the de Rham space, that is, from filtered Higgs bundles to filtered $D_X$-modules, goes through a class of analytic objects on the punctured curves $X_{\boldsymbol D}$, namely, harmonic bundles on $X_{\boldsymbol D}$ satisfying a certain growth condition for the curvature at the punctures called tameness. Then, the equivalence between the de Rham and Betti space, that is, from $D_X$-modules to representations, pertains a version of the Riemann--Hilbert correspondence on punctured algebraic curves with regular singularities. It is important to remark here that this step is much more involved compared to the standard Riemann--Hilbert in the compact curve case.

The full problem for general complex reductive groups on noncompact curves turns out to be significantly more subtle. The main reason is that a generalization of the structure of a filtered Higgs bundle does not suffice to provide the right holomorphic objects that will one-to-one correspond to fundamental group representations. In particular, when $G=\text{GL}_n(\mathbb{C})$, the extra structure at the points of a divisor is adequately described by a weighted filtration, usually called a \emph{parabolic structure}, in the fiber of a vector bundle. However, beyond the group $\text{GL}_n(\mathbb{C})$, the parabolic structure does not suffice to provide the full picture; one has to go further out in a formal neighborhood of the puncture and this requires a different approach.

\subsection{Examples}
In order to stress further the fact that the parabolic structure is not enough to fully describe the correspondence other than $\text{GL}_n(\mathbb{C})$, we include below basic examples in the case of $\text{SL}_2(\mathbb{C})$. We first fix some notation: $R=\mathbb{C}[[z]]$ and $K=\mathbb{C}((z))$.

\subsection*{\textbf{Connections}}
Let $A(z)=\sum_{i=0} a_i z^i$ be an element in $\mathfrak{gl}_n(K)$, and we consider $A(z)\frac{dz}{z}$ as a connection form with regular singularities. The action of $g \in {\rm GL}_n(K)$ on $A(z)\frac{dz}{z}$ is defined as
\begin{align*}
g \circ A(z)\frac{dz}{z} := \left( {\rm Ad}(g)A(z) \right) \frac{dz}{z}+dg \cdot g^{-1},
\end{align*}
which is the \emph{gauge action}. It is well-known that for any $A(z)$, there exists an element $g$ such that
\begin{align*}
g \circ A(z)\frac{dz}{z} = a \frac{dz}{z},
\end{align*}
where $a \in \mathfrak{gl}_n(\mathbb{C})$ is a constant matrix (see \cite[Theorem 5.1.4]{HoTakTan} for instance). This result is very powerful, and it implies that the information around punctures can be understood simply from that over the punctures. With the help of this property, the monodromy can be described directly around punctures, which is exactly $\exp(- 2 \pi \sqrt{-1} a)$, and filtered (or parabolic) $D_X$-modules appear in the Riemann--Hilbert correspondence with regular singularities.

However, this property no longer holds for general reductive groups. Consider, for instance,
\begin{align*}
A(z) = \begin{pmatrix} \frac{m}{2} & 0 \\ 0 & -\frac{m}{2} \end{pmatrix} +  \begin{pmatrix} 0 & 1 \\ 0 & 0 \end{pmatrix}z^m,
\end{align*}
as an element in $\mathfrak{sl}_2(K)$. Then, the connection form $A(z)\frac{dz}{z}$ is gauge equivalent to $a\frac{dz}{z}$ for some $a \in \mathfrak{sl}_2(\mathbb{C})$ if and only if $m$ is even \cite[\S 8.2]{BaVa}. This shows that the connection form $A(z)\frac{dz}{z}$ is not gauge equivalent to the form $a\frac{dz}{z}$ in general. Therefore, a more careful treatment is required to describe the monodromy data in this case.

\subsection*{\textbf{Higgs fields}}
Consider the matrix $B(z) = \begin{pmatrix} 0 & z \\ z^{-1} & 0 \end{pmatrix}$. Denote by
\begin{align*}
\phi(z) = B(z)\frac{dz}{z} = \left( \begin{pmatrix} 0 & 0 \\ 1 & 0 \end{pmatrix} \frac{1}{z^2} + \begin{pmatrix} 0 & 1 \\ 0 & 0 \end{pmatrix} \right) dz
\end{align*}
the corresponding Higgs field. As an element in $\mathfrak{gl}_2(K) \frac{dz}{z}$, the poles of all eigenvalues of $\phi(z)$ have order one, thus the tameness condition is satisfied (see \cite{Simp}). Note that the Higgs field has a nonzero coefficient of the term $\frac{1}{z^2}$, and it seems to be hard to find parabolic structures preserved by $\phi(z)$ in this form. Nonetheless, taking $g= \begin{pmatrix} -1 & 1 \\ 1 & 1 \end{pmatrix} \cdot \begin{pmatrix} 1 & 0 \\ 0 & z \end{pmatrix}$, it is easy to check that
\begin{align*}
{\rm Ad}(g) \phi(z) = \begin{pmatrix} -1 & 0 \\ 0 & 1 \end{pmatrix} \frac{dz}{z} \in \mathfrak{gl}_2(R) \frac{dz}{z},
\end{align*}
which is a Higgs field with trivial coefficients of higher order terms. In this form, the Higgs field is much more related to a parabolic structure on holomorphic bundles.

However, if we consider $B(z)$ as an element in $\mathfrak{sl}_2(K)$, the above calculation does not make sense because the matrix $\begin{pmatrix} 1 & 0 \\ 0 & z \end{pmatrix}$ is not a well-defined element in ${\rm SL}_2(K)$.  Moreover, the matrix $\begin{pmatrix} 0 & z \\ z^{-1} & 0 \end{pmatrix}$ is not conjugate to any matrix in $\mathfrak{sl}_2(R)$. Therefore, although the Higgs field $\phi(z)$ satisfies the tameness condition, there is a nontrivial coefficient of a higher order term in the case of ${\rm SL}_2(\mathbb{C})$, which is an obstacle to defining a parabolic structure directly.

\vspace{3mm}

The inadequacy of the parabolic structure on the fiber for providing a nonabelian Hodge correspondence for general reductive groups on noncompact curves, has been pointed out in the studies for establishing a Riemann--Hilbert correspondence on noncompact curves for groups other than $\text{GL}_n(\mathbb{C})$. It first became apparent through the work of Boalch \cite{Bo} that the right objects to replace the (parabolic) vector bundles on the de  Rham (and Dolbeault side) for an arbitrary group $G$ are \emph{torsors given by parahoric group schemes} in the sense of Bruhat--Tits \cite{BT1, BT2}. In that work, a notion of weight was introduced, which was used to define parahoric groups, and a local Riemann--Hilbert correspondence for logarithmic parahoric connections was established, called briefly \emph{logahoric connections}. Furthermore, a prediction of the local data in the extension of the nonabelian Hodge correspondence for a complex reductive group was given (see \cite[\S 6]{Bo}).



\subsection{Main Results}

In this article we establish a correspondence among stable logahoric Higgs torsors of degree zero, stable logahoric connections of degree zero and stable filtered local systems of degree zero on a punctured curve, for an arbitrary complex reductive group. This correspondence not only holds at the level of categories, but also for the corresponding moduli spaces. Before we state the idea and result, we first introduce some notation. 

A \emph{real weight} is an element in $Y(T)\otimes_{\mathbb{Z}} \mathbb{R}$, where $Y(T):={\rm Hom}(\mathbb{G}_m,T)$ and $T$ is a maximal torus of a complex reductive group $G$. A \emph{rational weight} is a weight with coefficient in $\mathbb{Q}$. In this paper, a real weight is called a weight if there is no ambiguity. Weights will be applied to define the central objects discussed in this article, namely, parahoric group schemes (Definition \ref{defn parahoric_group}), adapted metrics (Definition \ref{defn adapt metric}), tame adapted $G$-Higgs bundles (Definition \ref{defn tame ana Higgs}) and tame adapted $G$-connections (Definition \ref{defn tame ana conn}). We will be using the following notation in the sequel of the article: 
\begin{itemize}
	\item $\boldsymbol\bullet=\{\bullet_x, x \in \boldsymbol D\}$ is a collection of weights labelled by punctures in $\boldsymbol D$, where $\bullet=\alpha,\beta,\gamma$.
\item $\mathcal{G}_{\boldsymbol\bullet}$ is the corresponding parahoric group scheme, where $\bullet=\alpha,\beta,\gamma$;
	\item $\phi_{\boldsymbol\alpha},\varphi_{\boldsymbol\alpha},d'_{\boldsymbol\beta},\nabla_{\boldsymbol\beta}$ are collections of elements in $\mathfrak{g}$;
	\item $M_{\boldsymbol\gamma}$ is a collection of elements in $G$;
\end{itemize}
Based on this notation, we introduce five categories for corresponding notions of stability defined within the text:
\begin{itemize}
	\item $\mathcal{C}_{\rm Dol}(X_{\boldsymbol D},G,\boldsymbol\alpha, \phi_{\boldsymbol\alpha})$: the category of $R_h$-stable tame metrized $\boldsymbol\alpha$-adapted $G$-Higgs bundles of degree zero on $X_{\boldsymbol D}$, and the Levi factors of residues of the Higgs field are $\phi_{\boldsymbol\alpha}$ at punctures (see \S\ref{subsect ana stab Higgs and Dmod});
	\item $\mathcal{C}_{\rm dR}(X_{\boldsymbol D},G,\boldsymbol\beta,d'_{\boldsymbol\beta})$: the category of $R_h$-stable tame metrized $\boldsymbol\beta$-adapted $G$-connections of degree zero on $X_{\boldsymbol D}$, and the Levi factors of residues of the connection are $d'_{\boldsymbol\beta}$ at punctures (see \S\ref{subsect ana stab Higgs and Dmod});
	\item $\mathcal{C}_{\rm Dol}(X,\mathcal{G}_{\boldsymbol \alpha},\varphi_{\boldsymbol\alpha})$: the category of $R$-stable logahoric $\mathcal{G}_{\boldsymbol\alpha}$-Higgs torsors of degree zero on $X$, and the Levi factors of residues of the Higgs field are $\varphi_{\boldsymbol\alpha}$ at punctures (see \S\ref{sect alg Higgs and conn});
	\item $\mathcal{C}_{\rm dR}(X,\mathcal{G}_{\boldsymbol\beta},\nabla_{\boldsymbol\beta})$: the category of $R$-stable logahoric $\mathcal{G}_{\boldsymbol\beta}$-connections of degree zero on $X$ and the Levi factors of residues of the connection are $\nabla_{\boldsymbol\beta}$ at punctures (see \S\ref{sect alg Higgs and conn});
	\item $\mathcal{C}_{\rm B}(X_{\boldsymbol D}, G,\boldsymbol\gamma, M_{\boldsymbol\gamma})$: the category of $R$-stable $\boldsymbol\gamma$-filtered $G$-local systems of degree zero, of which the Levi factors of monodromies around punctures are $M_{\boldsymbol\gamma}$ (see \S\ref{subsect main result in cat}).
\end{itemize}

The relation among $\alpha$, $\beta$ and $\gamma$ is given from a careful study of local data (see \ref{subsect local study}). Implementing a class of model metrics $h_0$ locally around a puncture, we have the following Main Table describing the permutations of the jump values of the metric $h_0$ (weights) and the data of Levi factors of residues and monodromies (residues $\backslash$ monodromies), in the case of an arbitrary complex reductive group $G$:
\begin{table}[H]
	\begin{tabular}{|c|c|c|c|}
		\hline
		& Dolbeault & de Rham & Betti \\
		\hline
		weights & $\alpha$ & $\beta=\alpha-(s_\alpha+\bar{s}_\alpha)$ & $\gamma=-(s_\alpha+\bar{s}_\alpha)$ \\
		\hline
		residues $\backslash$ monodromies & $\varphi_\alpha=s_\alpha+Y_\alpha$ & $\nabla_\beta$ & $M_\gamma$\\
		\hline
	\end{tabular}
\end{table}
\noindent where
\begin{align*}
\nabla_\beta =\alpha+(s_\alpha-\bar{s}_\alpha)-( H_\alpha + X_\alpha - Y_\alpha)
\end{align*}
and
\begin{align*}
M_\gamma=\exp\left( - 2 \pi i ( \alpha + s_\alpha - \bar{s}_\alpha) \right)\exp \left( 2 \pi i( H_\alpha + X_\alpha - Y_\alpha ) \right).
\end{align*}
In the above, we consider a tame $\alpha$-adapted Higgs field with residue $\varphi_{\alpha}$ with Jordan decomposition $\varphi_{\alpha} = s_{\alpha} + Y_{\alpha}$, where $s_{\alpha}$ is the semisimple part and $Y_{\alpha}$ the nilpotent part, and $(X_{\alpha}, H_{\alpha}, Y_{\alpha})$ is a Kostant--Rallis triple completing $Y_{\alpha}$ (see Section \ref{subsect local study} for full descriptions). This table in the case when the Higgs field $\phi$ does not have a nilpotent part coincides with the table predicted in \cite[\S 6]{Bo}. Moreover, in the case of the group ${\rm GL}_n(\mathbb{C})$, the table coincides with that of Simpson in \cite[\S 5]{Simp}.

Now we are ready to give the idea of the main result. The starting objects are $R_h$-stable tame metrized $G$-Higgs bundles and $R_h$-stable tame metrized $G$-connections on $X_{\boldsymbol D}$, where a $G$-connection is a $G$-bundle together with an integrable connection. We prove a version of Kobayashi--Hitchin correspondence (Theorem \ref{thm KH corr Higgs}) which guarantees the equivalence between the existence of a harmonic metric and the stability condition we consider. Furthermore, this correspondence implies the equivalence between $R_h$-stable tame metrized $G$-Higgs bundles and $R_h$-stable tame metrized $G$-connections on $X_{\boldsymbol D}$:
\begin{center}
	\begin{tikzcd}
	\mathcal{C}_{\rm Dol}(X_{\boldsymbol D},G,\boldsymbol\alpha,\phi_{\boldsymbol\alpha}) \arrow[rr, equal, "\text{Theorem \ref{thm KH corr Higgs}}"]  & &
	\mathcal{C}_{\rm dR}(X_{\boldsymbol D},G,\boldsymbol\beta,d'_{\boldsymbol\beta}).
	\end{tikzcd}
\end{center}

Next, motivated by the transparency of the parahoric language in describing a full local Riemann--Hilbert correspondence as outlined above, we introduce entirely algebraic objects along with stability conditions from the parahoric point of view: $R$-stable logahoric Higgs torsors and $R$-stable logahoric connections. We prove that there is a one-to-one correspondence between the analytic objects and the algebraic (parahoric) objects by following the scheme of the original work of Simpson \cite{Simp},
\begin{center}
	\begin{tikzcd}
	\mathcal{C}_{\rm Dol}(X_{\boldsymbol D},G,\boldsymbol\alpha,\phi_{\boldsymbol\alpha}) \arrow[rr, equal] \arrow[d, equal,"\text{Lemma \ref{lem ang=alg Dol (dR)}}"] & &
	\mathcal{C}_{\rm dR}(X_{\boldsymbol D},G,\boldsymbol\beta,d'_{\boldsymbol\beta}) \arrow[d, equal,"\text{Lemma \ref{lem ang=alg Dol (dR)}}"]\\
	\mathcal{C}_{\rm Dol}(X,\mathcal{G}_{\boldsymbol\alpha},\varphi_{\boldsymbol\alpha}) \arrow[rr, equal,"\text{Theorem \ref{thm alg Dol and DR}}"]& & \mathcal{C}_{\rm dR}(X,\mathcal{G}_{\boldsymbol\beta},\nabla_{\boldsymbol\beta})
	\end{tikzcd}
\end{center}
where $\phi_{\boldsymbol\alpha}=\varphi_{\boldsymbol\alpha}$ and $d'_{\boldsymbol\beta}=\nabla_{\boldsymbol\beta}$.

Finally, by a global version of Riemann--Hilbert correspondence for logarithmic parahoric connections, we have
	\begin{center}
	\begin{tikzcd}
	\mathcal{C}_{\rm Dol}(X_{\boldsymbol D},G,\boldsymbol\alpha,\phi_{\boldsymbol\alpha}) \arrow[rr, equal] \arrow[d, equal] & &
	\mathcal{C}_{\rm dR}(X_{\boldsymbol D},G,\boldsymbol\beta,d'_{\boldsymbol\beta}) \arrow[d, equal] & &\\
	\mathcal{C}_{\rm Dol}(X,\mathcal{G}_{\boldsymbol\alpha},\varphi_{\boldsymbol\alpha}) \arrow[rr, equal]& & \mathcal{C}_{\rm dR}(X,\mathcal{G}_{\boldsymbol\beta},\nabla_{\boldsymbol\beta}) \arrow[rr, equal, "\text{Corollary \ref{cor tame parah RH corr}}"] & & \mathcal{C}_{\rm B}(X_{\boldsymbol D}, G,\boldsymbol\gamma, M_{\boldsymbol\gamma}),
	\end{tikzcd}
\end{center}

In total, we get a full passage from logahoric Higgs torsors to fundamental group representations into a general complex reductive group $G$, and we obtain the correspondence at the level of categories that fully reduces to the main theorem of Simpson in \cite{Simp}; :
\begin{thm}[Theorem \ref{thm main thm in cat}]\label{cat_thm_introduc}
	The following categories are equivalent
	\begin{align*}
	\mathcal{C}_{\rm Dol}(X,\mathcal{G}_{\boldsymbol\alpha},\varphi_{\boldsymbol\alpha}) \cong \mathcal{C}_{\rm dR}(X,\mathcal{G}_{\boldsymbol\beta},\nabla_{\boldsymbol\beta}) \cong \mathcal{C}_{\rm B}(X_{\boldsymbol D}, G,\boldsymbol\gamma, M_{\boldsymbol\gamma}).
	\end{align*}
\end{thm}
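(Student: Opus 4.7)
The plan is to assemble the theorem as the composition of two categorical equivalences already signposted in the introduction: Theorem \ref{thm alg Dol and DR} on the algebraic Dolbeault and de Rham sides, and Corollary \ref{cor tame parah RH corr} on the algebraic de Rham and Betti sides. Once both are in hand, they compose to yield the three-way equivalence, and the compatibility of the local invariants $(\boldsymbol\alpha,\boldsymbol\beta,\boldsymbol\gamma)$ together with $(\varphi_{\boldsymbol\alpha},\nabla_{\boldsymbol\beta},M_{\boldsymbol\gamma})$ is dictated by the Main Table.

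For the first equivalence I would follow the route of the second commutative diagram: translate parahoric objects on $X$ into tame metrized $G$-Higgs bundles and $G$-connections on $X_{\boldsymbol D}$ via Lemma \ref{lem ang=alg Dol (dR)}, apply the Kobayashi--Hitchin correspondence of Theorem \ref{thm KH corr Higgs} to produce a common harmonic metric and thereby identify the analytic Dolbeault and de Rham categories, and transport back to the algebraic picture by another application of Lemma \ref{lem ang=alg Dol (dR)}. The identifications $\boldsymbol\beta = \boldsymbol\alpha - (s_\alpha + \bar s_\alpha)$ and $\nabla_{\boldsymbol\beta} = \alpha + (s_\alpha - \bar s_\alpha) - (H_\alpha + X_\alpha - Y_\alpha)$ should drop out from the local model-metric computation after completing the nilpotent part $Y_\alpha$ of $\varphi_{\boldsymbol\alpha}$ to a Kostant--Rallis triple $(X_\alpha, H_\alpha, Y_\alpha)$ and reading off the $(1,0)$-part of the Chern connection $d + \theta + \theta^*$.

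For the second equivalence, the tame parahoric Riemann--Hilbert correspondence of Corollary \ref{cor tame parah RH corr} applies directly: restricting a logahoric connection to $X_{\boldsymbol D}$ and taking flat sections produces a local system, the parahoric weight $\boldsymbol\beta$ translates into the filtration weight $\boldsymbol\gamma = -(s_\alpha + \bar s_\alpha)$ on that local system, and the monodromy around each puncture is computed from $\nabla_{\boldsymbol\beta}$ by the exponential. Separating the semisimple and nilpotent contributions in the local normal form should yield
\[
M_{\boldsymbol\gamma} = \exp\bigl(-2\pi i(\alpha + s_\alpha - \bar s_\alpha)\bigr)\exp\bigl(2\pi i(H_\alpha + X_\alpha - Y_\alpha)\bigr),
\]
and stability is preserved by the Riemann--Hilbert functor because it is compatible with reductions to parabolic subgroups and with the assignment of parabolic degrees.

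The main obstacle, as I see it, is not the chaining of functors but the local analysis at each puncture. The $\mathrm{SL}_2$ examples earlier in the excerpt show that for a general complex reductive $G$ a tame connection or Higgs field need not be gauge equivalent to a constant form, so one cannot naively kill higher order terms; the nilpotent part of the residue has to be controlled via Kostant--Rallis data built into the parahoric scheme. Verifying that the local transformation rules of the Main Table are consistent with the various stability notions, in particular that $R$-stability on the parahoric side matches $R_h$-stability of tame adapted objects after a model metric $h_0$ is fixed, and that every functor in the chain preserves the Levi factor of the residue or monodromy, is where I expect most of the technical work to lie.
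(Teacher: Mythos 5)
Your proposal is correct and follows essentially the same route as the paper: the first equivalence is exactly Theorem \ref{thm alg Dol and DR}, obtained by passing to the analytic categories via Lemma \ref{lem ang=alg Dol (dR)} (which rests on the Kobayashi--Hitchin correspondence, Theorem \ref{thm KH corr Higgs}) and the analytic Dolbeault/de Rham equivalence, while the second is the tame parahoric Riemann--Hilbert correspondence of Corollary \ref{cor tame parah RH corr} combined with the matching of degrees and stability that the paper records as Lemma \ref{lem loc=ana degree} and Corollary \ref{cor Betti=dR}. The local Main Table computation with the Kostant--Rallis triple that you flag as the technical core is indeed where the paper does the work, in \S\ref{subsect local study} and Proposition \ref{prop tame ana Higgs and conn}.
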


In a prelude by the second, third and fourth author \cite{KSZparh}, an algebraic construction of the moduli space of $R$-stable logahoric Higgs torsors was given (Dolbeault space) for rational weights, and a similar argument applies to construct the corresponding algebraic moduli space of $R$-stable logahoric connections as well (de Rham space). An important feature of this moduli space is that it reduces to the category of filtered Higgs bundles of Simpson \cite{Simp} when $G=\text{GL}_n(\mathbb{C})$. The paper \cite{KSZparh} includes a more careful bibliographical overview of the various approaches that have appeared in the literature on the problem of introducing the right holomorphic objects for establishing a nonabelian Hodge correspondence beyond the group $\text{GL}_n(\mathbb{C})$, thus we shall not repeat this here. Based on the construction of the relevant moduli spaces as quasi-projective schemes provided in \cite{KSZparh}, we show that this moreover gives an equivalence between moduli spaces under the condition that weights are rational, and we also believe that the statement also holds for real weights with slight modifications.

\begin{thm}[Theorem \ref{thm main thm in mod space}]
	There is an isomorphism of complex analytic spaces
	\begin{align*}
	\mathcal{M}^{\rm (an)}_{\rm B}(X_{\boldsymbol D},G, \boldsymbol \gamma, M_{\boldsymbol \gamma} ) \cong \mathcal{M}^{\rm (an)}_{\rm dR}(X, \mathcal{G}_{\boldsymbol \beta}, \nabla_{\boldsymbol \beta}),
	\end{align*}
	and we also have a homeomorphism of topological spaces
	\begin{align*}
	\mathcal{M}^{\rm (top)}_{\rm Dol}(X, \mathcal{G}_{\boldsymbol \alpha}, \varphi_{\boldsymbol \alpha}) \cong \mathcal{M}^{\rm (top)}_{\rm dR}(X, \mathcal{G}_{\boldsymbol \beta}, \nabla_{\boldsymbol \beta}).
	\end{align*}
\end{thm}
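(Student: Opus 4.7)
The plan is to upgrade the categorical equivalence of Theorem \ref{cat_thm_introduc} to an equivalence of moduli spaces by invoking the quasi-projective scheme structures constructed in \cite{KSZparh}. The strategy splits naturally into two parts, mirroring the two isomorphisms in the statement: a Riemann--Hilbert style comparison between the Betti and de Rham moduli, which is analytic but not algebraic in nature; and a nonabelian Hodge style comparison between the Dolbeault and de Rham moduli, which is only a homeomorphism because the two carry genuinely different complex structures even though they share the same real underlying set.

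For the analytic isomorphism $\mathcal{M}^{\rm (an)}_{\rm B} \cong \mathcal{M}^{\rm (an)}_{\rm dR}$, I would first pass to the analytifications of the quasi-projective schemes constructed in \cite{KSZparh}, using rationality of the weights. The global logahoric Riemann--Hilbert correspondence (Corollary \ref{cor tame parah RH corr}) already supplies a bijection on closed points matching Levi data according to the Main Table. The remaining content is to verify that this bijection is bi-holomorphic. I would do this by constructing local analytic families on both sides: given an analytic family of logahoric connections over a base $S$, the monodromy representation varies holomorphically in $S$, defining a map $S \to \mathcal{M}^{\rm (an)}_{\rm B}$; conversely, standard constructions in the theory of meromorphic connections with prescribed local parahoric structure produce an analytic inverse, since the local formal model at each puncture is fixed by $\boldsymbol\beta$ and $\nabla_{\boldsymbol\beta}$. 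Gluing these local analytic families, together with the parahoric local RH already established by Boalch \cite{Bo}, yields a morphism of complex analytic moduli spaces which is a bijection on points, hence an isomorphism.

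For the topological homeomorphism $\mathcal{M}^{\rm (top)}_{\rm Dol} \cong \mathcal{M}^{\rm (top)}_{\rm dR}$, I would use the harmonic metric as the pivot, in the spirit of Hitchin--Simpson. The Kobayashi--Hitchin theorem (Theorem \ref{thm KH corr Higgs}) combined with the analytic/algebraic identifications of Lemma \ref{lem ang=alg Dol (dR)} and Theorem \ref{thm alg Dol and DR} provides, at the level of sets, the bijection: a stable logahoric $\mathcal{G}_{\boldsymbol\alpha}$-Higgs torsor corresponds to a harmonic metric, whose associated Chern connection plus Higgs field assemble into a flat $\mathcal{G}_{\boldsymbol\beta}$-connection of the prescribed de Rham type. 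To promote this to a homeomorphism, I would show that the harmonic metric depends continuously on the stable object in both directions, using the continuity of the solution of the Hermite--Einstein equation under variation of initial data on the moduli space of metrized adapted objects on $X_{\boldsymbol D}$, and then descend this continuity through the comparison with the parahoric algebraic moduli.

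The main obstacle is the continuity statement in the second part: unlike the categorical correspondence, where one works pointwise with a single harmonic metric, for the moduli-level homeomorphism one must control the harmonic metric in families near the singular strata, uniformly in the weights and residue data. Estimates on the model metrics $h_0$ near the punctures enter here, and one has to ensure that jumps in the Kostant--Rallis triple $(X_\alpha,H_\alpha,Y_\alpha)$ along deformations do not destroy continuity. This is analogous to the delicate boundary analysis performed by Simpson \cite{Simp} and Konno \cite{Konno} in the $\mathrm{GL}_n$ case, and I would expect to adapt those arguments to the parahoric setting using the local normal forms established in Section \ref{subsect local study}.
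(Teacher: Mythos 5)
Your proposal has genuine gaps in both halves. For the Betti--de Rham isomorphism, your argument presupposes that $\mathcal{M}^{\rm (an)}_{\rm B}(X_{\boldsymbol D},G,\boldsymbol\gamma,M_{\boldsymbol\gamma})$ already exists as a complex analytic space, and then tries to compare it with the analytification of the de Rham space. But the paper deliberately does \emph{not} construct the Betti moduli space independently: since $R$-stable $\boldsymbol\gamma$-filtered $G$-local systems need not be irreducible representations and the monodromy data is constrained to parabolics and Levi classes, no GIT character-variety construction is available (see the remark following Corollary \ref{cor Betti=dR}). The paper's route is functorial: it proves a \emph{family} version of the tame parahoric Riemann--Hilbert correspondence over an arbitrary base, by extending Boalch's normal-form lemma to coefficients in $\mathcal{O}_S$ (the key technical step, Lemmas \ref{lem family tame parah RH corr} and \ref{lem family main thm in cat}), concludes that the Betti and de Rham moduli \emph{functors} are isomorphic, and then uses that $\mathcal{M}^{\rm (an)}_{\rm dR}$ represents its functor (it parameterizes only stable objects) so that the same space represents the Betti functor; the isomorphism is then automatic. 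Your closing step, ``a morphism of complex analytic moduli spaces which is a bijection on points, hence an isomorphism,'' is moreover not a valid principle for analytic spaces (the normalization of a cuspidal curve is a bijective morphism that is not an isomorphism), so even granting existence of the Betti space you would need to exhibit an actual inverse morphism or argue via representability as the paper does.

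For the Dolbeault--de Rham homeomorphism you correctly identify the harmonic metric as the pivot, but the crux --- continuity of the correspondence at the level of moduli --- is exactly what you leave as an ``expected'' adaptation of Simpson--Konno, so the proposal stops at its own admitted main obstacle. The paper closes this step differently and much more economically: following the scheme of \cite[\S 7]{Simp3}, it reduces the homeomorphism to a subsequential compactness statement --- given a sequence of harmonic bundles, some subsequence converges to a harmonic bundle --- and this is quoted directly from \cite[Lemma 8.1]{Simp4}. No uniform control of the harmonic metric in families near singular strata, nor any analysis of jumps of the Kostant--Rallis triples along deformations, is carried out or needed in the paper's argument. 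As written, your proposal neither proves the continuity statement nor substitutes the compactness input, so the second half is incomplete.
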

\noindent The letter $\mathcal{C}$ in Theorem \ref{cat_thm_introduc} is used to represent the \emph{categories}, and the letter $\mathcal{M}$ in the above theorem is used to denote the \emph{moduli spaces}.

\subsection{Relation to Previous Works}
We believe it is useful for the reader to comment on the comparison of the results in this paper with the work of Biquard--Garc\'{i}a-Prada--Mundet i Riera \cite{BGM}. In that work, the authors introduce a notion of stable parabolic $G$-Higgs bundle and stable parabolic $G$-local system, and prove analytically an explicit correspondence between the two notions. However in that language, to a representation corresponds not a single holomorphic bundle, but rather a class of holomorphic bundles equivalent under gauge transformations which can have meromorphic singularities (see \cite[Sections 1 and 3]{BGM}).  Moreover, for this version of the correspondence it seems difficult to reduce to the main Theorem of Simpson \cite{Simp} for the case when the group is $\text{GL}_n(\mathbb{C})$ by taking a faithful representation, and the main problem is sorting out how the stability conditions would relate in particular (see \cite[\S 5.2]{BGM}). It was predicted, nonetheless, in their work that the point of view of parahoric torsors would be more transparent for introducing the right algebraic objects and get a full equivalence of the corresponding categories, and this is exactly what is done in the present article.

For establishing the correspondence between the analytic counterparts of our algebraic objects, namely, in our sections \S\ref{sect ana Higgs and conn} and \S\ref{subsect Kob-Hitchin}, we have adapted in our setting the very explicit analytic descriptions from \cite{BGM}. The model hermitian metric description we get is not subject to any condition and we use it to rewrite and complete the Table for the associated weights and monodromies as appears in \cite[Table 1]{BGM}, in particular including the data on the Betti side as well (cf. Remarks \ref{rem_functor}, \ref{rem_question_BGM}, \ref{comparison_model_real} and  \ref{rem_reduce_to_Simpson} within the text for more details).

The techniques followed in this article can be also applied to establish the correspondence for the analogous categories involving a real reductive group $G$ with the use of appropriate initial $\boldsymbol \theta$-adapted model metrics $h_0$ on an analytic tame $G$-Higgs bundle that give an approximate solution to the Hermite--Einstein equation near the punctures. In \cite[\S 5.1]{BGM}, the authors provide such models subject to a certain condition on the nilpotent part of the graded pieces of the residue of the Higgs field. Since it was not clear to us how to get past this condition and establish the analytic part of the correspondence in general for our parahoric objects, we did not deal with the real group case here.

The nonabelian Hodge theory for the character varieties considered here provides a way to study the topology of these moduli spaces and compute their Poincar\'{e} polynomials and $E$-polynomials, extending the works of Hausel--Thaddeus \cite{HaTa1, HaTa2}, Hausel--Rodr\'{i}guez-Villegas \cite{HaRo}, Garc\'{i}a-Prada--Heinloth \cite{GPHe}, Garc\'{i}a-Prada--Heinloth--Schmitt \cite{GPHeSch}, Mellit \cite{Me1, Me2} and Gothen--Oliveira \cite{GoOl} among others. Moreover, the methods and objects introduced in this tame framework already indicate the strategy of the approach for studying the wild nonabelian Hodge correspondence for complex reductive groups (see \cite{BiBo}, \cite{Sabbah} for the $\text{GL}_n(\mathbb{C})$-case).

\subsection{Organization of the Paper}

The main body of the paper is organized as follows. In \S\ref{sect parah grp}, we review the definition of parahoric group schemes and some related properties. In \S\ref{sect parah tor}, we define adapted $G$-bundles (Definition \ref{defn herm G-bun}), which are $G$-bundles with an adapted hermitian metric. Then, in \S\ref{subsect functor Xi}, we define a functor $\Xi$, which sends adapted $G$-bundles to parahoric torsors. This functor is an analogue of Simpson's construction in \cite[\S 3]{Simp}. Furthermore, this functor not only preserves the degree (Proposition \ref{prop analytic=algebraic degree}) but also the stability condition (Proposition \ref{prop stab of parah tor}). In \S\ref{sect ana Higgs and conn}, analytic $G$-Higgs bundles and analytic $G$-connections (principal bundles equipped with a connection) are studied. Two important classes of objects are introduced and studied: \emph{$R_h$-stable tame metrized $\boldsymbol\alpha$-adapted $G$-Higgs bundles of degree zero} and \emph{$R_h$-stable tame metrized $\boldsymbol\beta$-adapted $G$-connections of degree zero}. Under the assumption of the existence of harmonic metrics, we prove that the corresponding categories are equivalent (Theorem \ref{thm ana Dol and DR}). In \S\ref{sect alg Higgs and conn}, we define two algebraic objects: logahoric Higgs torsors and logahoric connections. We generalize the functor $\Xi$ given in \S\ref{subsect functor Xi} to Higgs bundles and connections (Proposition \ref{prop ana=alg Higgs and conn}). This is a bridge connecting analytic and algebraic objects. In \S\ref{sec_categorical naHc}, we prove the categorical version of the main result (Theorem \ref{thm main thm in cat}) while the version concerning moduli spaces is given in \S\ref{sec_naHc moduli spaces}.

\vspace{2mm}

\subsection*{\textbf{Terminology.}}  For a smooth projective algebraic curve, the term \emph{nonabelian Hodge correspondence} is widely used to refer to the passage from Higgs bundles to local systems, since the Riemann--Hilbert correspondence is well-understood in this setting. In the noncompact case, the Riemann--Hilbert requires a far more delicate treatment to be established and the term nonabelian Hodge correspondence sometimes refers to the passage from the Dolbeault to the de Rham space only; see for instance the survey description in \cite[\S1.3]{Bo17}. Yet, we use here the term \emph{tame parahoric nonabelian Hodge correspondence} for the full passage from Higgs bundles to fundamental group representations in order to keep up with the nomenclature from the smooth projective case.

\vspace{2mm}

\begin{notn*} Throughout the article, we will be distinguishing the notation between the parahoric objects on $X$ and holomorphic principal bundles on $X_{\boldsymbol D}$ as follows:
	
	\vspace{2mm}
	
	\begin{tabularx}{\textwidth}{XXX}
		Curve: & $X$ & $X_{\boldsymbol D}$ \\
		\hline
		Torsor/Bundle: & $\mathcal{E}$ & $E$ \\
		Reduction of structure group: & $\varsigma$ & $\sigma$ \\
		Character: & $\kappa$ & $\chi$ \\
		Higgs field: & $\varphi$ & $\phi$ \\
		Connection: & $\nabla$ & $d'$\\
		\hline
	\end{tabularx}
	\vspace{2mm}
\end{notn*}

\section{Parahoric Group Schemes}\label{sect parah grp}

Let $G$ be a connected complex reductive group. We fix a maximal torus $T$ in $G$ with Lie algebras $\mathfrak{t}$ and $\mathfrak{g}$. Let $X(T):={\rm Hom}(T,\mathbb{G}_m)$ be the character group and $Y(T):={\rm Hom}(\mathbb{G}_m,T)$ be the co-character group (or group of one-parameter subgroups of $T$). Let
\begin{align*}
	\langle \cdot, \cdot \rangle: Y(T) \times X(T) \rightarrow \mathbb{Z}
\end{align*}
be the canonical pairing, which can be extended to $\mathbb{R}$ by tensoring $Y(T)$ and $X(T)$ with $\mathbb{R}$. A co-character with coefficients in $\mathbb{R}$ (resp. $\mathbb{Q}$) is called a \emph{real weight} (resp. \emph{rational weight}). If there is no ambiguity, a \emph{weight} is usually regarded as a \emph{real weight}. We denote by $\mathcal{R}$ the root system with respect to the maximal torus $T$, and let $\mathcal{R}_+ \subseteq \mathcal{R}$ be the set of positive roots. Given a root $r \in \mathcal{R}$, there is an isomorphism of Lie algebras
\begin{align*}
	{\rm Lie}(\mathbb{G}_a) \rightarrow ({\rm Lie}(G))_r,
\end{align*}
which induces a natural homomorphism of groups
\begin{align*}
	u_r: \mathbb{G}_a \rightarrow G,
\end{align*}
such that $t u_r(a) t^{-1}= u_r(r(t)a)$ for $t \in T$ and $a \in \mathbb{G}_a$. Denote by $U_r$ the image of the homomorphism $u_r$, which is a closed subgroup. Furthermore, a reductive group $G$ is generated by its subgroups $T$ and $U_r$ for $r \in \mathcal{R}$. Let $g$ be an element in $G$, and $g$ can be written as a product $g=g_t \prod_{r \in \mathcal{R}} g_r$, where $g_t \in T$ and $g_r \in U_r$. Sometimes, we write $g$ as a tuple $(g_t,g_r)_{r \in \mathcal{R}}$ for convenience.

\subsection{Parahoric subgroups}
Given a weight $\theta$, under differentiation, we can consider $\theta$ as an element in $\mathfrak{t}$, which is the Lie algebra of $T$. We define the integer
\begin{align*}
	m_r(\theta):=\lceil -r(\theta) \rceil,
\end{align*}
where $\lceil \cdot \rceil$ is the ceiling function and $r(\theta):=\langle \theta, r \rangle$. We then introduce the following:
	
\begin{defn}\label{defn parahoric_group}
Let $R:=\mathbb{C}[[z]]$ and $K:=\mathbb{C}((z))$. With respect to the above data, we define the \emph{parahoric subgroup $G_{\theta}(K)$} of $G(K)$ as
\begin{align*}
G_{\theta}(K):=\langle T(R), U_r(z^{m_r(\theta)}R), r \in \mathcal{R} \rangle.
\end{align*}
Denote by $\mathcal{G}_{\theta}$ the corresponding group scheme of $G_{\theta}(K)$, which is called the \emph{parahoric group scheme}.
\end{defn}
	
Now we consider another definition of a subgroup of $G(K)$ determined by $\theta$:
\begin{align*}
	G'_{\theta}(K):=\{g(z) \in G(K) \text{ }|\text{ } z^{\theta} g(z) z^{-\theta} \text{ has a limit as $z \rightarrow 0$}\},
\end{align*}
where $z^{\theta}:=e^{\theta \ln z}$. This definition is a more analytic one. The following lemma shows that the two definitions are equivalent.
\begin{lem}\label{lem two defn parah grp}
	Given a weight $\theta$, we have $G_{\theta}(K)=G'_\theta(K)$.
\end{lem}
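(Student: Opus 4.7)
The plan is to prove the two inclusions separately, with $G_\theta(K) \subseteq G'_\theta(K)$ by a direct check on the generators and the reverse by exploiting the open Bruhat factorization. A preliminary observation is that $G'_\theta(K)$ is itself a subgroup of $G(K)$: the map $g \mapsto z^\theta g z^{-\theta}$ distributes over products, and the condition of having a limit at $z = 0$ is preserved under products and inverses.

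For $G_\theta(K) \subseteq G'_\theta(K)$, it suffices to check each generator. For $t \in T(R)$ we have $z^\theta t z^{-\theta} = t$ since $T$ is abelian, so the limit $t(0)$ exists. For $u_r(a) \in U_r(z^{m_r(\theta)} R)$, specializing the commutation relation $t u_r(a) t^{-1} = u_r(r(t) a)$ to $t = z^\theta$ and using $r(z^\theta) = z^{r(\theta)}$ yields $z^\theta u_r(a) z^{-\theta} = u_r(z^{r(\theta)} a)$; since the $z$-adic valuation of $a$ satisfies $v(a) \geq m_r(\theta) = \lceil -r(\theta) \rceil$, the exponent $r(\theta) + v(a)$ is non-negative, so the limit exists.

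For $G'_\theta(K) \subseteq G_\theta(K)$, I would use the open Bruhat cell $\Omega = U^- T U \subseteq G$: the product map $U^- \times T \times U \to \Omega$ is an isomorphism onto a dense open subvariety, and a choice of ordering on the roots gives $U^\pm \cong \prod_{r \in \mathcal{R}_\pm} U_r$ as varieties. Hence any $g \in \Omega(K)$ admits a unique factorization $g = \prod_{r \in \mathcal{R}_-} u_r(a_r) \cdot t \cdot \prod_{r \in \mathcal{R}_+} u_r(a_r)$ with $a_r \in K$ and $t \in T(K)$. Since conjugation by the torus element $z^\theta$ preserves this factorization, we obtain $z^\theta g z^{-\theta} = \prod_{r \in \mathcal{R}_-} u_r(z^{r(\theta)} a_r) \cdot t \cdot \prod_{r \in \mathcal{R}_+} u_r(z^{r(\theta)} a_r)$, and by uniqueness the existence of a limit is equivalent to each factor's limit existing; this forces $t \in T(R)$ and $v(a_r) \geq \lceil -r(\theta) \rceil = m_r(\theta)$ (as valuations are integers), placing $g$ in $G_\theta(K)$.

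The main obstacle is handling elements $g \in G'_\theta(K)$ whose reduction falls outside the big cell. For those, one uses the affine Bruhat decomposition of $G(K)$ together with the observation that for a Weyl representative $\dot{w} \in N_G(T)(\mathbb{C})$, the expression $z^\theta \dot{w} z^{-\theta} = \dot{w} \cdot z^{w^{-1}(\theta) - \theta}$ belongs to $G(K)$ only when $w^{-1}(\theta) - \theta \in Y(T)$; for generic weights this forces $w$ to stabilize $\theta$, and a representative $\dot{w}$ of such a $w$ can be chosen inside $G_\theta(K)$, reducing the problem back to the big-cell analysis above.
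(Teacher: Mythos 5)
Your first inclusion $G_{\theta}(K)\subseteq G'_{\theta}(K)$ is exactly the paper's argument (a generator-by-generator check), and your strategy for the converse --- factor $g$ into a torus piece and root-group pieces, conjugate by $z^{\theta}$, and read off the conditions factor by factor --- is also the route the paper takes. The problem is the step you offer as justification. You claim that, because the factorization $g=u^{-}\,t\,u^{+}$ on the big cell $\Omega(K)$ is unique, the existence of $\lim_{z\to 0}z^{\theta}g z^{-\theta}$ is equivalent to the existence of the limit of each conjugated factor. Uniqueness gives no such thing: the inverse of the multiplication map $U^{-}\times T\times U\to\Omega$ is continuous only over $\Omega$, and the limit of $z^{\theta}gz^{-\theta}$ may lie in $G\setminus\Omega$, in which case the factors can diverge while the product converges. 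Concretely, take $G={\rm SL}_2$, $\theta=0$, and
\begin{align*}
g(z)=\begin{pmatrix} z & 1\\ -1 & 0\end{pmatrix}
=\begin{pmatrix}1 & 0\\ -z^{-1} & 1\end{pmatrix}
\begin{pmatrix} z & 0\\ 0 & z^{-1}\end{pmatrix}
\begin{pmatrix}1 & z^{-1}\\ 0 & 1\end{pmatrix}\in\Omega(K).
\end{align*}
Here $g\in G'_{0}(K)$ (its limit is a Weyl representative, outside the big cell) and indeed $g\in {\rm SL}_2(R)=G_{0}(K)$, but the big-cell factors have $t=z\notin T(R)$ and unipotent parameters of valuation $-1<m_{\pm r}(0)=0$. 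So the conditions your argument claims are ``forced'' are simply false for this $g$, and your proof cannot place such a $g$ inside $G_{\theta}(K)$ at all. (The paper's own proof asserts the same factor-by-factor equivalence without justification, so the difficulty is inherited rather than invented; but your explicit appeal to uniqueness is precisely the step that fails.)

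The closing paragraph does not repair this. The affine Bruhat reduction is only sketched, and the phrase ``for generic weights this forces $w$ to stabilize $\theta$'' concedes that non-generic weights are not covered, whereas the lemma is asserted for every weight (the example above has $\theta=0$, where every $w$ fixes $\theta$ and the limit can land on a Bruhat cell of any codimension). What is actually needed is an argument adapted to the \emph{limit} $h(0)$ of $h(z):=z^{\theta}gz^{-\theta}$ rather than to $g$ itself: for instance, use the Bruhat decomposition of the constant $h(0)$ to peel off a factor that visibly conjugates back into $G_{\theta}(K)$ (constants in the parabolic determined by $\theta$, and Weyl representatives fixing $\theta$) and reduce to an element whose limit lies in the big cell, where the factor-by-factor analysis is legitimate; alternatively, invoke the Bruhat--Tits/Boalch description of $G'_{\theta}(K)$ directly. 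As written, the converse inclusion is not established.
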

	
\begin{proof}
We first prove that $G_{\theta}(K) \subseteq G'_\theta(K)$. Since $G_{\theta}(K)$ is generated by $T(R)$ and $U_r(z^{m_r(\theta)}R)$, it is enough to check that the generators are included in $G'_\theta(K)$. Since $\theta \in \mathfrak{t}$, the element $z^{\theta}$ commutes with elements in $T(R)$. Thus, taking $g_t(z) \in T(R)$, we have
\begin{align*}
	\lim_{z \rightarrow 0} z^{\theta} g_t(z) z^{-\theta}= \lim_{z \rightarrow 0} g_t(z),
\end{align*}
which is bounded when $z$ approaches zero. Now let $g_r(z) \in U_r(z^{m_r(\theta)}R)$, and consider
\begin{align*}
	z^{\theta} g_r(z) z^{-\theta}= z^{r(\theta)} g_r(z).
\end{align*}
The condition $g_r(z) \in U_r(z^{m_r(\theta)}R)$ means that the ``degree" of $g_r(z)$ is always greater than $m_r(\theta)$, i.e.
\begin{align*}
	r(\theta)+m_r(\theta) = r(\theta) + \lceil -r(\theta) \rceil \geq 0,
\end{align*}
which implies that $z^{r(\theta)} g_r(z) \in U_r(R)$. Therefore, the limit
\begin{align*}
	\lim_{z\rightarrow 0} z^{\theta} g_r(z) z^{-\theta}= \lim_{z \rightarrow 0}z^{r(\theta)} g_r(z)
\end{align*}
is also bounded as $z$ approaches zero. In conclusion, we have $G_{\theta}(K) \subseteq G'_\theta(K)$.
		
Now we consider the other direction. The reductive group $G$ is generated by $T$ and $U_r$ for $r \in \mathcal{R}$, and then $G(K)$ is generated by $T(K)$ and $U_r(K)$. Given an element $g \in G'_\theta$, we write it as a product $g(z)=g_t(z) \prod_{r \in \mathcal{R}} g_r(z)$, where $g_t(z) \in T(K)$ and $g_r(z) \in U_r(K)$. Note that
\begin{align*}
	z^{\theta} g(z) z^{-\theta}= (z^{\theta} g_t(z) z^{-\theta}) \prod_{r \in \mathcal{R}} (z^{\theta} g_r(z) z^{-\theta}).
\end{align*}
Therefore, as $z$ approaches zero, $z^{\theta} g z^{-\theta}$ is bounded if and only if $z^{\theta} g_t(z) z^{-\theta}$ and $z^{\theta} g_r(z) z^{-\theta}$ are bounded for $r \in \mathcal{R}$. This implies that $g_t(z) \in T(R)$ and $g_r(z) \in U_r(z^{m_r(\theta)}R)$ for $r \in \mathcal{R}$. This finishes the proof of the lemma.
\end{proof}
	
\begin{rem}
Definition \ref{defn parahoric_group} is an algebraic definition of parahoric groups $G_{\theta}(K)$, which is given in \cite{BS}. The analytic definition of $G'_\theta(K)$ is introduced in \cite{Bo}, and plays a very important role in proving the Riemann--Hilbert correspondence for logarithmic parahoric connections, which are called \emph{logahoric connections} in this paper. Moreover, the analytic definition is also studied in \cite{BGM} as a comparison to the parabolic case.
\end{rem}
	
The above construction gives a local picture of parahoric group schemes. Now we will define parahoric group schemes globally. Let $X$ be a smooth projective curve over $\mathbb{C}$, and we also fix a reduced effective divisor $\boldsymbol D$ on $X$. In fact, the divisor $\boldsymbol D$ is a sum of, say, $s$ many distinct points on $X$. For each point $x \in \boldsymbol D$, we equip it with a weight $\theta_x$. Denote by ${\boldsymbol\theta}:=\{\theta_x, x \in \boldsymbol D\}$ the collection of weights over points in $\boldsymbol D$. Let $X_{\boldsymbol D}:=X \backslash \boldsymbol D$ be the complement of $\boldsymbol D$ in $X$, which is considered as a noncompact curve.
	
\begin{defn}\label{202}
Let ${\boldsymbol\theta}$ be a collection of weights over points in $\boldsymbol D$, for a curve $X$ and a group $G$ as above. We define a group scheme $\mathcal{G}_{\boldsymbol\theta}$ over $X$ by gluing the following local data
\begin{align*}
	\mathcal{G}_{\boldsymbol\theta}|_{X_{\boldsymbol D}} \cong G \times X_{\boldsymbol D}, \quad \mathcal{G}_{\boldsymbol\theta}|_{\mathbb{D}_x} \cong \mathcal{G}_{\theta_x}, x \in \boldsymbol{D},
\end{align*}
where $\mathbb{D}_x$ is a formal disc around $x$. This group scheme $\mathcal{G}_{\boldsymbol\theta}$ is called a \emph{parahoric Bruhat--Tits group scheme}.
\end{defn}
\noindent By \cite[Lemma 3.18]{ChGP}, the group scheme $\mathcal{G}_{\boldsymbol\theta}$ defined above is a smooth affine group scheme of finite type, flat over $X$.

\subsection{Parahoric Lie algebras}
Now we move to Lie algebras. Let $\mathfrak{g}(K)$ be the loop Lie algebra of $G(K)$. The \emph{adjoint action} of $G(K)$ on $\mathfrak{g}(K)$ is induced from that of $G$ on $\mathfrak{g}$, and we use the notation ${\rm Ad}(g)A$ for the adjoint action, where $g \in G(K)$ and $A \in \mathfrak{g}(K)$. This action is used to define the equivalence classes of Higgs bundles later on. Then, we consider the set of connection forms $\mathfrak{g}(K)\frac{dz}{z}$. The \emph{gauge action} of $G(K)$ on $\mathfrak{g}(K)\frac{dz}{z}$ is given as follows. Let $A,B \in \mathfrak{g}(K)$. The elements $A\frac{dz}{z}$ and $B\frac{dz}{z}$ are \emph{gauge equivalent}, if there exists an element $g \in G(K)$, such that
\begin{align*}
	B \frac{dz}{z} = dg \cdot g^{-1} + {\rm Ad}(g) A \frac{dz}{z}.
\end{align*}
The gauge action will be applied to define the equivalence classes of connections locally. For simplicity, the above action is called the \emph{gauge action} of $G(K)$ on $\mathfrak{g}(K)$, and $A$ is \emph{gauge equivalent} to $B$.
	
Let $\theta$ be a weight for the group $G$. With respect to the weight $\theta$, there is a natural decomposition of the Lie algebra $\mathfrak{g}$
\begin{align*}
	\mathfrak{g} = \bigoplus\limits_{\lambda \in \mathbb{R}} \mathfrak{g}_\lambda,
\end{align*}
where $\mathfrak{g}_\lambda$ is the eigenspace of the operator ${\rm ad}(\theta)$. Then, we define
\begin{align*}
\mathfrak{g}_\theta(K) = \{ \sum\limits_{i \in \mathbb{Z}, \lambda \in \mathbb{R}}  a_{i \lambda} z^i \text{ } | \text{ } a_{i \lambda} \in \mathfrak{g}_\lambda \text{ and } i+\lambda \geq 0 \}
\end{align*}
as a subset of $\mathfrak{g}(K)$. Moreover, we have an alternative way to define $\mathfrak{g}_\theta(K)$:
\begin{align*}
	\mathfrak{g}'_\theta(K) : =\{ g(z) \in \mathfrak{g}(K) \text{ } | \text{ } z^{\theta} g(z) z^{-\theta} \text{ has a limit as $z \rightarrow 0$}\}.
\end{align*}
	
\begin{lem}\label{lem two defn parah Lie alg}
	Let $\theta$ be a weight for the group $G$. We have $\mathfrak{g}_\theta(K) =  \mathfrak{g}'_\theta(K)$.
\end{lem}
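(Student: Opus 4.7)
The plan is to mimic the structure of the proof of Lemma \ref{lem two defn parah grp}, exploiting that on the Lie algebra side the conjugation action diagonalizes in a completely transparent way on the eigenspaces of $\mathrm{ad}(\theta)$. This makes the Lie algebra statement, in some sense, simpler than its group counterpart because there are no issues about products of generators.

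First I would record the key computation: if $a \in \mathfrak{g}_\lambda$ is an eigenvector of $\mathrm{ad}(\theta)$ with eigenvalue $\lambda$, then since $z^\theta = e^{\theta \ln z}$ acts on $\mathfrak{g}$ via the exponential of $\mathrm{ad}(\theta)$, one has
\begin{align*}
z^\theta \, a \, z^{-\theta} \;=\; e^{\lambda \ln z}\, a \;=\; z^\lambda a.
\end{align*}
Consequently, for a monomial $a_{i\lambda} z^i$ with $a_{i\lambda} \in \mathfrak{g}_\lambda$,
\begin{align*}
z^\theta \bigl( a_{i\lambda} z^i \bigr) z^{-\theta} \;=\; z^{i+\lambda}\, a_{i\lambda}.
\end{align*}

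For the inclusion $\mathfrak{g}_\theta(K) \subseteq \mathfrak{g}'_\theta(K)$, I would take an arbitrary element $\sum_{i,\lambda} a_{i\lambda} z^i \in \mathfrak{g}_\theta(K)$, apply the above computation termwise, and note that the defining condition $i + \lambda \ge 0$ guarantees that each term $z^{i+\lambda} a_{i\lambda}$ has a limit as $z \to 0$, so the whole sum does too.

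For the reverse inclusion $\mathfrak{g}'_\theta(K) \subseteq \mathfrak{g}_\theta(K)$, the decomposition $\mathfrak{g}(K) = \bigoplus_\lambda \mathfrak{g}_\lambda \otimes_{\mathbb{C}} K$ allows me to write any $g(z) \in \mathfrak{g}(K)$ uniquely in the form $\sum_{i,\lambda} a_{i\lambda} z^i$. The terms in $z^\theta g(z) z^{-\theta} = \sum_{i,\lambda} z^{i+\lambda} a_{i\lambda}$ sit in distinct eigenspaces for pairs $(i,\lambda)$ with distinct values of $\lambda$, and for fixed $\lambda$ the powers of $z$ are distinct; hence boundedness of the total sum as $z \to 0$ forces $i + \lambda \ge 0$ whenever $a_{i\lambda} \neq 0$, which is precisely the condition defining $\mathfrak{g}_\theta(K)$.

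The only mild subtlety — and the analogue of the main point in the previous proof — is the linear independence argument needed to conclude boundedness termwise from boundedness of the sum; this is automatic because the eigenspace decomposition of $\mathfrak{g}$ is a direct sum and the coefficients of distinct powers of $z$ within each eigenspace are linearly independent. I do not foresee any substantive obstacle.
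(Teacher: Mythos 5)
Your proposal is correct and is precisely the adaptation the paper intends: the paper's own ``proof'' simply says the argument is similar to that of Lemma \ref{lem two defn parah grp} and leaves it to the reader, and your eigenspace computation $z^{\theta} a z^{-\theta} = z^{\lambda} a$ for $a \in \mathfrak{g}_{\lambda}$, together with projecting onto the $\mathrm{ad}(\theta)$-eigenspaces to get termwise boundedness, is the natural Lie-algebra analogue of the decomposition into $T(R)$ and $U_r$ components used there.
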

	
\begin{proof}
	The proof is similar to the one of Lemma \ref{lem two defn parah grp}, and we will leave it to the reader.
\end{proof}
	
In fact, $\mathfrak{g}_\theta(K)$ is usually understood as the Lie algebra of $G_{\theta}(K)$ and is called the \emph{parahoric Lie algebra}. Furthermore, the adjoint action and the gauge action are well-defined:
\begin{lem}\cite[Lemma 3]{Bo}
	The natural adjoint action and the gauge action of $G_{\theta}(K)$ on $\mathfrak{g}(K)$ preserve $\mathfrak{g}_\theta(K)$.
\end{lem}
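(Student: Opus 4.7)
The plan is to reduce everything to the analytic characterizations just established, so that preservation becomes a boundedness check for conjugates of the form $z^{\theta}(-)z^{-\theta}$. Concretely, I will invoke Lemma \ref{lem two defn parah grp} and Lemma \ref{lem two defn parah Lie alg} to replace $G_\theta(K)$ with $G'_\theta(K)$ and $\mathfrak{g}_\theta(K)$ with $\mathfrak{g}'_\theta(K)$; then membership becomes the single condition that a certain limit as $z \to 0$ exists. Throughout, I will use freely that $\theta \in \mathfrak{t}$, so $z^{\theta}$ commutes with every element of $T(R)$ and with $\theta$ itself.

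For the adjoint action, given $g(z) \in G_\theta(K)$ and $A(z) \in \mathfrak{g}_\theta(K)$, the starting identity is
\begin{align*}
z^{\theta}\,{\rm Ad}(g(z))A(z)\,z^{-\theta} \;=\; {\rm Ad}\bigl(z^{\theta}g(z)z^{-\theta}\bigr)\bigl(z^{\theta}A(z)z^{-\theta}\bigr).
\end{align*}
By hypothesis each of the two factors on the right has a finite limit as $z \to 0$, and the adjoint representation is a polynomial map in both entries, so the product has a limit. This places ${\rm Ad}(g)A$ in $\mathfrak{g}'_\theta(K)$, which is the desired conclusion.

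For the gauge action, the new ingredient beyond the adjoint case is the logarithmic derivative term $z\,\tfrac{dg}{dz}\,g^{-1}$ (the coefficient of $\tfrac{dz}{z}$ in $dg\cdot g^{-1}$). Setting $h(z):=z^{\theta}g(z)z^{-\theta}$ and differentiating, using $[\theta,z^{\theta}]=0$, gives the identity
\begin{align*}
z^{\theta}\Bigl(z\,\tfrac{dg}{dz}\,g^{-1}\Bigr)z^{-\theta} \;=\; z\,\tfrac{dh}{dz}\,h^{-1} \;-\; [\theta,h]\,h^{-1}.
\end{align*}
Since $g \in G'_\theta(K)$, the function $h(z)$ extends holomorphically across $z=0$, so $h^{-1}$ does as well; hence both summands on the right are bounded as $z\to 0$. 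Combined with the adjoint computation applied to ${\rm Ad}(g)A$, this shows that the gauge transform of any $A\tfrac{dz}{z}$ with $A \in \mathfrak{g}_\theta(K)$ still has its $\tfrac{dz}{z}$-coefficient in $\mathfrak{g}_\theta(K)$.

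The only mild subtlety, and the place where a careless computation would slip, is the derivative identity for $h$: one must remember that although $\theta$ commutes with $z^{\theta}$, it generally does not commute with $g(z)$, which is exactly what produces the commutator correction $-[\theta,h]h^{-1}$. Once this bookkeeping is done, both conclusions of the lemma follow immediately from the two analytic Lemmas \ref{lem two defn parah grp} and \ref{lem two defn parah Lie alg} without any further root-by-root analysis.
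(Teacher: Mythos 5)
Your argument is correct, but note that the paper does not actually prove this statement: it is quoted directly from Boalch \cite[Lemma 3]{Bo}, so there is no internal proof to compare against. What you have written is a self-contained verification in exactly the spirit the paper sets up, reducing membership in $\mathfrak{g}_\theta(K)$ to the boundedness criterion of Lemmas \ref{lem two defn parah grp} and \ref{lem two defn parah Lie alg}. The two computations at the core are right: the conjugation identity $z^{\theta}{\rm Ad}(g)A\,z^{-\theta}={\rm Ad}(z^{\theta}gz^{-\theta})(z^{\theta}Az^{-\theta})$ handles the adjoint action, and your derivative identity
\begin{align*}
z^{\theta}\Bigl(z\,\tfrac{dg}{dz}\,g^{-1}\Bigr)z^{-\theta}=z\,\tfrac{dh}{dz}\,h^{-1}-[\theta,h]\,h^{-1},\qquad h:=z^{\theta}gz^{-\theta},
\end{align*}
checks out (the commutator correction is exactly where a sloppy computation would go wrong, as you say). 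This buys a proof that needs no root-by-root analysis, only the already-established equivalence of the algebraic and analytic definitions.

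Two small points deserve tightening. First, for a genuinely real weight $\theta$ the conjugate $h$ is not literally holomorphic across $z=0$: its expansion involves powers $z^{a}$ with $a\in\mathbb{R}_{\geq 0}$ possibly irrational; what you actually use, and what is true, is that no negative exponents occur, so applying $z\,\tfrac{d}{dz}$ preserves boundedness term by term. Second, the step ``$h$ bounded, hence $h^{-1}$ bounded'' is not automatic from boundedness alone (the limit must land in $G$); the cleanest fix is to observe that $h^{-1}=z^{\theta}g^{-1}z^{-\theta}$ with $g^{-1}\in G_{\theta}(K)$, so $h^{-1}$ satisfies the same boundedness condition by Lemma \ref{lem two defn parah grp} applied to $g^{-1}$. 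With these two sentences added, the proof is complete.
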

Given a collection of weights $\boldsymbol\theta$ over the punctures $\boldsymbol D \subseteq X$, we can define a parahoric Lie algebra $\mathfrak{g}_{\boldsymbol\theta}$ of $\mathcal{G}_{\boldsymbol\theta}$ by gluing the local data, and we can define adjoint action and gauge action (representation) of $\mathcal{G}_{\boldsymbol\theta}$ on $\mathfrak{g}_{\boldsymbol\theta}$ naturally.

\subsection{Parahoric torsors}
As a group scheme on $X$, we can define parahoric $\mathcal{G}_{\boldsymbol\theta}$-torsors on $X$ in a natural way. However, we would like to make the definition much clearer by considering the local picture. Let $E$ be a $G$-bundle (equivalently, a $G$-torsor) on $X_{\boldsymbol D}$, and let $E_x$ be a $G_{\theta_x}(K)$-torsor on $\mathbb{D}_x$ for each puncture $x \in \boldsymbol D$. Thus, defining a parahoric $\mathcal{G}_{\boldsymbol\theta}$-torsor $\mathcal{E}$ on $X$ is equivalent to giving a transition function
\begin{align*}
	\Theta_x : E_x|_{\mathbb{D}_x \cap X_{\boldsymbol D}} \rightarrow E|_{X_{\boldsymbol D} \cap \mathbb{D}_x}
\end{align*}
for each puncture $x \in \boldsymbol D$. It is easy to check that the transition function $\Theta_x$ is an element in $G(K)$. Given two parahoric $\mathcal{G}_{\boldsymbol\theta}$-torsors $\mathcal{E}_1$ and $\mathcal{E}_2$ defined by transition functions $\{\Theta_{1x}, x \in \boldsymbol D\}$ and $\{ \Theta_{2x}, x \in \boldsymbol D\}$ respectively, we say $\mathcal{E}_1$ and $\mathcal{E}_2$ are \emph{equivalent} (or \emph{isomorphic}) if there exist isomorphisms
\begin{align*}
	\psi: E_1 \rightarrow E_2, \quad \psi_x: E_{1x} \rightarrow E_{2x}, \text{ } x \in \boldsymbol D
\end{align*}
such that for each $x \in \boldsymbol D$, the following diagram commutes
\begin{center}
\begin{tikzcd}
	E_{1x} \arrow[r, "\psi_x"] \arrow[d, "\Theta_{1x}"]& E_{2x} \arrow[d, "\Theta_{2x}"] \\
	E_{1} \arrow[r, "\psi"] & E_{2}.
\end{tikzcd}
\end{center}
Therefore, an equivalence class of parahoric $\mathcal{G}_{\boldsymbol\theta}$-torsors is given by an element in
\begin{align*}
	\left[ \prod_{x \in \boldsymbol D} G_{\theta_x}(K) \backslash \prod_{x \in \boldsymbol D} G(K) / G(\mathbb{C}[X_{\boldsymbol D}])  \right],
\end{align*}
and we refer the reader to \cite[\S 2]{BS} for more details.

\section{Holomorphic Principal Bundles on $X_{\boldsymbol D}$ and Parahoric Torsors on $X$}\label{sect parah tor}
	
For the case of vector bundles, Simpson defined a functor between the category of \emph{acceptable holomorphic bundles} on $X_{\boldsymbol D}$ and the category of \emph{parabolic bundles} (also called \emph{filtered bundles}) on $X$ (see \cite[\S 10]{Simp1988} or \cite[\S 3]{Simp}). This functor, as a bridge, connects analytic objects and algebraic objects. In this section, we generalize this result to principal bundles. Based on the concept of adapted metrics (Definition \ref{defn adapt metric}), we establish a functor from the category of \emph{adapted principal bundles} on $X_{\boldsymbol D}$ to the category of \emph{parahoric torsors} on $X$ (see \S\ref{subsect functor Xi}). In fact, when $G={\rm GL}_n(\mathbb{C})$, an adapted principal bundle is exactly an acceptable holomorphic bundle. We also prove that this functor preserves the degree (Proposition \ref{prop analytic=algebraic degree}) as well as the stability condition (Proposition \ref{prop stab of parah tor}). Furthermore, we introduce two notions at the end of this section: \emph{degree zero} and \emph{polystability condition} for adapted principal bundles and parahoric torsors. The first one is crucial in non-abelian Hodge theory and the second will be used in the Kobayashi--Hitchin correspondence in \S\ref{subsect Kob-Hitchin}. Before we start, we introduce the following notation
\begin{itemize}
	\item $\mathscr{A}^p$: sheaf of $C^{\infty}$ $p$-forms;
	\item $\mathscr{A}^p(\bullet)$: sheaf of $C^{\infty}$ $\bullet$-valued $p$-forms;
	\item $\mathscr{A}^{p,q}$: sheaf of $C^{\infty}$ forms of type $(p,q)$;
	\item $\mathscr{A}^{p,q}(\bullet)$: sheaf of $C^{\infty}$ $\bullet$-valued forms of type $(p,q)$,
\end{itemize}
where $\bullet$ represents a vector bundle or a principal bundle on $X_{\boldsymbol D}$. For an open subset $U \subseteq X_{\boldsymbol D}$, we have $\mathscr{A}^0(U)=C^{\infty}(U)$, and we also use the notation $\Gamma$ for $C^{\infty}$-functions.
	
\subsection{Functor $\Xi$}\label{subsect functor Xi}
Let $X$ be a smooth projective curve over $\mathbb{C}$ together with a given reduced effective divisor $\boldsymbol D$. With respect to the data $(X,\boldsymbol D)$, we can define an open cover
\begin{align*}
	\mathfrak{U}=(X_{\boldsymbol D}, \text{  } \mathbb{D}_x, \text{ } x \in \boldsymbol D)
\end{align*}
of $X$, where $\mathbb{D}_x$ is a formal disc around the puncture $x$.
	
Let $G$ be a connected complex reductive group with a maximal compact subgroup $H$ with Lie algebra $\mathfrak{h}$. Let $g$ be an element in $G$, and we consider $g$ as a tuple $(g_t,g_r)_{r \in \mathcal{R}}$, where $g_t \in T$ and $g_r \in U_r$. Its \emph{transpose} is the tuple $(g_t, g_{-r})_{r \in \mathcal{R}}$, defined by switching the position of $g_r$ and $g_{-r}$, and will be denoted by $g^{\rm T}$. We say that $g$ is \emph{symmetric} if $g=g^{\rm T}$. Also, $g$ is \emph{hermitian} if $g=\bar{g}^{\rm T}$, that is,
\begin{align*}
	g_t=\bar{g_t} \text{ and } g_r= \bar{g}_{-r}.
\end{align*}
Note that when $G={\rm GL}_n(\mathbb{C})$, a hermitian element is exactly a hermitian matrix.
	
Let $E$ be a $G$-bundle on $X_{\boldsymbol D}$. Let $M$ be a vector space, and suppose that there is a natural $G$-action on $M$. Denote by $E(M):=E \times_G M$ the corresponding vector bundle on $X_{\boldsymbol D}$. As a special case, $E(\mathfrak{g})$ is the corresponding adjoint bundle with respect to the adjoint action on $\mathfrak{g}$. Similarly, denote by $E(G):=E \times_G G$ the associated $G$-bundle under the adjoint action of $G$ (on itself). Note that there is a natural action of $E(G)$ on $E(\mathfrak{g})$, which is induced by the adjoint action of $G$ on $\mathfrak{g}$.
	
A \emph{metric} on $E$ is considered as a section $h \in \Gamma(X_{\boldsymbol D}, E/H)$. Furthermore, we say that $h$ is a \emph{hermitian metric}, if the value of $h$ is always hermitian. In this paper, we only care about hermitian metrics, thus a metric in the sequel will always refer to a hermitian metric. Taking $g \in \Gamma(X_{\boldsymbol D},E(G))$, there is a natural action on $h$ such that $g \cdot h$ is a metric (i.e. an element in $\Gamma(X_{\boldsymbol D},E/H)$). Now given a local trivialization $e$ of $E$, we can define a metric $h_0$ such that $e$ is $h_0$-orthonormal, and this metric is called the \emph{standard metric}. Furthermore, any other metric is given by $h=h_0 \cdot g$ for some $g \in \Gamma(X_{\boldsymbol D},E(G))$. We refer the reader to \cite[\S 2.4]{BGM} for more details.

Now we consider a holomorphic $G$-bundle $(E,\partial''_E)$ on $X_{\boldsymbol D}$, where $E$ is a $G$-bundle and $\partial''_E$ is a holomorphic structure of $E$. A \emph{holomorphic structure} is a first order differential operator
\begin{align*}
	\partial''_E: \mathscr{A}^{0}(E) \rightarrow \mathscr{A}^{0,1}(E)
\end{align*}
satisfying the Leibniz rule and integrability condition (see Definition \ref{defn holomorphic struct}). Furthermore, a holomorphic structure can be regarded as an element in $\mathscr{A}^{0,1}(E(\mathfrak{g}))$ locally, and this fact comes from an equivalent definition of connections in the viewpoint of (Atiyah) Lie algebroids (see Appendix \ref{sect appendix} for more details). Sometimes, we say that $E$ is a holomorphic $G$-bundle and omit the holomorphic structure if there is no ambiguity. A \emph{metrized $G$-bundle} on $X_{\boldsymbol D}$ is a triple $(E,\partial''_E,h)$, where $(E,\partial''_E)$ is a holomorphic $G$-bundle and $h$ is a hermitian metric on $E$.
		
\begin{rem}\label{rem conv holo and alg}
When we work on the noncompact curve $X_{\boldsymbol D}$, a section in $\Gamma(X_{\boldsymbol D}, E(G))$ (or $\Gamma(X_{\boldsymbol D}, E/H))$ is not necessarily holomorphic. We will say a \emph{holomorphic section} to emphasize the condition. However, when we work on $X$, we consider everything algebraically. Therefore, a section of some objects (e.g. torsors) on $X$ is always a holomorphic one. This is an important convention in this paper, especially when we discuss Higgs fields and connections later on in \S\ref{sect alg Higgs and conn}.
\end{rem}

We can write $\mathfrak{g}= \mathfrak{h}+\mathfrak{m}$ as a direct sum, where $\mathfrak{m}$ is the complement (as vector spaces). Also, we have a well-known decomposition of the Lie group $G=H \cdot {\rm exp}(\mathfrak{m})$. Let $(E,\partial''_E,h)$ be a metrized $G$-bundle on $X_{\boldsymbol D}$. Since the metric $h$ is a section of $E/H$, it is a function with values in ${\rm exp}(\mathfrak{m})$ locally. Therefore, the term $\partial h \cdot h^{-1}$ is a well-defined element in $\mathscr{A}^{1,0} (E(\mathfrak{m}))$ (and therefore in $\mathscr{A}^{1,0} (E(\mathfrak{g}))$), where $\partial$ is the $(1,0)$-part of the usual exterior differential operator $d$. With respect to the holomorphic structure $\partial''_E$, we obtain an operator $\partial'_h:  \mathscr{A}^0(E) \rightarrow  \mathscr{A}^{1,0}(E)$ of type $(1,0)$ by the equation
\begin{align*}
	\partial h \cdot h^{-1} = (\partial'_{h})^{\rm T} + {\rm Ad}(h)\partial''_E.
\end{align*}
With respect to the above information, the curvature $F_h$ is defined as $(D_h)^2$, where $D_h=\partial'_h + \partial''_E$ is the Chern connection. We will further comment on that later in \S\ref{sect ana Higgs and conn}.

\begin{defn}\label{defn adapt metric}
For a pair $(X, \boldsymbol D)$ as above, equip each puncture $x \in \boldsymbol D$ with a rational weight $\theta_x$ and denote by $\boldsymbol\theta=\{\theta_x, x \in \boldsymbol D\}$ the set of these weights. Moreover, for each puncture $x \in \boldsymbol D$, fix a local coordinate $z$ around $x$. We say that $h$ is a \emph{$\boldsymbol\theta$-adapted metric} if it satisfies the following conditions
\begin{itemize}
	\item $h$ is a hermitian metric;
	\item for any point $x \in \boldsymbol D$, the metric $h$ can be written as
    \begin{align*}
		h_0 \cdot |z|^{2 \theta_x} e^c
	\end{align*}
	under some suitable trivialization $e$ of $E$ around $x$, where $h_0$ is the standard constant metric (with respect to the trivialization $e$), multiplied
    by a scalar factor $e^c$ describing perturbation;
	\item for the data described above, we have ${\rm Ad}(|z|^{2 \theta_x})c = o (\ln |z|)$ and the curvature of the associated connection to $h$ is in $L^1$.
	\end{itemize}
\end{defn}

\begin{rem}\label{rem notation herm}
	The associated curvature $F_h$ of a $\boldsymbol\theta$-adapted metric $h$ is $L^1$, namely
	\begin{align*}
	\| F_h \|_{L^1}:= \int_{X_{\boldsymbol{D}}} |F_h|_{g,h} < \infty
	\end{align*}
	where $|F_h|_{g,h}^2:=( F_h,F_h )_{g,h}$ and $( \cdot,\cdot )_{g,h}$ stands for the hermitian inner product on the space of bundle-valued forms induced from the Riemannian metric $g$ and the hermitian metric $h$. From now on we use the notation $(\cdot,\cdot)_h$ if there is no ambiguity. In particular, on the space of sections of bundles (i.e. bundle-valued 0-forms), we apply the notation $\langle\cdot,\cdot\rangle_{h}$. This notation will be used in \S\ref{subsect Kob-Hitchin}.
\end{rem}

The data $\boldsymbol\theta$ is usually called the \emph{jump value} or \emph{weight} of $h$ with respect to the trivialization $e$. An important fact is that the jump value of a given hermitian metric at a puncture depends on the trivialization we choose.

\begin{defn}\label{defn herm G-bun}
Given a collection of weights $\boldsymbol\theta$, a \emph{$\boldsymbol\theta$-adapted $G$-bundle} is a metrized $G$-bundle $(E,\partial''_E,h)$, of which the metric $h$ is $\boldsymbol\theta$-adapted.
\end{defn}

Given a $\boldsymbol\theta$-adapted metric $h$, for each point $x\in \boldsymbol D$, we consider the following holomorphic map
\begin{align*}
	\xi: \mathbb{D}^{*}_x \rightarrow G,
\end{align*}
where $\mathbb{D}^{*}_x$ is a punctured disc around $x$, such that
\begin{align*}
	{\rm Ad}(z^{\theta_x  }) \xi(z) :=  z^{\theta_x} \xi(z) z^{-\theta_x}
\end{align*}
is a well-defined holomorphic morphism $\mathbb{D}_x \rightarrow G$. Considering the set of all such holomorphic maps on $\mathbb{D}^{*}_x$, it gives the parahoric group $\mathcal{G}_{\theta_x}$ (see Definition \ref{defn parahoric_group} and Lemma \ref{lem two defn parah grp}). With respect to the above discussion, $G_{X_{\boldsymbol D}}:=G \times X_{\boldsymbol D}$ is a group scheme over $X_{\boldsymbol D}$ and $\mathcal{G}_{\theta_x}$ is a group scheme on $\mathbb{D}_x$. By taking appropriate transition functions
\begin{align*}
	\Theta_{G,\theta_x}: \mathcal{G}_{\theta_x} |_{\mathbb{D}_x \cap X_{\boldsymbol D}} \rightarrow G_{X_{\boldsymbol D}}|_{X_{\boldsymbol D} \cap \mathbb{D}_x},
\end{align*}
we get a scheme over $X$, which is exactly the parahoric group scheme $\mathcal{G}_{\boldsymbol\theta}$.
	
Let $E$ be a $\boldsymbol\theta$-adapted $G$-bundle on $X_{\boldsymbol D}$. Similarly, for each puncture $x \in \boldsymbol D$, we can consider the set $E_{\theta_x}$ of holomorphic sections
\begin{align*}
	\xi: \mathbb{D}^{*}_x \rightarrow E
\end{align*}
such that
\begin{align*}
	{\rm Ad}(z^{\theta_x  }) \xi(z) =  z^{\theta_x} \xi(z) z^{-\theta_x}
\end{align*}
is a well-defined holomorphic morphism on $\mathbb{D}_x$. It is easy to check that $E_{\theta_x}$ is a $\mathcal{G}_{\theta_x}$-torsor on $\mathbb{D}_x$. Now we have a holomorphic $G$-bundle $E$ on $X_{\boldsymbol D}$, and a $\mathcal{G}_{\theta_x}$-torsor $E_{\theta_x}$ on $\mathbb{D}_x$ for each $x \in \boldsymbol D$. Denote by
\begin{align*}
	\rho: G_{X_{\boldsymbol D}} \times_{X_{\boldsymbol D}} E =  G \times E \rightarrow E, \quad \rho_x: \mathcal{G}_{\theta_x} \times_{\mathbb{D}_x} E_{\theta_x} \rightarrow E_{\theta_x}
\end{align*}
the corresponding actions of group schemes, where $G_{X_{\boldsymbol D}} =  T \times X_{\boldsymbol D}$. If we want to construct a parahoric $\mathcal{G}_{\boldsymbol\theta}$-torsor $\mathcal{E}$ on $X$ with respect to the above local data, it is equivalent to finding transition functions
\begin{align*}
	\Theta_x: E_{\theta_x}|_{\mathbb{D}_x \cap X_{\boldsymbol D}} \rightarrow E|_{X_{\boldsymbol D} \cap \mathbb{D}_x}
\end{align*}
such that the following diagram commutes
\begin{center}
\begin{tikzcd}
	\mathcal{G}_{\theta_x} \times_{\mathbb{D}_x} E_{\theta_x} \arrow[r, "\rho_x"] \arrow[d, "\Theta_{G, \theta_x} \times \Theta_x"]& E_{\theta_x} \arrow[d, "\Theta_{x}"] \\
	G \times E \arrow[r, "\rho"] & E
\end{tikzcd}
\end{center}
over the intersection $X_D \cap \mathbb{D}_x$. Fixing a trivialization $e_x$ of $E_{\theta_x}$, the transition function $\Theta_x$ is uniquely determined by the value $\Theta_x(e_x)$, which is an element in $G(K)$. From \S\ref{sect parah grp}, we know that parahoric $\mathcal{G}_{\boldsymbol\theta}$-torsors are parameterized by
\begin{align*}
	\left[ \prod_{x \in \boldsymbol D} G_{\theta_x}(K) \backslash \prod_{x \in \boldsymbol D} G(K) / G(\mathbb{C}[X_{\boldsymbol D}])  \right].
\end{align*}
This fact shows that the choice of transition functions $\Theta_x$ is not unique. Thus, we choose the identity element in this double coset and denote by $\mathcal{E}$ the corresponding parahoric $\mathcal{G}_{\boldsymbol\theta}$-torsor on $X$, which is defined by the local data $E$ on $X_{\boldsymbol D}$ and $E_{\theta_x}$ on $\mathbb{D}_x$.
	
With respect to the above construction, we define a functor
\begin{align*}
	\Xi : \mathcal{C}(X_{\boldsymbol D}, G, \boldsymbol\theta) \rightarrow \mathcal{C}(X,\mathcal{G}_{\boldsymbol\theta}),
\end{align*}
where $\mathcal{C}(X_{\boldsymbol D},G,\boldsymbol\theta)$ is the category of $\boldsymbol\theta$-adapted $G$-bundles on $X_{\boldsymbol D}$, and $\mathcal{C}(X,\mathcal{G}_{\boldsymbol\theta})$ is the category of parahoric $\mathcal{G}_{\boldsymbol\theta}$-torsors on $X$.
	
\begin{rem}\label{rem_functor}
In \cite[\S 3]{Simp}, Simpson constructed a functor from the category of adapted bundles on $X_{\boldsymbol D}$ to the category of parabolic bundles on $X$, which preserves duals, direct sums, determinants and tensor products.  The key point is that Simpson did not fix a particular parabolic structure for the category of parabolic bundles.
		
In this subsection, the above functor $\Xi$ is defined for a given collection of weights $\boldsymbol\theta$. Furthermore, this functor can be extended naturally to the category of parahoric torsors over $X$, where we do not fix the parahoric group scheme (or the weights), thus
\begin{align*}
	\Xi : \{ \text{ category of adapted principal bundles on $X_{\boldsymbol D}$ }   \} \rightarrow \{ \text{ category of parahoric torsors on $X$ } \}.
\end{align*}
If we consider the larger categories (where we do not fix at least the weights), then we can discuss whether the functor preserves tensor products, duals and other properties similarly. Furthermore, this functor also preserves reductions of group structures, and this property can be obtained easily in \S\ref{subsect reduct}.
\end{rem}
	
\begin{exmp}\label{exmp line bundle}
In this example, we consider the special case when $G={\rm GL}_1(\mathbb{C}) \cong \mathbb{C}^{*}$. In this special case, the holomorphic $G$-bundles are exactly line bundles. Fixing a collection of weights $\boldsymbol\theta$, let $L$ be a holomorphic ${\rm GL}_1(\mathbb{C})$-bundle on $X_{\boldsymbol D}$. Consider the holomorphic morphism $\xi: \mathbb{D}_x^{*} \rightarrow L$ such that ${\rm Ad}(z^{\theta_x}) \xi(z)$ is a holomorphic morphism $\mathbb{D}_x \rightarrow L$. Since $G={\rm GL}_1(\mathbb{C})$, we have
\begin{align*}
	{\rm Ad}(z^{\theta_x}) \xi(z)=\xi(z).
\end{align*}
Therefore, there is a natural extension of $L$ to a line bundle $\mathcal{L}$ on $X$ \cite[\S 10]{Simp1988}. In this special case, we find that the resulting torsor (or line bundle) $\mathcal{L}$ does not depend on the choice of weights $\boldsymbol\theta$, and the parahoric language cannot distinguish the pair $(L,h)$, where $L$ is a ${\rm GL}_1(\mathbb{C})$-bundle and $h$ is a $\boldsymbol\theta$-adapted metric. Therefore, it is reasonable to still use the pair $(\mathcal{L},\boldsymbol\theta)$ for the corresponding object on $X$, and $\boldsymbol\theta$ is exactly regarded as ``parabolic weights" in the language of parabolic bundles (see \cite{KSZ20211,Meht,Yoko2} for instance). Furthermore, the property that the parabolic structure of line bundles only depends on the weights also implies this phenomenon.
\end{exmp}
	
\subsection{Parahoric Degree}\label{subsect alg parah deg}
Let $G$ be a connected complex reductive group. Let $\theta$ be a weight, and denote by $G_{\theta}(K) \subseteq G(K)$ the parahoric group corresponding to $\theta$. Recall that a parabolic subgroup $P$ (with Lie algebra $\mathfrak{p}$) of $G$ can be determined by a subset of roots $\mathcal{R}_P \subseteq \mathcal{R}$. We define the following parahoric group as a subgroup of $P(K)$
\begin{align*}
	P_\theta(K) := \langle T(R), U_r(z^{m_r(\theta)}R), r \in \mathcal{R}_P \rangle.
\end{align*}
Denote by $\mathcal{P}_\theta$ the corresponding group scheme on $\mathbb{D}=\text{Spec}(R)$. This construction is a special case of the one in \cite{HaiRap}. Furthermore, let $ev: G_\theta (K) \rightarrow G$ be the evaluation map, and denote by $P_\theta \subset G$ the parabolic subgroup, of which the inverse image under the evaluation map is $P_\theta(K)$.
	
Now we consider the global picture. We define the group scheme $\mathcal{P}_{\boldsymbol\theta}$ on $X$ by gluing the local data
\begin{align*}
	\mathcal{P}_{\boldsymbol\theta}|_{\mathbb{D}_x} \cong P \times X_{\boldsymbol D}, \quad \mathcal{P}_{\boldsymbol\theta}|_{\mathbb{D}_x} \cong \mathcal{P}_{\theta_x}, x \in \boldsymbol D.
\end{align*}
By \cite[Lemma 3.18]{ChGP}, the group scheme $\mathcal{P}_{\boldsymbol\theta}$ is a smooth affine group scheme of finite type, flat over $X$ and we have that $\mathcal{P}_{\boldsymbol\theta} \subseteq \mathcal{G}_{\boldsymbol\theta}$.
	
Now let $\mathcal{E}$ be a parahoric $\mathcal{G}_{\boldsymbol\theta}$-torsor on $X$. Let $\varsigma \in \Gamma(X,\mathcal{E}/\mathcal{P}_{\boldsymbol\theta})$ be a section (see Remark \ref{rem conv holo and alg} for the convention), which is called a \emph{reduction of structure group}. Denote by $\mathcal{E}_{\varsigma}$ the pullback of the diagram
\begin{center}
	\begin{tikzcd}
	\mathcal{E}_{\varsigma} \arrow[r, dotted] \arrow[d, dotted] & \mathcal{E} \arrow[d]\\
	X \arrow[r, "\varsigma"]  & \mathcal{E}/\mathcal{P}_{\boldsymbol\theta}.
	\end{tikzcd}
\end{center}
Then, the pullback $\mathcal{E}_{\varsigma}$ is a parahoric $\mathcal{P}_{\boldsymbol\theta}$-torsor on $X$. Let $\kappa: \mathcal{P}_{\boldsymbol\theta} \rightarrow \mathbb{G}_m$ be a morphism of group schemes over $X$, and we call it a \emph{character} of $\mathcal{P}_{\boldsymbol\theta}$. Furthermore, there is a one-to-one correspondence between characters of $\mathcal{P}_{\boldsymbol\theta}$ and characters of $P$:
\begin{lem}\cite[Lemma 4.2]{KSZparh}\label{lem char}
It holds that 
    \begin{align*}
	{\rm Hom}(\mathcal{P}_{\boldsymbol\theta},\mathbb{G}_m) = {\rm Hom}(P,\mathbb{C}^*).
	\end{align*}
\end{lem}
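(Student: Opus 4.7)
The plan is to go in both directions, showing that a character of $\mathcal{P}_{\boldsymbol\theta}$ is uniquely determined by and can be reconstructed from a character of $P$. Recall that $\mathcal{P}_{\boldsymbol\theta}$ is obtained by gluing $P \times X_{\boldsymbol D}$ with the local parahoric group schemes $\mathcal{P}_{\theta_x}$ on the formal discs $\mathbb{D}_x$. A character $\kappa : \mathcal{P}_{\boldsymbol\theta} \to \mathbb{G}_m$ over $X$ is therefore the same data as a morphism of group schemes $\kappa_0 : P \times X_{\boldsymbol D} \to \mathbb{G}_m$ over $X_{\boldsymbol D}$ together with morphisms $\kappa_x : \mathcal{P}_{\theta_x} \to \mathbb{G}_m$ over $\mathbb{D}_x$, which agree on the overlaps $\mathbb{D}_x^{*} = \mathbb{D}_x \cap X_{\boldsymbol D}$.

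First, I would analyze $\kappa_0$. A group scheme morphism $P \times X_{\boldsymbol D} \to \mathbb{G}_m \times X_{\boldsymbol D}$ is a morphism $X_{\boldsymbol D} \to \mathrm{Hom}_{\mathrm{grp}}(P,\mathbb{G}_m)$. Since $P$ has unipotent radical $R_u(P)$ on which every character is trivial, and the reductive quotient $P/R_u(P)$ has a free abelian character group, the target $\mathrm{Hom}(P,\mathbb{C}^*)$ is a discrete lattice. Connectedness of $X_{\boldsymbol D}$ then forces the family to be constant: $\kappa_0$ comes from a fixed character $\chi \in \mathrm{Hom}(P,\mathbb{C}^*)$. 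Next, I would construct, for each puncture $x$, the extension of $\chi$ to $\mathcal{P}_{\theta_x}$ using the explicit generators of Definition \ref{defn parahoric_group}. On $T(R)$, the character $\chi|_T : T \to \mathbb{G}_m$ extends by functoriality to $T(R) \to R^{*}$. On each $U_r(z^{m_r(\theta_x)} R)$ with $r \in \mathcal{R}_P$, the character $\chi$ vanishes because $U_r$ lies in the derived subgroup/unipotent part killed by any character of $P$; hence the restriction lands in $\{1\} \subset R^{*}$. Since $\mathcal{P}_{\theta_x}$ is generated by these pieces, this defines a character $\mathcal{P}_{\theta_x} \to \mathbb{G}_m$ over $R$, and its generic fiber agrees on $\mathbb{D}_x^{*}$ with the restriction of $\kappa_0$ (the constant-loop copy of $P$ inside $P(K)$ under the Lemma \ref{lem two defn parah grp} description).

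For the converse, given a character $\kappa_x : \mathcal{P}_{\theta_x} \to \mathbb{G}_m$ over $R$, I would pass to the generic fiber $\mathcal{P}_{\theta_x} \otimes_R K \cong P_K$ to recover a character of $P_K$, which by rigidity of characters of the algebraic group $P$ descends to a unique $\chi \in \mathrm{Hom}(P,\mathbb{C}^*)$; the global compatibility with $\kappa_0$ is then forced by connectedness. The two assignments $\kappa \mapsto \chi$ and $\chi \mapsto \kappa_\chi$ are manifestly inverse, because a character of $P_{\theta_x}(K)$ is determined by its values on $T(R)$ and vanishes on the unipotent generators. I expect the main technical point to be the verification that the extension step is well-defined, i.e.\ that the relations among the generators $T(R)$ and $U_r(z^{m_r(\theta_x)} R)$ (coming from the commutation and Steinberg-type relations inside $P(K)$) are respected by the map $\chi$; this reduces to the fact that commutators of root subgroups are products of unipotent elements on which $\chi$ vanishes, so no obstruction arises.
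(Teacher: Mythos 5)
This lemma is not proved in the paper at all: it is quoted verbatim from the companion work \cite[Lemma 4.2]{KSZparh}, so there is no in-text argument to compare against. Your sketch reconstructs what is essentially the standard (and the expected) argument: decompose a character of $\mathcal{P}_{\boldsymbol\theta}$ into its restriction to $P\times X_{\boldsymbol D}$ and to the local pieces $\mathcal{P}_{\theta_x}$, use discreteness of the character lattice of $P$ (characters kill $R_u(P)$ and factor through the reductive quotient) together with connectedness of $X_{\boldsymbol D}$ to see the open part is a single $\chi\in{\rm Hom}(P,\mathbb{C}^*)$, and then extend $\chi$ over each formal disc using the generators $T(R)$ and $U_r(z^{m_r(\theta_x)}R)$ of Definition \ref{defn parahoric_group}. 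Two remarks. First, the ``main technical point'' you flag (compatibility of the generator-wise definition with the Steinberg-type relations) can be avoided entirely: rather than defining a new homomorphism on generators, apply the already existing homomorphism $\chi_K\colon P(K)\to K^*$ obtained by functoriality and simply observe that its restriction to $P_{\theta_x}(K)$ takes values in $R^*$, because it does so on the generators ($\chi|_T$ maps $T(R)$ into $R^*$ and $\chi$ is trivial on every $U_r$); since $P_{\theta_x}(K)$ is generated by these subgroups, no relation needs to be checked. Second, to be fully rigorous one wants a morphism of group schemes $\mathcal{P}_{\theta_x}\to\mathbb{G}_{m,R}$ over $R$, not merely a homomorphism on $K$- or $R$-points; this is where smoothness and flatness of the parahoric group scheme over $R$ (so that the character of the generic fiber, being bounded along the special fiber, extends as a regular function) should be invoked, and your injectivity direction via restriction to the generic fiber $P_K$ then closes the loop. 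With these small adjustments your proposal is a correct proof of the statement, in the same spirit as the cited source.
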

Denote by $\chi: P \rightarrow \mathbb{C}^*$ the corresponding character of $\kappa$. We define the following inner product
\begin{align*}
	\langle \theta, \kappa \rangle := \langle \theta, \chi \rangle,
\end{align*}
for any weight $\theta$. We return to the parahoric $\mathcal{P}_{\boldsymbol\theta}$-torsor $\mathcal{E}_{\varsigma}$. Given a character $\kappa$, we define a line bundle $\kappa_* (\mathcal{E}_{\varsigma})$ on $X$, and denote it by $L(\varsigma,\kappa)$. As a special case, let $P=G$. If $\varsigma: X \rightarrow \mathcal{E} / \mathcal{G}_{\boldsymbol\theta}$ is trivial, then the pushforward $L(\kappa):=\kappa_* \mathcal{E}$ directly gives a line bundle on $X$. In \cite{KSZparh}, the authors introduced a notion of algebraic degree of parahoric torsors as follows:
\begin{defn}\label{defn alg deg}
With respect to the above data, we define the \emph{parahoric degree} of a parahoric $\mathcal{G}_{\boldsymbol\theta}$-torsor $\mathcal{E}$ with respect to a given reduction $\varsigma$ and a character $\kappa$ as follows
\begin{align*}
	parh\deg \mathcal{E}(\varsigma,\kappa)=\deg L(\varsigma,\kappa)+ \langle  \boldsymbol\theta, \kappa \rangle,
\end{align*}
where $\langle  \boldsymbol\theta, \kappa \rangle:=\sum_{x\in \boldsymbol{D}} \langle  \theta_x, \kappa \rangle$. If $\varsigma: X \rightarrow \mathcal{E} / \mathcal{G}_{\boldsymbol\theta}$ is trivial, we define
\begin{align*}
	parh\deg \mathcal{E}(\kappa)=\deg L(\kappa)+ \langle  \boldsymbol\theta, \kappa \rangle.
\end{align*}
Sometimes, we say the degree of a parahoric torsor for simplicity.
\end{defn}
	
\subsection{Analytic Degree}\label{subsect ana deg on bundle}
Let $(E,\partial''_E,h)$ be a $\boldsymbol\theta$-adapted $G$-bundle on $X_{\boldsymbol D}$. Recall that the metric $h$ is considered as a section of $E/H$, where $H$ is the maximal compact subgroup of $G$.	Let $P \subseteq G$ be a parabolic subgroup of $G$, and denote by $\sigma: X_{\boldsymbol D} \rightarrow E/P$ a reduction of structure group. Similarly, a reduction of structure group can be considered as a section of $E/P$. Denote by $E_{\sigma}$ the pullback in the following diagram
\begin{center}
\begin{tikzcd}
	E_{\sigma} \arrow[r, dotted] \arrow[d, dotted] & E \arrow[d]\\
	X_{\boldsymbol D} \arrow[r, "\sigma"]  & E/P,
\end{tikzcd}
\end{center}
which is a $P$-bundle on $X_{\boldsymbol D}$. For convenience, we use the same notation $\sigma: E_\sigma \rightarrow E$ for the induced morphism. Then, taking a character $\chi: P \rightarrow \mathbb{C}^{\ast}$, we get a line bundle $\chi_* E_{\sigma}$ on $X_{\boldsymbol D}$ and denote it by $L(\sigma,\chi)$. Similar to the parahoric case, if $\sigma: X_{\boldsymbol D} \rightarrow E/G$ is trivial, then the pushforward $L(\chi):=\chi_* E$ directly gives a line bundle on $X_{\boldsymbol D}$.
	
Let $\partial''_E$ be a holomorphic structure on $E$. Given a reduction of structure group $\sigma$, we have an induced operator $\partial''_{E_\sigma}:\mathscr{A}^{0}(E_\sigma) \rightarrow \mathscr{A}^{1}(E_\sigma)$ given by
\begin{align*}
	\partial''_{E_\sigma}:=\sigma^* \partial''_E.
\end{align*}
If the reduction of structure group $\sigma$ is a (not necessarily holomorphic) section of $E/P$, the induced operator $\partial''_{E_\sigma}$ may not give a holomorphic structure of the $P$-bundle $E_{\sigma}$. More precisely, it is not of type $(0,1)$. Therefore, we would like to take a \emph{holomorphic reduction of structure group}, which is a holomorphic section of $E/P$. Then, the $P$-bundle $E_{\sigma}$ has a natural holomorphic structure $\partial''_{E_\sigma}$ induced from $\partial''_E$.
	
Next, we want to find a metric $h_{\sigma}: X_{\boldsymbol D} \rightarrow E_{\sigma} / (H \cap P)$ induced by $h$ on $E_{\sigma} $. Let $\langle H, P \rangle$ be the group generated by $H$ and $P$. Then, we have a diagram
\begin{center}
\begin{tikzcd}
	X_{\boldsymbol D} \arrow[rd, dotted] \arrow[rrd, bend left, "h"] \arrow[rdd, bend right, "{\rm id}"] & & \\
	& E/H \times_{E/\langle H,P \rangle} X_{\boldsymbol D} \arrow[r] \arrow[d] & E/H \arrow[d]\\
	& X_{\boldsymbol D} \arrow[r, "\sigma"]  & E/\langle H,P \rangle,
\end{tikzcd}
\end{center}
where the map at the bottom is actually the composition of the maps
\begin{align*}
	X_{\boldsymbol D} \xrightarrow{\sigma} E / P \rightarrow E/\langle H,P \rangle
\end{align*}
and we use the same notation $\sigma$ to emphasize that it is induced by the reduction of structure group $\sigma$. Since $H$ is the maximal compact subgroup of $G$ and $P$ is a parabolic subgroup of $G$, the group $\langle H, P \rangle$ is exactly $G$. Thus, the above diagram commutes and there exists a unique morphism
\begin{align*}
	X_{\boldsymbol D} \rightarrow E/H \times_{E/\langle H,P \rangle} X_{\boldsymbol D}.
\end{align*}
Furthermore, we have
\begin{align*}
h_\sigma: X_{\boldsymbol D} \rightarrow E/H \times_{E/\langle H,P \rangle} X_{\boldsymbol D} \cong E \times_{E/P} X_{\boldsymbol D} / (H \cap P)= E_{\sigma} / (H \cap P).
\end{align*}
This morphism gives a well-defined metric on the $P$-bundle $E_{\sigma}$, and denote by $h_\sigma$ the induced metric. We define the curvature $F_{h_\sigma}:=(D_{h_\sigma})^2$, where $D_{h_\sigma}=\partial'_{h_\sigma} + \partial''_{E_\sigma}$.
	
Composing with a character $\chi: P \rightarrow \mathbb{C}^*$, we induce a metric $\chi_* h_\sigma$ on the line bundle $\chi_* E_{\sigma}$. The same argument also holds for the operator $\partial'_{h_\sigma}$ and the curvature $F_{h_\sigma}$. Furthermore, $\chi_* \partial'_{h_\sigma}$ and $\chi_* F_{h_\sigma}$ are the induced operator and curvature on $\chi_* E_{\sigma}$ respectively.
	
\begin{defn}\label{defn ana deg}
Let $(E,\partial''_E,h)$ be a $\boldsymbol\theta$-adapted $G$-bundle on $X_{\boldsymbol D}$. For $P \subseteq G$ a parabolic subgroup of $G$, let $\sigma: X_{\boldsymbol D} \rightarrow E/P$ be a holomorphic reduction of structure group and we take a character $\chi: P \rightarrow \mathbb{C}^{\ast}$. With respect to the above data, we define the \emph{analytic degree} of $(E,\partial''_E,h)$ as
\begin{equation*}\label{analytic degree}
	\deg^{\rm an} E(h, \sigma, \chi):= \frac{\sqrt{-1}}{2\pi} \int_{X_{\boldsymbol{D}}} \chi_* F_{h_\sigma}.
\end{equation*}
If $\sigma: X \rightarrow E/G$ is trivial, we define
\begin{align*}
\deg^{\rm an} E(h, \chi):=\frac{\sqrt{-1}}{2\pi} \int_{X_{\boldsymbol{D}}} \chi_* F_{h}.
\end{align*}
Sometimes, we say the degree of a metrized $G$-bundle for simplicity.
\end{defn}

\begin{lem}[Chern--Weil Formula]\label{lem Chern-Weil}
Let $(E,\partial''_E,h)$ be a $\boldsymbol\theta$-adapted $G$-bundle on $X_{\boldsymbol D}$. For  $P \subseteq G$ a parabolic subgroup of $G$, let $\sigma: X_{\boldsymbol D} \rightarrow E/P$ be a holomorphic reduction of structure group and take $\chi: P \rightarrow \mathbb{C}^{\ast}$ a character. Then, the following identity holds
\begin{align*}
	\int_{X_{\boldsymbol{D}}} \chi_* F_{h_\sigma} = \int_{X_{\boldsymbol{D}}} \chi_* (\sigma^* F_h).
\end{align*}
\end{lem}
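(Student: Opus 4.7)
The plan is to compare the Chern connections $D_h$ on $E$ and $D_{h_\sigma}$ on $E_\sigma$, to show that after applying $\chi_*$ their curvatures differ by a globally defined exact 1-form, and then to invoke Stokes' theorem with the boundary contributions controlled by the $\boldsymbol\theta$-adapted structure of the metric.

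First I would decompose the Chern connection relative to the reduction. Because $\sigma$ is holomorphic, $\partial''_E$ preserves $E_\sigma$ and agrees with $\partial''_{E_\sigma}$. The $(1,0)$-part $\partial'_h$ in general does not preserve $E_\sigma$; using the $h$-orthogonal splitting $\mathfrak{g} = \mathfrak{p} \oplus \mathfrak{p}^\perp$, one writes $D_h = D_{h_\sigma} + \psi$ along $E_\sigma$, where $\psi$ encodes the second fundamental form $\beta \in \mathscr{A}^{1,0}(E_\sigma(\mathfrak{p}^\perp))$ together with its $h$-adjoint. A Gauss--Codazzi-type calculation of $(D_h)^2$ then yields
\begin{align*}
\sigma^* F_h = F_{h_\sigma} + D_{h_\sigma}\psi + \tfrac{1}{2}[\psi, \psi].
\end{align*}

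Next I would apply $\chi_*$. Since $\chi$ factors through the abelianization of the Levi quotient of $P$, the induced map $d\chi$ on $\mathfrak{p}$ vanishes on $[\mathfrak{p}, \mathfrak{p}]$. Viewed downstairs, $\chi_* D_h$ and $\chi_* D_{h_\sigma}$ are two Chern connections on the same complex line bundle $\chi_* E_\sigma = L(\sigma, \chi)$ over $X_{\boldsymbol D}$; their difference is therefore a smooth globally defined $1$-form $\eta$ on $X_{\boldsymbol D}$, obtained by applying $d\chi$ to the $\mathfrak{p}$-component of $\psi$ (the commutator contributions from $[\psi,\psi]$ being annihilated by $d\chi$), so that
\begin{align*}
\chi_*(\sigma^* F_h) - \chi_* F_{h_\sigma} = d\eta.
\end{align*}

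Finally I would exhaust $X_{\boldsymbol D}$ by the compact manifolds with boundary $X_{\boldsymbol D} \setminus \bigsqcup_{x \in \boldsymbol D} \mathbb{D}_x(\epsilon)$ and apply Stokes' theorem, reducing the statement to
\begin{align*}
\sum_{x \in \boldsymbol D} \int_{\partial \mathbb{D}_x(\epsilon)} \eta \longrightarrow 0 \quad \text{as } \epsilon \to 0.
\end{align*}
The hard part will be this boundary analysis. In the local model $h = h_0 |z|^{2\theta_x} e^c$ with ${\rm Ad}(|z|^{2\theta_x}) c = o(\ln|z|)$, the $r$-root-space component of the second fundamental form $\beta$ carries a factor $|z|^{r(\theta_x)}$ coming from the adjoint action by $|z|^{2\theta_x}$, while the perturbation coming from $c$ is subleading. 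Combining these growth estimates with the $L^1$ integrability of $F_h$ asserted in Definition~\ref{defn adapt metric}, one verifies that each circle integral $\int_{|z|=\epsilon} \eta$ decays to zero with $\epsilon$. The main technical obstacle is carrying out this root-space-by-root-space estimate uniformly across all root directions, which is where the precise structure of the parahoric weight $\theta_x$ becomes essential; the cancellation between $|z|^{r(\theta_x)}$ and the $|z|^{-r(\theta_x)}$ factor from the decomposition of the inverse of $z^{\theta_x}$ is what ultimately forces the limit to vanish.
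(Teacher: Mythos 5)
Your route is genuinely different from the paper's, and it contains a gap that cannot be repaired by the boundary analysis you defer to the end. The paper's proof of Lemma \ref{lem Chern-Weil} is pointwise and essentially two lines: by the very construction of the induced data, $\partial h_\sigma\cdot h_\sigma^{-1}=\sigma^*(\partial h\cdot h^{-1})$, and since $\sigma$ is holomorphic, $\partial''_E(\sigma)=0$, so $F_{h_\sigma}=\bigl(\sigma^*(\partial'_h+\partial''_E)\bigr)^2=\sigma^*F_h$ identically as forms. There is no second–fundamental–form correction in this setting because the operator $\partial'_{h_\sigma}$ of the paper is \emph{not} the $h$-orthogonally projected, $\mathfrak{p}$-valued connection you use in your Gauss--Codazzi decomposition: read in a holomorphic frame adapted to the reduction, $(h_\sigma,\partial''_{E_\sigma})$ is given by exactly the same local data as $(h,\partial''_E)$, so the two ``Chern'' operators and hence their curvatures coincide. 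In particular no exhaustion of $X_{\boldsymbol D}$, no Stokes theorem, and none of the adaptedness or $L^1$ hypotheses of Definition \ref{defn adapt metric} are needed for this lemma; the circle-integral estimates you sketch are the content of the degree comparison in Proposition \ref{prop analytic=algebraic degree}, not of this statement.

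The concrete flaw in your argument is the claim that $\chi_*$ turns the correction terms into an exact form. Writing $\sigma^*F_h=F_{h_\sigma}+D_{h_\sigma}\psi+\tfrac12[\psi,\psi]$ with $\psi$ built from the second fundamental form $\beta$ and its $h$-adjoint $\beta^*$, the cross terms of type $[\beta,\beta^*]$ land in the Levi factor of $\mathfrak{p}$ (for instance $[\mathfrak{u}^-,\mathfrak{u}^+]$ meets the Cartan), on which $d\chi$ is nontrivial; $d\chi$ only annihilates $[\mathfrak{p},\mathfrak{p}]$, i.e. the nilradical and the semisimple part of the Levi. This surviving term is precisely the sign-definite $|\beta|^2$ density appearing in the classical Chern--Weil formula for a holomorphic subbundle in the ${\rm GL}_n$-case; it is not $d\eta$ for any global $1$-form, and its integral is nonzero whenever $\beta\neq 0$. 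So with your (projected-connection) interpretation of $F_{h_\sigma}$ the two integrals genuinely differ, and no decay estimate at the punctures can close the argument. The resolution is definitional rather than analytic: with the paper's $h_\sigma$ and $\partial'_{h_\sigma}$ the integrands already agree pointwise, which is what the lemma asserts.
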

	
\begin{proof}
It is easy to check that
\begin{align*}
	\partial h_\sigma \cdot h_\sigma^{-1} = \sigma^* (\partial h \cdot h^{-1}).
\end{align*}
Thus, we have
\begin{align*}
F_{h_\sigma} = (\partial_{h_\sigma} + \partial''_{E_\sigma})^2 = \left( \sigma^* (\partial_h + \partial''_E) \right)^2 = \left( (\partial_h + \partial''_E) \circ \sigma \right)^2.
\end{align*}
Since $\sigma$ is a holomorphic section, we have $\partial''_{E}(\sigma)=0$. Therefore,
\begin{align*}
	F_{h_\sigma} = \sigma^* F_h.
\end{align*}
This finishes the proof of the lemma.
\end{proof}
	
\begin{rem}
It is important for our considerations that we take a holomorphic reduction of structure group $\sigma: X_{\boldsymbol D} \rightarrow E/P$. When $\sigma$ is not holomorphic, the induced operator $\partial''_{E_\sigma}:=\sigma^* \partial''_E$ is not necessarily of type $(0,1)$ and an additional term is expected in the Chern--Weil formula of Lemma \ref{lem Chern-Weil}. In  \cite[Lemma 2.13]{BGM}, the authors obtained such a Chern--Weil formula under certain conditions.
\end{rem}
	
\begin{exmp}\label{exmp char}
We consider a special hermitian metric $h=|z|^{2 \theta}$ of the trivial $G$-bundle on the punctured disc $\mathbb{D}^{*}$, where $\theta \in \mathfrak{t}_{\mathbb{Q}}$ is a weight. Then, we have
\begin{align*}
	\partial h \cdot h^{-1} = \theta \frac{dz}{z}.
\end{align*}
Let $\sigma: \mathbb{D}^{*} \rightarrow G/P \times \mathbb{D}^{*}$ be the trivial reduction of structure group, which is obviously holomorphic. Since $\theta$ is included in the Lie algebra $\mathfrak{p}$ of $P$, we have the same formula for the induced metric $h_\sigma$
\begin{align*}
	\sigma^* (\partial h \cdot h^{-1}) = \partial h_\sigma \cdot h_\sigma^{-1} = \theta \frac{dz}{z}.
\end{align*}
Let $\chi$ be a character of $P$. Then,
\begin{align*}
	\chi_* (\theta \frac{dz}{z})=\langle \theta, \chi \rangle \frac{dz}{z}.
\end{align*}
\end{exmp}
	
\subsection{Reduction of structure group}\label{subsect reduct}
In this subsection, we study the relation of the reductions of structure group between parahoric torsors and metrized principal bundles. Given an arbitrary holomorphic reduction of structure group $\sigma:X_{\boldsymbol D} \rightarrow E/P$, its extension to $X$ is not unique, and furthermore, it may not be extended to a reduction $X \rightarrow \mathcal{E}/\mathcal{P}_{\boldsymbol\theta}$. Therefore, we want to consider a special type of reduction, the \emph{$\boldsymbol\theta$-adapted holomorphic reduction of structure group}. Here is the definition:

\begin{defn}
	If a holomorphic reduction of structure group $\sigma: X_{\boldsymbol D} \rightarrow E/P$ satisfies the condition that for each puncture $x \in \boldsymbol D$, we have
	\begin{align*}
	z^{\theta_x} \cdot \sigma(z) \cdot z^{-\theta_x} \text{ is bounded as $z$ approaches zero,}
	\end{align*}
	where $z$ is the local coordinate of a punctured disc $\mathbb{D}^{*}$ around $x$, then we say that $\sigma$ is a \emph{$\boldsymbol\theta$-adapted holomorphic reduction of structure group}. Given a $\boldsymbol\theta$-adapted $G$-bundle $(E,\partial''_E,h)$, denote by $\Gamma_{\boldsymbol\theta}(X_{\boldsymbol D},E/P)$ the set of $\boldsymbol\theta$-adapted holomorphic reductions of structure group (to $P$).
\end{defn}

\begin{lem}\label{lem red grp struc}
Let $(E,\partial''_E,h)$ be a $\boldsymbol\theta$-adapted $G$-bundle on $X_{\boldsymbol D}$, and denote by $\mathcal{E}$ the corresponding parahoric $\mathcal{G}_{\boldsymbol\theta}$-torsor on $X$. For a fixed parabolic group $P$, there is a one-to-one correspondence
\begin{align*}
	\Gamma_{\boldsymbol\theta}(X_{\boldsymbol D},E/P) = \Gamma(X, \mathcal{E} / \mathcal{P}_{\boldsymbol\theta}).
\end{align*}
\end{lem}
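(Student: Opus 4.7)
The plan is to reduce the claim to a purely local analysis at each puncture. Away from $\boldsymbol D$, by construction $\mathcal{G}_{\boldsymbol\theta}|_{X_{\boldsymbol D}} \cong G \times X_{\boldsymbol D}$ and $\mathcal{P}_{\boldsymbol\theta}|_{X_{\boldsymbol D}} \cong P \times X_{\boldsymbol D}$, while $\mathcal{E}|_{X_{\boldsymbol D}} \cong E$ from the definition of the functor $\Xi$, so a section of $\mathcal{E}/\mathcal{P}_{\boldsymbol\theta}$ over $X$ restricts tautologically to a holomorphic section of $E/P$ over $X_{\boldsymbol D}$. First I would make this restriction map explicit, isolating the whole content of the lemma at the punctures: namely that extendability of such a restricted section across each $x \in \boldsymbol D$ is equivalent to the $\boldsymbol\theta$-adaptedness condition.

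Next, at a puncture $x$ with local coordinate $z$, I would work in the trivialization of the $\mathcal{G}_{\theta_x}$-torsor $E_{\theta_x}$ over $\mathbb{D}_x$ used in the construction of $\mathcal{E}$. In this trivialization a holomorphic reduction $\sigma$ restricted to $\mathbb{D}_x^*$ is represented by an element $\sigma_x(z) \in G(K)/P(K)$, and $\boldsymbol\theta$-adaptedness at $x$ says exactly that $z^{\theta_x}\sigma_x(z)z^{-\theta_x}$ has a limit as $z \to 0$. Combining Lemma \ref{lem two defn parah grp} with its direct parabolic analogue (so that $P_{\theta_x}(K) = P'_{\theta_x}(K)$), I would identify this boundedness condition with the statement that $\sigma_x(z)$ admits a representative in $G_{\theta_x}(K)$ modulo $P_{\theta_x}(K)$, which is precisely the condition that $\sigma|_{\mathbb{D}_x^*}$ extends to a section of $(\mathcal{E}/\mathcal{P}_{\boldsymbol\theta})|_{\mathbb{D}_x}$. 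The global bijection then follows by gluing the locally extended sections on each $\mathbb{D}_x$ with $\sigma$ on $X_{\boldsymbol D}$ via the transition functions $\Theta_x$ built into $\Xi$, and the inverse assignment is given by restriction, whose image is automatically $\boldsymbol\theta$-adapted because the local extension exists by assumption.

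The hard part will be verifying that the boundedness condition $z^{\theta_x}\sigma_x(z)z^{-\theta_x} = O(1)$ is well-defined on cosets in $G(K)/P(K)$, not merely on representatives. Here the parabolic analogue of Lemma \ref{lem two defn parah grp} is essential: since $P$ is generated by $T$ and the root subgroups $U_r$ for $r \in \mathcal{R}_P$, its proof adapts without change and yields $P_{\theta_x}(K) = P'_{\theta_x}(K)$. This ensures that right multiplication by an element of $P_{\theta_x}(K)$ preserves boundedness under conjugation by $z^{\theta_x}$, so the $\boldsymbol\theta$-adaptedness condition descends consistently to cosets and the claimed bijection is well-defined.
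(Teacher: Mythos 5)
Your proposal is correct and takes essentially the same route as the paper: the paper's own proof just observes that the restriction direction is clear and that, working locally around each puncture, the argument is word-for-word the one proving Lemma \ref{lem two defn parah grp}. Your explicit parabolic analogue $P_{\theta_x}(K)=P'_{\theta_x}(K)$ (which in particular identifies $P(K)\cap G_{\theta_x}(K)$ with $P_{\theta_x}(K)$, giving well-definedness of the extension) and the gluing via the transition functions of $\Xi$ are exactly the details the paper leaves implicit in that one-line reduction.
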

	
\begin{proof}
One direction is clear, and we only have to show that given a $\boldsymbol\theta$-adapted holomorphic reduction of structure group $\sigma$, we can get an element $\varsigma: X \rightarrow \mathcal{E} / \mathcal{P}_{\boldsymbol\theta}$. It is enough to work around a puncture $x \in \boldsymbol{D}$, and then the proof of this lemma is exactly the same as the one for Lemma \ref{lem two defn parah grp}.
\end{proof}
	
\subsection{Equivalence of Analytic Degree and Algebraic (Parahoric) Degree}\label{subsect equiv ana and alg}
Let $(E,\partial''_E,h)$ be a $\boldsymbol\theta$-adapted $G$-bundle on $X_{\boldsymbol D}$. Taking a parabolic subgroup $P \subseteq G$, let $\chi$ be a character of $P$ and let $\sigma$ be a $\boldsymbol\theta$-adapted holomorphic reduction of structure group. Denote by $\mathcal{E}$ the corresponding parahoric $\mathcal{G}_{\boldsymbol\theta}$-torsor of $E$ given by the functor $\Xi$ in \S\ref{subsect functor Xi}, and let $\kappa$ and $\varsigma$ be the corresponding elements to $\chi$ and $\sigma$ respectively by Lemmas \ref{lem char} and \ref{lem red grp struc}.

\begin{prop}\label{prop analytic=algebraic degree}
The following identity holds
\begin{align*}
	\deg^{\rm an} E(h, \sigma, \chi)=parh\deg \mathcal{E}(\varsigma,\kappa).
\end{align*}
\end{prop}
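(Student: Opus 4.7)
The plan is to reduce to the line bundle case via the character $\chi$ (equivalently $\kappa$), and then extract the $\boldsymbol\theta$-contribution from a Stokes/Poincar\'e--Lelong comparison with a smooth metric on the parahoric extension. First, by Lemma \ref{lem Chern-Weil}, the analytic degree equals $\frac{\sqrt{-1}}{2\pi}\int_{X_{\boldsymbol D}} \chi_*F_{h_\sigma}$, and the integrand is precisely the curvature of the pushforward metric $\chi_* h_\sigma$ on the holomorphic line bundle $L := L(\sigma,\chi) = \chi_* E_\sigma$ on $X_{\boldsymbol D}$. By the abelian case of the functor $\Xi$ discussed in Example \ref{exmp line bundle}, together with Lemmas \ref{lem char} and \ref{lem red grp struc} identifying $\chi\leftrightarrow\kappa$ and $\sigma\leftrightarrow\varsigma$, the parahoric extension of $L$ to $X$ is exactly $\mathcal{L} := L(\varsigma,\kappa)$; near each puncture $x$ the pushforward metric has the form
\begin{align*}
\chi_*h_\sigma \;=\; (\mathrm{const})\cdot |z|^{2\langle\theta_x,\chi\rangle}\cdot e^{\chi_*(c)},
\end{align*}
where $\chi_*(c) = o(\ln|z|)$ comes from Definition \ref{defn adapt metric} (the adjoint action drops out after composition with a character, so no further conjugation is needed to read off the growth rate).

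Next, I fix any smooth hermitian metric $h_\mathcal{L}$ on $\mathcal{L}$ defined on all of $X$, and consider the globally defined positive ratio $\rho := \chi_*h_\sigma/h_\mathcal{L}$ on $X_{\boldsymbol D}$. On the punctured curve one has the pointwise identity $F_{\chi_*h_\sigma} = F_{h_\mathcal{L}} + \bar\partial\partial\log\rho$, with $\log\rho = \langle\theta_x,\chi\rangle \log|z|^2 + \tilde c_x$ and $\tilde c_x = o(\ln|z|)$ near each puncture. The term $F_{h_\mathcal{L}}$ extends smoothly across the punctures, so its integral over $X_{\boldsymbol D}$ equals its integral over $X$, namely $-2\pi\sqrt{-1}\deg\mathcal{L}$. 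For the remaining term, I would apply Stokes' theorem on $X\setminus\bigcup_x B_\varepsilon(x)$ and let $\varepsilon\to 0$: writing $\bar\partial\partial\log\rho = -d(\bar\partial\log\rho)$, each boundary circle contributes $-2\pi\sqrt{-1}\langle\theta_x,\chi\rangle$ from the principal logarithmic term, while the residual flux from $\tilde c_x$ should vanish in the limit. Combining and multiplying by $\sqrt{-1}/(2\pi)$, and using $\langle\theta_x,\chi\rangle = \langle\theta_x,\kappa\rangle$ via Lemma \ref{lem char}, yields the claimed equality $\deg^{\rm an} E(h,\sigma,\chi) = \deg L(\varsigma,\kappa) + \langle\boldsymbol\theta,\kappa\rangle = parh\deg \mathcal{E}(\varsigma,\kappa)$.

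The main technical obstacle I anticipate is the vanishing of $\lim_{\varepsilon\to 0}\int_{|z|=\varepsilon}\bar\partial\tilde c_x$. The pointwise bound $\tilde c_x = o(\ln|z|)$ does not by itself control $\bar\partial\tilde c_x$, so this step must invoke the $L^1$ integrability of the curvature built into Definition \ref{defn adapt metric}: integrability of $F_{\chi_*h_\sigma}$ on shrinking annuli around each puncture forces the boundary fluxes of $\bar\partial\log\rho$ to converge to their principal (logarithmic) values. This is essentially the abelian Chern--Weil formula for $\boldsymbol\theta$-adapted line bundles, and once that analytic lemma is in place the full proposition follows from the character reduction described in the first two paragraphs.
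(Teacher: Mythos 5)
Your proposal is correct and follows the same strategy as the paper's proof: push forward by the character to reduce to the abelian case, identify the resulting line bundle on $X_{\boldsymbol D}$ with the restriction of $L(\varsigma,\kappa)$ via Example \ref{exmp line bundle} and Lemmas \ref{lem char}, \ref{lem red grp struc}, and extract the contribution $\langle\theta_x,\chi\rangle$ at each puncture from the leading $|z|^{2\langle\theta_x,\chi\rangle}$ behaviour of the metric by a Stokes computation on small circles. The difference is only in the bookkeeping: the paper splits $\frac{\sqrt{-1}}{2\pi}\int_{X}\chi_*F_{h_\sigma}$ into a bulk term over $X_\varepsilon$ plus small-disc terms and evaluates the latter with the model metric $|z|^{2\theta_x}$ as in Example \ref{exmp char}, declaring the total to be $\deg L(\varsigma,\kappa)$, whereas you compare $\chi_*h_\sigma$ with a smooth metric on the extension $\mathcal{L}$ and apply Stokes to $\bar\partial\partial\log\rho$. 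Your formulation has the merit of isolating explicitly the abelian Chern--Weil lemma that the paper uses implicitly (the curvature integral of the adapted metric computes $\deg L(\varsigma,\kappa)$ plus the weight term), and of being candid about the one analytic point both arguments must address: the bound $\tilde c_x=o(\ln|z|)$ alone does not control the flux of $\bar\partial\tilde c_x$, and one needs the $L^1$ curvature condition in Definition \ref{defn adapt metric} (passing to a good sequence of radii, as in Simpson's treatment of acceptable metrics) to make the boundary terms converge to their logarithmic principal values; the paper's step ``we can assume $h=|z|^{2\theta_x}$'' is exactly this point left implicit. Two cosmetic remarks: the first displayed equality is the definition of $\deg^{\rm an}$ rather than a consequence of Lemma \ref{lem Chern-Weil} (the latter is what lets one replace $F_{h_\sigma}$ by $\sigma^*F_h$), and your sign conventions do check out, since the per-puncture boundary contribution $-2\pi\sqrt{-1}\,\langle\theta_x,\chi\rangle$ multiplied by $\sqrt{-1}/2\pi$ yields $+\langle\theta_x,\kappa\rangle$, in agreement with Definition \ref{defn alg deg}.
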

	
\begin{proof}
First, note that since the metric $h$ is $\boldsymbol\theta$-adapted, the analytic degree as defined above is absolutely convergent. Then, we have the following equation
\begin{align*}
\frac{\sqrt{-1}}{2\pi} \int_{X} \chi_* F_{h_\sigma}
=\frac{\sqrt{-1}}{2\pi} \int_{ X_\varepsilon }\chi_* F_{h_\sigma} + \frac{\sqrt{-1}}{2\pi} \sum_{x \in {\boldsymbol D}}\int_{ \mathbb{D}_{x,\varepsilon} } \chi_* F_{h_\sigma},
\end{align*}
where $\mathbb{D}_{x,\varepsilon}$ is the disc around $x$ with radius $\varepsilon$ and $X_\varepsilon$ is the complement of $\bigcup\limits_{x \in \boldsymbol{D}} \mathbb{D}_{x,\varepsilon}$ in $X_{\boldsymbol D}$. By the definition of line bundles $L(\varsigma,\kappa)$ and $L(\sigma,\chi)$ and Example \ref{exmp line bundle}, the left hand side of the equation is
\begin{align*}
\frac{\sqrt{-1}}{2\pi} \int_{X} \chi_* F_{h_\sigma} = \deg L(\sigma,\chi) = \deg L(\varsigma,\kappa).
\end{align*}
Now we consider the right hand side of the equation. Letting $\varepsilon$ go to zero, the first integral
\begin{align*}
\lim_{\varepsilon \rightarrow 0}\frac{\sqrt{-1}}{2\pi} \int_{X_{\varepsilon}}\chi_* F_{h_\sigma} = \frac{\sqrt{-1}}{2\pi} \int_{X_{\boldsymbol{D}}}\chi_* \sigma^* F_{h}  = \deg^{\rm an}E (h, \sigma, \chi)
\end{align*}
is exactly the analytic degree. Therefore, we only have to show that
\begin{align*}
\lim_{\varepsilon \rightarrow 0} \frac{\sqrt{-1}}{2\pi} \sum_{x \in \boldsymbol D} \int_{ \mathbb{D}_{x,\varepsilon} } \chi_*(\sigma^* F_h)= - \langle  \boldsymbol\theta, \kappa \rangle.
\end{align*}
To prove this equality, it is enough to work locally around a puncture $x \in \boldsymbol{D}$. Since $h$ is a $\theta_x$-adapted metric, when $\varepsilon$ goes to zero, we can assume $h = |z|^{2 \theta_x}$. Then,
\begin{align*}
\lim_{\varepsilon \rightarrow 0} \frac{\sqrt{-1}}{2\pi} \int_{ \mathbb{D}_{x,\varepsilon} } \chi_*(\sigma^* F_h) &= \lim_{\varepsilon \rightarrow 0} \frac{\sqrt{-1}}{2\pi} \int_{ \partial \mathbb{D}_{x,\varepsilon} } \chi_* \sigma^* (h^{-1}\partial h)\\
&=\lim_{\varepsilon \rightarrow 0}\frac{\sqrt{-1}}{2\pi}\int_{ \partial \mathbb{D}_{x,\varepsilon} } \chi_* \sigma^* (\theta_x \frac{dz}{z})\\
&=\lim_{\varepsilon \rightarrow 0}\frac{\sqrt{-1}}{2\pi}\int_{ \partial \mathbb{D}_{x,\varepsilon} }  \chi_* \theta_x \frac{dz}{z}\\
&=\lim_{\varepsilon \rightarrow 0}\frac{\sqrt{-1}}{2\pi}\int_{ \partial \mathbb{D}_{x,\varepsilon} } \langle  \theta_x, \chi \rangle \frac{dz}{z}\\
&= - \langle  \theta_x, \chi \rangle = - \langle \theta_x, \kappa \rangle.
\end{align*}
This finishes the proof of this proposition.
\end{proof}

\subsection{Stability Conditions}
Ramanathan introduced a stability condition for $G$-bundles and constructed their moduli spaces \cite{Rama1975,Rama19961,Rama19962}. Since then, this stability condition has been widely used for studying objects related to $G$-Higgs bundles (see for instance \cite{BruOt,Schmitt}). Recently, the authors in \cite{BGM} introduced an analogous stability condition for parabolic $G$-bundles and studied the Kobayashi--Hitchin correspondence in their case. In this subsection, we review the (algebraic) stability condition for parahoric $\mathcal{G}_{\boldsymbol\theta}$-torsors (Definition \ref{defn alg stab cond}) introduced in \cite{KSZparh}, which we called $R$-stability, and define (analytic) $R_h$-stability for $G$-bundles (Definition \ref{defn ana stab cond}), where $h$ is a given metric. We prove that the two are equivalent and this is the main result in this subsection (Proposition \ref{prop stab of parah tor}).

Before we introduce the stability condition, we refer the reader to \cite{BGM} for the definition of \emph{anti-dominant characters}, and to \cite{Rama1975,Rama19961} for \emph{dominant characters}. Furthermore, a character of a group scheme  $\mathcal{P}_{\boldsymbol\theta}$ is \emph{anti-dominant} if the corresponding character of $P$ (by Lemma \ref{lem char}) is anti-dominant.
	
\begin{defn}\label{defn alg stab cond}
A parahoric $\mathcal{G}_{\boldsymbol\theta}$-torsor $\mathcal{E}$ on a curve $X$ is called \textit{$R$-stable} (resp. \textit{$R$-semistable}) if for
\begin{itemize}
\item any proper parabolic subgroup $P$ of $G$,
\item any reduction of structure group $\varsigma:X\to \mathcal{E} /\mathcal{P}_{\boldsymbol\theta}$,
\item any nontrivial anti-dominant character $\kappa: \mathcal{P}_{\boldsymbol\theta} \rightarrow \mathbb{G}_m$, which is trivial on the center of $\mathcal{P}_{\boldsymbol\theta}$,
\end{itemize}
one has
\begin{align*}
	parh\deg \mathcal{E}(\varsigma,\kappa) > 0\quad (\mbox{resp. }\ge 0).
\end{align*}
\end{defn}
	
\begin{defn}\label{defn ana stab cond}
A $\boldsymbol\theta$-adapted $G$-bundle $(E,\partial''_E,h)$ on $X_{\boldsymbol D}$ is \textit{$R_h$-stable} (resp. \textit{$R_h$-semistable}) if for
\begin{itemize}
\item any proper parabolic subgroup $P$ of $G$,
\item any $\boldsymbol\theta$-adapted holomorphic reduction of structure group $\sigma:X_{\boldsymbol D} \to E/P$,
\item any nontrivial anti-dominant character $\chi: P \rightarrow \mathbb{C}^*$, which is trivial on the center of $P$,
\end{itemize}
one has
\begin{align*}
	\deg^{\rm an} E(h, \sigma, \chi) > 0\quad (\mbox{resp. }\ge 0).
\end{align*}
\end{defn}
	
\begin{prop}\label{prop stab of parah tor}
Let $(E,\partial''_E,h)$ be a $\boldsymbol\theta$-adapted $G$-bundle on $X_{\boldsymbol D}$, and let $\mathcal{E}$ be the corresponding parahoric $\mathcal{G}_{\boldsymbol\theta}$-torsor on $X$. Then, $E$ is $R_h$-stable (resp. $R_h$-semistable) if and only if $\mathcal{E}$ is $R$-stable (resp. $R$-semistable).
\end{prop}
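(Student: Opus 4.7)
The plan is to match, in a bijective fashion, the data that parametrizes Definition \ref{defn alg stab cond} with the data that parametrizes Definition \ref{defn ana stab cond}, and then invoke the numerical identity of Proposition \ref{prop analytic=algebraic degree}. The parabolic subgroup $P \subseteq G$ enters both stability definitions in the same way, so nothing needs to be said about that quantifier. For each such fixed $P$, the bijection of Lemma \ref{lem char} matches characters $\chi : P \to \mathbb{C}^{\ast}$ with characters $\kappa : \mathcal{P}_{\boldsymbol\theta} \to \mathbb{G}_m$, while Lemma \ref{lem red grp struc} matches $\boldsymbol\theta$-adapted holomorphic reductions $\sigma : X_{\boldsymbol D} \to E/P$ with reductions $\varsigma : X \to \mathcal{E}/\mathcal{P}_{\boldsymbol\theta}$. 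So the admissible test data for the two stability notions are in canonical bijection.

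The next step is to verify that the two side conditions on the character survive under this bijection. Anti-dominance is built into the set-up: by the convention recalled just above Definition \ref{defn alg stab cond}, $\kappa$ is defined to be anti-dominant exactly when the corresponding $\chi$ is. For triviality on the center, one uses the explicit description of $\mathcal{P}_{\boldsymbol\theta}$ obtained by gluing $P \times X_{\boldsymbol D}$ to the local models $\mathcal{P}_{\theta_x}$; the evaluation map $\mathcal{P}_{\theta_x} \to P_{\theta_x} \subseteq P$ sends $Z(\mathcal{P}_{\boldsymbol\theta})$ onto $Z(P)$, so a character $\kappa$ is trivial on $Z(\mathcal{P}_{\boldsymbol\theta})$ if and only if the associated $\chi$ is trivial on $Z(P)$.

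Once the matching of test data is established, Proposition \ref{prop analytic=algebraic degree} supplies the decisive identity
\begin{align*}
\deg^{\rm an} E(h, \sigma, \chi) \;=\; parh\deg \mathcal{E}(\varsigma,\kappa),
\end{align*}
valid for every corresponding pair of triples. Strict positivity (respectively non-negativity) of the left-hand side as $(P,\sigma,\chi)$ ranges over all admissible triples for $E$ is therefore equivalent to strict positivity (respectively non-negativity) of the right-hand side as $(P,\varsigma,\kappa)$ ranges over all admissible triples for $\mathcal{E}$. This gives $R_h$-stability $\Leftrightarrow$ $R$-stability and the analogous equivalence for semistability.

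The only genuinely non-routine step is the center bookkeeping in the second paragraph: one must be sure that ``trivial on the center of $\mathcal{P}_{\boldsymbol\theta}$'' really corresponds to ``trivial on the center of $P$'' under the character bijection of Lemma \ref{lem char}, since this is the one place where the argument leaves the territory directly supplied by the earlier lemmas. Everything else is formal transport of data along the functor $\Xi$ and the Chern--Weil comparison already packaged in Proposition \ref{prop analytic=algebraic degree}.
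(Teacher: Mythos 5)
Your proposal is correct and follows exactly the paper's argument: the paper proves this proposition by citing Lemma \ref{lem char}, Lemma \ref{lem red grp struc} and Proposition \ref{prop analytic=algebraic degree}, which is precisely your transport of test data plus the degree identity. Your extra remarks on anti-dominance and triviality on the center are just the bookkeeping the paper leaves implicit (anti-dominance for $\kappa$ is defined via the corresponding $\chi$, and the center matching follows from Lemma \ref{lem char}), so the two proofs coincide.
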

	
\begin{proof}
This is a direct result of Lemma \ref{lem char}, Lemma \ref{lem red grp struc} and Proposition \ref{prop analytic=algebraic degree}.
\end{proof}
	
\subsection{$R_\mu$-stability Conditions and Degree Zero Case}\label{subsect degree zero}
Although the $R_\mu$-stability condition is not considered in this paper, it is important to give the definition of a parahoric torsor of \emph{degree zero}. We first restate the definition of $R_\mu$-stability condition from \cite{KSZparh} for convenience:
\begin{defn}\label{Ralpha}
Fixing an element $\mu\in \mathfrak{t}$, a parahoric $\mathcal{G}_{\boldsymbol\theta}$-torsor $\mathcal{E}$ on $X$ is called \textit{$R_\mu$-stable} (resp. \textit{$R_\mu$-semistable}) if for
\begin{itemize}
\item any proper parabolic subgroup $P$ of $G$,
\item any reduction $\varsigma:X\to \mathcal{E}/\mathcal{P}_{\boldsymbol\theta}$,
\item any nontrivial anti-dominant character $\kappa: \mathcal{P}_{\boldsymbol\theta} \rightarrow \mathbb{G}_m$,
\end{itemize}
one has
\[parh\deg \mathcal{E}(\varsigma,\kappa) -\langle \mu,\kappa \rangle> 0\quad (\mbox{resp. }\ge 0).\]
\end{defn}

Definition \ref{defn alg deg} gives the definition of degree of a parahoric torsor $\mathcal{E}$. This definition is not a topological invariant for $\mathcal{E}$ because it not only depends on the torsor but also on the choice of $\varsigma$ and $\kappa$. However, we can find a canonical element $\mu \in \mathfrak{t}$ and take it as a topological invariant for a given parahoric torsor, which is similar to the one considered in \cite[Section 4.2]{BGM} for the study of solutions to the Hermite--Einstein equations.
\begin{prop}[Proposition 5.9 in \cite{KSZparh}]\label{prop mu top inv}
Let $E$ be a parahoric $\mathcal{G}_{\boldsymbol\theta}$-torsor. Let $\mathfrak{z}$ be the center of $\mathfrak{g}$. There exists a canonical choice of $\mu\in \mathfrak{z}$, depending on the topological type of $E$, such that $E$ is $R$-stable (resp. $R$-semistable) if and only if $E$ is $R_\mu$-stable ($R_\mu$-semistable).
\end{prop}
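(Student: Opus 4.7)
\medskip

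\noindent\textbf{Proof Proposal.}
The strategy is to isolate the ``central'' component of every character of $\mathcal{P}_{\boldsymbol\theta}$ and to absorb its contribution to the parahoric degree into the parameter $\mu$. Concretely, via Lemma \ref{lem char} I identify characters of $\mathcal{P}_{\boldsymbol\theta}$ with characters of $P$. Using that $G$ is connected reductive, $G = Z(G)^\circ \cdot [G,G]$ up to finite index, and I would establish a splitting (tensored with $\mathbb{R}$)
\begin{align*}
X(P)\otimes\mathbb{R} \;=\; \bigl(X(G)\otimes\mathbb{R}\bigr) \,\oplus\, V_0,
\end{align*}
where $V_0$ denotes the subspace of characters trivial on $Z(G)^\circ$. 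Under the differential identification $X(G)\otimes\mathbb{R}\cong \mathfrak{z}^{*}$, I would then write every anti-dominant character as $\kappa=\kappa_1+\kappa_2$ with $\kappa_1\in V_0$ and $\kappa_2$ extending to a character of $\mathcal{G}_{\boldsymbol\theta}$. Since Weyl-invariant characters pair trivially with coroots, $\kappa_1$ inherits the anti-dominance of $\kappa$.

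Next I would define $\mu\in\mathfrak{z}$ by prescribing its pairings on the subspace $X(G)\otimes\mathbb{R}$ as
\begin{align*}
\langle\mu,\kappa_2\rangle \;:=\; parh\!\deg\mathcal{E}(\kappa_2) \;=\; \deg L(\kappa_2) + \langle\boldsymbol\theta,\kappa_2\rangle, \qquad \kappa_2\in X(G).
\end{align*}
This functional is well-defined because, for $\kappa_2$ coming from $\mathcal{G}_{\boldsymbol\theta}$, the parahoric degree is independent of the reduction $\varsigma$ (the line bundle $L(\kappa_2)$ is the direct pushforward of $\mathcal{E}$ itself). Since the right-hand side depends only on the topological type of $\mathcal{E}$ and on the fixed weights $\boldsymbol\theta$, so does $\mu$.

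The computational core of the argument is then the identity
\begin{align*}
parh\!\deg\mathcal{E}(\varsigma,\kappa)-\langle\mu,\kappa\rangle
 \;=\; parh\!\deg\mathcal{E}(\varsigma,\kappa_1)
\end{align*}
which I would deduce by splitting the degree additively along $\kappa=\kappa_1+\kappa_2$, observing that $\langle\mu,\kappa_1\rangle=0$ (because $\mu\in\mathfrak{z}$ while $\kappa_1$ vanishes on $\mathfrak{z}$) and that $\langle\mu,\kappa_2\rangle=parh\!\deg\mathcal{E}(\kappa_2)$ by construction. The equivalence of $R$-stability and $R_\mu$-stability now follows: the $R_\mu$-inequality for $\kappa$ is equivalent to the $R$-inequality for the ``purely parabolic'' component $\kappa_1$, which by the splitting exhausts exactly the characters trivial on $Z(G)$ (i.e.\ on the center of $\mathcal{P}_{\boldsymbol\theta}$).

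The part that will require most care is the canonical splitting $X(P)\otimes\mathbb{R}=X(G)\otimes\mathbb{R}\oplus V_0$ and the verification that $\mu$ genuinely lives in the center $\mathfrak{z}$ rather than in a larger Cartan subalgebra: this relies on the fact that characters of $G$ are exactly the Weyl-invariant characters of $T$, hence pair trivially with every coroot, which both places $\mu\in\mathfrak{z}$ and guarantees preservation of anti-dominance under the decomposition. The remaining inputs (independence of $parh\!\deg\mathcal{E}(\kappa_2)$ of $\varsigma$, and compatibility of pushforward with parahoric degree) follow directly from Definition \ref{defn alg deg} and Lemma \ref{lem char}.
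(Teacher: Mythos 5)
Your proposal is correct and takes essentially the same route as the paper's (sketched) argument: $\mu\in\mathfrak{z}$ is defined, exactly as in the text, by $\langle\mu,\kappa\rangle=parh\deg\mathcal{E}(\kappa)$ for characters $\kappa$ of $G$ at the trivial reduction, and the equivalence then follows from splitting an anti-dominant character as $\kappa=\kappa_1+\kappa_2$ with $\kappa_2$ extending to $G$, together with linearity of the parahoric degree and the independence of $parh\deg\mathcal{E}(\varsigma,\kappa_2)$ from the reduction $\varsigma$. The only point to make explicit is the degenerate case $\kappa_1=0$, i.e.\ $\kappa$ pulled back from $G$: there your identity gives $parh\deg\mathcal{E}(\varsigma,\kappa)-\langle\mu,\kappa\rangle=0$ identically, so for the strict (stable) statement such central characters must be understood as excluded from the $R_\mu$-test, a convention implicit in the statement and in the definition of anti-dominant characters being used.
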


Instead of giving a proof of this proposition, we give the basic idea of finding the element $\mu$. We take the trivial reduction of structure group, and then consider characters of $G$. Clearly, the set of all characters of $G$ is equivalent to the set of all characters of $G^{\rm ab}:=G/[G,G]$. Thus, nontrivial characters of $G$ act nontrivially on the center of $G$. Thus, we can find a unique element $\mu \in \mathfrak{z}$ such that
\begin{align*}
parh\deg \mathcal{E}(\kappa)  = \langle \mu,\kappa \rangle.
\end{align*}

\begin{exmp}
We discuss the case $G={\rm GL}_n(\mathbb{C})$, and we refer to reader to \cite[\S 5.2]{KSZparh} for more details. It is well-known that a parahoric ${\rm GL}_n$-torsor $\mathcal{E}$ corresponds to a unique parabolic bundle $E_{\bullet}$, where $E$ is a vector bundle and the subscript $\bullet$ is for the parabolic structures. Then, the canonical element $\mu$ of $\mathcal{E}$ is exactly the element $\frac{par \deg E_{\bullet}}{{\rm rk} E} \cdot I$, where $I$ is the identity matrix. In the case of curves, the element $\mu$ includes the information of degree and rank, and therefore, gives the information of Hilbert polynomials. This is the reason why $\mu$ is a topological invariant of a given parahoric torsor.
\end{exmp}

\begin{defn}\label{defn degree zero}
With respect to the definition and proposition above, if the corresponding element $\mu$ of a parahoric torsor $\mathcal{E}$ is the trivial element in $\mathfrak{t}$, then we say that \emph{the parahoric torsor $\mathcal{E}$ is of degree zero}. Under the correspondence we defined in \S\ref{subsect functor Xi}, \emph{a $\boldsymbol\theta$-adapted $G$-bundle $E$ on $X_{\boldsymbol D}$ is of degree zero}, if the corresponding parahoric torsor is of degree zero.
\end{defn}

If $\mathcal{E}$ is a parahoric torsor of degree zero, then the $R_\mu$-stability condition of $\mathcal{E}$ is equivalent to the $R$-stability condition. Furthermore, for any character $\kappa$ of $\mathcal{G}_{\boldsymbol\theta}$, we have
\begin{align*}
parh\deg \mathcal{E}(\kappa) = 0.
\end{align*}

Although we do not know how to define a notion of ``degree" (as a numerical topological invariant) for general parahoric torsors, when quoting \emph{a parahoric torsor of degree zero} we shall mean the notion considered above. In this paper, we focus on the nonabelian Hodge correspondence, involving  connections and Higgs bundles of degree zero (on curves) as in \cite{Simp,Simp Higgs Local}. This is the reason why here we only care about the case of degree zero. When $\mu$ is the trivial element, the $R_\mu$-stability condition is exactly the same as the $R$-stability condition (see Definitions \ref{defn ana stab cond} and \ref{defn alg stab cond}). Furthermore, in \cite{BGM} the authors proved a version of the Kobayashi--Hitchin correspondence for parabolic $G$-Higgs bundles for a given topological invariant $\mu \in \mathfrak{t}$ \cite[Theorem 5.1]{BGM}; a polystable parabolic $G$-Higgs bundle in the terminology of \cite{BGM} corresponds to a parabolic local system if and only if the invariant $\mu$ is trivial.
	
\begin{rem}\label{rem_question_BGM}
For the case $G={\rm GL}_n$, in \cite[\S 5.2]{BGM}, the authors raised the question whether there is some relation between the $R$-stability condition and the stability condition given by the parabolic slope for parabolic bundles (see \cite{KSZ20211,KSZ20212} for instance). The question is very important, because this would allow one to generalize the classical case considered by Simpson in the nonabelian Hodge theory for ${\rm GL}_n$-bundles on noncompact curves \cite{Simp}. In the degree zero case, these two stability conditions are equivalent (see \cite[\S 5]{KSZparh}). This means that the nonabelian Hodge correspondence studied in this paper is a full generalization of Simpson's work to $G$-bundles.
\end{rem}
	
\subsection{Polystability}\label{subsect poly stab}
In this subsection, we introduce the definition of polystability for both parahoric torsors and metrized $G$-bundles in the case of degree zero. We refer the reader to \cite{Bis_poly} for the polystability conditions of principal bundles. The notion of polystability can be defined similarly for Higgs bundles and connections, which will be used in the proof of Theorem \ref{thm KH corr Higgs} in \S\ref{subsect Kob-Hitchin}.
	
Given a parabolic subgroup $P \subseteq G$, let $L$ be the Levi subgroup of $G$ with Lie algebra $\mathfrak{l}$. Clearly, $\theta$ is also a weight of $L$. Similar to the construction of $\mathcal{P}_{\boldsymbol\theta}$, we define the group scheme $\mathcal{L}_{\boldsymbol\theta}$ similarly and there is a natural projection $\mathcal{P}_{\boldsymbol\theta} \rightarrow \mathcal{L}_{\boldsymbol\theta}$ induced by $P \rightarrow L$. Let $\mathcal{E}$ be a parahoric $\mathcal{G}_{\boldsymbol\theta}$-torsor. Given a reduction of structure group $\varsigma: X \rightarrow \mathcal{E}/\mathcal{P}_{\boldsymbol\theta}$, denote by $\mathcal{E}_{\varsigma_L}$ the corresponding parahoric $\mathcal{L}_{\boldsymbol\theta}$-torsor, which is induced by the parahoric $\mathcal{P}_{\boldsymbol\theta}$-torsor $\mathcal{E}_{\varsigma}$.

\begin{defn}\label{defn stab poly}
A parahoric $\mathcal{G}_{\boldsymbol\theta}$-torsor $\mathcal{E}$ is \emph{$R$-polystable of degree zero}, if it is $R$-semistable and there exists a proper parabolic subgroup $P \subseteq G$ and a reduction of structure group $\varsigma: X \rightarrow \mathcal{E}/\mathcal{P}_{\boldsymbol\theta}$, satisfying the conditions
\begin{itemize}
\item for any anti-dominant character $\kappa$, the equality
\begin{align*}
	parh \deg \mathcal{E}(\varsigma,\kappa) = 0
\end{align*}
holds;
\item the parahoric $\mathcal{L}_{\boldsymbol\theta}$-torsor $\mathcal{E}_{\varsigma_L}$ is $R$-stable.
\end{itemize}

Let $(E,\partial''_E,h)$ be a $\boldsymbol\theta$-adapted $G$-bundle on $X_{\boldsymbol D}$. It is \emph{$R_h$-polystable of degree zero} if it is $R_h$-semistable and there exists a proper parabolic subgroup $P \subseteq G$ and a $\boldsymbol\theta$-adapted holomorphic reduction of structure group $\sigma: X \rightarrow E/P$, satisfying the conditions
\begin{itemize}
\item for any anti-dominant character $\chi$, the equality
\begin{align*}
	\deg^{\rm an} E(h,\sigma,\chi) = 0
\end{align*}
holds;
\item the $\boldsymbol\theta$-adapted $L$-bundle $E_{\sigma_L}$ is $R_{h_L}$-stable, where $h_L$ is the metric on the $L$-bundle $E_{\sigma_L}$ induced by $h$.
\end{itemize}
\end{defn}

A reduction satisfies the first condition that the degree equals zero is called an \emph{admissible} reduction of structure group \cite[\S 3.3]{Rama19961}.

\begin{lem}
Given a $\boldsymbol\theta$-adapted $G$-bundle, it is $R_h$-polystable if and only if the corresponding parahoric $\mathcal{G}_{\boldsymbol\theta}$-torsor is $R$-polystable.
\end{lem}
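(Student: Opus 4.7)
The plan is to reduce the statement to the already established correspondences at the level of semistability, characters, reductions, degree, and the functor $\Xi$, and then to handle the Levi piece by a second application of the same machinery. First, by Proposition \ref{prop stab of parah tor}, the $R_h$-semistability of $(E,\partial''_E,h)$ is equivalent to the $R$-semistability of its image $\mathcal{E}=\Xi(E)$, so that part of the polystability condition transfers for free in both directions.

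Next, given a proper parabolic subgroup $P\subseteq G$, Lemma \ref{lem red grp struc} provides a bijection between $\boldsymbol\theta$-adapted holomorphic reductions $\sigma:X_{\boldsymbol D}\to E/P$ and algebraic reductions $\varsigma:X\to \mathcal{E}/\mathcal{P}_{\boldsymbol\theta}$, while Lemma \ref{lem char} identifies anti-dominant characters $\chi$ of $P$ with anti-dominant characters $\kappa$ of $\mathcal{P}_{\boldsymbol\theta}$. Proposition \ref{prop analytic=algebraic degree} then yields the equality
\begin{align*}
\deg^{\rm an} E(h,\sigma,\chi)=parh\deg\mathcal{E}(\varsigma,\kappa),
\end{align*}
so the admissibility condition in Definition \ref{defn stab poly}, namely vanishing of the degree for every anti-dominant character, is satisfied on the analytic side for $(\sigma,\chi)$ if and only if it is satisfied on the parahoric side for the corresponding pair $(\varsigma,\kappa)$.

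It remains to match the stability of the Levi reductions. For this I would invoke the fact, indicated in Remark \ref{rem_functor}, that the functor $\Xi$ is compatible with reductions of structure group: applying $\Xi$ to the $\boldsymbol\theta$-adapted $L$-bundle $E_{\sigma_L}$ equipped with the induced metric $h_L$ yields precisely the parahoric $\mathcal{L}_{\boldsymbol\theta}$-torsor $\mathcal{E}_{\varsigma_L}$ obtained from $\mathcal{E}_\varsigma$ by pushing forward along $\mathcal{P}_{\boldsymbol\theta}\to\mathcal{L}_{\boldsymbol\theta}$. Once this compatibility is in place, a second application of Proposition \ref{prop stab of parah tor}, now in the reductive group $L$ with the induced weights and the induced metric $h_L$, gives the equivalence between $R_{h_L}$-stability of $E_{\sigma_L}$ and $R$-stability of $\mathcal{E}_{\varsigma_L}$. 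Combining the three equivalences (semistability, vanishing of degree on anti-dominant characters, stability of the Levi reduction) establishes the lemma in both directions, since the bijections between the sets of parabolic subgroups, reductions, and characters are the same on both sides.

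The main obstacle is the Levi step: one must check carefully that the local trivializations witnessing the $\boldsymbol\theta$-adaptedness of $h$ restrict to local trivializations witnessing the $\boldsymbol\theta$-adaptedness of $h_L$ on $E_{\sigma_L}$, and that the functor $\Xi$ applied to $(E_{\sigma_L},h_L)$ produces $\mathcal{E}_{\varsigma_L}$ rather than some other $\mathcal{L}_{\boldsymbol\theta}$-torsor differing by a transition in $L(K)\setminus L_{\boldsymbol\theta}(K)$. This is where the assumption that $\sigma$ is $\boldsymbol\theta$-adapted in the sense of Section \ref{subsect reduct} (so that $z^{\theta_x}\sigma(z)z^{-\theta_x}$ stays bounded at each puncture) is essential, because it guarantees that the reduction descends to the parahoric level with the canonical identity representative used in the construction of $\Xi$; once this is verified, the remainder of the argument is formal.
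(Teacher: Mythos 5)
Your argument is correct and matches the paper's intended proof: the paper simply says the lemma "can be proved similarly as Proposition \ref{prop stab of parah tor}", i.e.\ by combining Lemma \ref{lem char}, Lemma \ref{lem red grp struc} and Proposition \ref{prop analytic=algebraic degree}, which is exactly what you do, including the extra application of the same machinery to the Levi reduction. Your explicit attention to the compatibility of $\Xi$ with the passage to $\mathcal{L}_{\boldsymbol\theta}$-torsors is a welcome elaboration of a step the paper leaves implicit, but it is not a departure from its route.
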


\begin{proof}
This lemma can be proved similarly as Proposition \ref{prop stab of parah tor}.
\end{proof}
	
\section{Analytic $G$-Higgs Bundles and analytic $G$-connections}\label{sect ana Higgs and conn}
	
For a metrized $G$-bundle on a noncompact curve $X_{\boldsymbol D}$, we consider in this section an extra layer of structure defined by a Higgs field and study the relationship to $G$-connections on $X_{\boldsymbol D}$. Under an appropriate harmonicity condition, we describe a categorical equivalence between tame harmonic $G$-Higgs bundles and tame harmonic $G$-connections. We also describe a local correspondence between the weights and residues (or monodromies) at the points in the divisor $\boldsymbol D$ of the corresponding elements in the Dolbeault, de Rham and Betti spaces. 
	
\subsection{Basic definitions}\label{subsect basic defn of HIggs and conn}
We begin by extending basic definitions and calculations of Simpson \cite{Simp,Simp Higgs Local} for Higgs fields and flat connections on a metrized $G$-bundle on $X_{\boldsymbol D}$. Note that since $X_{\boldsymbol D}$ is a curve, the \emph{integrability condition} for both connections and Higgs fields is satisfied automatically.
	
Let $(E,\partial''_E,h)$ be a metrized $G$-bundle on $X_{\boldsymbol D}$. Denote by $d=\partial + \bar{\partial}$ the usual exterior differential operator with $(1,0)$-part $\partial$ and $(0,1)$-part $\bar{\partial}$. A \emph{Higgs field} $\phi$ associated to $E$ is defined as a holomorphic section of $E(\mathfrak{g}) \otimes K_{X_{\boldsymbol D}}$, where $K_{X_{\boldsymbol D}}$ is the canonical line bundle. With respect to the given metric $h$, we can get an ``adjoint" section $\phi^{*}$ of type $(0,1)$
\begin{align*}
	(\phi^{*})^{\rm T}={\rm Ad}(h) \bar{\phi},
\end{align*}
where $\bar{\phi}$ is the (complex) conjugation and $\phi^{\rm T}$ is the transpose (see \S\ref{subsect functor Xi}). Since $h$ is hermitian, the relation above is equivalent to
\begin{align*}
	{\rm Ad}(h) \bar{\phi}^* = \phi^{\rm T},
\end{align*}
Now let $\partial', \partial'': \mathscr{A}^0(E) \rightarrow \mathscr{A}^1(E)$ be operators of type $(1,0)$ and $(0,1)$ respectively.
\begin{notn}
We would like to use the notation $\partial'$ (resp. $\partial''$) for operators of type $(1,0)$ (resp. type $(0,1)$), and the notation $\bar{\partial}'$ actually means the (complex) conjugation of $\partial'$ with respect to what we defined in \S\ref{subsect functor Xi}. The operators $\partial$ and $\bar{\partial}$ correspond to the differential operator $d$ as we defined above. Later on, we shall also use the notation $\partial'_0:=\partial$ and $\partial''_0:=\bar{\partial}$.	
\end{notn}
	
We say that the operator $\partial' + \partial''$ \emph{preserves the metric} $h$, if we have
\begin{align*}
	\partial h \cdot h^{-1} = \partial'^{\rm T} + {\rm Ad}(h)\bar{\partial}'',
\end{align*}
which is equivalent to the equation
\begin{align*}
	h^{-1} \cdot \partial h = {\rm Ad}(h^{-1}) \partial'^{\rm T} + \bar{\partial}''.
\end{align*}
	
\begin{rem}\label{rem hermitian metric}
In the case of $G={\rm GL}_n(\mathbb{C})$, a metric $h$ on $E$ (as vector bundles) induces a natural bilinear form $h(\cdot,\cdot)$ on fibers of $E$. Then, for the Higgs field $\phi$, we have
\begin{align*}
	h(\phi^{*}u,v)=h(u,\phi v).
\end{align*}
or equivalently,
\begin{align*}
	(\phi^{*})^{\rm T} h = h \bar{\phi}.
\end{align*}
For the operators $\partial'$ and $\partial''$, we have
\begin{align*}
	\partial h(u,v)= h(\partial' u,v) + h(u,\partial'' v),
\end{align*}
which means
\begin{align*}
	\partial h= \partial'^{\rm T} h + h \bar{\partial}''.
\end{align*}
For general reductive groups, the above relations are exactly
\begin{align*}
	(\phi^{*})^{\rm T}={\rm Ad}(h) \bar{\phi}, \quad \partial h \cdot h^{-1} = \partial'^{\rm T} + {\rm Ad}(h)\bar{\partial}''.
\end{align*}
Furthermore, as we discussed above, the hermitian metric $h$ induces a bilinear form $h(\cdot,\cdot)$ on the fibers of a vector bundle (the case of ${\rm GL}_n(\mathbb{C})$). We extend this property to $G$-bundles $E$, more precisely, the adjoint bundle $E(\mathfrak{g})$, and we still work on ``fibers". The metric $h$ induces a bilinear operation $h(\cdot,\cdot): \mathfrak{g} \times \mathfrak{g} \rightarrow \mathfrak{g}$ as $h(u,v)=u^T h \bar{v}$. Here is a brief explanation of the product $u^T h \bar{v}$. We regard $h,u,v$ as the corresponding adjoint operators ${\rm Ad}(h),{\rm ad}(u),{\rm ad}(v)$ on $\mathfrak{g}$, and then, the product $u^T h \bar{v}$ is defined as the product of the corresponding adjoint operators.
\end{rem}
	
Let $(E,D,h)$ be a metrized $G$-bundle equipped with an integrable connection $D: \mathscr{A}^0(E) \rightarrow \mathscr{A}^1(E)$. Equivalently, an integrable connection on $E$ is equivalent to a homomorphism $T_{X_{\boldsymbol D}} \rightarrow {\rm At}(E)$ of Lie algebroids, where $T_{X_{\boldsymbol D}}$ is the tangent bundle and ${\rm At}(E)$ is the Atiyah algebroid (see Appendix \ref{sect appendix} for the definition). Furthermore, this equivalent definition of connections on $G$-bundles implies that a connection corresponds to an element in $\mathscr{A}^1(E(\mathfrak{g}))$ locally, which is regarded as the \emph{connection form} (see Example \ref{exmp connection analytic}). We write $D=d'+d''$ as a sum of operators of type $(1,0)$ and $(0,1)$. Then, for the given metric $h$, we can find a $(1,0)$-operator $\delta'$ and a $(0,1)$-operator $\delta''$ such that both $d'+\delta''$ and $\delta'+d''$ preserve the metric $h$, in other words,
\begin{align*}
	& h^{-1} \cdot \partial h = d'^{\rm T} + {\rm Ad}(h) \bar{\delta}'' ,\\
	& h^{-1} \cdot \partial h = \delta'^{\rm T}+ {\rm Ad}(h) \bar{d}''.
\end{align*}
We then define the following operators
\begin{align*}
	& \partial' :=\frac{1}{2}(d'+\delta'), \quad \phi:=\frac{1}{2}(d'-\delta')\\
	& \partial'':=\frac{1}{2}(d''+\delta''), \quad \phi^*:=\frac{1}{2}(d''-\delta''),
\end{align*}
and compute
\begin{align*}
	\partial'^{\rm T} + {\rm Ad}(h) \bar{\partial}'' +  &= \frac{1}{2}\left( (d'^{\rm T}+\delta'^{\rm T}) + {\rm Ad}(h)(\bar{d}''+\bar{\delta}'') \right)\\
	& =\frac{1}{2} \left( ( \delta'^{\rm T} + {\rm Ad}(h)\bar{d}'') +  (d'^{\rm T} + {\rm Ad}(h)\bar{\delta}'' ) \right)\\
	& = h^{-1} \cdot \partial h.
\end{align*}
This actually means that $\partial'+\partial''$ preserves the metric $h$. At the same time, we have
\begin{align*}
	d'^{\rm T} + {\rm Ad}(h) \bar{\delta}'' =  \delta'^{\rm T} + {\rm Ad}(h) \bar{d}'',
\end{align*}
which implies
\begin{align*}
	{\rm Ad}(h)(\bar{d}'' - \bar{\delta}'')=d'^{\rm T} - \delta'^{\rm T},
\end{align*}
and then
\begin{align*}
	{\rm Ad}(h) \bar{\phi}^* = \phi^T.
\end{align*}
This calculation shows that $\phi$ as introduced above is adjoint to $\phi^*$ with respect to the metric $h$.
	
\subsection{Analytic $G$-Higgs Bundles and Analytic $G$-connections}\label{subsect ana Hig ana D-mod}
	
In this subsection, we define $G$-Higgs bundles and $G$-connections over $X_{\boldsymbol D}$ analytically and introduce a notion of harmonicity for these objects. The harmonicity condition commonly refers to a hermitian metric solving the Hermite--Einstein equation and it is equivalent to a notion of stability for the points in the Dolbeault moduli space. This is the content of the Kobayashi--Hitchin correspondence, which we will prove later on in \S\ref{subsect Kob-Hitchin}. We will see then that the existence of a hermitian metric solving the Hermite--Einstein equation, in fact, implies the property we call \emph{harmonicity} in this subsection, thus explaining this slight abuse of terminology.

\begin{defn}\label{defn ana Higgs}
An \emph{analytic $G$-Higgs bundle} on $X_{\boldsymbol D}$ is a triple $(E,\partial''_E, \phi)$, where
\begin{itemize}
	\item $(E,\partial''_E)$ is a holomorphic $G$-bundle on $X_{\boldsymbol D}$,
	\item $\phi$ is a Higgs field that is a holomorphic section of $E(\mathfrak{g}) \otimes K_{X_{\boldsymbol D}}$.
\end{itemize}
It is called a \emph{metrized $G$-Higgs bundle}, if it comes equipped with a metric $h$.
\end{defn}

Given a metrized $G$-Higgs bundle $(E,\partial''_E,\phi,h)$, we obtain an operator
\begin{align*}
	D'':=\partial''_E + \phi.
\end{align*}
Denote by
\begin{align*}
	F_{D''}:=(D'')^2
\end{align*}
the \emph{pseudo-curvature} of $D''$ and clearly, $F_{D''}=0$. We also induce a connection
\begin{equation}\label{induced_flat_connect}
	D=\partial'_h +\partial''_E + \phi + \phi^*,
\end{equation}
where $\partial'_h+\partial''_E$ preserves the metric $h$ and $\phi^*$ is adjoint to $\phi$ with respect to the metric $h$. Denote by $F_D$ the curvature of $D$. In conclusion, with respect to the metric $h$, we obtain a tuple $(E,D,D'',h)$. The discussion above introduces the concept of $G$-connections as follows:
	
\begin{defn}
An \emph{analytic $G$-connection} on $X_{\boldsymbol D}$ is a pair $(E,D)$, where
\begin{itemize}
	\item $E$ is a $G$-bundle on $X_{\boldsymbol D}$,
	\item $D$ is an integrable connection on $E$.
\end{itemize}
If we equip $E$ with a metric $h$, then we say that $(E,D,h)$ is a \emph{metrized $G$-connection}.
\end{defn}
	
Given a metrized $G$-connection $(E,D,h)$, then $F_D=0$ by definition. Also, given the above data, we can define the operator $D'':=\partial''+\phi$. If we want to obtain an analytic $G$-Higgs bundle, we need to add the extra condition that the pseudo-curvature $F_{D''}$ is trivial. Under this condition, one can follow the calculation in \S\ref{subsect basic defn of HIggs and conn} and induce a metrized $G$-Higgs bundle. Before we state the correspondence, we first introduce the following definition.
	
\begin{defn}\label{defn har bun Dol+DR}
A \emph{harmonic $G$-Higgs bundle} is a metrized $G$-Higgs bundle such that its induced connection $D$ as in \eqref{induced_flat_connect} is flat. On the other hand, a \emph{harmonic $G$-connection} is a metrized $G$-connection such that its induced pseudo-curvature $F_{D''}$ is trivial. The corresponding metric inducing these conditions in either case will be called a \emph{harmonic metric}.
\end{defn}
	
Given the definitions above and the calculations from \S \ref{subsect basic defn of HIggs and conn} we have the following:
\begin{lem}\label{lem higgs and connect 1}
The category of harmonic $G$-Higgs bundles is equivalent to the category of harmonic $G$-connections.
\end{lem}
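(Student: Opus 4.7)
The plan is to construct explicit functors in both directions and verify they are mutually inverse, relying entirely on the calculations already carried out in \S\ref{subsect basic defn of HIggs and conn}.

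In the forward direction, I would send a harmonic $G$-Higgs bundle $(E,\partial''_E,\phi,h)$ to the metrized $G$-connection $(E,D,h)$, where $D$ is the induced connection \eqref{induced_flat_connect}. Flatness of $D$ is exactly the harmonicity hypothesis for the Higgs bundle, so $(E,D,h)$ is a bona fide metrized $G$-connection. To check that this connection is harmonic in the sense of Definition \ref{defn har bun Dol+DR}, set $D''=\partial''_E+\phi$; then $F_{D''}=(\partial''_E)^2+\partial''_E\phi+\phi\,\partial''_E+\tfrac{1}{2}[\phi,\phi]$, where on a curve $(\partial''_E)^2=0$ for type reasons, $[\phi,\phi]=0$ automatically since $\phi$ is a $(1,0)$-form on a one-dimensional base, and $\partial''_E\phi=0$ because $\phi$ is holomorphic. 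Hence $F_{D''}=0$.

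In the reverse direction, I would send a harmonic $G$-connection $(E,D,h)$ to the triple $(E,\partial''_E,\phi,h)$ produced by the decomposition recalled in \S\ref{subsect basic defn of HIggs and conn}: writing $D=d'+d''$, solve uniquely for the $(1,0)$- and $(0,1)$-operators $\delta',\delta''$ characterized by the requirement that both $d'+\delta''$ and $\delta'+d''$ preserve $h$, and then set $\partial''_E:=\tfrac{1}{2}(d''+\delta'')$ and $\phi:=\tfrac{1}{2}(d'-\delta')$. The $(0,1)$-operator $\partial''_E$ defines a holomorphic structure on $E$ since the integrability $(\partial''_E)^2=0$ is automatic on $X_{\boldsymbol D}$. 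The vanishing of the pseudo-curvature $F_{D''}$, expanded as above, then forces $\partial''_E\phi=0$, so $\phi$ is a holomorphic Higgs field in $\mathscr{A}^{1,0}(E(\mathfrak g))$. Finally, the flat connection reconstructed from $(E,\partial''_E,\phi,h)$ via \eqref{induced_flat_connect} coincides with the original $D$ by the uniqueness of the splitting, so the resulting Higgs bundle is harmonic.

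The two composites are identity functors precisely because of this uniqueness: starting from $(E,\partial''_E,\phi,h)$, forming $D$ and then splitting $D$ by the metric condition recovers the original data, since $\partial'_h+\partial''_E$ already preserves $h$ and $\phi^*$ is already the $h$-adjoint of $\phi$. Morphisms in each category are taken to be bundle maps preserving all the structure (holomorphic structure, Higgs field, metric on one side; connection, metric on the other); the canonical correspondence of objects extends tautologically to morphisms.

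The main obstacle, such as it is, is essentially bookkeeping: everything reduces to the uniqueness of the $h$-adapted splitting of a differential operator into a metric-compatible piece and a self-adjoint piece. What genuinely deserves attention is being explicit about what $F_{D''}=0$ says on a curve: both the holomorphic-structure integrability $(\partial''_E)^2=0$ and the Higgs integrability $[\phi,\phi]=0$ are automatic for degree reasons, so the only nontrivial content of $F_{D''}=0$ is the holomorphicity condition $\partial''_E\phi=0$, which is exactly what is needed to ensure the reverse functor lands in genuine Higgs bundles rather than just pairs of operators.
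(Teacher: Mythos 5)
Your proposal is correct and follows essentially the same route as the paper: the paper states the lemma as an immediate consequence of the definitions and the splitting calculations of \S\ref{subsect basic defn of HIggs and conn}, which is exactly the two-way construction (inducing $D$ from $(\partial''_E,\phi,h)$, and recovering $\partial''_E,\phi$ from $(D,h)$ via the unique metric-adapted decomposition) that you spell out. Your explicit remark that on a curve the only nontrivial content of $F_{D''}=0$ is $\partial''_E\phi=0$ is a useful clarification but not a departure from the paper's argument.
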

	
Given a harmonic $G$-connection $(E,D,h)$, the $D_X$-module structure is induced from the flat connection $D$, which decomposes to its $(1,0)$-part $d'$ and $(0,1)$-part $d''$, as
\begin{align*}
	D=d'+d''.
\end{align*}
Therefore, the $(0,1)$-type operator $d''$ equips the $G$-bundle $E$ with a holomorphic structure and the induced holomorphic $G$-bundle $(E,d'')$ will be denoted by $V$. Furthermore, the $(1,0)$-operator is holomorphic with respect to the holomorphic structure $d''$. Therefore, we get a holomorphic $G$-connection $(V,d')$, which is a more algebraic description of the $G$-connection.
	
\subsection{Degree}\label{subsect deg Higgs and mod}
Let $(E,\partial''_E,\phi,h)$ be a metrized $G$-Higgs bundle on $X_{\boldsymbol D}$. Let $\sigma: X_{\boldsymbol D} \rightarrow E/P$ be a holomorphic reduction of structure group and let $\chi: P \rightarrow \mathbb{C}^*$ be a character. Similar to \S\ref{subsect ana deg on bundle}, we first define the induced metrized $P$-Higgs bundle $E_\sigma$. Since the induced metric $h_\sigma$ and holomorphic structure $\partial''_{E_\sigma}$ is already given in \S\ref{subsect ana deg on bundle}, we only have to consider the Higgs field.

Given an arbitrary reduction of structure group $\sigma$, the Higgs field $\phi$ may not be lifted to a Higgs field on $E_\sigma$. We introduce the following:
\begin{defn}
Let $(E,\partial''_E,\phi)$ be an analytic $G$-Higgs bundle on $X_{\boldsymbol D}$. A holomorphic reduction of structure group $\sigma: X_{\boldsymbol D} \rightarrow E / P$ is said to be \emph{$\phi$-compatible}, if there exists a lifting $\phi_\sigma : X_{\boldsymbol D} \rightarrow E_\sigma(\mathfrak{p}) \otimes K_{X_{\boldsymbol D}}$, such that the following diagram commutes
\begin{center}
\begin{tikzcd}
	& E_\sigma(\mathfrak{p}) \otimes K_{X_{\boldsymbol D}} \arrow[d, hook] \\
	X_{\boldsymbol D} \arrow[r, "\phi"] \arrow[ur, "\phi_\sigma", dotted] & E(\mathfrak{g}) \otimes K_{X_{\boldsymbol D}}.
\end{tikzcd}
\end{center}
If there is no ambiguity, we shall simply say \emph{compatible holomorphic reductions of structure group}.
\end{defn}

Here is an alternative way to understand the condition of compatibility. By taking the adjoint representation, we have the following Cartesian diagram for the adjoint bundles
\begin{center}
\begin{tikzcd}
	E_\sigma(\mathfrak{p}) \arrow[r] \arrow[d] & E(\mathfrak{g}) \arrow[d]\\
	X_{\boldsymbol D} \arrow[r, "\sigma"]  & E(\mathfrak{u}^-),
\end{tikzcd}
\end{center}
where $\mathfrak{u}^-$ is the Lie algebra of the unipotent group $U^- \subseteq G$, which is defined by the set of roots $\mathcal{R} \backslash \mathcal{R}_P$ (see the notation in \S\ref{subsect alg parah deg}); here we use the same notation $\sigma$ for the induced section of $E(\mathfrak{u}^-)$. Now, let us consider the following diagram
\begin{center}
\begin{tikzcd}
	X_{\boldsymbol D} \arrow[rd, dotted, "\phi_\sigma"] \arrow[rrd, bend left, "\phi"] \arrow[rdd, bend right, "{\rm id}"] & & \\
	& E_\sigma(\mathfrak{p}) \otimes K_{X_{\boldsymbol D}} \arrow[r] \arrow[d] & E(\mathfrak{g}) \otimes K_{X_{\boldsymbol D}} \arrow[d]\\
	& X_{\boldsymbol D} \arrow[r, "\sigma"]  & E(\mathfrak{u}^-).
\end{tikzcd}
\end{center}
Then, $\phi_\sigma$ exists if the diagram commutes, that is, $\phi|_{\mathfrak{u}^-} = \sigma$. From the point of view of Higgs bundles, this condition pertains to the existence of subbundles preserved by the Higgs field.

For analytic $G$-connections, we have a similar definition:
\begin{defn}
Let $(E,D)$ be an analytic $G$-connection on $X_{\boldsymbol D}$. A holomorphic reduction of structure group $\sigma: X_{\boldsymbol D} \rightarrow E / P$ is said to be \emph{$D$-compatible}, if there exists a lifting $D_\sigma : \mathscr{A}^0(E_\sigma(\mathfrak{p})) \rightarrow \mathscr{A}^1(E_\sigma(\mathfrak{p}))$, such that the following diagram commutes
\begin{center}
\begin{tikzcd}
	\mathscr{A}^0(E_\sigma (\mathfrak{p})) \arrow[r,"D_\sigma", dotted] \arrow[d] & \mathscr{A}^1(E_\sigma(\mathfrak{p})) \arrow[d] \\
	\mathscr{A}^0(E_(\mathfrak{g})) \arrow[r, "D"] & \mathscr{A}^1(E (\mathfrak{g})).
\end{tikzcd}
\end{center}
If there is no ambiguity, we shall simply say \emph{compatible holomorphic reductions of structure group}.
\end{defn}

We remind the reader that the holomorphic reduction of structure group depends on the holomorphic structure. More precisely, given an analytic $G$-Higgs bundle $(E,\partial''_E,\phi)$, a reduction of structure group $\sigma$ is holomorphic with respect to $\partial''_E$. Similarly, given an analytic $G$-connection $(V,d')$, $\sigma$ is holomorphic with respect to $d''$.

\begin{rem}
Let $E$ be the trivial $G$-bundle. Then, a connection $D$ is equivalent to an element in $\mathscr{A}^1(E(\mathfrak{g}))$ (see Appendix \ref{sect appendix}). Thus, the compatibility condition for connections can be stated in the same way as for Higgs fields:
\begin{center}
\begin{tikzcd}
	& E_\sigma(\mathfrak{p}) \otimes K_{X_{\boldsymbol D}} \arrow[d, hook] \\
	X_{\boldsymbol D} \arrow[r, "D"] \arrow[ur, "D_\sigma", dotted] & E(\mathfrak{g}) \otimes K_{X_{\boldsymbol D}}.
\end{tikzcd}
\end{center}
For a general analytic $G$-connection, this statement holds locally.
\end{rem}

\begin{defn}\label{defn ana deg Higgs and loc}
Let $(E,\partial''_E,\phi,h)$ be a metrized $G$-Higgs bundle, let $\sigma : X_{\boldsymbol D} \rightarrow E / P$ be a compatible reduction of structure group, and let $\chi : P \to \mathbb{C}^{*}$ be a character. We define the \emph{analytic degree} of $(E,\partial''_E,\phi,h)$ as
\begin{align*}
	\deg^{\rm an} E (\bullet,\sigma,\chi) = \frac{\sqrt{-1}}{2\pi} \int_{X_{\boldsymbol{D}}} \chi_* F_{\bullet_\sigma},
\end{align*}
where $\bullet=h, D, D''$. If $\sigma$ is the trivial reduction of structure group, we define
\begin{align*}
	\deg^{\rm an} E (\bullet,\chi) = \frac{\sqrt{-1}}{2\pi} \int_{X_{\boldsymbol{D}}} \chi_* F_{\bullet},
\end{align*}
where $\bullet=h, D, D''$. Given an analytic metrized $G$-connection $(E,D,h)$, one similarly defines a notion of degree.
\end{defn}

For a metrized $G$-connection $(E,D,h)$, the operator $\partial' + \partial''$ preserves the metric $h$ and this implies that $F_h = \partial' \circ \partial'' + \partial'' \circ \partial'$, thus
\begin{align*}
	F_D = F_h + 2 F_{D''} + [\phi, \phi^*].
\end{align*}
Since $D$ is an integrable connection, for an arbitrary character $\chi$, we have
\begin{align*}
	\deg^{\rm an} E (D,\chi) = \frac{\sqrt{-1}}{2\pi} \int_{X_{\boldsymbol{D}}} \chi_* F_{D}=0.
\end{align*}
Thus,
\begin{align*}
	0=\int_{X_{\boldsymbol{D}}} \chi_* F_{D} = \int_{X_{\boldsymbol{D}}} \chi_* (F_{h} + 2 F_{D''} + [\phi, \phi^*]).
\end{align*}
Note that
\begin{align*}
	\int_{X_{\boldsymbol{D}}} \chi_* [\phi, \phi^*]=0,
\end{align*}
therefore,
\begin{align*}
	\deg^{\rm an} E (h,\chi) = \frac{\sqrt{-1}}{2\pi} \int_{X_{\boldsymbol{D}}} \chi_* F_{h} = -2 \times \frac{\sqrt{-1}}{2\pi} \int_{X_{\boldsymbol{D}}} \chi_* F_{D''} =-2 \deg^{\rm an} E (D'',\chi).
\end{align*}
On the other hand, given a metrized $G$-Higgs bundle $(E,\partial''_E,\phi,h)$, we have
\begin{align*}
	2 F_{D''} = F_D - F_h - [\phi, \phi^*].
\end{align*}
Since $F_{D''}$ is trivial, we get
\begin{align*}
\deg^{\rm an} E (D,\chi)= \frac{\sqrt{-1}}{2\pi} \int_{X_{\boldsymbol{D}}} \chi_* F_D = \frac{\sqrt{-1}}{2\pi} \int_{X_{\boldsymbol{D}}} \chi_* F_h= \deg^{\rm an} E (h,\chi).
\end{align*}	
The above discussion gives us the following lemma.
\begin{lem}\label{lem three ana deg}
Given a harmonic $G$-Higgs bundle $(E,\partial''_E, \phi,h)$, we have
\begin{align*}
	\deg^{\rm an} E (h,\chi)=\deg^{\rm an} E (D,\chi)=\deg^{\rm an} E (D'',\chi)=0,
\end{align*}
for any character $\chi: G \rightarrow \mathbb{C}^*$.
		
Similarly, given a harmonic $G$-connection $(E,D,h)$, we have
\begin{align*}
	\deg^{\rm an} E (h,\chi)=\deg^{\rm an} E (D,\chi)=\deg^{\rm an} E (D'',\chi)=0
\end{align*}
for any character $\chi: G \rightarrow \mathbb{C}^*$.

Furthermore, harmonic $G$-Higgs bundles and harmonic $G$-connections are of degree zero in the sense of \S\ref{subsect degree zero}.
\end{lem}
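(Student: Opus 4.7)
The statement is essentially a packaging of the three key identities from the surrounding discussion, so the plan is to combine them systematically and then invoke the definition of ``degree zero''.

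The first step is to recall the fundamental decomposition already established in \S\ref{subsect deg Higgs and mod}, namely
\begin{align*}
F_D = F_h + 2F_{D''} + [\phi,\phi^*].
\end{align*}
In the harmonic Higgs bundle case, by Definition \ref{defn har bun Dol+DR} the induced connection $D$ is flat, so $F_D = 0$, and by Definition \ref{defn ana Higgs} (applied to the pseudo-curvature) we have $F_{D''} = 0$. In the harmonic $G$-connection case, $F_D = 0$ holds by the definition of a connection while $F_{D''} = 0$ holds by harmonicity. In either case, the decomposition collapses to $F_h = -[\phi,\phi^*]$.

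The second step is to argue that pushing a commutator forward along a character vanishes after integration. Since $\chi:G\to\mathbb{C}^*$ is a group homomorphism to an abelian group, the induced map $\chi_*:\mathfrak{g}\to\mathbb{C}$ kills the derived algebra, so $\chi_*[\phi,\phi^*]=0$ pointwise. Consequently
\begin{align*}
\deg^{\mathrm{an}} E(h,\chi) = \tfrac{\sqrt{-1}}{2\pi}\int_{X_{\boldsymbol D}}\chi_* F_h = -\tfrac{\sqrt{-1}}{2\pi}\int_{X_{\boldsymbol D}}\chi_*[\phi,\phi^*] = 0.
\end{align*}
The vanishing of $\deg^{\mathrm{an}} E(D,\chi)$ and $\deg^{\mathrm{an}} E(D'',\chi)$ is then immediate because the integrands $\chi_* F_D$ and $\chi_* F_{D''}$ are identically zero. (The absolute convergence of all three integrals is guaranteed because $h$ is $\boldsymbol\theta$-adapted, cf. Definition \ref{defn adapt metric} and Remark \ref{rem notation herm}.)

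The final step is the ``degree zero'' assertion. By Proposition \ref{prop analytic=algebraic degree}, the analytic degree $\deg^{\mathrm{an}} E(h,\chi)$ with trivial reduction of structure group agrees with the parahoric degree $\text{parh}\deg\mathcal{E}(\kappa)$ of the associated parahoric $\mathcal{G}_{\boldsymbol\theta}$-torsor $\mathcal{E}$ for the character $\kappa$ corresponding to $\chi$ via Lemma \ref{lem char}. Since the previous step gives $\text{parh}\deg\mathcal{E}(\kappa)=0$ for \emph{every} character $\kappa$ of $\mathcal{G}_{\boldsymbol\theta}$, the canonical element $\mu\in\mathfrak{z}$ produced by Proposition \ref{prop mu top inv} must be zero, which is precisely the content of Definition \ref{defn degree zero}. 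No step presents a real obstacle; the only thing to be slightly careful about is verifying that the vanishing of $\int\chi_*[\phi,\phi^*]$ is genuinely pointwise (not just formal integration by parts), which is ensured by the abelianness of $\operatorname{Lie}(\mathbb{C}^*)$.
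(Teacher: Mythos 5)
Your proof is correct and takes essentially the same route as the paper: both rest on the decomposition $F_D = F_h + 2F_{D''} + [\phi,\phi^*]$, the vanishing of $F_D$ and $F_{D''}$ under harmonicity, and the fact that $\chi_*$ annihilates the commutator term, with the degree-zero claim then following from Proposition \ref{prop analytic=algebraic degree} together with the characterization of $\mu$ in \S\ref{subsect degree zero}. The only differences are cosmetic: you justify the vanishing of $\chi_*[\phi,\phi^*]$ pointwise (the paper only asserts the vanishing of its integral) and you spell out the final degree-zero step, which the paper leaves implicit.
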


\subsection{Tameness}
In this subsection, we introduce the tameness condition for analytic $G$-Higgs bundles and analytic $G$-connections.
\begin{defn}\label{defn tame ana Higgs}
Let $(E,\partial''_E,\phi)$ be an analytic $G$-Higgs bundle. For each puncture $x \in \boldsymbol D$, let $e$ be a local trivialization such that $\partial''_E = \partial''_0$ on a neighborhood of $x$ with local coordinate $z$, where $\partial''_0$ is the $(0,1)$-part of the standard connection on $\mathcal{O}_X$. The tuple $(E,\partial''_E,\phi)$ is a \emph{tame $\boldsymbol\theta$-adapted $G$-Higgs bundle}, if for each puncture $x \in \boldsymbol D$, we have
\begin{align*}
	z^{\theta_x} \cdot (z\phi(z)) \cdot z^{-\theta_x} \text{ is bounded as $z$ approaches zero}.
\end{align*}
A \emph{tame metrized $\boldsymbol\theta$-adapted $G$-Higgs bundle} is a tame $\boldsymbol\theta$-adapted $G$-Higgs bundle equipped with a metric $h$. Furthermore, a \emph{tame harmonic $\boldsymbol\theta$-adapted $G$-Higgs bundle} is a tame metrized $\boldsymbol\theta$-adapted $G$-Higgs bundle such that the induced connection $D$ is flat.
\end{defn}

\begin{rem}
	In the case of ${\rm GL}_n(\mathbb{C})$ (that is, for vector bundles), the tameness condition is exactly the same as Simpson considered in \cite[\S 2]{Simp}. Let $\theta$ be a weight of ${\rm GL}_n(\mathbb{C})$, and denote by ${\rm GL}_\theta(K)$ the corresponding parahoric group. Since any weight of ${\rm GL}_n(\mathbb{C})$ is equivalent to a small weight, the parahoric group ${\rm GL}_\theta(K)$ can be regarded as a subgroup of ${\rm GL}_n(R)$ under conjugation. Therefore, in local charts, a Higgs field $\varphi$ is an element in $\frac{\mathfrak{gl}_{\theta}(K)}{z} dz \subseteq \frac{\mathfrak{gl}_n(R)}{z}dz$, and then, the eigenvalues of $\varphi$ have poles at most one, which matches the definition of a \emph{tame Higgs field} given by Simpson.
\end{rem}

We next move on to analytic $G$-connections. Let $(E,D)$ be an analytic $G$-connection, and let $D=d'+d''$. In \S\ref{subsect ana Hig ana D-mod}, we gave an alternative definition, and equipped $E$ with a new holomorphic structure $d''$ together with a holomorphic connection $d'$. Then, we can use a similar approach as we did in the case of Higgs bundles. For each puncture $x$, we can take a local coordinate and a good trivialization of $E$ such that $d''=\partial''_0$ around the puncture $x$. Since we work on local charts, we can suppose that the induced connection $d'$ is an $E(\mathfrak{g})$-valued one form. With respect to this trivialization, we give the definition of tameness for $G$-connections.
	
\begin{defn}\label{defn tame ana conn}
Let $(E,D)$ be an analytic $G$-connection. For each puncture $x$, let $e$ be a local trivialization such that $\partial''_E = \partial''_0$ on a neighborhood of $x$. Then, the tuple $(E,D)$ is a \emph{tame $\boldsymbol\theta$-adapted $G$-connection}, if for each puncture $x \in \boldsymbol D$, we have
\begin{align*}
z^{\theta_x} \cdot (zd') \cdot z^{-\theta_x} \text{ is bounded as $z$ approaches zero}.
\end{align*}
A \emph{tame metrized $\boldsymbol\theta$-adapted $G$-connection} is a tame $\boldsymbol\theta$-adapted $G$-connection equipped with a metric $h$. Moreover, a \emph{tame harmonic $\boldsymbol\theta$-adapted $G$-connection} is a tame metrized $\boldsymbol\theta$-adapted $G$-connection such that the induced pseudo-curvature $F_{D''}$ is trivial.
\end{defn}
	
\begin{rem}
In the above definition, since both the Higgs field $\phi$ and the operator $d'$ of type $(1,0)$ are holomorphic, we use the coordinate $z$ to define boundedness. More generally, we can rewrite this condition by applying the polar coordinate $r=|z|$, and for example, the boundedness condition for $d'$ will be
\begin{align*}
	r^{\theta_x} \cdot (rd') \cdot r^{-\theta_x} \text{ is bounded as $r$ approaches zero}.
\end{align*}
This is a more workable definition when dealing with local calculations.
\end{rem}

\begin{rem}	
Note that the terminology \emph{$\boldsymbol\theta$-adapted} in Definitions \ref{defn tame ana Higgs} and \ref{defn tame ana conn} is not for the metric, but for Higgs fields and connections. In this paper, the metric $h$ of a tame harmonic $\boldsymbol\theta$-adapted Higgs bundle $(E,\partial''_E,\phi,h)$ is also $\boldsymbol\theta$-adapted, but the metric of a tame harmonic $\boldsymbol\theta$-adapted $G$-connection $(V,d')$ is usually not $\boldsymbol\theta$-adapted. We will study this relation locally in the next subsection.
\end{rem}

\subsection{Local Study}\label{subsect local study}
Analogously to Simpson's description of model metrics in the case of filtered ($\text{GL}_n(\mathbb{C})$-)Higgs bundles (\cite[\S 5]{Simp}), we construct in this section a class of model metrics $h_0$ locally around a puncture in $\boldsymbol D$ and consider the relation between Higgs bundles (Dolbeault side), connections (de Rham side) and representations (Betti side). More precisely, we complete the following table
\begin{table}[H]
\begin{tabular}{|c|c|c|c|}
	\hline
	& Dolbeault & de Rham & Betti \\
	\hline
	weights & $\alpha$ & $\beta$ & $\gamma$ \\
	\hline
	residues $\backslash$ monodromies & $\phi_\alpha$ & $d'_\beta$ & $M_\gamma$\\
	\hline
	\end{tabular}
\end{table}
The \emph{weights} $\alpha, \beta, \gamma$ are considered as the jump values of the metric $h_0$ with respect to distinct choices of trivializations. For the ``residues $\backslash$ monodromies", we mean the Levi factor $\phi_\alpha$ of the residue of the Higgs field $\phi$, the Levi factor $d'_\beta$ of the residue of the connection $d'$ at a fixed puncture, and the Levi factor $M_{\gamma}$ of the monodromy of the representation around the puncture. The choice of the model metric needed for this complete description is inspired by the work of Biquard--Garc\'{i}a-Prada--Mundet i Riera \cite{BGM}, where parabolic $G$-Higgs bundles for real reductive groups were studied, which also includes the case when the group $G$ is complex.
	
This detailed description of the local data will help us later on to establish the correspondence between tame harmonic $\boldsymbol\alpha$-adapted $G$-Higgs bundles and tame harmonic $\boldsymbol\beta$-adapted $G$-connections. Moreover, we will construct harmonic metrics with the desired local behavior and the local models here allow us to understand the behavior of the non-abelian Hodge correspondence in terms of the parahoric weights.
	
Let $x \in \boldsymbol D$ be a point in the divisor, and denote by $\mathbb{D}^*$ a punctured disc around $x$ with local coordinate $z$. Let $\alpha$ be a weight. By Lemma \ref{lem two defn parah Lie alg}, a tame $\alpha$-adapted Higgs field $\phi$ is regarded as an element in $\mathfrak{g}_\alpha(K) \frac{dz}{z}$. In this subsection, we suppose that
\begin{align*}
	\phi=\phi_\alpha \frac{dz}{z},
\end{align*}
where $\phi_\alpha \in \mathfrak{p}_\alpha \subseteq \mathfrak{g}$ is the residue of the Higgs field $\phi$ at $x$. Applying Jordan decomposition, we have
\begin{align*}
	\phi_\alpha=s_\alpha+Y_\alpha,
\end{align*}
where $s_\alpha$ is the semisimple part and $Y_\alpha$ is the nilpotent part. Since $G$ is a complex reductive group, we can suppose that the semisimple part $s_\alpha$ is an element in $\mathfrak{t}$. The element $Y_{\alpha}$ can be completed into a natural $\mathfrak{sl}_2$-triple $(X_\alpha, H_\alpha, Y_\alpha)$ called a \textit{normal triple} or a \textit{Kostant--Rallis triple} (see \cite[Proposition 4]{KR}) such that $X_\alpha$ is the ``hermitian dual" of $Y_\alpha$ and $H_\alpha=[X_\alpha, Y_\alpha]$ lies in $\mathfrak{t}$.
	
We consider similarly to \cite{BGM} the model metric
\begin{equation}\label{model_metric_description_nilpotent}
	h_0=|z|^{\alpha} (- \ln |z|^2)^{ H_\alpha} |z|^{\alpha}.
\end{equation}
	
\begin{rem}\label{comparison_model_real}
Note that the model metric considered in \cite[Formula (5.8)]{BGM} is given for the case of a real group $G$ by
\begin{align*}
	h_0=|z|^{\alpha} (- \ln |z|^2)^{ {\rm Ad}(e^{i \theta \alpha}) H_\alpha} |z|^{\alpha}.
\end{align*}
Compared to the metric we are considering above, the reason why the term ${\rm Ad}(e^{i \theta \alpha})$ is not apparent is that in the complex group case we always have ${\rm Ad}(e^{i \theta \alpha}) H_\alpha=H_\alpha$.
\end{rem}
	
For the model metric $h_0$ as in (\ref{model_metric_description_nilpotent}), we have
\begin{align*}
	\partial'_{h_0}=\partial'_0 + \left( \alpha+\frac{  H_\alpha }{ \ln |z|^2 } \right) \frac{dz}{z}, \quad D_0=d_0 + \left( \alpha+\frac{ H_\alpha }{ \ln |z|^2 } \right) \frac{dz}{z},
\end{align*}
where $d_0=\partial'_0 + \partial''_0$. Take
\begin{align*}
	g_0=|z|^{-\alpha} (- \ln |z|^2)^{  -\frac{ H_\alpha}{2} },
\end{align*}
and define the trivialization $e_0=e g_0$. Then, $h_0(e_0,e_0)=1$. Under the gauge action of $g_0$, we have
\begin{align*}
	\partial'_{h_0}=\partial'_0 + \frac{1}{2}\left( \alpha+ \frac{ H_\alpha }{ \ln |z|^2 } \right) \frac{dz}{z}, \quad \partial''_{h_0}=\partial''_0 - \frac{1}{2} \left(\alpha + \frac{ H_\alpha }{ \ln |z|^2 } \right) \frac{d\bar{z}}{\bar{z}},
\end{align*}
where we use the same notation for operators $\partial'_{h_0},\partial''_{h_0}$.

For the Higgs field $\phi$, we calculate ${\rm Ad}(g_0^{-1}) \phi$ and ${\rm Ad}(g_0^{-1}) \phi^*$. Note that
\begin{align*}
	{\rm Ad}(g_0) Y_\alpha &= {\rm Ad}( e^{\frac{H_\alpha}{2} \ln (-\ln |z|^2) } ) Y_\alpha \\
	&= Y_\alpha e^{ - \ln (-\ln |z|^2) }\\
	&= - \frac{ Y_\alpha}{ \ln |z|^2 },
\end{align*}
where we used the formula ${\rm Ad } (e^X) Y=e^{{\rm ad}(X)} Y$ in the second equality. Therefore, under the trivialization $e_0$, the Higgs field and its adjoint are given by
\begin{align*}
	\phi=\left( s_\alpha - \frac{ Y_\alpha }{ \ln |z|^2 } \right) \frac{dz}{z}, \quad \phi^{*}=\left( \bar{s}_\alpha - \frac{ X_\alpha }{ \ln |z|^2 } \right) \frac{d \bar{z}}{\bar{z}},
\end{align*}
where we also use the same notation for convenience.
	
With respect to the above description, the connection $D$ (see (\ref{induced_flat_connect})) is given as
\begin{align*}
	D & = \partial'_{h_0} + \partial''_{h_0}+\phi +\phi^{*} \\
	&=d_0  + \left( \frac{1}{2}\alpha + s_\alpha +  \frac{\frac{1}{2}H_\alpha- Y_\alpha }{ \ln |z|^2 } \right)\frac{dz}{z}\\
	&\ \ \ \ \ \ + \left( - \frac{1}{2}\alpha + \bar{s}_\alpha +  \frac{ - \frac{1}{2}H_\alpha- X_\alpha }{ \ln |z|^2 } \right)\frac{d \bar{z}}{\bar{z}}.
\end{align*}
Next, we take another trivialization $e_1=e_0 g_1$, where
\begin{align*}
	g_1& =|z|^{-s_\alpha - \bar{s}_\alpha} ( - \ln |z|^2 )^{ \frac{1}{2}(X_\alpha+Y_\alpha) }.
\end{align*}
Note that
\begin{align*}
	h_0(e_1,e_1)=|z|^{-2(s_\alpha+\bar{s}_\alpha)}(-\ln |z|^2)^{(X_\alpha+Y_\alpha)}.
\end{align*}
Under the trivialization $e_1$, the connection $D$ becomes
\begin{align*}
	d_0 &+ \left( \frac{1}{2}\alpha + \frac{1}{2} s_\alpha - \frac{1}{2} \bar{s}_\alpha -\frac{1}{2}(H_\alpha + X_\alpha -Y_\alpha) \right) \frac{dz}{z}\\
	&- \left( \frac{1}{2}\alpha + \frac{1}{2} s_\alpha - \frac{1}{2} \bar{s}_\alpha -\frac{1}{2}(H_\alpha + X_\alpha -Y_\alpha) \right) \frac{d \bar{z}}{\bar{z}},
\end{align*}
where we applied the formulas
\begin{align*}
	\mathrm{Ad}\Big(( - \ln |z|^2 )^{- \frac{1}{2}(X_\alpha+Y_\alpha) }\Big)&\left(\frac{1}{2}H_\alpha- Y_\alpha\right)\\
	&=\frac{1}{2}(H_\alpha + X_\alpha -Y_\alpha)(- \ln |z|^2)-\frac{1}{2}(X_\alpha+Y_\alpha);\\
	\mathrm{Ad}\Big(( - \ln |z|^2 )^{- \frac{1}{2}(X_\alpha+Y_\alpha) }\Big)&\left(-\frac{1}{2}H_\alpha- X_\alpha\right)\\
	&=-\frac{1}{2}(H_\alpha + X_\alpha -Y_\alpha)(- \ln |z|^2)-\frac{1}{2}(X_\alpha+Y_\alpha).
\end{align*}
Now we consider the connection $D$ in the following two ways:
\begin{enumerate}
\item In the Betti side, recall the formula
\begin{align*}
	i d \theta = \frac{1}{2} ( \frac{dz}{z} - \frac{d \bar{z}}{\bar{z}} ).
\end{align*}
We write the formula about the connection $D$ in polar coordinates:
\begin{align*}
	d_0 + i ( \alpha + s_\alpha - \bar{s}_\alpha - ( H_\alpha + X_\alpha - Y_\alpha ) ) d \theta.
\end{align*}
Therefore, the \emph{weight} is given by
\begin{align*}
	\gamma = -(s_\alpha + \bar{s}_\alpha),
\end{align*}
and the \emph{monodromy} is given by
\begin{align*}
	\exp\left( - 2 \pi i ( \alpha + s_\alpha - \bar{s}_\alpha) \right)\exp \left( 2 \pi i( H_\alpha + X_\alpha - Y_\alpha ) \right).
\end{align*}
\item On the de Rham side, let $e_2=e_1 g_2$, where
\begin{align*}
	g_2=|z|^{ \alpha + s_\alpha - \bar{s}_\alpha -( H_\alpha +X_\alpha -Y_\alpha) }.
\end{align*}
Under the trivialization $e_2$, the connection $D$ becomes
\begin{align*}
		d_0 + ( \alpha + s_\alpha - \bar{s}_\alpha - ( H_\alpha + X_\alpha - Y_\alpha ))\frac{dz}{z}.
\end{align*}
Furthermore,
\begin{align*}
	h_{0}(e_2, e_2)& = |z|^{2\alpha- 2(s_\alpha+\bar{s}_\alpha) - 2 ( H_\alpha +X_\alpha -Y_\alpha) } \cdot(-\ln |z|^2)^{(X_\alpha+Y_\alpha)}\\
	&= |z|^{2\alpha- 2(s_\alpha+\bar{s}_\alpha)} |z|^{- 2 ( H_\alpha +X_\alpha -Y_\alpha)} \cdot(-\ln |z|^2)^{(X_\alpha+Y_\alpha)}.
\end{align*}
Note that $H_\alpha+X_\alpha - Y_\alpha$ is a nilpotent element, and under some good choice of basis, it can be regarded as the matrix
\begin{align*}
\begin{pmatrix}
		1 & 1 \\
		-1 & -1
\end{pmatrix}.
\end{align*}
Thus,
\begin{align*}
	|z|^{- 2 ( H_\alpha +X_\alpha -Y_\alpha)} & = \exp \left( \begin{pmatrix}
	1 & 1 \\
	-1 & -1
\end{pmatrix} \cdot \ln |z|^{-2} \right) \\
& =  \begin{pmatrix}
1 & 0 \\
0 & 1
\end{pmatrix} +  \begin{pmatrix}
	1 & 1 \\
	-1 & -1
\end{pmatrix} \cdot \ln |z|^{-2}.
\end{align*}
Therefore, this term does not contribute to the jump value. Summing up, the \emph{weight} is
\begin{align*}
	\beta = \alpha-(s_\alpha+\bar{s}_\alpha),
\end{align*}
and the \emph{residue} is
\begin{align*}
	d'_\beta = \alpha+(s_\alpha-\bar{s}_\alpha)-(H_\alpha + X_\alpha - Y_\alpha).
\end{align*}
\end{enumerate}
	
In conclusion, we have the following table:
\begin{table}[H]
	\begin{tabular}{|c|c|c|c|}
		\hline
		& Dolbeault & de Rham & Betti \\
		\hline
		weights & $\alpha$ & $\alpha-(s_\alpha+\bar{s}_\alpha)$ & $-(s_\alpha+\bar{s}_\alpha)$ \\
		\hline
		residues $\backslash$ monodromies & $s_\alpha+Y_\alpha$ & $d'_\beta$ & $M_\gamma$\\
		\hline
	\end{tabular}
\end{table}
\noindent where
\begin{align*}
	d'_\beta =\alpha+(s_\alpha-\bar{s}_\alpha)- ( H_\alpha + X_\alpha - Y_\alpha)
\end{align*}
and
\begin{align*}
	M_\gamma=\exp\left( - 2 \pi i ( \alpha + s_\alpha - \bar{s}_\alpha) \right)\exp \left( 2 \pi i( H_\alpha + X_\alpha - Y_\alpha ) \right).
\end{align*}
	
\begin{rem}
If the Higgs field $\phi$ does not have the nilpotent part, i.e.
\begin{align*}
	\phi=\phi_\alpha\frac{dz}{z}=s_\alpha \frac{dz}{z},
\end{align*}
the model metric we choose is $h_0=|z|^{2\alpha}$. With the same calculation as above, we have the following table for the semisimple case:
\begin{table}[H]
	\begin{tabular}{|c|c|c|c|}
		\hline
		& Dolbeault & de Rham & Betti \\
		\hline
		weights & $\alpha$ & $\alpha-(s_\alpha+\bar{s}_\alpha)$ & $-(s_\alpha+\bar{s}_\alpha)$ \\
		\hline
		residues $\backslash$ monodromies & $s_\alpha$ & $\alpha+(s_\alpha-\bar{s}_\alpha)$ & $\exp(-2\pi i (\alpha+(s_\alpha-\bar{s}_\alpha)) )$\\
		\hline
	\end{tabular}.
\end{table}
\noindent The data in this table was predicted by Boalch in \cite[\S 6]{Bo}.
\end{rem}
	
\subsection{The Correspondence}
	
From Lemma \ref{lem higgs and connect 1}, a metrized $G$-Higgs bundle corresponds to a metrized $G$-connection under harmonicity. Now we are going to give a more precise description of this correspondence under the condition of tameness. Let $\boldsymbol\alpha=\{\alpha_x, \text{ } x \in \boldsymbol D\}$ and $\boldsymbol\beta=\{\beta_x, \text{ } x \in \boldsymbol D\}$ be two collections of weights.
\begin{prop}\label{prop tame ana Higgs and conn}
Let $h$ be an $\boldsymbol\alpha$-adapted metric. A tame harmonic $\boldsymbol\alpha$-adapted $G$-Higgs bundle $(E,\partial''_E, \phi,h)$, with
\begin{align*}
	\phi_{\alpha_x} = s_{\alpha_x} + Y_{\alpha_x}
\end{align*}
the Levi factor of the residue of the Higgs field $\phi$ around each puncture $x$, corresponds to a tame harmonic $\boldsymbol\beta$-adapted $G$-connection $(V,d')$, where
\begin{align*}
	\beta_x=\alpha_x - (s_{\alpha_x} + \bar{s}_{\alpha_x}),
\end{align*}
with
\begin{align*}
	d'_{\beta_x} = \alpha_x +(s_{\alpha_x} - \bar{s}_{\alpha_x}) - (H_{\alpha_x} + X_{\alpha_x} - Y_{\alpha_x})
\end{align*}
the Levi factor of the residue of the connection $d'$ around each puncture $x$. The correspondence also holds in the other direction.
\end{prop}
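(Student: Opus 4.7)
The global categorical equivalence between harmonic $G$-Higgs bundles and harmonic $G$-connections is already supplied by Lemma \ref{lem higgs and connect 1}: starting from $(E,\partial''_E,\phi,h)$, the operator $D=\partial'_h+\partial''_E+\phi+\phi^*$ is a flat connection, and conversely the decomposition $D=d'+d''$ combined with the metric $h$ produces a Higgs field $\phi$ and a holomorphic structure $\partial''_E$ via the formulas in \S\ref{subsect basic defn of HIggs and conn}. The real content of the proposition is therefore the translation of the local asymptotic data (weights, residues) from the Dolbeault side to the de Rham side at each puncture $x\in\boldsymbol{D}$. Since the statement is purely local at each puncture, I would fix a single puncture $x$, drop the subscript, and work on a punctured disc $\mathbb{D}^*$ with coordinate $z$.

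The key step is to identify a preferred local frame in which the harmonic metric is asymptotic to the model metric $h_0=|z|^\alpha(-\ln|z|^2)^{H_\alpha}|z|^\alpha$ of \S\ref{subsect local study}. The strategy is as follows. First, because $\phi$ is tame $\boldsymbol\alpha$-adapted, one can choose a local trivialization $e$ in which $\phi=\phi_\alpha\frac{dz}{z}+O(1)\frac{dz}{z}$ and in which $h$ has jump value $\alpha$. Using the Jordan decomposition $\phi_\alpha=s_\alpha+Y_\alpha$ and completing $Y_\alpha$ to a Kostant--Rallis triple $(X_\alpha,H_\alpha,Y_\alpha)$, the model metric $h_0$ solves the Hermite--Einstein equation to leading order (this is where the $\mathfrak{sl}_2$-structure of the nilpotent part enters). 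The harmonic equation $F_h+[\phi,\phi^*]=0$, together with the $L^1$-curvature bound built into the definition of $\boldsymbol\alpha$-adapted metric, forces $h=h_0\cdot e^c$ for a perturbation $c$ with $\mathrm{Ad}(|z|^{2\alpha})c=o(\ln|z|)$ and hence invisible to the leading asymptotics.

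Once this local model is in place, the computation becomes exactly the one carried out in \S\ref{subsect local study}. Composing the two gauge transformations
\[
g_0=|z|^{-\alpha}(-\ln|z|^2)^{-H_\alpha/2},\qquad g_1=|z|^{-s_\alpha-\bar{s}_\alpha}(-\ln|z|^2)^{(X_\alpha+Y_\alpha)/2},
\]
and finally $g_2=|z|^{\alpha+s_\alpha-\bar{s}_\alpha-(H_\alpha+X_\alpha-Y_\alpha)}$, transports the flat connection $D=\partial'_h+\partial''_E+\phi+\phi^*$ into the normal form
\[
d_0+\bigl(\alpha+(s_\alpha-\bar{s}_\alpha)-(H_\alpha+X_\alpha-Y_\alpha)\bigr)\frac{dz}{z}
\]
in the trivialization $e_2=eg_0g_1g_2$. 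The jump value of the transported metric $h_0$ in this trivialization is $\beta=\alpha-(s_\alpha+\bar{s}_\alpha)$ (the nilpotent term $|z|^{-2(H_\alpha+X_\alpha-Y_\alpha)}$ contributes only polynomial factors in $\ln|z|$, hence does not alter the weight). By construction the $(0,1)$-part of $D$ in this frame equals $\partial''_0$, so $(V,d')$ is a well-defined analytic $G$-connection. The connection form lies in $\mathfrak{g}_\beta(K)\frac{dz}{z}$ by Lemma \ref{lem two defn parah Lie alg}, which verifies the tameness of $(V,d')$ as a $\boldsymbol\beta$-adapted connection and identifies the Levi factor of its residue as $d'_\beta$.

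For the converse direction I would reverse the three gauges, starting from a tame $\boldsymbol\beta$-adapted $G$-connection $(V,d')$ with the prescribed Levi residues, pass to its harmonic metric via Lemma \ref{lem higgs and connect 1}, and apply the inverse gauge transformation $g_2^{-1}g_1^{-1}g_0^{-1}$ to recover the Dolbeault frame in which $\phi$ acquires the form $(s_\alpha+Y_\alpha)\frac{dz}{z}+\cdots$ and $h$ becomes $\boldsymbol\alpha$-adapted. The main technical obstacle is the first step: establishing that a general tame harmonic metric is genuinely asymptotic to the explicit model $h_0$, including the precise leading $(-\ln|z|^2)^{H_\alpha}$ behavior coming from the nilpotent part of the residue. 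This is the step where one must combine the tameness of $\phi$, the adaptedness of $h$, and the harmonic equation; it is the analogue in our setting of the analytic local study carried out in \cite[\S 5]{BGM}, and the computations in \S\ref{subsect local study} are designed precisely to make this comparison possible.
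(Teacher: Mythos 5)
Your global reduction (Lemma \ref{lem higgs and connect 1} plus a puncture-by-puncture analysis) and your appeal to the computation of \S\ref{subsect local study} are the same ingredients the paper uses, but there is a genuine gap at the step you yourself flag. You assert that tameness, $\boldsymbol\alpha$-adaptedness and the harmonic equation ``force'' $h=h_0e^{c}$ with $h_0=|z|^{\alpha}(-\ln|z|^2)^{H_\alpha}|z|^{\alpha}$, and you then apply the model gauges $g_0,g_1,g_2$ of \S\ref{subsect local study} verbatim. Nothing you cite justifies this: Definition \ref{defn adapt metric} only pins down the $|z|^{2\alpha}$ factor up to a perturbation $c$ with ${\rm Ad}(|z|^{2\alpha})c=o(\ln|z|)$, and the logarithmic factor $(-\ln|z|^2)^{H_\alpha}$ lies exactly inside that error class, so adaptedness cannot detect it; extracting it from the Hermite--Einstein equation and the $L^1$ curvature bound is the hard asymptotic analysis of tame harmonic bundles (Simpson for ${\rm GL}_n(\mathbb{C})$, \cite{BGM} in general), which you do not carry out. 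Since the de Rham residue formula $d'_{\beta}=\alpha+(s_\alpha-\bar s_\alpha)-(H_\alpha+X_\alpha-Y_\alpha)$ is produced precisely by that logarithmic factor, your argument is incomplete at the only point where the nilpotent part of the residue matters.

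The paper's proof is structured so as to avoid this claim. It writes out the argument for semisimple residue, where one may take $h(e,e)=|z|^{2\alpha}$, and builds the gauge transformations from the actual data rather than from the model: after the explicit $g_0=|z|^{-\alpha}$, the transformations $g_1$ and $g_2$ are defined as solutions of the equations $g_1^{-1}\partial'_0g_1=-\tfrac12(\phi_0+\bar\phi_0^*)$, $g_1^{-1}\partial''_0g_1=-\tfrac12(\bar\phi_0+\phi_0^*)$, and similarly for $g_2$, so that the $(0,1)$-part of $D$ becomes trivial and the weight and residue of the resulting holomorphic connection are read off ($\beta=\alpha-(s_\alpha+\bar s_\alpha)$, residue $\alpha+s_\alpha-\bar s_\alpha$); only the general nilpotent case is deferred to the model computation of \S\ref{subsect local study}. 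To repair your route you must either prove the asymptotic comparison of $h$ with the full model $h_0$, including the $(-\ln|z|^2)^{H_\alpha}$ term, or replace the fixed model gauges by data-adapted gauge transformations as in the paper.
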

	
\begin{proof}
We only give the proof for the case that the residue of the Higgs field is semisimple, and the general case is treated similarly to the local study in \S\ref{subsect local study}. Also, it is enough to work around a puncture $x \in D$ with the help of Lemma \ref{lem higgs and connect 1}. Given a tame harmonic $\boldsymbol\alpha$-adapted $G$-Higgs bundle $(E,\partial''_E, \phi,h)$, suppose that we have $\partial''_E=\partial''_0$ around the puncture $x$ with a good choice of trivialization $e_0$. Then, we have
\begin{align*}
	d'=\partial'_h + \phi, \quad d''= \partial''_0 + \phi^*,
\end{align*}
where $\partial'_h=\partial'_0 + \alpha \frac{dz}{z}$. To simplify the notation in the proof, we omit the subscript $x$ in the notation for convenience. The jump value of $h$ with respect to this trivialization $e$ is $\alpha$ and, without loss of generality, suppose that $h(e,e)=|z|^{2 \alpha}$. Moreover, when we say ``residue", it actually means the Levi factor of the residue.
		
\emph{Step 0}: Take $e_0= e g_0$, where $g_0= | z|^{-\alpha}$. Under the trivialization $e_0$, the operators become
\begin{align*}
	\partial'_0 + \frac{1}{2}\alpha \frac{dz}{z}+ \phi_0, \quad \partial''_0 - \frac{1}{2} \alpha \frac{d\bar{z}}{\bar{z}} + \phi_0^*,
\end{align*}
where
\begin{align*}
	\phi_0 =  {\rm Ad}(g_0) \phi, \quad \phi_0^* = {\rm Ad}(g_0) \phi^*.
\end{align*}
Then, $h(e_0,e_0) \sim 1$ and the limit $\lim\limits_{r \rightarrow 0} r\phi(r)$ is bounded.
		
\emph{Step 1}: We can find a complex gauge transformation $g_1$ such that
\begin{align*}
	\partial'_0 g_1 \cdot g^{-1}_1 & = - \frac{1}{2} {\rm Ad}(g_1) \phi_0 - \frac{1}{2} {\rm Ad}(g_1) \bar{\phi}^*_0; \\
	\partial''_0 g_1 \cdot g^{-1}_1 & =  - \frac{1}{2} {\rm Ad}(g_1) \bar{\phi}_0 - \frac{1}{2} {\rm Ad}(g_1) \phi^*_0.
\end{align*}
In fact, these two equations are equivalent to
\begin{align*}
	g_1^{-1} \cdot \partial'_0 g_1 & = -\frac{1}{2}(\phi_0 + \bar{\phi}^*_0);\\
	g_1^{-1} \cdot \partial''_0 g_1 & = -\frac{1}{2}(\bar{\phi}_0 + \phi^*_0),
\end{align*}
and it is clear that the element $g_1$ exists. Under the trivialization $e_1=e_0 g_1$, we have the operators
\begin{align*}
\partial'_0 + \phi_1, \quad \partial''_0 - \phi_1^*,
\end{align*}
where
\begin{align*}
	\phi_1 = \frac{1}{2} {\rm Ad}(g_1)  \left( \alpha  \frac{dz}{z} + \phi_0 - \bar{\phi}^*_0  \right), \quad \phi_1^* =  \frac{1}{2} {\rm Ad}(g_1) \left( \alpha \frac{d\bar{z}}{\bar{z}}  + \bar{\phi}_0 - \phi^*_0  \right).
\end{align*}
As we studied in \S\ref{subsect local study}, we have
\begin{align*}
	h(e_1,e_1) \sim r^{- 2(s_{\alpha} + \bar{s}_{\alpha})},
\end{align*}
the limit $\lim\limits_{r \rightarrow 0} {\rm Ad}(r^{s_{\alpha} + \bar{s}_{s_{\alpha}}}) (r \phi_1(r))$ is bounded and the residue of $\phi_1$ is $\frac{1}{2}(\alpha + s_\alpha - \bar{s}_\alpha)$.
		
\emph{Step 2}: We can find a second complex gauge transformation $g_2$ such that
\begin{align*}
	\partial'_0 g_2 \cdot g_2^{-1} & =  -{\rm Ad}(g_2) \phi^*_1,\\
	\partial''_0 g_2 \cdot g_2^{-1} & = -{\rm Ad}(g_2) \bar{\phi}^*_1.
\end{align*}
Under the trivialization $e_2=e_1 g_2$, the $(0,1)$-type operator becomes trivial and the $(1,0)$-type operator is
\begin{align*}
	\partial'_0 + \phi_2 - \bar{\phi}_2^*,
\end{align*}
where
\begin{align*}
	\phi_2 = {\rm Ad}(g_2) \phi_1, \quad \phi_2^* = {\rm Ad}(g_2) \phi^*_1.
\end{align*}
Furthermore,
\begin{align*}
	h(e_2,e_2) \sim r^{2 \alpha - 2(s_{\alpha} + \bar{s}_{\alpha})},
\end{align*}
the limit
\begin{align*}
	\lim\limits_{r \rightarrow 0} {\rm Ad}(r^{\alpha - (s_{\alpha} + \bar{s}_{s_{\alpha}})}) \left( r(\phi_2(r) - \bar{\phi}_2^*(r)) \right)
\end{align*}
is bounded and the residue of $\phi_2$ is $\alpha + s_\alpha - \bar{s}_\alpha$.
		
In conclusion, we obtain a holomorphic connection $\partial'_0+\phi_2 - \bar{\phi}_2^*$ with weight $\alpha-(s_\alpha + \bar{s}_\alpha)$ and residue $\alpha+s_\alpha-\bar{s}_\alpha$, which is exactly the data stated in the proposition. Thus, given a tame harmonic $\boldsymbol\alpha$-adapted $G$-Higgs bundle, we obtain a tame harmonic $\boldsymbol\beta$-adapted $G$-connection with the desired data.
		
The other direction can be proved similarly. This finishes the proof of this proposition.
\end{proof}
	
\subsection{Stability Condition}\label{subsect ana stab Higgs and Dmod}
	
\begin{defn}
A tame metrized $\boldsymbol\theta$-adapted $G$-Higgs bundle $(E,\partial''_E, \phi,h)$ is called \emph{$R_h$-stable} (resp. \emph{$R_h$-semistable}), if for
\begin{itemize}
	\item any proper parabolic group $P \subseteq G$,
	\item any compatible $\boldsymbol\theta$-adapted holomorphic reduction of structure group $\sigma: X_{\boldsymbol D} \rightarrow E/P$,
	\item any nontrivial anti-dominant character $\chi: P \rightarrow \mathbb{G}_m$, which is trivial on the center of $P$,
\end{itemize}
one has
\begin{align*}
	\deg^{\rm an} E (h, \sigma, \chi) > 0, \quad (\text{resp. } \geq 0).
\end{align*}
\end{defn}

\begin{defn}
A tame metrized $\boldsymbol\theta$-adapted $G$-connection $(V,d',h)$ is called \emph{$R_h$-stable} (resp. \emph{$R_h$-semistable}), if for
\begin{itemize}
	\item any proper parabolic group $P \subseteq G$,
	\item any compatible $\boldsymbol\theta$-adapted holomorphic reduction of structure group $\sigma: X_{\boldsymbol D} \rightarrow V/P$,
	\item any nontrivial anti-dominant character $\chi: P \rightarrow \mathbb{G}_m$, which is trivial on the center of $P$,
\end{itemize}
one has
\begin{align*}
	\deg^{\rm an} V (h, \sigma,\chi) > 0, \quad (\text{resp. } \geq 0).
\end{align*}
\end{defn}
	
The definitions of polystability for tame metrized $G$-Higgs bundles and $G$-connections are analogous to Definition \ref{defn stab poly} by adding the compatibility condition of reductions of structure group, thus we skip the definition here.
	
	
\begin{lem}\label{lem ana sub Higgs and sub mod}
Given an $\boldsymbol\alpha$-adapted metric $h$, there is a one-to-one correspondence between compatible $\boldsymbol\alpha$-adapted reductions of structure group of $(E,\partial''_E,\phi,h)$ and compatible $\boldsymbol\beta$-adapted reductions of structure group of $(V,d',h)$.
\end{lem}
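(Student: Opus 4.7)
The plan is to establish the bijection by combining two ingredients: the $P$-equivariant version of the Higgs--connection equivalence in Lemma \ref{lem higgs and connect 1}, and the local model of Section \ref{subsect local study} which translates between $\boldsymbol\alpha$ and $\boldsymbol\beta$ at each puncture. The starting observation is that $E$ and $V$ share the same underlying smooth $G$-bundle on $X_{\boldsymbol D}$, with $d'' = \partial''_E + \phi^{*}$ and a common harmonic metric $h$; the content of the lemma therefore lies in showing that the three conditions --- holomorphicity, compatibility, and parahoric adaptedness --- match between the two sides.

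Given a compatible $\boldsymbol\alpha$-adapted reduction $\sigma$ of $(E, \partial''_E, \phi, h)$ to a parabolic $P$, I would consider the restricted data on the $P$-subbundle $E_\sigma$, producing a tame metrized $P$-Higgs bundle. Applying Lemma \ref{lem higgs and connect 1} in a $P$-equivariant fashion yields a tame $P$-connection whose underlying holomorphic $P$-bundle, equipped with the holomorphic structure induced from $d''$, defines a reduction $\tau$ of $V$ to $P$. The compatibility of $\tau$ with $d'$ follows because the induced connection on $V_\tau$ is the restriction of $d'$ to the invariant subbundle, while the $\boldsymbol\beta$-adaptedness is checked puncture by puncture using the local frames $e_0$ and $e_2 = e_0 g_1 g_2$ from Section \ref{subsect local study}: the gauge transformation $g_1 g_2$ sends the $\boldsymbol\alpha$-adapted profile of $\sigma$ in the frame $e_0$ to the $\boldsymbol\beta$-adapted profile of $\tau$ in the frame $e_2$, implementing the weight shift $\beta_x = \alpha_x - (s_{\alpha_x} + \bar{s}_{\alpha_x})$ dictated by the main table. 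The inverse map is constructed by the reverse procedure, and the two are mutually inverse because the correspondence of Lemma \ref{lem higgs and connect 1} is a categorical equivalence that respects the passage to invariant subbundles.

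I expect the main obstacle to be the justification of the $P$-equivariant form of Lemma \ref{lem higgs and connect 1}: since $P$ is not reductive, one must either reduce to the Levi $L$ and treat the unipotent radical separately, or argue directly that the restriction of the induced flat connection on $E$ to the $\phi$-compatible reduction is still flat and integrable. A related technical point is that $\phi^{*}$ need not preserve $E_\sigma(\mathfrak{p})$ for an arbitrary $\phi$-compatible $\sigma$, so the identification of $\sigma$ with $\tau$ is \emph{not} literally the identity on smooth sections but is implemented through the change of frame $e_0 \to e_2$ of Section \ref{subsect local study}; this frame change encodes the asymmetry between $\phi$ and $\phi^{*}$ and is precisely the mechanism by which the bijection is well-defined. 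Once these points are settled, the matching of compatibility and adaptedness on the two sides reduces to tracking the Kostant--Rallis data $(X_{\alpha}, H_{\alpha}, Y_{\alpha})$ through the explicit formula for $d'_\beta$, completing the proof.
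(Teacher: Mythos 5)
Your proposal follows essentially the same route as the paper: the paper's proof just applies the gauge transformations from the proof of Proposition \ref{prop tame ana Higgs and conn} (the frames $e_0, e_1, e_2$ of \S\ref{subsect local study}) to the reduction $\sigma$, obtaining a $\boldsymbol\beta$-adapted holomorphic reduction with respect to $d''$, with compatibility coming directly from the construction of $d'$. The detour through a $P$-equivariant version of Lemma \ref{lem higgs and connect 1}, which you flag as the main obstacle, is not actually needed --- transporting $\sigma$ by the explicit gauge change already settles holomorphicity, adaptedness and compatibility, exactly as in the remainder of your argument.
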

	
\begin{proof}
The proof of this lemma is exactly the same as that of Proposition \ref{prop tame ana Higgs and conn}. The idea is that we start with a compatible $\boldsymbol\alpha$-adapted holomorphic reduction of structure group $\sigma$, and the corresponding $P$-Higgs bundle is $(E_\sigma, \phi_\sigma)$. Applying the gauge actions given in Proposition \ref{prop tame ana Higgs and conn} to $\sigma$, we obtain a $\boldsymbol\beta$-adapted holomorphic reduction of structure group with respect to the holomorphic structure $d''$, and the compatibility follows directly from the construction of $d'$.
\end{proof}

We introduce the following notions:
\begin{itemize}
\item $\mathcal{C}_{\rm Dol}(X_{\boldsymbol D},G,\boldsymbol\alpha, \phi_{\boldsymbol\alpha})$: the category of $R_h$-stable tame harmonic $\boldsymbol\alpha$-adapted $G$-Higgs bundles of degree zero on $X_{\boldsymbol D}$, and the Levi factors of residues of the Higgs field are $\phi_{\boldsymbol\alpha}$ at punctures;
\item $\mathcal{C}_{\rm dR}(X_{\boldsymbol D},G,\boldsymbol\beta,d'_{\boldsymbol\beta})$: the category of $R_h$-stable tame harmonic $\boldsymbol\beta$-adapted $G$-connections of degree zero on $X_{\boldsymbol D}$, and the Levi factors of residues of the connection are $d'_{\boldsymbol\beta}$ at punctures,
\end{itemize}
where
\begin{align*}
	\phi_{\boldsymbol\alpha}:=\{\phi_{\alpha_x}, x \in \boldsymbol D\} \quad \text{and} \quad d'_{\boldsymbol\beta}:=\{d'_{\beta_x}, x \in \boldsymbol D\}
\end{align*}
are collections of elements in $\mathfrak{g}$. We would like to explain the terminology here. In the classical case, the Kobayashi--Hitchin correspondence shows that harmonicity is equivalent to the polystability condition. We denote the category by including both harmonicity and stability at this point because a version of Kobayashi--Hitchin correspondence has not been proved yet; this will be proven later on in \S\ref{subsect Kob-Hitchin}.
	
\begin{thm}\label{thm ana Dol and DR}
Fix an $\boldsymbol\alpha$-adapted metric $h$. Let $(E,\partial''_E, \phi,h)$ be a tame harmonic $\boldsymbol\alpha$-adapted $G$-Higgs bundle, and let $(V,d',h)$ be the corresponding tame harmonic $\boldsymbol\beta$-adapted $G$-connection. The tame harmonic $G$-Higgs bundle $(E,\partial''_E,\phi,h)$ is $R_h$-stable (resp. $R_h$-polystable) if and only if the corresponding tame harmonic $G$-connection $(V,d',h)$ is $R_h$-stable (resp. $R_h$-polystable). Furthermore, the categories $\mathcal{C}_{\rm Dol}(X_{\boldsymbol D},G,\boldsymbol\alpha, \phi_{\boldsymbol\alpha})$ and $\mathcal{C}_{\rm dR}(X_{\boldsymbol D},G,\boldsymbol\beta,d'_{\boldsymbol\beta})$ are equivalent.
\end{thm}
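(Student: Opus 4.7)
The plan is to upgrade the object-level correspondence of Proposition \ref{prop tame ana Higgs and conn} and Lemma \ref{lem higgs and connect 1} to an equivalence of categories by showing that the $R_h$-stability conditions match under the bijection. The three ingredients one needs are: the correspondence of underlying harmonic objects, the bijection of compatible adapted holomorphic reductions, and the equality of analytic degrees for matched reductions with a common character of the parabolic.

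First, I would apply Proposition \ref{prop tame ana Higgs and conn} to obtain the object-level correspondence $(E,\partial''_E,\phi,h) \leftrightarrow (V,d',h)$ together with the prescribed residue matching $\phi_{\boldsymbol\alpha} = s_{\boldsymbol\alpha}+Y_{\boldsymbol\alpha} \leftrightarrow d'_{\boldsymbol\beta}$. Next, Lemma \ref{lem ana sub Higgs and sub mod} provides the bijection between compatible $\boldsymbol\alpha$-adapted holomorphic reductions $\sigma : X_{\boldsymbol D} \to E/P$ and compatible $\boldsymbol\beta$-adapted holomorphic reductions $\sigma' : X_{\boldsymbol D} \to V/P$, implemented by the same sequence of complex gauge transformations $g_0, g_1, g_2$ appearing in the proof of Proposition \ref{prop tame ana Higgs and conn}. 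Since the parabolic $P$ is the same in both instances, characters $\chi: P \to \mathbb{C}^*$ match trivially, and via Lemma \ref{lem char} this corresponds to a fixed character $\kappa$ of $\mathcal{P}_{\boldsymbol\alpha}$ (resp.\ $\mathcal{P}_{\boldsymbol\beta}$).

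The main step is then to establish the degree equality
\[
\deg^{\rm an} E(h,\sigma,\chi) \;=\; \deg^{\rm an} V(h,\sigma',\chi)
\]
for corresponding pairs. I would proceed by translating both analytic degrees to parahoric degrees via Proposition \ref{prop analytic=algebraic degree}, reducing the task to comparing
\[
\deg L(\varsigma,\kappa) + \langle \boldsymbol\alpha,\kappa\rangle \quad \text{and} \quad \deg L(\varsigma',\kappa) + \langle \boldsymbol\beta,\kappa\rangle.
\]
By definition $\boldsymbol\beta = \boldsymbol\alpha - (s_{\boldsymbol\alpha}+\bar{s}_{\boldsymbol\alpha})$, so the weight-contribution difference equals $\langle s_{\boldsymbol\alpha}+\bar{s}_{\boldsymbol\alpha},\kappa\rangle$. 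Using the explicit local description in \S\ref{subsect local study}, the gauge transformation $g_2 = |z|^{\alpha+s_\alpha-\bar{s}_\alpha-\cdots}$ that relates the Dolbeault and de Rham trivializations at each puncture modifies the local holomorphic framings of the associated line bundle $L(\varsigma,\kappa)$, and a residue/boundary computation shows that the resulting change in line-bundle degree exactly cancels the shift in weight contribution. Equivalently, one may work directly with the curvature: harmonicity gives $F_h = -[\phi,\phi^*]$ on the Dolbeault side and the analogous identity on the de Rham side, and the Chern--Weil formula of Lemma \ref{lem Chern-Weil} combined with the compatibility $\phi_\sigma \in \mathfrak{p}$ pins down the puncture contributions in terms of the Levi residue data.

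Once the degree equality is in hand, the $R_h$-stability correspondence follows directly from matching the three quantifiers (parabolics $P$, compatible reductions $\sigma \leftrightarrow \sigma'$, anti-dominant characters $\chi$) in the two definitions. The polystability statement is handled analogously: for an admissible reduction $\sigma$ (those with $\deg^{\rm an} E(h,\sigma,\chi)=0$ for every anti-dominant $\chi$) the Levi reduction $E_{\sigma_L}$ corresponds under the bijection to $V_{\sigma'_L}$, and applying the already-established stability equivalence to the Levi subgroup yields the polystability equivalence. Finally, the categorical equivalence $\mathcal{C}_{\rm Dol}(X_{\boldsymbol D},G,\boldsymbol\alpha,\phi_{\boldsymbol\alpha}) \cong \mathcal{C}_{\rm dR}(X_{\boldsymbol D},G,\boldsymbol\beta,d'_{\boldsymbol\beta})$ is assembled from the object-level functor of Proposition \ref{prop tame ana Higgs and conn}, which sends morphisms (isomorphisms of metrized $G$-Higgs bundles preserving all structure) to morphisms in the target category via the identity on the underlying smooth bundle; this functor is fully faithful and essentially surjective by the stability correspondence.

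The main obstacle is the degree-equality step. The subtlety is that the holomorphic structures $\partial''_E$ and $d''$ on the common underlying smooth $G$-bundle genuinely differ (by the term $\phi^*$), so the associated parahoric torsors $\mathcal{E}_{\boldsymbol\alpha}$ and $\mathcal{V}_{\boldsymbol\beta}$ are different as algebraic objects and their line-bundle degrees $\deg L(\varsigma,\kappa)$ and $\deg L(\varsigma',\kappa)$ are a priori unrelated. Reconciling these via the local gauges of \S\ref{subsect local study} and the residue data requires a careful puncture-by-puncture analysis, which is where the bulk of the technical work lies.
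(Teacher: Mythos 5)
Your proposal follows the paper's own route: the object-level correspondence of Proposition \ref{prop tame ana Higgs and conn}, the bijection of compatible adapted reductions from Lemma \ref{lem ana sub Higgs and sub mod}, matching of parabolics and anti-dominant characters, and the Levi argument for polystability --- this is exactly how the paper proves Theorem \ref{thm ana Dol and DR}, whose proof consists of precisely these two citations. Where you diverge is in isolating the equality $\deg^{\rm an}E(h,\sigma,\chi)=\deg^{\rm an}V(h,\sigma',\chi)$ as the ``main obstacle'' and proposing to verify it by passing through two different parahoric extensions and comparing $\deg L(\varsigma,\kappa)+\langle\boldsymbol\alpha,\kappa\rangle$ with $\deg L(\varsigma',\kappa)+\langle\boldsymbol\beta,\kappa\rangle$; the paper treats this point as immediate, since both stability conditions are computed from curvature integrals for the \emph{same} metric $h$, and under the correspondence of reductions (implemented by the explicit local gauge changes $g_0,g_1,g_2$ of \S\ref{subsect local study}) the shift of weights $\boldsymbol\beta=\boldsymbol\alpha-(s_{\boldsymbol\alpha}+\bar s_{\boldsymbol\alpha})$ and the change of extension compensate --- this is exactly the content already encoded in Proposition \ref{prop analytic=algebraic degree} together with the Main Table, so your claimed cancellation is correct and the step does not fail; your route is just a heavier, algebraic way of checking what the paper absorbs into Proposition \ref{prop tame ana Higgs and conn} and Lemma \ref{lem ana sub Higgs and sub mod}. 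In short, the proposal is sound and essentially the paper's argument; the puncture-by-puncture analysis you defer is the local computation the paper has already carried out in \S\ref{subsect local study}, not new work that remains open.
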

	
\begin{proof}
By Lemma \ref{lem ana sub Higgs and sub mod}, if $\sigma$ is a compatible $\boldsymbol\alpha$-adapted reduction of structure group of $(E,\partial''_E,\phi,h)$, then and only then it is a compatible $\boldsymbol\beta$-adapted reduction of structure group of $(V,d',h)$. Based on Proposition \ref{prop tame ana Higgs and conn}, the correspondence follows directly.
\end{proof}

\section{Logahoric Higgs Torsors and Logahoric Connections}\label{sect alg Higgs and conn}
	
In this section, we extend the functor $\Xi$ studied in \S\ref{subsect functor Xi} to categories of Higgs bundles and connections. Let $X$ be a smooth algebraic curve, and denote by $K_X$ the canonical bundle on $X$. A parahoric $\mathcal{G}_{\boldsymbol\theta}$-torsor $\mathcal{E}$ on $X$ can be understood from its local charts $(E, E_x)$, where $E$ is a $G$-bundle on $X_{\boldsymbol D}$ and $E_x$ is a $\mathcal{G}_{\theta_x}$-torsor on $\mathbb{D}_x$. By taking adjoint representations, the adjoint bundles $E(\mathfrak{g})$ on $X_{\boldsymbol D}$ and $E_x(\mathfrak{g})$ on $\mathbb{D}_x$ are defined in a natural way. Under a trivialization around $x$, sections of $E_x(\mathfrak{g})$ are regarded as $\mathfrak{g}_{\theta_x}(K)$ (see \S\ref{sect parah grp} for the notation). Patching them together via the same transition functions as $\mathcal{E}$, we get an \emph{adjoint bundle} on $X$ which is denoted by $\mathcal{E}(\mathfrak{g})$.
	
\begin{defn}\label{defn alg parah Higgs}
A \emph{logahoric $\mathcal{G}_{\boldsymbol\theta}$-Higgs torsor} on a smooth algebraic curve $X$ is a pair $(\mathcal{E},\varphi)$, where
\begin{itemize}
	\item $\mathcal{E}$ is a parahoric $\mathcal{G}_{\boldsymbol\theta}$-torsor on $X$;
	\item $\varphi \in H^0(X, \mathcal{E}(\mathfrak{g}) \otimes K_X(\boldsymbol D))$ is a section.
\end{itemize}
The section $\varphi$ is called a \emph{logarithmic Higgs field}.
\end{defn}
We would like to repeat at this point Remark \ref{rem conv holo and alg}; namely, the definition above is a purely algebraic one, therefore we do not use the term \emph{holomorphic section}.  Similarly to parahoric torsors (cf. \S \ref{sect parah tor}), one naturally defines equivalence classes of logahoric Higgs torsors. Next, we define \emph{logahoric connections}. We only give the key definition and refer the reader to Appendix \ref{sect appendix} for more details.
\begin{defn}\label{defn alg parah conn}
Let $\mathcal{E}$ be a parahoric $\mathcal{G}_{\boldsymbol\theta}$-torsor. A \emph{logarithmic connection} on $\mathcal{E}$ is a $\mathbb{C}$-linear map $\nabla:\mathcal{O}_{\mathcal{E}} \rightarrow \mathcal{O}_{\mathcal{E}} \otimes K_X(\boldsymbol D)$ such that
\begin{enumerate}
	\item $\nabla$ satisfies the Leibniz rule;
	\item the diagram commutes, 
	\begin{center}
		\begin{tikzcd}
		\mathcal{O}_{\mathcal{E}} \arrow[rr,"\nabla"] \arrow[d,"a"] & & \mathcal{O}_{\mathcal{E}} \otimes K_X(\boldsymbol D) \arrow[d,"a \otimes 1"] \\
		\mathcal{O}_{\mathcal{E}} \otimes \mathcal{O}_{\mathcal{G}_{\boldsymbol\theta}} \arrow[rr,"\nabla \otimes 1 + 1 \otimes \nabla_{\mathcal{G}_{\boldsymbol\theta}}"] & & (\mathcal{O}_{\mathcal{E}} \otimes \mathcal{O}_{\mathcal{G}_{\boldsymbol\theta}}) \otimes K_X(\boldsymbol D)
		\end{tikzcd}
	\end{center}
	where $a: \mathcal{O}_{\mathcal{E}} \rightarrow \mathcal{O}_{\mathcal{E}} \otimes \mathcal{O}_{\mathcal{G}_{\boldsymbol\theta}}$ is the co-action map and $\nabla_{\mathcal{G}_{\boldsymbol\theta}}$ is the canonical integrable connection on $\mathcal{G}_{\boldsymbol\theta}$.
\end{enumerate}
\end{defn}

A logarithmic connection $\nabla$ corresponds to a section of $\mathcal{E}(\mathfrak{g}) \otimes K_X(\boldsymbol D)$ locally, which is regarded as the \emph{connection form} of $\nabla$.
	
\begin{defn}\label{defn tame loga mod}
A \emph{logahoric $\mathcal{G}_{\boldsymbol\theta}$-connection} on $X$ is a pair $(\mathcal{E},\nabla)$, where $\mathcal{E}$ is a parahoric $\mathcal{G}_{\boldsymbol\theta}$-torsor and $\nabla$ is a logarithmic connection on $\mathcal{E}$.
\end{defn}
	
In \S\ref{sect parah tor}, we have seen that there is a correspondence between $\boldsymbol\theta$-adapted $G$-bundles on $X_{\boldsymbol D}$ and parahoric $\mathcal{G}_{\boldsymbol\theta}$-torsors on $X$. In the sequel, we include the additional information given by Higgs fields (resp. connections) to get a correspondence between \emph{tame metrized $\boldsymbol\theta$-adapted $G$-Higgs bundles} (resp. \emph{$G$-connections}) on $X_{\boldsymbol D}$ and \emph{logahoric $\mathcal{G}_{\boldsymbol\theta}$-Higgs torsors} (resp. \emph{logahoric $\mathcal{G}_{\boldsymbol\theta}$-connections}) on $X$.
	
A tame $\boldsymbol\theta$-adapted metrized $G$-Higgs bundle $(E,\partial''_E,\phi,h)$ corresponds to a parahoric $\mathcal{G}_{\boldsymbol\theta}$-torsor $\mathcal{E}$. Note that around each puncture $x \in \boldsymbol D$, the Higgs field $\phi$ satisfies the tameness condition that $z^{\theta_x} \cdot (z\phi(z)) \cdot z^{-\theta_x}$ is bounded as $z$ approaches zero (see Definition \ref{defn tame ana Higgs}). By Lemma \ref{lem two defn parah Lie alg}, the Higgs field $\phi$ naturally extends to a logarithmic Higgs field $\varphi: X \rightarrow \mathcal{E}(\mathfrak{g}) \otimes K_X(\boldsymbol D)$ . Therefore, we associate $(E,\partial''_E,\phi,h)$ with a logahoric $\mathcal{G}_{\boldsymbol\theta}$-Higgs torsor $(\mathcal{E},\varphi)$, and thus establish the functor
\begin{align*}
	\Xi_{\rm Higgs}:  \mathcal{C}_{\rm Higgs}(X_{\boldsymbol D},G,\boldsymbol\theta) \rightarrow \mathcal{C}_{\rm Higgs}(X,\mathcal{G}_{\boldsymbol\theta}),
\end{align*}
where $\mathcal{C}_{\rm Higgs}(X_{\boldsymbol D},G,\boldsymbol\theta)$ is the category of tame metrized $\boldsymbol\theta$-adapted $G$-Higgs bundles on $X_{\boldsymbol D}$ and $\mathcal{C}_{\rm Higgs}(X,\mathcal{G}_{\boldsymbol\theta})$ is the category of logahoric $\mathcal{G}_{\boldsymbol\theta}$-Higgs torsors $(\mathcal{E},\varphi)$ on $X$.
	
The argument for connections is similar. Given a tame metrized $\boldsymbol\theta$-adapted $G$-connection $(V,d',h)$, the holomorphic bundle $V$ can be extended to a parahoric $\mathcal{G}_{\boldsymbol\theta}$-torsor $\mathcal{V}$ under the functor $\Xi$. Under the tameness condition for $d'$, the connection $d'$ corresponds to a logarithmic connection on $\mathcal{V}$ and denote by $\nabla$ the corresponding connection. Therefore, we have the functor
\begin{align*}
	\Xi_{\rm Conn}:  \mathcal{C}_{\rm Conn}(X_{\boldsymbol D},G,\boldsymbol\theta) \rightarrow \mathcal{C}_{\rm Conn}(X,\mathcal{G}_{\boldsymbol\theta}),
\end{align*}
where $\mathcal{C}_{\rm Conn}(X_{\boldsymbol D},G,\boldsymbol\theta)$ is the category of tame metrized $\boldsymbol\theta$-adapted $G$-connections on $X_{\boldsymbol D}$, and similarly, $\mathcal{C}_{\rm Conn}(X,\mathcal{G}_{\boldsymbol\theta})$ is the category of logahoric $\mathcal{G}_{\boldsymbol\theta}$-connections on $X$.
	
After establishing the correspondence for Higgs bundles and connections, we move to stability conditions. We first consider the stability condition for logahoric $\mathcal{G}_{\boldsymbol\theta}$-Higgs torsors on $X$. Let $(\mathcal{E},\varphi)$ be a parahoric $\mathcal{G}_{\boldsymbol\theta}$-Higgs torsor on $X$. A reduction of structure group $\varsigma: X \rightarrow \mathcal{E}/\mathcal{P}_{\boldsymbol \theta}$ is said to be \emph{$\varphi$-compatible} (or \emph{compatible} in short), if there is a lifting $\varphi_{\varsigma}:X \rightarrow \mathcal{E}_{\varsigma}(\mathfrak{p}) \otimes K_X(\boldsymbol D)$ such that the following diagram commutes
	\begin{center}
		\begin{tikzcd}
		& \mathcal{E}_{\varsigma}(\mathfrak{p}) \otimes K_X(\boldsymbol D) \arrow[d, hook] \\
		X \arrow[r, "\varphi"] \arrow[ur, "\varphi_{\varsigma}", dotted] & \mathcal{E}(\mathfrak{g}) \otimes K_X(\boldsymbol D).
		\end{tikzcd}
	\end{center}
	
	\begin{defn}
		A logahoric $\mathcal{G}_{\boldsymbol\theta}$-Higgs torsor $(\mathcal{E},\varphi)$ is called \emph{$R$-stable} (resp. \emph{$R$-semistable}), if for
		\begin{itemize}
			\item any proper parabolic group $P \subseteq G$,
			\item any compatible reduction of structure group $\varsigma: X \rightarrow \mathcal{E}/\mathcal{P}_{\boldsymbol\theta}$,
			\item any nontrivial anti-dominant character $\kappa: \mathcal{P}_{\boldsymbol\theta} \rightarrow \mathbb{G}_m$, which is trivial on the center of $\mathcal{P}_{\boldsymbol\theta}$,
		\end{itemize}
		one has
		\begin{align*}
		parh\deg \mathcal{E}(\varsigma,\kappa) > 0, \quad (\text{resp. } \geq 0).
		\end{align*}
	\end{defn}
	
For a logahoric $\mathcal{G}_{\boldsymbol\theta}$-connection $(\mathcal{V},\nabla)$, a reduction of structure group $\varsigma: X \rightarrow \mathcal{V}/\mathcal{P}_{\boldsymbol \theta}$ is \emph{$\nabla$-compatible} (or \emph{compatible} in short), if there is a lifting $\nabla_{\varsigma}:\mathcal{O}_{\mathcal{V}_{\varsigma}} \rightarrow\mathcal{O}_{\mathcal{V}_{\varsigma}} \otimes K_X(\boldsymbol D)$ such that the following diagram commutes
\begin{center}
\begin{tikzcd}
	\mathcal{O}_{\mathcal{V}} \arrow[r,"\nabla"] \arrow[d] & \mathcal{O}_{\mathcal{V}} \otimes K_X(\boldsymbol D) \arrow[d]  \\
	\mathcal{O}_{\mathcal{V}_{\varsigma}} \arrow[r,"\nabla_{\varsigma}",dotted] & \mathcal{O}_{\mathcal{V}_{\varsigma}} \otimes K_X(\boldsymbol D) .
\end{tikzcd}
\end{center}
	
\begin{defn}
A logahoric $\mathcal{G}_{\boldsymbol\theta}$-connection $(\mathcal{V},\nabla)$ is called \emph{$R$-stable} (resp. \emph{$R$-semistable}), if for
\begin{itemize}
	\item any proper parabolic group $P \subseteq G$,
	\item any compatible reduction of structure group $\varsigma: X \rightarrow \mathcal{V}/\mathcal{P}_{\boldsymbol\theta}$,
	\item any nontrivial anti-dominant character $\kappa: \mathcal{P}_{\boldsymbol\theta} \rightarrow \mathbb{G}_m$, which is trivial on the center of $\mathcal{P}_{\boldsymbol\theta}$,
\end{itemize}
one has
\begin{align*}
	parh\deg \mathcal{V}(\varsigma,\kappa) > 0, \quad (\text{resp. } \geq 0).
\end{align*}
\end{defn}
	
	
Lemma \ref{lem red grp struc} gives a one-to-one correspondence between $\boldsymbol\theta$-adapted holomorphic reductions of group structure of $E$ and reductions of group structure of $\mathcal{E}$. This result can be extended to include Higgs fields and connections.
	
\begin{lem}\label{lem red grp struc Higgs and conn}
Let $(E,\partial''_E,\phi,h)$ (resp. $(V,d',h)$) be a tame metrized $\boldsymbol\theta$-adapted $G$-Higgs bundle (resp. $G$-connection) on $X_{\boldsymbol D}$. Denote by $(\mathcal{E}, \varphi)$ (resp. $(\mathcal{V},\nabla)$) the corresponding logahoric $\mathcal{G}_{\boldsymbol\theta}$-Higgs torsor (resp. logahoric $\mathcal{G}_{\boldsymbol\theta}$-connection). Given a $\boldsymbol\theta$-adapted holomorphic reduction of structure group $\sigma: X_{\boldsymbol D} \rightarrow E/P$, it is compatible with $\phi$ (resp. $d'$) if and only if the corresponding reduction of structure group $\varsigma$ is compatible with $\varphi$ (resp. $\nabla$).
\end{lem}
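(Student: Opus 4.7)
The strategy is to reduce to a local computation at each puncture and invoke the equivalence of algebraic and analytic descriptions of parahoric objects provided by Lemma \ref{lem two defn parah grp}, Lemma \ref{lem two defn parah Lie alg} and Lemma \ref{lem red grp struc}. Over $X_{\boldsymbol D}$ the data on both sides agree: by construction of $\Xi_{\rm Higgs}$ (resp.\ $\Xi_{\rm Conn}$) the restrictions satisfy $\mathcal{E}|_{X_{\boldsymbol D}}=E$ and $\varphi|_{X_{\boldsymbol D}}=\phi$ (resp.\ $\mathcal{V}|_{X_{\boldsymbol D}}=V$ and $\nabla|_{X_{\boldsymbol D}}=d'$), and by Lemma \ref{lem red grp struc} the reductions $\varsigma|_{X_{\boldsymbol D}}$ and $\sigma$ coincide as sections of $E/P$. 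Consequently a lifting $\varphi_\varsigma$ restricts to a lifting $\phi_\sigma$, proving one direction trivially. The substantive content is showing that, conversely, a lifting $\phi_\sigma$ on $X_{\boldsymbol D}$ automatically extends across each puncture to a genuine section of $\mathcal{E}_\varsigma(\mathfrak{p})\otimes K_X(\boldsymbol D)$.

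For this extension, I would work in a formal disc $\mathbb{D}_x$ around a fixed puncture $x\in\boldsymbol D$, choose a trivialization in which the $\boldsymbol\theta$-adaptedness of $\sigma$ takes the form that $z^{\theta_x}\sigma(z)z^{-\theta_x}$ is bounded as $z\to 0$, so that $\varsigma$ is described locally by an element of $\mathcal{P}_{\theta_x}$ as in the proof of Lemma \ref{lem red grp struc}. The tameness of $\phi$ means $\phi\in \mathfrak{g}_{\theta_x}(K)\tfrac{dz}{z}$ by Lemma \ref{lem two defn parah Lie alg}, and this is precisely the local description of the logarithmic Higgs field $\varphi$ on $\mathcal{E}$. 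Combining these two boundedness statements by an ${\rm Ad}(z^{\theta_x})$-conjugation argument identical to the one used in Lemma \ref{lem two defn parah grp} (namely, that ${\rm Ad}(z^{\theta_x})\phi_\sigma(z)$ is bounded once both ${\rm Ad}(z^{\theta_x})\phi(z)$ and $z^{\theta_x}\sigma(z)z^{-\theta_x}$ are bounded), the lift $\phi_\sigma$ lies in the analogue $\mathfrak{p}_{\theta_x}(K)\tfrac{dz}{z}$ of the parahoric Lie algebra for the parabolic subgroup $P$. This is exactly the local model of $\mathcal{E}_\varsigma(\mathfrak{p})\otimes K_X(\boldsymbol D)$, giving the desired extension $\varphi_\varsigma$.

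The connection case is entirely parallel: the tameness of $d'$ in Definition \ref{defn tame ana conn} is stated by exactly the same ${\rm Ad}(z^{\theta_x})$-boundedness as for the Higgs field, so replacing $\phi$ by $d'$ and $\varphi$ by $\nabla$ in the local argument, and reinterpreting the conclusion through the intrinsic definition of a logahoric connection (Definition \ref{defn tame loga mod} and Appendix \ref{sect appendix}), yields the same equivalence.

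The main obstacle I anticipate is formulating and verifying cleanly the parabolic analogue of Lemma \ref{lem two defn parah Lie alg}, i.e.\ that the set of $g\in\mathfrak{p}(K)$ with ${\rm Ad}(z^{\theta_x})g$ bounded as $z\to 0$ is generated by $\mathfrak{t}(R)$ and $\mathfrak{u}_r(z^{m_r(\theta_x)}R)$ for $r\in\mathcal{R}_P$. This is a straightforward repetition of the proof of Lemma \ref{lem two defn parah grp} using the root decomposition restricted to $\mathcal{R}_P$, but it is the technical ingredient that genuinely uses the parabolic structure and ensures that the lift $\phi_\sigma$ (or $d'_\sigma$) belongs to the correct parahoric subsheaf rather than only to the ambient $\mathcal{E}(\mathfrak{g})\otimes K_X(\boldsymbol D)$.
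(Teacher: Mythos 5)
Your proposal is correct and follows essentially the same route as the paper, which simply reduces to a local argument at each puncture and invokes the boundedness/conjugation argument of Lemma \ref{lem two defn parah grp} exactly as in Lemma \ref{lem red grp struc}. Your explicit identification of the parabolic analogue of Lemma \ref{lem two defn parah Lie alg} (built from $T(R)$ and $U_r(z^{m_r(\theta_x)}R)$ for $r\in\mathcal{R}_P$, as in the construction of $\mathcal{P}_{\boldsymbol\theta}$ in \S\ref{subsect alg parah deg}) is precisely the technical point the paper leaves implicit.
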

	
\begin{proof}
The proof of this lemma is similar to the one for Lemma \ref{lem red grp struc}.
\end{proof}
	
\begin{prop}\label{prop ana=alg Higgs and conn}
A tame metrized $\boldsymbol\theta$-adapted $G$-Higgs bundle is $R_h$-stable (resp. $R_h$-semistable) if and only if the corresponding logahoric $\mathcal{G}_{\boldsymbol\theta}$-Higgs torsor is $R$-stable (resp. $R$-semistable). Similarly, a tame metrized $\boldsymbol\theta$-adapted $G$-connection is $R_h$-stable (resp. $R_h$-semistable) if and only if the corresponding logahoric $\mathcal{G}_{\boldsymbol\theta}$-connection is $R$-stable (resp. $R$-semistable).
\end{prop}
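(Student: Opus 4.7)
The plan is to reduce the proposition directly to the bundle-level statement already proven in Proposition \ref{prop stab of parah tor}, with the only new ingredient being that reductions of structure group must now respect the Higgs field or connection. No genuinely new analysis is needed: once the correct bijections are invoked, the stability inequalities on the two sides are literally the same inequality.

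First I would unpack the quantifiers on both sides. $R_h$-stability of a tame metrized $\boldsymbol\theta$-adapted $G$-Higgs bundle $(E,\partial''_E,\phi,h)$ asks that $\deg^{\rm an}E(h,\sigma,\chi)>0$ (resp.\ $\geq 0$) for every triple $(P,\sigma,\chi)$ consisting of a proper parabolic $P\subseteq G$, a $\boldsymbol\theta$-adapted holomorphic reduction $\sigma:X_{\boldsymbol D}\to E/P$ that is $\phi$-compatible, and a nontrivial anti-dominant character $\chi:P\to\mathbb{G}_m$ trivial on the center. Meanwhile, $R$-stability of $(\mathcal{E},\varphi)$ asks the same inequality with $parh\deg\mathcal{E}(\varsigma,\kappa)$ and triples $(P,\varsigma,\kappa)$ on the algebraic side. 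To prove the two conditions are equivalent, I need a quantifier-by-quantifier correspondence that preserves the numerical value of the degree.

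The three bijections that accomplish this are already in place: Lemma \ref{lem char} identifies characters $\chi$ of $P$ with characters $\kappa$ of $\mathcal{P}_{\boldsymbol\theta}$ and this identification sends nontrivial anti-dominant characters trivial on the center to the same class on the algebraic side (the center of $\mathcal{P}_{\boldsymbol\theta}$ corresponds to that of $P$); Lemma \ref{lem red grp struc Higgs and conn} refines Lemma \ref{lem red grp struc} by showing that under the bijection $\Gamma_{\boldsymbol\theta}(X_{\boldsymbol D},E/P)\cong\Gamma(X,\mathcal{E}/\mathcal{P}_{\boldsymbol\theta})$ the $\phi$-compatible (resp.\ $\nabla$-compatible) reductions on $X_{\boldsymbol D}$ are exactly those whose extension to $X$ is $\varphi$-compatible (resp.\ $\nabla$-compatible); and Proposition \ref{prop analytic=algebraic degree} gives the numerical equality $\deg^{\rm an}E(h,\sigma,\chi)=parh\deg\mathcal{E}(\varsigma,\kappa)$. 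Combining these, the set of triples over which $R_h$-stability quantifies and the set over which $R$-stability quantifies are in bijection, and the degree function attached to each pair of matching triples takes the same value, so the positivity (resp.\ non-negativity) inequality holds on one side if and only if it holds on the other. The argument for connections is formally identical, replacing $\phi$ by $\nabla$, $\varphi$ by $\nabla$, and the Higgs-compatibility diagram by the connection-compatibility diagram.

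There is no serious obstacle. The only thing worth double-checking is that the bijection of Lemma \ref{lem red grp struc Higgs and conn} really does restrict to compatible reductions in both directions, but this is precisely what that lemma asserts, and its proof reduces to the local statement already established. Consequently the proof of the present proposition is essentially a one-line invocation, exactly parallel to the proof of Proposition \ref{prop stab of parah tor}: it is a direct consequence of Lemma \ref{lem char}, Lemma \ref{lem red grp struc Higgs and conn}, and Proposition \ref{prop analytic=algebraic degree}.
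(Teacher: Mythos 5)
Your proposal is correct and matches the paper's own argument: the paper likewise disposes of this proposition by citing Lemma \ref{lem red grp struc Higgs and conn} together with Propositions \ref{prop analytic=algebraic degree} and \ref{prop stab of parah tor}, and your quantifier-by-quantifier matching (characters via Lemma \ref{lem char}, compatible reductions via Lemma \ref{lem red grp struc Higgs and conn}, equality of degrees via Proposition \ref{prop analytic=algebraic degree}) is just the unpacked form of the same reduction. No gap.
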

	
\begin{proof}
This proposition is a direct result of Lemma \ref{lem red grp struc Higgs and conn} and Propositions \ref{prop analytic=algebraic degree}, \ref{prop stab of parah tor}.
\end{proof}
	
	Let
	\begin{align*}
	\varphi_{\boldsymbol\theta}=\{\varphi_{\theta_x}, \text{ } x \in \boldsymbol{D}\}, \quad \nabla_{\boldsymbol\theta}=\{\nabla_{\theta_x}, \text{ } x \in \boldsymbol{D}\}
	\end{align*}
	be two collections of elements (as residues) in $\mathfrak{g}$ indexed by punctures. We introduce the following notions:
\begin{itemize}
\item $\mathcal{C}_{\rm Dol}(X,\mathcal{G}_{\boldsymbol\theta},\varphi_{\boldsymbol\theta})$: the category of $R$-stable logahoric $\mathcal{G}_{\boldsymbol\theta}$-Higgs torsors of degree zero on $X$, and the Levi factors of residues of the Higgs field are $\varphi_{\boldsymbol\theta}$ at punctures;
\item $\mathcal{C}_{\rm dR}(X,\mathcal{G}_{\boldsymbol\theta},\nabla_{\boldsymbol\theta})$: the category of $R$-stable logahoric $\mathcal{G}_{\boldsymbol\theta}$-connections of degree zero on $X$ and the Levi factors of residues of the connection are $\nabla_{\boldsymbol\theta}$ at punctures.
\end{itemize}
Proposition \ref{prop ana=alg Higgs and conn} shows that the functor $\Xi_{\rm Higgs}$ (resp. $\Xi_{\rm Conn}$) induces a well-defined one
	\begin{align*}
	\Xi_{\rm Dol}: \mathcal{C}_{\rm Dol}(X_{\boldsymbol D},G,\boldsymbol\theta, \phi_{\boldsymbol\theta}) \rightarrow \mathcal{C}_{\rm Dol}(X_{\boldsymbol D},\mathcal{G}_{\boldsymbol\theta},\varphi_{\boldsymbol\theta}) \quad (\text{resp. } \Xi_{\rm dR}: \mathcal{C}_{\rm dR}(X_{\boldsymbol D},G,\boldsymbol\theta,d'_{\boldsymbol\theta}) \rightarrow \mathcal{C}_{\rm dR}(X_{\boldsymbol D},\mathcal{G}_{\boldsymbol\theta},\nabla_{\boldsymbol\theta}))
	\end{align*}
	between the Dolbeault categories (resp. de Rham categories), where $\phi_{\boldsymbol\theta}=\varphi_{\boldsymbol\theta}$ (resp. $d'_{\boldsymbol\theta}=\nabla_{\boldsymbol\theta}$).

\section{Tame Parahoric Nonabelian Hodge Correspondence: Category}\label{sec_categorical naHc}

We are now ready to establish the tame parahoric nonabelian Hodge correspondence at the level of categories. The first step is a Kobayashi--Hitchin correspondence relating $R_h$-polystability with the existence of a $\boldsymbol \theta$-adapted harmonic metric $h$ on a tame metrized $\boldsymbol\theta$-adapted $G$-Higgs bundle $(E,\partial''_E,\phi)$ of degree zero on $X_{\boldsymbol D}$ (Theorem \ref{thm KH corr Higgs}). Here, by a \emph{harmonic metric} $h$, we mean one that solves the Hermite--Einstein equation and such a solution then induces a flat connection $D$ as in \eqref{induced_flat_connect}; this explains the slight abuse of notation for harmonicity that appeared in Definition \ref{defn har bun Dol+DR}. Given the Kobayashi--Hitchin correspondence, we obtain a bijection between objects in the Dolbeault and the de Rham categories (Theorem \ref{thm alg Dol and DR}). The next step involves a Riemann--Hilbert correspondence describing a bijection between objects in the de Rham and Betti categories. In \S\ref{subsect Riemann-Hilbert cat} we profit from the local behavior of the correspondence studied by Boalch in \cite{Bo} and show how the tame harmonic $G$-connections introduced here align with that work. The outcome will be a complete diagram describing the correspondences at the level of categories as follows:
\begin{center}
	\begin{tikzcd}
	\mathcal{C}_{\rm Dol}(X_{\boldsymbol D},G,\boldsymbol\alpha,\phi_{\boldsymbol\alpha}) \arrow[rr, equal] \arrow[d, equal] & &
	\mathcal{C}_{\rm dR}(X_{\boldsymbol D},G,\boldsymbol\beta,d'_{\boldsymbol\beta}) \arrow[d, equal] & &\\
	\mathcal{C}_{\rm Dol}(X,\mathcal{G}_{\boldsymbol\alpha},\varphi_{\boldsymbol\alpha}) \arrow[rr, equal]& & \mathcal{C}_{\rm dR}(X,\mathcal{G}_{\boldsymbol\beta},\nabla_{\boldsymbol\beta}) \arrow[rr, equal] & & \mathcal{C}_{\rm B}(X_{\boldsymbol D}, G,\boldsymbol\gamma, M_{\boldsymbol\gamma}),
	\end{tikzcd}
\end{center}
for the terms introduced in \S\ref{subsect local study}, and $\phi_{\boldsymbol\alpha}=\varphi_{\boldsymbol\alpha}, d'_{\boldsymbol\beta}=\nabla_{\boldsymbol\beta}$.

\subsection{Kobayashi--Hitchin Correspondence}\label{subsect Kob-Hitchin}

Let $(E,\partial''_E,\phi)$ be a tame $\boldsymbol\theta$-adapted $G$-Higgs bundle on $X_{\boldsymbol D}$. The purpose of this section is to prove a Kobayashi--Hitchin correspondence relating the polystability condition for the triple $(E,\partial''_E, \phi)$ with the existence of a hermitian metric $h$ on $E$ satisfying the Hermite--Einstein equation
\begin{equation}\label{Hermite-Einstein eq}
F_h+\left[ \phi ,{{\phi }^{*}} \right]=0,
\end{equation}
where $F_h$ is the curvature of the Chern connection associated to the metric $h$ and the holomorphic structure $\partial''_E$, while
${{\phi }^{*}}$ is the adjoint section of type $(0,1)$ defined by
\[{{\left( {{\phi }^{*}} \right)}^{T}}=\text{Ad}(h)\bar{\phi }.\]
Note that a metric $h$ satisfying the above equation also implies that the induced connection $D$ is flat, and therefore $h$ is exactly a harmonic metric as considered in Definition \ref{defn har bun Dol+DR}.

The strategy of the proof of the correspondence has so far been applied in a large variety of similar situations. Among those, we single out pioneering works for the correspondence on noncompact K\"{a}hler manifolds. Mehta and Seshadri \cite{Meht} first provided the analog of the classical Narasimhan--Seshadri correspondence in the context of punctured Riemann surfaces. Later on, Simpson \cite{Simp} developed the geometric analytic tools to study Hermite--Einstein metrics over noncompact curves for the group $\text{GL}_n( \mathbb{C})$, while the higher dimensional generalization to obtain  a complete correspondence between parabolic Higgs bundles and logarithmic integrable connections was due to Biquard for the case of smooth divisors \cite{Biquard}, and Mochizuki for normal crossing divisors \cite{Moc1,Moc2}. We give below the proof of the correspondence for our parahoric situation modeled on the one given in \cite{BGM}, which in turn follows the one in \cite{Biquard}. Since the context is very similar to that of \cite[\S 5]{BGM}, we only exhibit here how the key steps of the proof adapt for our triples $(E,\partial''_E,\phi)$.

\begin{thm}[Kobayashi--Hitchin Correspondence for $G$-Higgs Bundles]\label{thm KH corr Higgs}
	Let $(E,\partial''_E, \phi)$ be a tame $\boldsymbol\theta$-adapted $G$-Higgs bundle of degree zero equipped with a $\boldsymbol\theta$-adapted metric $h_0$. Then, $(E,\partial''_E, \phi)$ admits a $\boldsymbol\theta$-adapted harmonic metric $h$ which is quasi-isometric to $h_0$ if and only if it is $R_{h_0}$-polystable. Moreover, this metric is unique up to automorphisms of Higgs bundles.
\end{thm}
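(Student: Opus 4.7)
The plan is to follow the general strategy of Simpson, Biquard, and most directly Biquard--Garc\'ia-Prada--Mundet i Riera, adapting the argument to the parahoric setting developed in the preceding sections. The statement has two directions together with uniqueness: necessity of $R_{h_0}$-polystability from the existence of a harmonic metric, and the harder existence part.

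For the implication from harmonicity to $R_{h_0}$-polystability, I would take any $\boldsymbol\theta$-adapted holomorphic reduction of structure group $\sigma:X_{\boldsymbol D}\to E/P$ that is compatible with $\phi$, together with any nontrivial anti-dominant character $\chi$ of $P$ trivial on the center of $P$. Restricting \eqref{Hermite-Einstein eq} to the $P$-Higgs sub-bundle $E_\sigma$, pushing forward by $\chi$, and invoking the Chern--Weil identity of Lemma \ref{lem Chern-Weil} together with the analytic-to-parahoric degree equality of Proposition \ref{prop analytic=algebraic degree}, one reads off the inequality
\begin{align*}
\mathrm{parh}\deg \mathcal{E}(\varsigma,\kappa) = \deg^{\rm an} E(h,\sigma,\chi) = -\frac{\sqrt{-1}}{2\pi}\int_{X_{\boldsymbol D}}\chi_{*}\sigma^{*}[\phi,\phi^{*}]\ge 0,
\end{align*}
because after reduction to the Levi factor $[\phi,\phi^*]$ becomes a sum of pointwise squared norms of the off-diagonal components of $\phi$ (using the Kostant--Rallis triples of \S\ref{subsect local study}). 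Strict inequality forces $R_{h_0}$-stability, and the case of equality identifies the reduction on the Levi required by Definition \ref{defn stab poly}, giving polystability.

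For the converse I would run the Donaldson-type continuity/heat-flow method. Writing a candidate metric as $h=h_0\cdot e^s$ with $s\in\Gamma(X_{\boldsymbol D},E(\mathfrak{m}))$, one solves the perturbed Hermite--Einstein equation
\begin{align*}
F_{h_0 e^{s_\varepsilon}}+[\phi,\phi^{*}_{h_0 e^{s_\varepsilon}}]+\varepsilon\, s_\varepsilon = 0
\end{align*}
on appropriate weighted Sobolev spaces adapted to the punctures. The tameness of $\phi$ and the $\boldsymbol\theta$-adaptedness of $h_0$, together with the local behavior computed in \S\ref{subsect local study}, ensure that the initial error $F_{h_0}+[\phi,\phi^{*}_{h_0}]$ lies in the correct weighted $L^{p}$-space so that the linearization of the equation is Fredholm and invertible for each $\varepsilon>0$, yielding a family of solutions $s_\varepsilon$ via the implicit function theorem and a connectedness argument in $\varepsilon$. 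The analytic heart of the proof is a uniform estimate $\|s_\varepsilon\|_{L^\infty}\le C$ independent of $\varepsilon$: given this, $h=h_0\cdot e^{\lim s_\varepsilon}$ is well-defined, quasi-isometric to $h_0$, and solves \eqref{Hermite-Einstein eq}, hence is harmonic in the sense of Definition \ref{defn har bun Dol+DR}.

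The main obstacle is precisely this uniform $L^\infty$ bound. Following the Uhlenbeck--Yau scheme as in BGM, one argues by contradiction: if $\|s_\varepsilon\|_{L^\infty}\to\infty$, then the normalized sequence $u_\varepsilon:=s_\varepsilon/\|s_\varepsilon\|_{L^\infty}$ admits, after passage to a subsequence, a weak limit $u_\infty$ whose spectral projectors produce a holomorphic reduction of structure group to a proper parabolic $P\subsetneq G$ with $\deg^{\rm an}E(h_0,\sigma,\chi)\le 0$ for some anti-dominant $\chi$, contradicting $R_{h_0}$-polystability. The delicate point in the parahoric context is ensuring that this limiting reduction is $\boldsymbol\theta$-adapted and $\phi$-compatible and extends across the punctures; the local models of \S\ref{subsect local study} are what control the behavior of $u_\infty$ at the cusps and guarantee that $\sigma$ gives an admissible test object for Definition \ref{defn ana stab cond}. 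Uniqueness up to Higgs automorphism is then standard: for two such metrics $h_1,h_2$, the self-adjoint endomorphism $s=\log(h_1^{-1}h_2)$ satisfies a Bochner-type identity forcing $D_{h_1}s=0$ and $[\phi,s]=0$, so $e^{s/2}$ is an automorphism of the Higgs bundle intertwining $h_1$ and $h_2$.
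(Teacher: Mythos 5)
Your overall strategy coincides with the paper's (a Kobayashi--Hitchin argument in the style of Simpson/Biquard/BGM, with the Uhlenbeck--Yau contradiction producing a destabilizing reduction), but the two directions are implemented somewhat differently. For necessity, you feed the Hermite--Einstein equation directly into Lemma \ref{lem Chern-Weil} and Proposition \ref{prop analytic=algebraic degree} to get $\deg^{\rm an}E(h,\sigma,\chi)=-\tfrac{\sqrt{-1}}{2\pi}\int\chi_*\sigma^*[\phi,\phi^*]$, whose sign for anti-dominant $\chi$ and $\phi$-compatible $\sigma$ comes from the root-space components of $\phi$ in the nilradical (your appeal to Kostant--Rallis triples is beside the point here, and you should say explicitly that quasi-isometry of $h$ and $h_0$ lets you pass between $R_h$- and $R_{h_0}$-stability); the paper instead writes $F_{h_\sigma}=\sigma^*F_D+2h([\phi,\sigma],[\phi,\sigma])-[\phi_\sigma,\phi_\sigma^*]$, uses $F_D=0$ and the vanishing of the $\chi$-pushforward of the last term, and in the equality case explicitly lifts to a semistable Levi Higgs bundle and iterates to reach the polystability statement — your "equality identifies the reduction on the Levi" compresses exactly this step, which needs to be spelled out. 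For existence, you run the $\varepsilon$-perturbed equation with the implicit function theorem on weighted Sobolev spaces and a uniform $L^\infty$ bound, whereas the paper minimizes a Donaldson functional under a curvature constraint and reduces everything to the main estimate $\sup|s|\le C+C'M(h_0,h_0e^s)$ (Proposition \ref{estimates_regularity}); both implementations are standard and both funnel into the same Uhlenbeck--Yau weak-limit argument, with the parahoric-specific points (holomorphicity of $u_\infty$, the construction of $(P_s,\chi_\infty,\sigma_\infty)$, and the admissibility of the limiting reduction as a test object) handled by citing the same BGM/Biquard machinery that you flag. You should also record the preliminary reduction from the polystable to the stable case before running the estimate, and note that your uniqueness argument via the Bochner identity is equivalent to the paper's convexity argument for the Donaldson functional; with these points made explicit, your outline is a sound alternative packaging of the same proof.
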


\begin{proof}
We first prove that the polystability condition is necessary. Suppose that there exists a harmonic metric $h$ on $(E,\partial''_E,\phi)$. Since $h$ is quasi-isometric to $h_0$, the $R_{h}$-stability condition is equivalent to the $R_{h_0}$-stability condition. Therefore, we only have to show that the $G$-Higgs bundle is $R_h$-polystable. Let $\sigma$ be a compatible $\boldsymbol\theta$-adapted holomorphic reduction of group structure. With a similar calculation as in \S\ref{subsect ana Hig ana D-mod}, we derive
\begin{align*}
	F_{h_\sigma} = F_{D_\sigma} - 2 F_{D''_\sigma} - [\phi_\sigma , \phi^*_\sigma],
\end{align*}
where $D_\sigma:=\sigma^* D$, $D''_\sigma:=\sigma^* D''$ and the other terms are the same as in  \S\ref{subsect ana deg on bundle} and \S\ref{subsect ana Hig ana D-mod}. Since the reduction $\sigma: X_{\boldsymbol D} \rightarrow E/P$ is holomorphic, the pseudo-curvature $F_{D''_\sigma} = (D''_\sigma)^2=0$ is trivial. Now we consider the term $F_{D_\sigma}$. With a similar calculation as we did in Lemma \ref{lem Chern-Weil}, we get
\begin{align*}
	F_{D_\sigma}=\sigma^* F_D + 2 h( D''(\sigma), D''(\sigma)),
\end{align*}
where $h(\cdot,\cdot)$ denotes the hermitian metric on the bundle $E(\mathfrak{g})\otimes T^*_\mathbb{C} X$ (sections of this bundle are $1$-forms on $E(\mathfrak{g})$) induced from  the hermitian metric $h$ on $E(\mathfrak{g})$ (see Remark \ref{rem hermitian metric}) and $\sigma$ is regarded as a section $X_{\boldsymbol D} \rightarrow E(\mathfrak{u}^-)$ (see \S\ref{subsect deg Higgs and mod}). Since $\partial''_E(\sigma)=0$, we have
\begin{align*}
	F_{D_\sigma}= \sigma^* F_D + 2 h([\phi,\sigma],[\phi,\sigma]),
\end{align*}
and since $h$ is a harmonic metric, we have $F_D=0$, thus
\begin{align*}
	F_{h_\sigma} = 2 h([\phi,\sigma],[\phi,\sigma]) - [\phi,\phi^*].
\end{align*}
Note that the second term $[\phi,\phi^*]$ does not contribute to the degree, and therefore, we have the analytic degree $\deg^{\rm an} E(h, \sigma, \chi) \geq 0$ for an arbitrary nontrivial antidominant character $\chi$. Therefore, the tame $\boldsymbol\theta$-adapted $G$-Higgs bundle is $R_h$-semistable. The analytic degree is zero if and only if $[\phi,\sigma]=0$. In this case, we have a natural lifting $\phi_{\sigma_L}: X \rightarrow E_{\sigma_L}(\mathfrak{l}) \otimes K_{X_{\boldsymbol D}}$ and obtain a well-defined semistable $L$-Higgs bundle $(E_{\sigma_L},\phi_{\sigma_L})$ of degree zero. If $(E_{\sigma_L},\phi_{\sigma_L})$ is stable, then $(E,\partial''_E,\phi,h)$ is $R_h$-polystable. If not, we iterate the arguments above and stop at a stable one.
	
For the converse statement, assume that the triple $(E,\partial''_E, \phi)$ is $R_{h_{0}}$-polystable of degree zero for a $\boldsymbol\theta$-adapted harmonic metric $h_0$. We want to show that $R_{h_{0}}$-polystability provides the existence of a solution to the Hermite--Einstein equation (\ref{Hermite-Einstein eq}) in the space of metrics
	\[\mathcal{H}=\{ h = h_0 e^s, \text{ } s \in \hat{L}^{2,p}_{\delta}(E( \text{Im}(\mathfrak{g})))\},\]
for a small positive $\delta$ and a large positive $p$, where $\hat{L}^{2,p}_{\delta}$ denotes the weighted Sobolev space (see \cite{BGM}) and $\text{Im}(\mathfrak{g})$ denotes the imaginary part of the Lie algebra $\mathfrak{g}$. Note that a metric $h \in \mathcal{H}$ is $\boldsymbol\theta$-adapted by definition. For a pair of metrics $h_0$, $h_0 e^s \in \mathcal{H}$, we consider a Donaldson functional analogous to the one considered by Simpson in \cite{Simp1988}:
\begin{align*}
	M(h_0, h_0 e^s) = \int_{X_{\boldsymbol{D}}} \langle \sqrt{-1} \Lambda F_{h_0},s \rangle_{h_0} + \int_{X_{\boldsymbol{D}}} (\psi(s)(D''(s)),D''(s))_{h_0},
\end{align*}
where $\Lambda F_{h_0}$ is the contraction of $F_{h_0}$ with respect to the K\"ahler form on $X$, $\langle \cdot, \cdot \rangle_{h_0}$ and $(\cdot,\cdot)_{h_0}$ are explained in Remark \ref{rem notation herm}. We are extending the function $\psi(t)=\frac{e^t-t-1}{t^2}$ to sections of $E(\mathfrak{g})$. Evidently, the critical points of $M$ are Hermite--Einstein metrics. Following the scheme of the analogous proof in \cite[\S 5.6]{BGM}, we first remark that we can reduce to the case when the tuple $(E,\partial''_E, \phi)$ is $R_{h_{0}}$-stable of degree zero and that the solution we seek is of the form $h=h_0 e^s$ for a section $s$ in the semisimple part of $\mathfrak{g}$.
	
Now one needs to show that the Donaldson functional $M(h_0 ,h)$ minimizes for $h \in \mathcal{H}$ under the constraint ${{\left\| F(h) \right\|}_{L_{\delta }^{2,p}}}\le B$, for some large constant $B$. Fix a constant $B$ and define
\[{{\mathsf{\mathcal{S}}}^{\infty }}( B )=\left\{ s\in \hat{C}_{\delta }^{\infty }\left( E(\text{Im}(\mathfrak{g})) \right),\,\,{{\left\| F({{h}_{0}}{{e}^{s}}) \right\|}_{L_{\delta }^{p}}}\le B \right\}.\]
In the proof of \cite[Theorem 8.1]{Biquard}, Biquard shows how the existence of a solution $h$ reduces to the following proposition. A similar argument is also given in \cite[Proposition 5.3]{Simp1988}.
\begin{prop}\label{estimates_regularity}
Let $(E,\partial''_E, \phi)$ be a tame $\boldsymbol\theta$-adapted $G$-Higgs bundle over ${X}_{\boldsymbol D}$ equipped with a $\boldsymbol\theta$-adapted metric $h_0$ which is stable of degree zero. Then there exist constants $C$ and $C'$ such that
\begin{equation}\label{estimates_equation}
	{\rm sup} |s| \leq C +  C' M(h_0,h_0 e^s),
\end{equation}
for any section $ s \in {{\mathsf{\mathcal{S}}}^{\infty }}\left( B \right)$.
\end{prop}
	
\begin{proof}
We will show using the method of Uhlenbeck--Yau \cite{UY} that if there are no constants satisfying (\ref{estimates_equation}) then stability of the tuple $(E,\partial''_E, \phi)$ is violated. From \cite[Lemma 8.4]{Biquard} it follows that under this assumption there do not exist constants $C, C'$ such that
\begin{equation*}
	{{\left\| s \right\|}_{L^{1}}}\le C+  C' M(h_0,h_0 e^s),
\end{equation*}
for any section $ s \in {{\mathsf{\mathcal{S}}}^{\infty }}( B )$. Let $C_i$ be real constants with $\lim\limits_{i \rightarrow \infty} C_i = \infty$. Then, there is a sequence of sections $\{s_i\} \subset \mathcal{S}^{\infty}(B)$ such that ${\left\| s_i \right\|}_{L^{1}}\to \infty$ and ${{\left\| s_i \right\|}_{L^{1}}}\ge C_{i} M(h_0,h_0 e^{s_i})$.
		
Set $l_i:={\left\| s_i \right\|}_{L^{1}}$ and define $u_i :=l^{-1}_{i}s_{i}$, so ${\left\| u_i \right\|}_{L^{1}}=1$. Then, after passing to a subsequence, the sections $u_i$ converge weakly and locally in $L^{1,2}$ to a section $u_{\infty}$ of $E(\text{Im}(\mathfrak{g}))\rvert _{X_{\boldsymbol D}}$. This construction of $u_\infty$ gives the inequality
\begin{align*}
M(h_0,h_0 e^{u_\infty}) =  \int_{X_{\boldsymbol{D}}} \langle \sqrt{-1} \Lambda F_{h_0},u_\infty \rangle_{h_0} + \int_{X_{\boldsymbol{D}}} (\psi(u_\infty)(D''(u_\infty)),D''(u_\infty))_{h_0} \leq 0.
\end{align*}
Furthermore, from \cite[Section 5]{Simp1988}, the $L^2$-norm of $D''u_{\infty}$ is finite and the projection on eigenspaces of $\text{ad}(u_{\infty})$ corresponding to nonnegative eigenvalues of $\text{ad}(u_{\infty})$ is zero. This section $u_{\infty}$ is holomorphic with respect to $\partial''_E$ \cite[Lemma 5.4]{BGM}.
		
For any point $x \in X_{\boldsymbol D}$, let $s = u_\infty (x) \in \text{Im}(\mathfrak{g})$. The element $s$ defines a parabolic subgroup $P_s \subseteq G$ together with a strictly anti-dominant character $\chi_\infty: P_s \rightarrow \mathbb{C}^{*}$ (see \cite[\S 2.3]{BGM}). Furthermore, the section $u_{\infty}$ gives a holomorphic reduction of structure group $\sigma_{\infty}: X_{\boldsymbol D} \rightarrow E/P_s$. By \cite[Lemma 2.10]{BGM}, we have
\begin{align*}
	\int_{X_{\boldsymbol{D}}} \langle \sqrt{-1} \Lambda F_{h_0},u_\infty \rangle_{h_0} = \sqrt{-1} \int_{X_{\boldsymbol{D}}}   (\chi_\infty)_* (\sigma_\infty)^* F_{h_0} = 2 \pi \deg^{\rm an} E(h_0,\sigma_\infty, \chi_\infty).
\end{align*}
We now see that
\begin{align*}
	2 \pi \deg^{\rm an} E(h_0,\sigma_\infty, \chi_\infty) = & \int_{X_{\boldsymbol{D}}} \langle \sqrt{-1} \Lambda F_{h_0},u_\infty \rangle_{h_0} \\
	\leq & \int_{X_{\boldsymbol{D}}} \langle \sqrt{-1} \Lambda F_{h_0},u_\infty \rangle_{h_0} - (\varpi(u_\infty)(D''(u_\infty)), D''(u_\infty) )_{h_0}\\
	\leq & \, M(h_0,h_0 e^{u_\infty}) \leq 0,
\end{align*}
where $\varpi : \mathbb{R} \to \mathbb{R}$ is the function with $\varpi (0)=0$ and $\varpi(t)=t^{-1}$ for $t\neq 0$. The second inequality above holds because the function $\psi(t)$ in the Donaldson functional has the same convergence as the function $-\varpi(t)$ when the terms $l_i$, for all $i$, go to infinity. In conclusion, we find a particular reduction of structure group $\sigma_\infty$ and a nontrivial antidominant character $\chi_\infty$ such that the analytic degree is not positive, thus violating the stability of $(E,\partial''_E,\phi)$.
\end{proof}
	
The last step in the proof of Theorem \ref{thm KH corr Higgs} is to show that the harmonic metric $h$ is unique up to Higgs bundle automorphisms. Indeed, if $h$ and $h'$ are two such metrics in $\mathcal{H}$, then we may write $h'=he^s$ for some $s \in \hat{L}^{2,p}_{\delta}(E( \text{Im}(\mathfrak{g})))$. Then, $M(h, he^{ts} )$ is a constant function of $t$. In fact, this is a convex function of $t$, and for any metrics $h, h', h'' \in \mathcal{H}$, it satisfies the relations:
\begin{align*}
	& M(h,h'') = M(h,h') + M(h',h'')\\
	& \frac{d}{dt} M(h,he^{ts}) \rvert _{t=0} = \int_{X_{\boldsymbol{D}}} \langle \sqrt{-1} \Lambda F_h ,s \rangle_{h}.
\end{align*}
From these relations we get that $D''s=0$, and then the element $e^s$ stabilizes the Higgs field $\phi$. Therefore, the element induces an automorphism of the Higgs bundle. This finishes the proof of the theorem.	
\end{proof}

\begin{rem}\label{rem_reduce_to_Simpson}
It is important to point out that the result above does, in fact, reduce to the main theorem of Simpson \cite{Simp} for the case $G=\text{GL}_n(\mathbb{C})$ by taking a faithful representation. Indeed, in \cite[Section 5]{KSZparh} it was shown how the notion of $R$-stability as considered in the proof of Theorem \ref{thm KH corr Higgs} in this article coincides with the stability condition for parabolic Higgs bundles as considered in \cite{Simp}. Note that in \cite[\S 5.2]{BGM}, the authors remark the difficulty in reducing the Kobayashi--Hitchin correspondence they developed to the theorem of Simpson for the stability condition used in that work.
\end{rem}

From Proposition \ref{prop tame ana Higgs and conn}, a tame $\boldsymbol\alpha$-adapted $G$-Higgs bundle $(E,\partial''_E, \phi)$ equipped with an $\boldsymbol\alpha$-adapted harmonic metric $h$ and with residue of the Higgs field $\phi_{\alpha_x}$ around each puncture $x$, corresponds to a tame harmonic $\boldsymbol\beta$-adapted $G$-connection $(V,d')$ with residue $d'_{\beta_x}$equipped with the same $\boldsymbol\alpha$-adapted metric $h$. The proof of Theorem  \ref{thm KH corr Higgs} readily adapts to relate the harmonicity of a metric on a $G$-connection $(V, d', h)$ with the $R_h$-polystability condition: 

\begin{thm}[Kobayashi--Hitchin Correspondence for $G$-connections]\label{cor KH corr mod}
	Let $(V,d')$ be a tame $\boldsymbol\beta$-adapted $G$-connection of degree zero equipped with an $\boldsymbol\alpha$-adapted metric $h_0$. Then, $(V,d')$ admits an $\boldsymbol\alpha$-adapted harmonic metric $h$ which is quasi-isometric to $h_0$ if and only if it is $R_{h_{0}}$-polystable.
\end{thm}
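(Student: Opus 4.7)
The plan is to deduce this theorem from the Kobayashi--Hitchin correspondence for Higgs bundles (Theorem \ref{thm KH corr Higgs}) by exploiting the dictionary between Higgs bundles and connections from \S\ref{subsect basic defn of HIggs and conn} and Proposition \ref{prop tame ana Higgs and conn}, together with the categorical equivalence of Theorem \ref{thm ana Dol and DR}.

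For necessity, assume $(V, d')$ admits an $\boldsymbol\alpha$-adapted harmonic metric $h$ quasi-isometric to $h_0$. The decomposition $D = \partial'_h + \partial''_h + \phi_h + \phi^*_h$ then produces a tame harmonic $\boldsymbol\alpha$-adapted $G$-Higgs bundle $(V, \partial''_h, \phi_h, h)$ corresponding to $(V, d', h)$ under Proposition \ref{prop tame ana Higgs and conn}. The necessity direction of Theorem \ref{thm KH corr Higgs} gives $R_h$-polystability of this Higgs bundle, Theorem \ref{thm ana Dol and DR} transfers polystability to $(V, d', h)$, and quasi-isometry with $h_0$ implies $R_{h_0}$-polystability.

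For sufficiency I would adapt the Donaldson functional argument from the proof of Theorem \ref{thm KH corr Higgs}. Decomposing $D$ via $h_0$ and using the identity $F_D = F_h + 2 F_{D''_h} + [\phi_h, \phi^*_h]$ together with $F_D = 0$, the harmonicity condition $F_{D''_h} = 0$ reduces to the Hermite--Einstein-type equation
\[
F_h + [\phi_h, \phi^*_h] = 0,
\]
which is structurally identical to its analog in the Higgs setting. Defining the same functional $M(h_0, h_0 e^s)$ on the same weighted Sobolev space $\mathcal{H}$, the Uhlenbeck--Yau estimate of Proposition \ref{estimates_regularity} transfers: if a harmonic metric failed to exist, the limiting section $u_\infty$ would give a holomorphic reduction $\sigma_\infty \colon X_{\boldsymbol D} \to V/P$ and a nontrivial antidominant character $\chi_\infty$ with $\deg^{\rm an} V(h_0, \sigma_\infty, \chi_\infty) \le 0$, contradicting the assumed $R_{h_0}$-polystability of $(V, d')$.

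The main obstacle is that in the connection setting the Higgs-like term $\phi_h$ depends on the metric $h$, unlike the fixed holomorphic Higgs field $\phi$ in Theorem \ref{thm KH corr Higgs}. One must therefore recheck the variational calculus of $M$ along the path $h_0 e^{ts}$---convexity, the first variation formula, and the limiting behavior of $D'' u_\infty$ as $h$ varies---to confirm the estimates still hold. On a curve this is manageable because the PDE retains the same structural form; uniqueness of the harmonic metric up to automorphisms then follows from the convexity of $M$ exactly as in the Higgs case.
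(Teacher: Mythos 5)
Your proposal matches the paper's own treatment: the paper offers no separate argument for this theorem, stating only that the proof of Theorem \ref{thm KH corr Higgs} ``readily adapts'' via the correspondence of Proposition \ref{prop tame ana Higgs and conn} (with stability transferred as in Theorem \ref{thm ana Dol and DR}), which is exactly the reduction and Donaldson-functional adaptation you outline. Your remark that $\phi_h$ now depends on the metric correctly identifies the one point the adaptation must check, so nothing essential is missing relative to the paper.
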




\subsection{Equivalence between the Dolbeault and de Rham categories}

Now we are ready to prove that the functors $\Xi_{\rm Dol}$ and $\Xi_{\rm dR}$ induce an equivalence of categories.
\begin{lem}\label{lem ang=alg Dol (dR)}
	The functors
	\begin{align*}
	\Xi_{\rm Dol}: \mathcal{C}_{\rm Dol}(X_{\boldsymbol D},G,\boldsymbol\theta,\phi_{\boldsymbol\theta}) \rightarrow \mathcal{C}_{\rm Dol}(X,\mathcal{G}_{\boldsymbol\theta},\varphi_{\boldsymbol\theta}) \quad \text{ and } \quad \Xi_{\rm dR}: \mathcal{C}_{\rm dR}(X_{\boldsymbol D},G,\boldsymbol\theta,d'_{\boldsymbol\theta}) \rightarrow \mathcal{C}_{\rm dR}(X,\mathcal{G}_{\boldsymbol\theta},\nabla_{\boldsymbol\theta})
	\end{align*}
	induce equivalences of categories, where $\phi_{\boldsymbol\theta}=\varphi_{\boldsymbol\theta}$ and $d'_{\boldsymbol\theta}=\nabla_{\boldsymbol\theta}$.
\end{lem}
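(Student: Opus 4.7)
The plan is to show that $\Xi_{\rm Dol}$ is essentially surjective and fully faithful, and then observe that the same scheme adapts to $\Xi_{\rm dR}$ with the appropriate replacements. The essential inputs are the functor $\Xi$ on underlying bundles from \S\ref{subsect functor Xi}, the translation of stability conditions in Proposition \ref{prop ana=alg Higgs and conn}, the local model metrics of \S\ref{subsect local study}, and the Kobayashi--Hitchin correspondence of Theorem \ref{thm KH corr Higgs}.

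\emph{Essential surjectivity.} Starting from an $R$-stable logahoric $\mathcal{G}_{\boldsymbol\theta}$-Higgs torsor $(\mathcal{E},\varphi)$ of degree zero on $X$ with prescribed Levi residues $\varphi_{\boldsymbol\theta}$, I first restrict to $X_{\boldsymbol D}$, using that $\mathcal{G}_{\boldsymbol\theta}|_{X_{\boldsymbol D}} \cong G \times X_{\boldsymbol D}$, to obtain a holomorphic $G$-bundle $(E,\partial''_E)$ and a Higgs field $\phi$. Around each puncture $x$ the section $\varphi$ lies in $\mathfrak{g}_{\theta_x}(K) \frac{dz}{z}$, so the boundedness condition in Definition \ref{defn tame ana Higgs} is automatic by Lemma \ref{lem two defn parah Lie alg}, producing a tame $\boldsymbol\theta$-adapted $G$-Higgs bundle on $X_{\boldsymbol D}$. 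I then endow $E$ with the $\boldsymbol\theta$-adapted model metric $h_0$ constructed locally in \S\ref{subsect local study} (using the Jordan decomposition of $\varphi_{\boldsymbol\theta}$) and patched globally by a partition of unity. Proposition \ref{prop ana=alg Higgs and conn} transfers $R$-stability to $R_{h_0}$-stability of $(E,\partial''_E,\phi,h_0)$, and Theorem \ref{thm KH corr Higgs} then yields a $\boldsymbol\theta$-adapted harmonic metric $h$ quasi-isometric to $h_0$. Since the functor $\Xi$ on bundles was defined in \S\ref{subsect functor Xi} precisely by gluing the restriction on $X_{\boldsymbol D}$ with its canonical $\mathcal{G}_{\theta_x}$-extension across each puncture, applying $\Xi_{\rm Dol}$ to the resulting object recovers $(\mathcal{E},\varphi)$ up to isomorphism.

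\emph{Full faithfulness.} A morphism in $\mathcal{C}_{\rm Dol}(X_{\boldsymbol D},G,\boldsymbol\theta,\phi_{\boldsymbol\theta})$ is a holomorphic $G$-bundle map on $X_{\boldsymbol D}$ intertwining the Higgs fields, and one in $\mathcal{C}_{\rm Dol}(X,\mathcal{G}_{\boldsymbol\theta},\varphi_{\boldsymbol\theta})$ is a map of parahoric torsors intertwining the logarithmic Higgs fields. Restriction to $X_{\boldsymbol D}$ gives a map between the Hom-sets, which is injective by density of $X_{\boldsymbol D}$ in $X$. For surjectivity onto the analytic Hom-set, one applies an argument parallel to Lemma \ref{lem red grp struc} (with the automorphism group scheme in place of $\mathcal{P}_{\boldsymbol\theta}$): a morphism of the underlying parahoric torsors restricts to a $\boldsymbol\theta$-adapted holomorphic map on $X_{\boldsymbol D}$, and conversely any such $\boldsymbol\theta$-adapted holomorphic map extends uniquely across each puncture by Lemma \ref{lem two defn parah grp}. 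Compatibility with the Higgs fields is preserved in both directions because the logarithmic Higgs fields are defined as extensions of the analytic ones.

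\emph{The de Rham case.} The same blueprint applies to $\Xi_{\rm dR}$ with Theorem \ref{cor KH corr mod} in place of Theorem \ref{thm KH corr Higgs}; if one starts on the Dolbeault side, the weight data is adjusted from $\boldsymbol\alpha$ to $\boldsymbol\beta$ as in Proposition \ref{prop tame ana Higgs and conn}, and the intertwining condition for morphisms becomes compatibility with the logarithmic connection, which reduces under restriction to compatibility with the tame $\boldsymbol\beta$-adapted connection on $X_{\boldsymbol D}$.

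\emph{Principal obstacle.} The subtle point is the full-faithfulness step: one must confirm that every Higgs-compatible holomorphic morphism over $X_{\boldsymbol D}$ is genuinely $\boldsymbol\theta$-adapted, so that it extends to a parahoric morphism on $X$. The mere tameness of the Higgs field does not a priori yield the required boundedness of $z^{\theta_x} \psi(z) z^{-\theta_x}$ as $z \to 0$. What makes it work is the prescription of the Levi residue $\phi_{\boldsymbol\theta}$: an intertwining map must preserve the generalized eigenspace decomposition of the residue at each puncture, and a local analysis modeled on the proof of Lemma \ref{lem two defn parah grp} then forces the desired boundedness. This reduction to a root-by-root argument is where the bulk of the technical work sits; once carried out, the rest of the proof is bookkeeping via the functors and correspondences already established.
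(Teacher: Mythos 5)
Your essential-surjectivity argument is exactly the paper's: restrict $(\mathcal{E},\varphi)$ to $X_{\boldsymbol D}$, note that tameness is automatic from Lemma \ref{lem two defn parah Lie alg}, build the $\boldsymbol\theta$-adapted model metric $h_0$ from \S\ref{subsect local study} by a partition of unity, transfer $R$-stability and degree zero through Propositions \ref{prop analytic=algebraic degree} and \ref{prop ana=alg Higgs and conn}, and invoke Theorem \ref{thm KH corr Higgs} to replace $h_0$ by a harmonic metric. Where you diverge is the other half. The paper does not discuss morphisms at all: it establishes injectivity \emph{on objects} by appealing to the uniqueness clause of Theorem \ref{thm KH corr Higgs} (the harmonic metric is unique up to automorphisms, so two analytic objects with the same underlying restricted data and hence the same parahoric image are isomorphic). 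You never use uniqueness; instead you attempt full faithfulness via a restriction/extension argument on Hom-sets.

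The gap is in the key claim of that full-faithfulness step: that a holomorphic morphism over $X_{\boldsymbol D}$ intertwining the Higgs fields is forced to be $\boldsymbol\theta$-adapted because it ``must preserve the generalized eigenspace decomposition of the residue.'' This is not true in general. When the Levi factor of the residue is zero or nilpotent, intertwining imposes essentially no constraint near the puncture: for $G=\mathbb{C}^{*}$ with $\phi=0$, the gauge transformation $z^{-n}$ (or $e^{1/z}$) intertwines the zero Higgs field with itself but satisfies no bound of the form required for $z^{\theta_x}\psi(z)z^{-\theta_x}$ to stay bounded; for $G={\rm SL}_2$ with nilpotent residue, the unipotent element $\begin{pmatrix} 1 & z^{-1} \\ 0 & 1 \end{pmatrix}$ does the same. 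So the proposed ``root-by-root analysis modeled on Lemma \ref{lem two defn parah grp}'' cannot close this step; what actually rules such maps out is compatibility with the ($\boldsymbol\theta$-adapted) metrics carried by the objects, i.e.\ the adaptedness must be built into (or deduced from) the definition of morphisms in $\mathcal{C}_{\rm Dol}(X_{\boldsymbol D},G,\boldsymbol\theta,\phi_{\boldsymbol\theta})$, not derived from the residue data. Either restrict to adapted/metric-compatible morphisms (after which the extension across punctures is indeed the argument of Lemmas \ref{lem two defn parah grp} and \ref{lem red grp struc}), or follow the paper and argue injectivity on objects via the uniqueness of the harmonic metric; as written, the faithfulness step would fail.
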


\begin{proof}
	We only give the proof for the functor $\Xi_{\rm Dol}$, and the proof for $\Xi_{\rm dR}$ is similar. By the uniqueness of the harmonic metric (Theorem \ref{thm KH corr Higgs}), the functor $\Xi_{\rm Dol}$ is injective on the objects. Now we consider the surjectivity. Take a logahoric $\mathcal{G}_{\boldsymbol\theta}$-Higgs torsor $(\mathcal{E},\varphi)$ in the category $\mathcal{C}_{\rm Dol}(X,\mathcal{G}_{\boldsymbol\theta},\varphi_{\boldsymbol\theta})$. Restricting this object on $X_{\boldsymbol D}$, we obtain a tame  $\boldsymbol\theta$-adapted $G$-Higgs bundle $(E,\partial''_E,\phi)$. By the construction of the metric in \S\ref{subsect local study}, we obtain a $\boldsymbol\theta$-adapted metric $h_0$ on $E$ (by partition of unity). In total, we construct an element $(E,\partial''_E,\phi,h_0)$ in the category $\mathcal{C}_{\rm Higgs}(X_{\boldsymbol D},G,\boldsymbol\theta,\phi_{\boldsymbol\theta})$, which corresponds to the given parahoric Higgs torsor $(\mathcal{E},\varphi)$ under the functor $\Xi_{\rm Higgs}$. From Propositions \ref{prop analytic=algebraic degree} and \ref{prop ana=alg Higgs and conn}, the tuple $(E,\partial''_E,\phi,h_0)$ is $R_{h_0}$-stable of degree zero. By Theorem \ref{thm KH corr Higgs} again, we obtain a harmonic metric $h$ on $E$, and therefore an element $(E,\partial''_E,\phi,h)$ in the category $\mathcal{C}_{\rm Dol}(X_{\boldsymbol D},G,\boldsymbol\theta,\phi_{\boldsymbol\theta})$. This finishes the proof of this lemma.
\end{proof}

With the same notation as in \S\ref{sect ana Higgs and conn} and \S\ref{sect alg Higgs and conn}, we have:
\begin{thm}\label{thm alg Dol and DR}
The categories $\mathcal{C}_{\rm Dol}(X,\mathcal{G}_{\boldsymbol\alpha},\varphi_{\boldsymbol\alpha})$ and $\mathcal{C}_{\rm dR}(X,\mathcal{G}_{\boldsymbol\beta},\nabla_{\boldsymbol\beta})$ are equivalent.
\end{thm}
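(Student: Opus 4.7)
The plan is to obtain the asserted equivalence simply by composing three equivalences that have already been set up in the paper, arranged in the commutative square displayed in the introduction. Concretely, I would exhibit the chain of functors
\begin{align*}
\mathcal{C}_{\rm Dol}(X,\mathcal{G}_{\boldsymbol\alpha},\varphi_{\boldsymbol\alpha})
\xrightarrow{\;\Xi_{\rm Dol}^{-1}\;}
\mathcal{C}_{\rm Dol}(X_{\boldsymbol D},G,\boldsymbol\alpha,\phi_{\boldsymbol\alpha})
\xrightarrow{\;\text{Thm \ref{thm ana Dol and DR}}\;}
\mathcal{C}_{\rm dR}(X_{\boldsymbol D},G,\boldsymbol\beta,d'_{\boldsymbol\beta})
\xrightarrow{\;\Xi_{\rm dR}\;}
\mathcal{C}_{\rm dR}(X,\mathcal{G}_{\boldsymbol\beta},\nabla_{\boldsymbol\beta}),
\end{align*}
and check that each of the three arrows is an equivalence of categories with the correctly matched parabolic/parahoric data.

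First, the outer two arrows are equivalences by Lemma \ref{lem ang=alg Dol (dR)}, which gives $\Xi_{\rm Dol}$ and $\Xi_{\rm dR}$ as equivalences under the identifications $\phi_{\boldsymbol\alpha}=\varphi_{\boldsymbol\alpha}$ and $d'_{\boldsymbol\beta}=\nabla_{\boldsymbol\beta}$ of the Levi factors of residues. The middle arrow is the content of Theorem \ref{thm ana Dol and DR}: once an $\boldsymbol\alpha$-adapted harmonic metric $h$ is fixed on a tame $\boldsymbol\alpha$-adapted $G$-Higgs bundle $(E,\partial''_E,\phi)$, the induced flat connection $(V,d')=(E,\partial'_h+\partial''_E+\phi+\phi^*)$ together with the same $h$ gives an object in the de Rham category with the weights $\beta_x=\alpha_x-(s_{\alpha_x}+\bar s_{\alpha_x})$ and residues $d'_{\beta_x}=\alpha_x+(s_{\alpha_x}-\bar s_{\alpha_x})-(H_{\alpha_x}+X_{\alpha_x}-Y_{\alpha_x})$ prescribed in \S\ref{subsect local study}, and the correspondence is an equivalence on the level of $R_h$-stable objects by Lemma \ref{lem ana sub Higgs and sub mod} and Proposition \ref{prop tame ana Higgs and conn}.

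The main thing to verify carefully, and which I expect to be the principal bookkeeping obstacle, is the compatibility of the data at punctures along the chain. On the Dolbeault side the input is $(\boldsymbol\alpha,\varphi_{\boldsymbol\alpha})$ and the parahoric group scheme is $\mathcal{G}_{\boldsymbol\alpha}$; on the de Rham side both the weights and the parahoric scheme change to $(\boldsymbol\beta,\mathcal{G}_{\boldsymbol\beta})$ with $\nabla_{\boldsymbol\beta}$ as the Levi factor of the residue. I would therefore record explicitly that the functors $\Xi_{\rm Dol}^{-1}$ and $\Xi_{\rm dR}$ send the algebraic residues $\varphi_{\boldsymbol\alpha}$ and $\nabla_{\boldsymbol\beta}$ to the analytic residues $\phi_{\boldsymbol\alpha}$ and $d'_{\boldsymbol\beta}$ verbatim (by the construction in \S\ref{sect alg Higgs and conn} and Lemma \ref{lem two defn parah Lie alg}), and that the middle arrow respects exactly the permutation of weights and residues displayed in the Main Table. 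The degree zero condition is preserved throughout: this follows from Proposition \ref{prop analytic=algebraic degree} for the vertical arrows and from Lemma \ref{lem three ana deg} for the horizontal one, since harmonicity forces $\deg^{\rm an}E(h,\chi)=\deg^{\rm an}E(D,\chi)=0$ for every character $\chi$ of $G$.

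Finally, compositions of equivalences being equivalences, the resulting functor
\[
\mathcal{C}_{\rm Dol}(X,\mathcal{G}_{\boldsymbol\alpha},\varphi_{\boldsymbol\alpha})\longrightarrow\mathcal{C}_{\rm dR}(X,\mathcal{G}_{\boldsymbol\beta},\nabla_{\boldsymbol\beta})
\]
is an equivalence. Its essential surjectivity uses the existence of harmonic metrics from the Kobayashi--Hitchin correspondences (Theorems \ref{thm KH corr Higgs} and \ref{cor KH corr mod}) to promote any $R$-stable logahoric connection to the analytic de Rham category so as to run the middle equivalence backwards; its fully faithfulness follows from the fully faithfulness of each factor and from the uniqueness up to automorphism of the harmonic metric established at the end of the proof of Theorem \ref{thm KH corr Higgs}.
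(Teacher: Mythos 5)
Your proposal is correct and follows essentially the same route as the paper: the paper proves Theorem \ref{thm alg Dol and DR} precisely by chasing the square whose vertical arrows are the equivalences $\Xi_{\rm Dol}$, $\Xi_{\rm dR}$ of Lemma \ref{lem ang=alg Dol (dR)} and whose top arrow is Theorem \ref{thm ana Dol and DR}, with $\phi_{\boldsymbol\alpha}=\varphi_{\boldsymbol\alpha}$ and $d'_{\boldsymbol\beta}=\nabla_{\boldsymbol\beta}$. Your additional remarks on the matching of weights/residues and on degree zero, and your invocation of the Kobayashi--Hitchin correspondence and uniqueness of the harmonic metric, are exactly the ingredients already packaged inside Lemma \ref{lem ang=alg Dol (dR)}, so nothing is missing.
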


\begin{proof}
The theorem follows directly by chasing the diagram below
\begin{center}
\begin{tikzcd}
\mathcal{C}_{\rm Dol}(X_{\boldsymbol D},G,\boldsymbol\alpha,\phi_{\boldsymbol\alpha}) \arrow[rr, "\text{Theorem } \ref{thm ana Dol and DR}", equal] \arrow[d, "\text{Lemma } \ref{lem ang=alg Dol (dR)}"] & &
\mathcal{C}_{\rm dR}(X_{\boldsymbol D},G,\boldsymbol\beta,d'_{\boldsymbol\beta}) \arrow[d, "\text{Lemma } \ref{lem ang=alg Dol (dR)}"]\\
\mathcal{C}_{\rm Dol}(X,\mathcal{G}_{\boldsymbol\alpha},\varphi_{\boldsymbol\alpha}) \arrow[rr, leftrightarrow, dotted]& & \mathcal{C}_{\rm dR}(X,\mathcal{G}_{\boldsymbol\beta},\nabla_{\boldsymbol\beta}),
\end{tikzcd}
\end{center}
where $\phi_{\boldsymbol\alpha}=\varphi_{\boldsymbol\alpha}$ and $d'_{\boldsymbol\beta}=\nabla_{\boldsymbol\beta}$.
\end{proof}

\subsection{Riemann--Hilbert Correspondence}\label{subsect Riemann-Hilbert cat}

Let $\pi_1(X_{\boldsymbol D})$ be the fundamental group of $X_{\boldsymbol D}$, i.e.
\begin{align*}
\pi_1(X_{\boldsymbol D}) = \langle a_i,b_j,c_x \text{ } | \text{ } \prod^g_{i=1} [a_i,b_i] \prod_{x\in \boldsymbol D} c_x = 1  \rangle,
\end{align*}
where the generator $c_x$ corresponds to a loop around the puncture $x$. A \emph{$G$-local system} $\mathscr{L}$ is a $G$-bundle on $X_{\boldsymbol D}$ with parallel transport. Equivalently, fixing a base point $x \in X$, a $G$-local system corresponds to a pair $(\mathscr{L}_x, \rho_x)$, where $\mathscr{L}_x$ is the stalk and $\rho_x : \pi_1(X_{\boldsymbol D}) \rightarrow G$ is a representation of the fundamental group. Although this correspondence depends on the choice of the base point $x$, we can apply the same argument as in \cite[\S 6 The Betti moduli spaces]{Simp3} and identify categories of pairs $(\mathscr{L}_x,\rho_x)$ obtained from distinct choices of base point. Then, a $G$-local system is regarded as a representation of the fundamental group.

Now we consider the relation between $G$-local systems and connections on $G$-bundles with regular singularities on $X_{\boldsymbol D}$. The key point is to study the correspondence locally around each puncture $x \in \boldsymbol D$. We first review the result for $G = {\rm GL}_n(\mathbb{C})$. An arbitrary connection $d + A(z)\frac{dz}{z}$ is gauge equivalent to a connection in the form $d + a \frac{dz}{z}$ \cite[Theorem 5.1.4]{HoTakTan}, where $A(z) \in \mathfrak{gl}_n(R)$ and $a \in \mathfrak{gl}_n(\mathbb{C})$. Based on this result, the corresponding monodromy around the puncture is given by $e^{-2 \pi i a}$, and we can define a representation $\rho$ such that $\rho(c_x)=e^{-2 \pi i a}$. Thus, there is a well-defined correspondence between ${\rm GL}_n(\mathbb{C})$-local systems and connections on vector bundles with regular singularities.

However, for a general complex reductive group $G$, the property above does not hold anymore. The main problem is that, given an arbitrary connection $d + A(z)\frac{dz}{z}$, where $A(z) \in \mathfrak{g}(R)$, it may not be gauge equivalent to one in the form $d+a\frac{dz}{z}$, where $a \in \mathfrak{g}$. Note that here the gauge action is given by $G$ rather than ${\rm GL}_n$. Therefore, a local correspondence between equivalence classes of monodromy data and equivalence classes of connections needs to be studied in detail.

To address this problem, Boalch introduced the parahoric language and established a version of Riemann--Hilbert correspondence, which is called the \emph{tame parahoric Riemann--Hilbert correspondence}. We follow the same notation as in \S\ref{subsect local study} and review his result:

\begin{thm}\cite[Theorem D]{Bo}\label{thm tame parah RH corr Bo}
	There is a canonical bijection between $G_\beta(K)$-orbits of logahoric connections with residue $d'_\beta$ and conjugacy classes of pairs $(M,P)$, where $P$ is a parabolic subgroup of $G$ conjugate to $P_\gamma$ and $M \in P$, such that the Levi factor of $M$ is conjugate to $M_\gamma$.
\end{thm}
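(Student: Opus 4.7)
The plan is to construct an explicit monodromy map on logahoric connections with residue $d'_\beta$, prove that it is constant on $G_\beta(K)$-orbits and descends to a well-defined map into conjugacy classes of pairs $(M,P)$, and then verify bijectivity by producing an inverse construction. By Lemma \ref{lem two defn parah Lie alg} a logahoric connection on the formal disc is represented by $\nabla = d + A(z)\frac{dz}{z}$ with $A(z) = d'_\beta + \sum_{k \geq 1} A_k z^k \in \mathfrak{g}_\beta(K)$. Since the singularity is regular in the parahoric sense (implicit in the tameness condition), Deligne's classical comparison between formal and analytic meromorphic gauge equivalence for regular singular connections applies, and one may work over $\mathbb{C}[[z]]$ throughout without loss of generality.

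The central step is a parahoric analogue of the Levelt normal form. I would iteratively eliminate higher-order terms $A_k z^k$ by gauge transformations of the form $\exp(B_k z^k) \in G_\beta(K)$, where $B_k \in \mathfrak{g}$ solves the cohomological equation $[d'_\beta, B_k] - k B_k = -A_k$ on each weight component of $\mathfrak{g}_\beta(K)$ under $\mathrm{ad}(\beta)$. Non-resonant components are always eliminable; resonant obstructions assemble exactly into the Kostant--Rallis triple $(X_\alpha, H_\alpha, Y_\alpha)$ and the Cartan piece $s_\alpha$ appearing in \S\ref{subsect local study}. The output is a canonical normal form whose monodromy is computed by direct path-integration, essentially reversing the trivialization changes $e \mapsto e_0 \mapsto e_1 \mapsto e_2$ of \S\ref{subsect local study}: the result is precisely $M = \exp(-2\pi i(\alpha + s_\alpha - \bar{s}_\alpha))\exp(2\pi i(H_\alpha + X_\alpha - Y_\alpha))$, which lies inside a parabolic $P$ conjugate to $P_\gamma$ with Levi factor conjugate to $M_\gamma$.

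For descent to orbits I would identify the residual gauge subgroup of $G_\beta(K)$ preserving the normal form: it is generated by $T(R)$ and the root subgroups $U_r(z^{m_r(\beta)} R)$ for roots $r$ lying in the root subsystem determining $P_\gamma$, and its image under evaluation at $z = 0$ is precisely $P_\gamma$. Hence two normal forms lie in the same $G_\beta(K)$-orbit if and only if their monodromies are $P_\gamma$-conjugate, giving a well-defined map on orbits. Surjectivity is proved by reversing the construction: given a representative $(M,P)$, decompose $M = M_s M_u$ via Jordan decomposition, pick $s_\alpha \in \mathfrak{t}$ compatible with $\log M_s$ and the prescribed weight $\alpha$, take a nilpotent logarithm of $M_u$, complete it to a Kostant--Rallis triple via Jacobson--Morozov, and assemble the normal form above; the resulting connection has the prescribed residue and monodromy. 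Injectivity is then immediate, since distinct normal forms in the same orbit have $P_\gamma$-conjugate monodromies by the residual gauge analysis.

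The principal obstacle is the normal form step when $d'_\beta$ has nontrivial nilpotent part. For semisimple $d'_\beta$ the operator $\mathrm{ad}(d'_\beta) - k$ acts diagonally on root spaces and invertibility is a transparent eigenvalue check, recovering the semisimple table predicted in \cite[\S 6]{Bo}. In general the cohomological equation must be analyzed using the $\mathfrak{sl}_2$-representation theory of the Kostant--Rallis triple: $\mathfrak{g}$ decomposes into irreducible $\mathfrak{sl}_2$-modules, and the kernels and cokernels of $[d'_\beta, -] - k$ on each summand are controlled by highest- and lowest-weight data. Ensuring that the residual obstructions assemble into exactly the $H_\alpha$, $X_\alpha$, $Y_\alpha$ data encoded in $M_\gamma$, and that the gauge group $G_\beta(K)$ (not merely $G(K)$) acts with stabilizer equal to $P_\gamma$ rather than a larger parabolic, is the delicate bookkeeping that makes the parahoric, rather than parabolic, language essential to the statement.
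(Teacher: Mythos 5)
First, a point of comparison with the paper: the paper does not prove this statement at all --- it is quoted as Boalch's Theorem D, and the only place the paper engages with its proof is the discussion in \S\ref{sec_naHc moduli spaces}, where the key step of Boalch's argument is recalled in order to upgrade it to families: any $A \in \mathfrak{g}_\beta(K)$ is $G_\beta(K)$-gauge equivalent to $B=\sum_{i\geq 0} b_i z^i$ with $b_0$ the residue, $[\nabla_\beta,b_i]=ib_i$ and $[\sigma_\beta,b_i]=0$ (Jordan-decomposition notation as in that section). Your overall architecture --- order-by-order elimination of non-resonant terms by elements of $G_\beta(K)$, monodromy computation of the resulting normal form, identification of the residual gauge group, and an inverse construction via Jordan decomposition and logarithms --- is indeed the architecture of Boalch's proof, so the plan is pointed in the right direction.

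However, your central step is wrong as stated. You claim the normal form has monodromy ``precisely'' $\exp(-2\pi i(\alpha+s_\alpha-\bar{s}_\alpha))\exp(2\pi i(H_\alpha+X_\alpha-Y_\alpha))$, with the resonant obstructions ``assembling into the Kostant--Rallis triple''. Two problems. First, the resonant terms are not fixed data attached to $d'_\beta$: connections with the same residue but different resonant tails $b_i$, $i\geq 1$, have monodromies with the same Levi factor $M_\gamma$ but \emph{different} unipotent parts inside $P$. That varying unipotent part is exactly why the Betti datum in the theorem is a pair $(M,P)$ with only the Levi factor of $M$ pinned down; if your normal form really forced a single conjugacy class of $M$, the asserted bijection could not be surjective, so your normal-form claim contradicts the statement you are proving. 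The correct normal form (as recalled in \S\ref{sec_naHc moduli spaces}) retains the resonant coefficients, and they carry precisely the extra unipotent monodromy data. Second, the Kostant--Rallis triple $(X_\alpha,H_\alpha,Y_\alpha)$ and the quantities $\alpha$, $s_\alpha$ belong to the Dolbeault-side local models of \S\ref{subsect local study} (the harmonic-metric analysis relating Higgs residues to connection residues); they are extraneous to this purely de Rham--Betti statement, whose only input is $\beta$, $d'_\beta$, $\gamma$, $M_\gamma$, so they should not appear in the proof at all. A smaller but genuine gap: by Lemma \ref{lem two defn parah Lie alg}, elements of $\mathfrak{g}_\beta(K)$ may contain negative powers of $z$ paired with positive $\mathrm{ad}(\beta)$-eigenspaces (cf. the $\mathrm{SL}_2$ examples in the introduction), so your starting point $A=d'_\beta+\sum_{k\geq 1}A_k z^k$ is not the general case; the residue must be taken in the $z^\beta$-graded sense and the elimination must respect the parahoric filtration, which is precisely where the restriction to gauging by $G_\beta(K)$ rather than $G(R)$ or $G(K)$ has to be tracked and is glossed over in your cohomological-equation step.
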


Although the above theorem only gives a local correspondence around punctures, it can be extended to a global one naturally. Before we state the result, we introduce the following notions. Let $\boldsymbol\gamma=\{\gamma_x, x \in \boldsymbol D\}$ be a set of weights labelled by punctures in $\boldsymbol D$. Let $P_{\gamma_x}$ be the parabolic subgroup of $G$ defined by the weight $\gamma_x$, and denote by $L_{\gamma_x}$ the Levi subgroup of $P_{\gamma_x}$. Given a collection of elements 
\begin{align*}
M_{\boldsymbol\gamma}=\{M_{\gamma_x}\text{ }|\text{ } M_{\gamma_x} \in L_{\gamma_x}\},
\end{align*}
define ${\rm Hom}(\pi_1(X_{\boldsymbol D}),G, \boldsymbol\gamma, M_{\boldsymbol\gamma})$ to be set of representations $\rho: \pi_1(X_{\boldsymbol D}) \rightarrow G$ such that
\begin{itemize}
	\item $\rho(c_x) \in P$, where $P$ is a parabolic subgroup conjugate to $P_{\gamma_x}$;
	\item the Levi component of $\rho(c_x)$ is conjugate to $M_{\gamma_x}$.
\end{itemize}
Under the natural $G$-action on ${\rm Hom}(\pi_1(X_{\boldsymbol D}),G, \boldsymbol\gamma, M_{\boldsymbol\gamma})$, denote by $\mathcal{C}_{\rm Loc}(X_{\boldsymbol D}, G,\boldsymbol\gamma, M_{\boldsymbol\gamma})$ the corresponding category. An object in $\mathcal{C}_{\rm Loc}(X_{\boldsymbol D}, G,\boldsymbol\gamma, M_{\boldsymbol\gamma})$ is called a \emph{$\boldsymbol\gamma$-filtered $G$-local system (with monodromies $M_{\boldsymbol\gamma}$)} on $X_{\boldsymbol D}$, where the filtered structure comes from the parabolic subgroup $P_{\gamma_x}$ around each puncture $x \in \boldsymbol D$. Let $\mathcal{C}_{\rm Conn}(X,\mathcal{G}_{\boldsymbol\beta},\nabla_{\boldsymbol\beta})$ be the category of logahoric $\mathcal{G}_{\boldsymbol\beta}$-connections on $X$. As a corollary, we obtain the global version of the tame parahoric Riemann--Hilbert correspondence:

\begin{cor}\label{cor tame parah RH corr}
The categories $\mathcal{C}_{\rm Loc}(X_{\boldsymbol D}, G,\boldsymbol\gamma, M_{\boldsymbol\gamma})$ and $\mathcal{C}_{\rm Conn}(X,\mathcal{G}_{\boldsymbol\beta},\nabla_{\boldsymbol\beta})$ are equivalent.
\end{cor}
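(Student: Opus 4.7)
The plan is to promote Boalch's local correspondence (Theorem \ref{thm tame parah RH corr Bo}) to a global statement by combining it with the classical (monodromy) Riemann--Hilbert correspondence on the punctured curve $X_{\boldsymbol D}$. The two categories in question only differ near the punctures: on $X_{\boldsymbol D}$ both a logahoric $\mathcal{G}_{\boldsymbol\beta}$-connection $(\mathcal{E},\nabla)$ and a $\boldsymbol\gamma$-filtered $G$-local system restrict to a flat $G$-connection (respectively to its underlying local system), and these two restrictions already correspond to one another by classical Riemann--Hilbert. Thus the content of the corollary is entirely local at each $x\in\boldsymbol D$, and the job is to glue the local equivalences.

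First, I would construct the functor $\mathcal{C}_{\rm dR}(X,\mathcal{G}_{\boldsymbol\beta},\nabla_{\boldsymbol\beta})\to \mathcal{C}_{\rm B}(X_{\boldsymbol D},G,\boldsymbol\gamma,M_{\boldsymbol\gamma})$. Given $(\mathcal{E},\nabla)$, restrict to $X_{\boldsymbol D}$ to obtain a holomorphic $G$-bundle with flat connection, and take its monodromy representation $\rho\colon \pi_1(X_{\boldsymbol D})\to G$ based at some point. For each puncture $x$, choose a formal coordinate $z$ and trivialize $\mathcal{E}|_{\mathbb{D}_x}$ as a $\mathcal{G}_{\beta_x}$-torsor; in this trivialization the connection form lies in $\mathfrak{g}_{\beta_x}(K)\tfrac{dz}{z}$ with residue whose Levi factor is $\nabla_{\beta_x}$. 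Theorem \ref{thm tame parah RH corr Bo} then asserts that the $G_{\beta_x}(K)$-gauge orbit of this local connection determines, and is determined by, a conjugacy class of pairs $(M,P)$ with $P$ conjugate to $P_{\gamma_x}$ and with Levi factor of $M$ conjugate to $M_{\gamma_x}$. Since $\rho(c_x)$ is precisely the local monodromy up to conjugation, and the local trivialization of $\mathcal{E}$ endows $\rho(c_x)$ with a canonical parabolic reduction (the one coming from the trivialization of $\mathcal{G}_{\beta_x}$ specialized at $z=0$), we obtain a $\boldsymbol\gamma$-filtered $G$-local system with the required monodromy data $M_{\boldsymbol\gamma}$.

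Next, I would construct the functor in the reverse direction. Given a $\boldsymbol\gamma$-filtered $G$-local system, pass first to the associated flat $G$-connection $(E,d')$ on $X_{\boldsymbol D}$ via classical Riemann--Hilbert. Near each puncture the monodromy lies (up to conjugation) in $P_{\gamma_x}$ with Levi factor conjugate to $M_{\gamma_x}$; applying Theorem \ref{thm tame parah RH corr Bo} in the other direction yields a canonical $G_{\beta_x}(K)$-gauge class of connection forms in $\mathfrak{g}_{\beta_x}(K)\tfrac{dz}{z}$ with residue Levi factor $\nabla_{\beta_x}$. Choose a representative in each class; these local models, together with $(E,d')$ on $X_{\boldsymbol D}$, glue to a logahoric $\mathcal{G}_{\boldsymbol\beta}$-connection $(\mathcal{E},\nabla)$. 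The independence of the resulting object from the choices of representatives is precisely what Theorem \ref{thm tame parah RH corr Bo} guarantees at each puncture, while the fact that the underlying flat connection is recovered on $X_{\boldsymbol D}$ ensures the two constructions are mutually inverse on objects.

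Finally, I would check that these assignments are functorial and that they are compatible on morphisms. A morphism in $\mathcal{C}_{\rm dR}(X,\mathcal{G}_{\boldsymbol\beta},\nabla_{\boldsymbol\beta})$ is a $\mathcal{G}_{\boldsymbol\beta}$-equivariant map of torsors intertwining the connections; restricting to $X_{\boldsymbol D}$ gives a horizontal morphism of flat $G$-bundles, which corresponds by classical Riemann--Hilbert to a $\pi_1$-equivariant map of local systems, and by Boalch's statement this map automatically respects the parabolic reductions at the punctures because the local gauge classes on either side are naturally identified. Conversely, a morphism of filtered local systems yields a horizontal morphism on $X_{\boldsymbol D}$ whose local behavior extends across each puncture by the same local bijection. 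The main obstacle is conceptual rather than computational: one must check that the parabolic reduction $P$ at each $c_x$ that appears in Theorem \ref{thm tame parah RH corr Bo} is canonically the same reduction imposed by the filtered-local-system structure, so that the global gluing is unambiguous. Once this identification is made (which is essentially the content of the Main Table of \S\ref{subsect local study} read backwards), the equivalence of categories follows.
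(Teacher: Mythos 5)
Your proposal is correct and follows essentially the same route the paper intends: the paper states the corollary as the natural globalization of Boalch's local Theorem \ref{thm tame parah RH corr Bo} without writing out the details, and your argument—classical Riemann--Hilbert on $X_{\boldsymbol D}$ away from $\boldsymbol D$, Boalch's bijection at each puncture to control the formal local data and the pair $(M,P)$, then gluing and checking the identification of parabolic reductions—is exactly that intended globalization, spelled out.
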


\subsection{Nonabelian Hodge Correspondence}\label{subsect main result in cat}

In this subsection, we define a stability condition on filtered $G$-local systems and prove a correspondence between stable filtered $G$-local systems on $X_{\boldsymbol D}$ and $R$-stable logahoric connections of degree zero on $X$. Together with results in previous sections, we establish the tame parahoric nonabelian Hodge correspondence at the level of categories.

Let $\mathscr{L}$ be a $\boldsymbol\gamma$-filtered $G$-local system with monodromies $M_{\boldsymbol\gamma}$. Let  $\tau : X_{\boldsymbol D} \rightarrow \mathscr{L}/P$ be a reduction of structure group, where $P$ is a parabolic subgroup of $G$. We say that $\tau$ is \emph{compatible}, if the pullback $\mathscr{L}_\tau$
\begin{center}
	\begin{tikzcd}
	\mathscr{L}_{\tau} \arrow[r, dotted] \arrow[d, dotted] & \mathscr{L} \arrow[d]\\
	X_{\boldsymbol D} \arrow[r, "\tau"]  & \mathscr{L}/P
	\end{tikzcd}
\end{center}
is a $P$-local system. Given a character $\chi$ of $P$, the \emph{degree of the $\boldsymbol\gamma$-filtered $G$-local system $\mathscr{L}$} (with respect to $\tau$ and $\chi$) is defined as
\begin{align*}
\deg^{\rm loc} \mathscr{L}(\tau,\chi) := \langle \boldsymbol\gamma, \chi \rangle.
\end{align*}
Note that as a local system, the degree of the line bundle $\chi_* \mathscr{L}_\tau$ is always zero. Therefore, we omit the term $\chi_* \mathscr{L}_\tau$ in the definition of the degree of filtered local system.

Now given a $\boldsymbol\gamma$-filtered $G$-local system $\mathscr{L}$ with monodromies $M_{\boldsymbol\gamma}$, denote by $(\mathcal{V},\nabla)$ the corresponding  logahoric $\mathcal{G}_{\boldsymbol\beta}$-connection in $\mathcal{C}_{\rm Conn}(X,\mathcal{G}_{\boldsymbol\beta},\nabla_{\boldsymbol\beta})$. By Corollary \ref{cor tame parah RH corr}, there is a one-to-one correspondence between compatible reductions of $(\mathcal{V},\nabla)$ and compatible reductions of $\mathscr{L}$. Furthermore, if $(V,d',h)$ is a tame metrized $\boldsymbol\beta$-adapted $G$-connection corresponding to $(\mathcal{V},\nabla)$, then the one-to-one correspondence also holds for compatible $\boldsymbol\beta$-adapted holomorphic reductions $\sigma$ of $(V,d',h)$ by Lemma \ref{lem red grp struc Higgs and conn}. With respect to the discussion above, we have the following lemma:

\begin{lem}\label{lem loc=ana degree}
	Let $\mathscr{L}$ be a $\boldsymbol\gamma$-filtered $G$-local system in $\mathcal{C}_{\rm Loc}(X_{\boldsymbol D},G,\boldsymbol\gamma,M_{\boldsymbol\gamma})$, and let $(\mathcal{V},\nabla)$ be the corresponding logahoric $\mathcal{G}_{\boldsymbol\beta}$-connection in $\mathcal{C}_{\rm Conn}(X,\mathcal{G}_{\boldsymbol\beta},\nabla_{\boldsymbol\beta})$. Given a compatible reduction $\tau$ of $\mathscr{L}$ and a character $\chi$ of $P$, we have
	\begin{align*}
	\deg^{\rm loc} \mathscr{L}(\tau,\chi) = parh \deg \mathcal{V}(\varsigma, \kappa),
	\end{align*}
	where $\varsigma$ is the compatible reduction of $\mathcal{V}$ (corresponding to $\tau$) and $\kappa$ is the character of $\mathcal{P}_{\boldsymbol \beta}$ (corresponding to $\chi$) by Lemma \ref{lem char}.
\end{lem}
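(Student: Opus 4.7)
The plan is to reduce to the rank-one setting via pushforward by the character $\kappa$ (equivalently $\chi$, through Lemma \ref{lem char}), and then apply the residue theorem for logarithmic connections on line bundles on $X$. First I would form the line bundle $L(\varsigma,\kappa) = \kappa_{*}\mathcal{V}_{\varsigma}$ on $X$, carrying the induced logarithmic connection $\tilde\nabla := \kappa_{*}\nabla_{\varsigma}$. By the functoriality of the tame parahoric Riemann--Hilbert correspondence (Corollary \ref{cor tame parah RH corr}) under pushforward by characters, together with Lemma \ref{lem red grp struc Higgs and conn}, the restriction of $(L(\varsigma,\kappa),\tilde\nabla)$ to $X_{\boldsymbol D}$ recovers $\chi_{*}\mathscr{L}_\tau$ as a flat $\mathbb{C}^{*}$-bundle, with monodromy $\chi(M_{\gamma_x})$ around each puncture $x$.

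Next I would apply the classical residue theorem to $\tilde\nabla$: $\deg L(\varsigma,\kappa) = -\sum_{x\in\boldsymbol D}\mathrm{Res}_x(\tilde\nabla)$. Since any character of $\mathcal{P}_{\boldsymbol\beta}$ annihilates both the nilpotent radical and commutators in the Levi, and since the element $H_{\alpha_x}+X_{\alpha_x}-Y_{\alpha_x}$ produced by the Kostant--Rallis triple is nilpotent (cf.\ the $\mathfrak{sl}_2$-calculation in \S\ref{subsect local study}), only the semisimple contribution survives, giving $\mathrm{Res}_x(\tilde\nabla) = \langle\nabla_{\beta_x},\chi\rangle$. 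Substituting into Definition \ref{defn alg deg} yields
\begin{align*}
parh\deg\mathcal{V}(\varsigma,\kappa) = \sum_{x\in\boldsymbol D}\langle\beta_x-\nabla_{\beta_x},\chi\rangle.
\end{align*}
Using the formulas in the Main Table of \S\ref{subsect local study}, one computes $\beta_x-\nabla_{\beta_x} = -2s_{\alpha_x}+(H_{\alpha_x}+X_{\alpha_x}-Y_{\alpha_x})$, with the nilpotent term again killed by $\chi$.

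Finally, I would invoke the integrality of the degree: the identity $\sum_x\langle\nabla_{\beta_x},\chi\rangle = -\deg L(\varsigma,\kappa)\in\mathbb{Z}$ forces the imaginary component of the sum (living in $i\mathfrak{t}_{\mathbb{R}}$) to vanish, which in view of the Main Table translates into $\sum_x\langle s_{\alpha_x}-\bar s_{\alpha_x},\chi\rangle = 0$. Combining the two previous identities gives
\begin{align*}
parh\deg\mathcal{V}(\varsigma,\kappa) = -2\sum_x\langle s_{\alpha_x},\chi\rangle = -\sum_x\langle s_{\alpha_x}+\bar s_{\alpha_x},\chi\rangle = \langle\boldsymbol\gamma,\chi\rangle = \deg^{\mathrm{loc}}\mathscr{L}(\tau,\chi),
\end{align*}
as desired. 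The hard part will be making the Riemann--Hilbert functoriality under the character $\kappa$ precise in the parahoric framework and rigorously identifying the residue of $\tilde\nabla$ with the character-image of the Levi factor $\nabla_{\beta_x}$; once this is in place, the remaining computation is bookkeeping in the Main Table together with the elementary integrality argument forcing the imaginary contributions to cancel.
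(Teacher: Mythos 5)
Your argument is correct in outline and the bookkeeping closes up, but it is organized differently from the paper's proof, which is a one-line transplant: invoke Proposition \ref{prop analytic=algebraic degree} and repeat Simpson's proof of \cite[Lemma 6.5]{Simp}, i.e.\ the Betti/de Rham degree comparison is routed through the (adapted-metric) analytic degree already packaged in Proposition \ref{prop analytic=algebraic degree}, rather than through the algebraic residue theorem. Your route instead pushes everything forward along $\kappa$ to the line bundle $L(\varsigma,\kappa)$ with its induced logarithmic connection and uses $\deg L(\varsigma,\kappa)=-\sum_x \mathrm{Res}_x$ plus the Main Table; what this buys is a self-contained algebraic verification, at the cost of having to justify carefully the residue identification, which you rightly flag as the crux. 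That step does hold, and the clean justifications are: $d\chi$ kills every root direction of $\mathfrak{p}$, so the higher-order polar terms allowed in $\mathfrak{g}_{\beta}(K)\tfrac{dz}{z}$ do not contribute and $\kappa_*\nabla_\varsigma$ is honestly logarithmic; $d\chi$ is ${\rm Ad}(P)$-invariant and a parahoric gauge change contributes $d\log\chi(g)$ with $\chi(g)\in R^*$ a unit, so the residue is well defined; and $d\chi$ kills every nilpotent of $\mathfrak{p}$ because the image of a unipotent under a character is a unipotent of $\mathbb{C}^*$, hence trivial --- this last point is the right way to discard $H_{\alpha_x}+X_{\alpha_x}-Y_{\alpha_x}$, whose toral component is nonzero, so ``only the semisimple contribution survives'' should not be argued componentwise. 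Finally, two simplifications: the Riemann--Hilbert functoriality and the monodromy $\chi(M_{\gamma_x})$ are never used in the computation, and the integrality appeal is unnecessary --- since $\alpha_x,\beta_x,\gamma_x$ are real and $s_{\alpha_x}-\bar{s}_{\alpha_x}$ is purely imaginary, taking real parts of the residue-theorem identity gives $\deg L(\varsigma,\kappa)=-\langle\boldsymbol\alpha,\chi\rangle$ directly, and adding $\langle\boldsymbol\beta,\chi\rangle$ yields $\langle\boldsymbol\gamma,\chi\rangle=\deg^{\rm loc}\mathscr{L}(\tau,\chi)$ in one line.
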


\begin{proof}
	 With the help of Proposition \ref{prop analytic=algebraic degree}, the proof is the same as that of \cite[Lemma 6.5]{Simp}.
\end{proof}

\begin{defn}
	A $\boldsymbol\gamma$-filtered $G$-local system $\mathscr{L}$ is \emph{$R$-stable} (resp. \emph{$R$-semistable}), if for
	\begin{itemize}
		\item any proper parabolic group $P \subseteq G$,
		\item any compatible reduction of structure group $\tau: X_{\boldsymbol D} \rightarrow \mathscr{L}/P$,
		\item any nontrivial anti-dominant character $\chi: P \rightarrow \mathbb{C}^*$, which is trivial on the center of $P$,
	\end{itemize}
	one has
	\begin{align*}
	\deg^{\rm loc} \mathscr{L} (\tau, \chi) > 0, \quad (\text{resp. } \geq 0).
	\end{align*}
\end{defn}

\begin{cor}\label{cor Betti=dR}
	A $\boldsymbol\gamma$-filtered $G$-local system is \emph{$R$-stable} (resp. \emph{$R$-semistable}) if and only if the corresponding logahoric $\mathcal{G}_{\boldsymbol\beta}$-connection is \emph{$R$-stable} (resp. \emph{$R$-semistable}).
\end{cor}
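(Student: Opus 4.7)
The plan is to argue that the stability conditions on the two sides of the Riemann--Hilbert correspondence are literally the same inequality, once one translates the defining data through the equivalences already established. Each ingredient in the definition of $R$-stability (a proper parabolic subgroup $P \subseteq G$, a compatible reduction of structure group, and a nontrivial anti-dominant character trivial on the center) matches up bijectively on the two sides.

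First I would fix the dictionary. Given $\mathscr{L} \in \mathcal{C}_{\rm Loc}(X_{\boldsymbol D}, G, \boldsymbol\gamma, M_{\boldsymbol\gamma})$ and the corresponding $(\mathcal{V}, \nabla) \in \mathcal{C}_{\rm Conn}(X, \mathcal{G}_{\boldsymbol\beta}, \nabla_{\boldsymbol\beta})$ provided by Corollary \ref{cor tame parah RH corr}, for any proper parabolic $P \subseteq G$ there is a one-to-one correspondence between compatible reductions $\tau : X_{\boldsymbol D} \to \mathscr{L}/P$ and compatible reductions $\varsigma : X \to \mathcal{V}/\mathcal{P}_{\boldsymbol\beta}$. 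This is precisely the correspondence recorded in the discussion preceding Lemma \ref{lem loc=ana degree}, which itself is obtained by combining the Riemann--Hilbert equivalence with Lemma \ref{lem red grp struc Higgs and conn}. On the level of characters, Lemma \ref{lem char} gives a bijection between characters $\chi$ of $P$ and characters $\kappa$ of $\mathcal{P}_{\boldsymbol\beta}$. Anti-dominance and triviality on the center are intrinsic to $P$ and hence transfer verbatim between $\chi$ and $\kappa$.

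Next, given matched data $(P, \tau, \chi) \leftrightarrow (P, \varsigma, \kappa)$, Lemma \ref{lem loc=ana degree} gives the key numerical identity
\begin{align*}
\deg^{\rm loc} \mathscr{L}(\tau, \chi) \;=\; parh\deg \mathcal{V}(\varsigma, \kappa).
\end{align*}
Thus the inequality $\deg^{\rm loc} \mathscr{L}(\tau, \chi) > 0$ (resp. $\geq 0$) holds for all admissible $(P, \tau, \chi)$ if and only if $parh\deg \mathcal{V}(\varsigma, \kappa) > 0$ (resp. $\geq 0$) holds for all admissible $(P, \varsigma, \kappa)$. Since the quantifiers on the two sides range over bijectively corresponding sets, the $R$-stability (resp. $R$-semistability) of $\mathscr{L}$ is equivalent to that of $(\mathcal{V}, \nabla)$.

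There is no real obstacle here: the work has been done in assembling the Riemann--Hilbert equivalence (Corollary \ref{cor tame parah RH corr}), in matching reductions and characters (the paragraph preceding Lemma \ref{lem loc=ana degree} together with Lemma \ref{lem char}), and in comparing degrees (Lemma \ref{lem loc=ana degree}). The only mild point worth spelling out is the compatibility of the notions of \emph{compatible reduction} on the two sides, i.e. that a reduction $\tau$ of the local system $\mathscr{L}$ makes $\mathscr{L}_\tau$ a $P$-local system if and only if the corresponding reduction $\varsigma$ of $\mathcal{V}$ is $\nabla$-compatible; this is automatic from the construction of the Riemann--Hilbert functor, since parallel transport preserves a reduction precisely when the connection preserves the corresponding subbundle.
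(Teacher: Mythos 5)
Your proposal is correct and follows the same route as the paper: the paper's own proof simply invokes the bijection between compatible reductions (from the Riemann--Hilbert correspondence and Lemma \ref{lem red grp struc Higgs and conn}) together with the degree identity of Lemma \ref{lem loc=ana degree}, which is exactly the argument you spell out. Your version just makes the quantifier-matching and the transfer of anti-dominance via Lemma \ref{lem char} explicit.
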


\begin{proof}
	The corollary follows directly from the correspondence of compatible reductions of structure group and the equivalence of degrees.
\end{proof}

Let
\begin{align*}
\mathcal{C}_{\rm B}(X_{\boldsymbol D}, G, \boldsymbol\gamma, M_{\boldsymbol\gamma}) \subseteq \mathcal{C}_{\rm Loc}(X_{\boldsymbol D}, G, \boldsymbol\gamma, M_{\boldsymbol\gamma})
\end{align*}
be the subcategory including all $R$-stable $\boldsymbol\gamma$-filtered $G$-local systems of degree zero, where the subscript {\rm B} is for \emph{Betti}. Here is an informal geometric interpretation of the category $\mathcal{C}_{\rm B}(X_{\boldsymbol D}, G, \boldsymbol\gamma, M_{\boldsymbol\gamma})$. As we discussed above, the category $\mathcal{C}_{\rm Loc}(X_{\boldsymbol D}, G, \boldsymbol\gamma, M_{\boldsymbol\gamma})$ is exactly the set of points on the underlying space of the quotient ${\rm Hom}(\pi_1(X_{\boldsymbol D}),G, \boldsymbol\gamma, M_{\boldsymbol\gamma})/G$. By adding the stability condition of local systems, let
\begin{align*}
{\rm Hom}(\pi_1(X_{\boldsymbol D}),G, \boldsymbol\gamma, M_{\boldsymbol\gamma})^{\rm s} \subseteq {\rm Hom}(\pi_1(X_{\boldsymbol D}),G,\boldsymbol\gamma, M_{\boldsymbol\gamma})
\end{align*}
be the subset of all representations, which correspond to stable $\boldsymbol\gamma$-filtered local systems. Then, the category $\mathcal{C}_{\rm B}(X_{\boldsymbol D}, G, \boldsymbol\gamma, M_{\boldsymbol\gamma})$ can be understood as the set of points on the underlying space of ${\rm Hom}(\pi_1(X_{\boldsymbol D}),G, \boldsymbol\gamma, M_{\boldsymbol\gamma})^{\rm s} /G$.

\begin{rem}
Note that a stable filtered $G$-local system may not correspond to an irreducible representation of the fundamental group and this is not even true for parabolic bundles (that is, even in the case when $G={\rm GL}_n(\mathbb{C})$). Therefore, the quotient ${\rm Hom}(\pi_1(X_{\boldsymbol D}),G, \boldsymbol\gamma, M_{\boldsymbol\gamma})^{\rm s} /G$ cannot be taken as the GIT quotient (in the sense of \cite{Rich}) because it is well-known that a (stable) point in ${\rm Hom}(\pi_1(X_{\boldsymbol D}),G)/\!\!/G$ corresponds to an irreducible representation \cite[Lemma 4.1]{Rich}. It would be interesting to give a GIT construction of this moduli space ${\rm Hom}(\pi_1(X_{\boldsymbol D}),G, \boldsymbol\gamma, M_{\boldsymbol\gamma})^{\rm s} /G$ directly. We are grateful to Carlos Simpson for pointing out this important remark to us.
\end{rem}

\begin{thm}\label{thm main thm in cat}
The categories are equivalent
\begin{align*}
\mathcal{C}_{\rm Dol}(X,\mathcal{G}_{\boldsymbol\alpha},\varphi_{\boldsymbol\alpha}) \cong \mathcal{C}_{\rm dR}(X,\mathcal{G}_{\boldsymbol\beta},\nabla_{\boldsymbol\beta}) \cong \mathcal{C}_{\rm B}(X_{\boldsymbol D}, G,\boldsymbol\gamma, M_{\boldsymbol\gamma}).
\end{align*}
\end{thm}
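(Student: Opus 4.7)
The plan is to assemble the theorem by stitching together the two equivalences that have already been prepared in the preceding sections, ensuring that the stability conditions and the degree--zero condition are preserved under each functor.

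For the first equivalence $\mathcal{C}_{\rm Dol}(X,\mathcal{G}_{\boldsymbol\alpha},\varphi_{\boldsymbol\alpha}) \cong \mathcal{C}_{\rm dR}(X,\mathcal{G}_{\boldsymbol\beta},\nabla_{\boldsymbol\beta})$, I would simply invoke Theorem \ref{thm alg Dol and DR}, whose proof is already assembled from the commutative square relating the analytic and algebraic categories via $\Xi_{\rm Dol}$ and $\Xi_{\rm dR}$ (Lemma \ref{lem ang=alg Dol (dR)}) combined with the analytic Dolbeault--de Rham equivalence of Theorem \ref{thm ana Dol and DR}. The verification that Levi factors of residues match on both sides is exactly the content of the local study in \S\ref{subsect local study}, which produced the relations $\beta=\alpha-(s_\alpha+\bar s_\alpha)$ and $\nabla_\beta=\alpha+(s_\alpha-\bar s_\alpha)-(H_\alpha+X_\alpha-Y_\alpha)$.

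For the second equivalence $\mathcal{C}_{\rm dR}(X,\mathcal{G}_{\boldsymbol\beta},\nabla_{\boldsymbol\beta}) \cong \mathcal{C}_{\rm B}(X_{\boldsymbol D},G,\boldsymbol\gamma,M_{\boldsymbol\gamma})$, I would use the global tame parahoric Riemann--Hilbert correspondence (Corollary \ref{cor tame parah RH corr}) to obtain the underlying bijection between logahoric $\mathcal{G}_{\boldsymbol\beta}$-connections and $\boldsymbol\gamma$-filtered $G$-local systems, with the monodromy data $M_{\boldsymbol\gamma}$ matching the residue data $\nabla_{\boldsymbol\beta}$ through the exponential formula in the Main Table. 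Then I would apply Corollary \ref{cor Betti=dR} to identify $R$-stability on the Betti side with $R$-stability on the de Rham side, since the equality of degrees $\deg^{\rm loc}\mathscr{L}(\tau,\chi)=parh\deg\mathcal{V}(\varsigma,\kappa)$ (Lemma \ref{lem loc=ana degree}) means the classes of compatible reductions and anti-dominant characters correspond bijectively and preserve strict positivity.

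The one point that requires an explicit check is that the degree--zero condition on the de Rham side translates correctly to the degree--zero condition defining $\mathcal{C}_{\rm B}$ (namely, that the underlying topological invariant $\mu\in\mathfrak{z}$ vanishes). This follows from Lemma \ref{lem loc=ana degree} applied to characters of the full group $G$ together with Proposition \ref{prop mu top inv}: if $parh\deg\mathcal{V}(\kappa)=0$ for all characters $\kappa$ of $\mathcal{G}_{\boldsymbol\beta}$, the same vanishing holds for $\deg^{\rm loc}\mathscr{L}(\chi)=\langle\boldsymbol\gamma,\chi\rangle$, and conversely. Chaining the two equivalences then yields the triple equivalence stated in the theorem, via the diagram
\begin{center}
\begin{tikzcd}
\mathcal{C}_{\rm Dol}(X,\mathcal{G}_{\boldsymbol\alpha},\varphi_{\boldsymbol\alpha}) \arrow[r, equal, "\text{Thm \ref{thm alg Dol and DR}}"] & \mathcal{C}_{\rm dR}(X,\mathcal{G}_{\boldsymbol\beta},\nabla_{\boldsymbol\beta}) \arrow[r, equal, "\text{Cors \ref{cor tame parah RH corr}, \ref{cor Betti=dR}}"] & \mathcal{C}_{\rm B}(X_{\boldsymbol D},G,\boldsymbol\gamma,M_{\boldsymbol\gamma}).
\end{tikzcd}
\end{center}

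The main obstacle, or rather the only substantive thing beyond citing previous results, is the bookkeeping of the compatibility between the three kinds of decorations (weights, Levi factors of residues or monodromies) under each functor; this was precisely the point of establishing the explicit local table in \S\ref{subsect local study}, so the argument reduces to invoking that table at each puncture. All other ingredients---the Kobayashi--Hitchin correspondence, the analytic/algebraic equivalence for degrees and stability, and the parahoric Riemann--Hilbert---have been developed in full in the preceding sections, so the proof should occupy no more than a few lines of chasing the diagram.
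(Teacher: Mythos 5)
Your proposal is correct and follows essentially the same route as the paper: the first equivalence is exactly Theorem \ref{thm alg Dol and DR}, and the second is Corollary \ref{cor Betti=dR} (which itself packages the parahoric Riemann--Hilbert correspondence of Corollary \ref{cor tame parah RH corr} together with the degree comparison of Lemma \ref{lem loc=ana degree}). The extra bookkeeping you mention on residues, monodromies and the degree-zero condition is consistent with, and already implicit in, the paper's cited results.
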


\begin{proof}
The equivalence of the first two categories follows from Theorem \ref{thm alg Dol and DR}, and the second one follows from Corollary \ref{cor Betti=dR}.
\end{proof}

\begin{rem}
The nonabelian Hodge correspondence established in this article is considered for the categories involving a general complex reductive group $G$. For dealing with real groups, one would need to construct well-defined initial $\boldsymbol \theta$-adapted model metrics $h_0$ on a tame analytic $G$-Higgs bundle that give an approximate solution to the Hermite--Einstein equation near the punctures. In \cite[\S 5.1]{BGM}, the authors provide such models subject to a certain condition on the nilpotent part of the graded pieces of the residue of the Higgs field. A similar condition could also be applied in our parahoric setting and for our notion of stability to provide a special Kobayashi--Hitchin correspondence for real groups, however, since it was not clear to us how to get past this condition and establish the analytic part of the correspondence in general, we did not deal with the real group case here (cf. Remark \ref{comparison_model_real}).
\end{rem}

\section{Tame Parahoric Nonabelian Hodge Correspondence: Moduli Space}\label{sec_naHc moduli spaces}

We show that the nonabelian Hodge correspondence holds not only at the level of categories, but also for the corresponding moduli spaces, which are constructed in \cite{KSZparh}. In this section, all weights are considered to be \emph{rational} because the moduli space construction from \cite{KSZparh} is done for rational weights only.

\subsection{Betti Moduli Problem}

In this subsection, we give the definition of the moduli problem of $\boldsymbol\gamma$-filtered $G$-local systems, of which the Levi factors of monodromies are $M_{\boldsymbol\gamma}$ around punctures. Let
\begin{align*}
\widetilde{\mathcal{M}}_{\rm B}(X_{\boldsymbol D}, G, \boldsymbol \gamma, M_{\boldsymbol \gamma}): ({\rm Sch}/\mathbb{C})^{\rm op} \rightarrow {\rm Sets},
\end{align*}
be a functor where $({\rm Sch}/\mathbb{C})^{\rm op}$ is the (opposite) category of schemes over $\mathbb{C}$ with respect to the \'etale topology. For each $\mathbb{C}$-scheme $S$, the set $\widetilde{\mathcal{M}}_{\rm B}(X_{\boldsymbol D}, G, \boldsymbol \gamma, M_{\boldsymbol \gamma})(S)$ is defined as isomorphism classes of $S$-flat families $\mathscr{L}$ of $\boldsymbol\gamma$-filtered $G$-local systems on $X$ with $M_{\boldsymbol\gamma}$ the Levi factors of monodromies around punctures such that for each point $s \in S$, the fiber $\mathscr{L}|_s$ is $R$-stable. This moduli problem is called the \emph{Betti moduli problem}.

In this paper, we do not construct a moduli space for this moduli problem. Later on, we will prove that this moduli problem is equivalent to the moduli problem for the de Rham moduli space, and therefore, the de Rham moduli space represents both moduli functors by the universal property. Although we do not give the construction for the Betti moduli space directly, we still introduce the notation $\mathcal{M}_{\rm B}(X_{\boldsymbol D}, G, \boldsymbol \gamma, M_{\boldsymbol \gamma})$ for the corresponding moduli space.

\subsection{Dolbeault Moduli Space}

Fixing a topological invariant $\mu$ (see \S\ref{subsect degree zero}), the moduli problem of $R$-stable logahoric $\mathcal{G}_{\boldsymbol\theta}$-Higgs torsors of type $\mu$ on $X$
\begin{align*}
\widetilde{\mathcal{M}}_{\rm Higgs}(X, \mathcal{G}_{\boldsymbol\theta},\mu): {\rm (Sch/\mathbb{C})}^{\rm op}  \rightarrow {\rm Sets}
\end{align*}
is defined as follows. For each $\mathbb{C}$-scheme $S$, the set $\widetilde{\mathcal{M}}_{\rm Higgs}(X,\mathcal{G}_{\boldsymbol\theta},\mu)(S)$ is defined as the collection of isomorphism classes of pairs $(\mathcal{E},\varphi)$  such that
\begin{itemize}
	\item $\mathcal{E}$ is an $S$-flat family of parahoric $\mathcal{G}_{\boldsymbol\theta}$-torsors on $X$;
	\item $\varphi$ is a section of $\mathcal{E}(\mathfrak{g}) \otimes \pi^*_X K_X(D)$, where $\pi_X: X_S \rightarrow X$ is the natural projection;
	\item for each point $s \in S$, the fiber $(\mathcal{E}|_{X \times s},\varphi|_{X \times s})$ is an $R$-stable logahoric $\mathcal{G}_{\boldsymbol\theta}$-Higgs torsor of type $\mu$ on $X$.
\end{itemize}
The authors prove the following result for this moduli problem:
\begin{thm}\cite[Theorem 6.1]{KSZparh}\label{thm exist of parah Higgs moduli}
	There exists a quasi-projective scheme $\mathcal{M}_{\rm Higgs}(X,\mathcal{G}_{\boldsymbol\theta},\mu)$ as the moduli space of the moduli problem $\widetilde{\mathcal{M}}_{\rm Higgs}(X,\mathcal{G}_{\boldsymbol\theta},\mu)$.
\end{thm}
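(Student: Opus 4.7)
The plan is to reduce the construction to the well-understood moduli theory of equivariant $G$-Higgs bundles on a smooth projective curve, exploiting the assumption that all weights $\boldsymbol\theta$ are rational. Choose a common denominator $N$ for the components of all $\theta_x$, and let $p : Y \to X$ be a cyclic cover of degree $N$ ramified exactly over $\boldsymbol D$ (a Kawamata-type cover), with Galois group $\Gamma \cong \mathbb{Z}/N\mathbb{Z}$. The key local observation is that the pullback of the parahoric group scheme $\mathcal{G}_{\boldsymbol\theta}$ becomes $\Gamma$-equivariantly isomorphic to the constant group scheme $G \times Y$, because the defining inequalities $r(\theta_x) + m_r(\theta_x) \ge 0$ become integral after pulling back via a local coordinate change $z \mapsto w^N$. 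Under this correspondence, a logahoric $\mathcal{G}_{\boldsymbol\theta}$-Higgs torsor $(\mathcal{E}, \varphi)$ on $X$ of topological type $\mu$ corresponds bijectively to a $\Gamma$-equivariant $G$-Higgs bundle $(p^*\mathcal{E}, p^*\varphi)$ on $Y$ with prescribed local isotropy representation at each ramification point.

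Next, I would establish boundedness of the family of $R$-stable objects of fixed type $\mu$. Using the $R$-stability inequality applied to Harder--Narasimhan-type reductions, one obtains a uniform bound on the instability of parahoric subbundles, and hence on the invariants of the associated equivariant vector bundles $p^*\mathcal{E}(V)$ for a chosen faithful representation $G \hookrightarrow \mathrm{GL}(V)$. With boundedness in hand, the construction proceeds along the now-classical Simpson--Nitsure scheme: embed the equivariant objects into a large Quot scheme parameterizing quotients of a fixed twisted sheaf, cut out the locally closed subscheme $\mathcal{R}$ consisting of tuples $(p^*\mathcal{E}, p^*\varphi)$ equipped with a $\Gamma$-action, reduction of structure group to $G$, and Higgs field mapping $p^*\mathcal{E}$ into $p^*\mathcal{E}(\mathfrak{g}) \otimes K_Y(\boldsymbol D_Y)$ compatibly with the $\Gamma$-structure. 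Form the GIT quotient of $\mathcal{R}$ by the natural action of the rigidifying gauge group, with a linearization built from a sufficiently positive twist. The resulting quotient is quasi-projective by Mumford's GIT, and descends to a quasi-projective moduli space on $X$ by taking $\Gamma$-invariants.

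The main obstacle is the compatibility of stability conditions: one must prove that $R$-stability of $(\mathcal{E}, \varphi)$ in the sense of Definition~\ref{defn alg stab cond} coincides exactly with Hilbert--Mumford GIT stability for the linearization chosen above (equivalently, with $\Gamma$-equivariant Ramanathan stability of $p^*\mathcal{E}$ with the pullback Higgs field). This requires translating anti-dominant characters of $\mathcal{P}_{\boldsymbol\theta} \subseteq \mathcal{G}_{\boldsymbol\theta}$ into $1$-parameter subgroups of the gauge group acting on $\mathcal{R}$, computing the resulting Mumford weights, and matching them with $\mathrm{parh}\deg \mathcal{E}(\varsigma,\kappa)$ using Lemma~\ref{lem char} together with the parahoric degree formula of Definition~\ref{defn alg deg}; the contribution $\langle \boldsymbol\theta, \kappa \rangle$ must be shown to arise precisely from the ramification correction on $Y$. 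Once this matching is verified, Mumford's theorem yields the desired quasi-projective coarse moduli space $\mathcal{M}_{\mathrm{Higgs}}(X, \mathcal{G}_{\boldsymbol\theta}, \mu)$, and the universal family on $\mathcal{R}$ shows that it corepresents the functor $\widetilde{\mathcal{M}}_{\mathrm{Higgs}}(X,\mathcal{G}_{\boldsymbol\theta}, \mu)$.
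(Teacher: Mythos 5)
First, a point of bookkeeping: the paper does not prove Theorem \ref{thm exist of parah Higgs moduli} at all; it is imported from the companion work \cite{KSZparh}, so what you are really offering is a replacement for the construction carried out there, along the Balaji--Seshadri line of passing to a ramified cover and working with equivariant objects. As a proof, your outline has genuine gaps. (a) The cover you posit need not exist: a degree-$N$ cyclic cover of $X$ totally ramified exactly over $\boldsymbol D$ requires $\mathcal{O}_X(\boldsymbol D)$ to be an $N$-th power in the Picard group; in general one must use the Kawamata covering lemma, whose Galois group is not cyclic and which introduces extra branch points (where the weight is trivial and the local analysis must also be carried out), or else work with the root stack. (b) The heart of the matter --- that conjugation by $w^{N\theta_x}$ identifies $p^*\mathcal{G}_{\theta_x}$ with the constant group scheme $\Gamma$-equivariantly, and that this upgrades to an equivalence of moduli \emph{functors}, i.e.\ for flat families over an arbitrary base $S$, preserving the topological type $\mu$, carrying the logarithmic Higgs field $\varphi\in H^0(X,\mathcal{E}(\mathfrak{g})\otimes K_X(\boldsymbol D))$ to a $\Gamma$-invariant logarithmic field on $Y$ with local isotropy type determined by $\exp(2\pi i\theta_x)$ --- is exactly the family and Higgs extension of the Balaji--Seshadri correspondence, and it is asserted in two sentences rather than proved. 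Without the functorial statement, producing a space of equivariant objects on $Y$ says nothing about corepresentability of $\widetilde{\mathcal{M}}_{\rm Higgs}(X,\mathcal{G}_{\boldsymbol\theta},\mu)$.

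Second, the two steps you yourself flag are left undone, and they are the substance of the theorem. (c) The comparison of $R$-stability (Definition \ref{defn alg stab cond}) with GIT, equivalently equivariant Ramanathan, stability is where the correction term $\langle\boldsymbol\theta,\kappa\rangle$ of Definition \ref{defn alg deg} must be extracted from the ramification data, and for principal bundles the translation of anti-dominant characters of parabolic reductions into one-parameter subgroups of the relevant finite-dimensional $\mathrm{PGL}$-action on a rigidified parameter space (not of the infinite-dimensional gauge group, which is not a GIT situation as written) is the hardest part of any Ramanathan--Schmitt type construction; a GIT theory for $\Gamma$-equivariant principal $G$-Higgs bundles with logarithmic poles is itself not off the shelf and needs either decorated-sheaf machinery or a faithful representation argument spelled out. (d) The closing step ``descends to $X$ by taking $\Gamma$-invariants'' is not meaningful: once the equivalence in (b) is established, the moduli space of $\Gamma$-equivariant objects on $Y$ already corepresents the parahoric moduli problem on $X$, and $\Gamma$ acts trivially on that space; conversely, without (b) there is nothing to descend. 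In sum, the strategy is a recognized one, but the family-level parahoric--equivariant correspondence with matching of degree, type and stability, together with the Hilbert--Mumford analysis, are precisely the points assumed rather than supplied, so the proposal does not yet constitute a proof of the theorem.
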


As a special case, we define the moduli problem $\widetilde{\mathcal{M}}_{\rm Dol}(X, \mathcal{G}_{\boldsymbol\theta},\varphi_{\boldsymbol\theta})$ for $R$-stable logahoric $\mathcal{G}_{\boldsymbol\theta}$-Higgs torsors of degree zero with additional residue data $\varphi_{\boldsymbol\theta}$ around punctures. We want to remind the reader that when parahoric degree is zero, then $\mu$ is the trivial element. As a result of Theorem \ref{thm exist of parah Higgs moduli}, we have:

\begin{cor}
	There exists a quasi-projective scheme $\mathcal{M}_{\rm Dol}(X, \mathcal{G}_{\boldsymbol\theta},\varphi_{\boldsymbol\theta})$ as the moduli space of the moduli problem $\widetilde{\mathcal{M}}_{\rm Dol}(X, \mathcal{G}_{\boldsymbol\theta},\varphi_{\boldsymbol\theta})$.
\end{cor}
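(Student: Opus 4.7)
\medskip

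\noindent\textbf{Proof proposal.} The plan is to deduce the corollary directly from Theorem \ref{thm exist of parah Higgs moduli} by first specializing the topological invariant and then cutting out the residue condition as a (locally) closed subscheme. As a first step, I would apply Theorem \ref{thm exist of parah Higgs moduli} with $\mu$ equal to the trivial element of $\mathfrak{t}$: this is legitimate because, as explained in \S\ref{subsect degree zero}, the degree zero condition corresponds precisely to the vanishing of the canonical topological invariant $\mu$ associated to a parahoric torsor. The outcome is a quasi-projective coarse moduli scheme $\mathcal{M}_{\rm Higgs}(X,\mathcal{G}_{\boldsymbol\theta},0)$ representing the forgetful moduli problem $\widetilde{\mathcal{M}}_{\rm Higgs}(X,\mathcal{G}_{\boldsymbol\theta},0)$ in which the residues of the Higgs field at the punctures are allowed to vary freely.

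Next, I would construct an algebraic \emph{residue morphism}. For each puncture $x\in\boldsymbol D$, a trivialization of $\mathcal{E}$ on the formal disc $\mathbb{D}_x$ identifies the residue of $\varphi$ at $x$ with an element of $\mathfrak{p}_{\theta_x}\subseteq \mathfrak{g}$, and composing with the adjoint quotient map $\mathfrak{g}\to \mathfrak{g}/\!\!/G=\mathrm{Spec}\,\mathbb{C}[\mathfrak{g}]^G$ yields a quantity independent of both the trivialization and the $\mathcal{G}_{\theta_x}$-orbit. These local assignments fit together in families, producing a morphism
\begin{align*}
\mathrm{Res}\colon \mathcal{M}_{\rm Higgs}(X,\mathcal{G}_{\boldsymbol\theta},0) \longrightarrow \prod_{x\in \boldsymbol D} \mathfrak{g}/\!\!/G
\end{align*}
of schemes. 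On the level of closed points, $\mathrm{Res}$ records exactly the conjugacy class of the semisimple part $s_{\theta_x}$ of $\varphi_{\theta_x}$, which by definition of the Levi factor of the residue in \S\ref{subsect local study} is equivalent data.

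The residue datum $\varphi_{\boldsymbol\theta}=\{\varphi_{\theta_x}\}_{x\in\boldsymbol D}$ determines a closed point $[\varphi_{\boldsymbol\theta}]$ in the affine scheme $\prod_{x\in\boldsymbol D}\mathfrak{g}/\!\!/G$, and I would then define $\mathcal{M}_{\rm Dol}(X, \mathcal{G}_{\boldsymbol\theta},\varphi_{\boldsymbol\theta})$ to be the scheme-theoretic fibre $\mathrm{Res}^{-1}([\varphi_{\boldsymbol\theta}])$. As the preimage of a closed point under a morphism of schemes, this fibre is a closed subscheme of $\mathcal{M}_{\rm Higgs}(X,\mathcal{G}_{\boldsymbol\theta},0)$ and hence is itself quasi-projective. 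A functorial verification then shows that this closed subscheme corepresents the subfunctor $\widetilde{\mathcal{M}}_{\rm Dol}(X,\mathcal{G}_{\boldsymbol\theta},\varphi_{\boldsymbol\theta})\subseteq \widetilde{\mathcal{M}}_{\rm Higgs}(X,\mathcal{G}_{\boldsymbol\theta},0)$, finishing the proof.

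The main obstacle I expect is the careful construction of $\mathrm{Res}$ as an honest morphism of schemes (rather than a set-theoretic map on points): one has to check that for an $S$-flat family $(\mathcal{E},\varphi)$ the local residue, although only defined after choosing a trivialization along each $\{x\}\times S$, descends to a well-defined $S$-point of $\mathfrak{g}/\!\!/G$. This is essentially automatic from the $G$-invariance of the target, but requires pinning down how the residue of $\varphi$ depends on the parahoric trivialization and verifying compatibility with base change; with the formalism of \cite{KSZparh} in place this is routine, and the rest of the argument is formal.
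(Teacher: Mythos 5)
Your overall strategy---specialize Theorem \ref{thm exist of parah Higgs moduli} to trivial $\mu$ (degree zero) and then carve the residue condition out of $\mathcal{M}_{\rm Higgs}(X,\mathcal{G}_{\boldsymbol\theta},0)$---is exactly the reduction the paper intends; the corollary is recorded there as a direct consequence of Theorem \ref{thm exist of parah Higgs moduli} with no further argument. The gap is in the mechanism you use to impose the residue data. The moduli problem $\widetilde{\mathcal{M}}_{\rm Dol}(X,\mathcal{G}_{\boldsymbol\theta},\varphi_{\boldsymbol\theta})$ fixes the Levi factor of the residue at each puncture, and in this paper that means the full element $\varphi_{\theta_x}=s_{\theta_x}+Y_{\theta_x}$, semisimple \emph{and} nilpotent part (see \S\ref{subsect local study}: the de Rham residue and the Betti monodromy depend on the Kostant--Rallis triple $(X_\alpha,H_\alpha,Y_\alpha)$ attached to $Y_\alpha$, so the nilpotent part is essential data for the correspondence). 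Your morphism $\mathrm{Res}$ lands in $\prod_{x\in\boldsymbol D}\mathfrak{g}/\!\!/G$, and the adjoint quotient only remembers the conjugacy class of the semisimple part: two residues with conjugate semisimple parts but different nilpotent parts have the same image. So the assertion that the image in $\mathfrak{g}/\!\!/G$ is ``equivalent data'' to the Levi factor is false, and the fibre $\mathrm{Res}^{-1}([\varphi_{\boldsymbol\theta}])$ is strictly larger than the locus you want --- it contains, for instance, Higgs torsors with purely semisimple residue when $Y_{\theta_x}\neq 0$, and torsors with nonzero nilpotent part when $Y_{\theta_x}=0$.

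Repairing this is not just a matter of refining the target: the locus of residues with prescribed semisimple part and prescribed nilpotent orbit is a Jordan (decomposition) class, which is in general only \emph{locally closed}, since nilpotent orbits degenerate in their closures; so the desired locus cannot be realized as a closed fibre of a morphism to an affine quotient, and one should instead argue that the residue condition cuts out a locally closed subscheme (which still yields quasi-projectivity). One should also take the residue where it naturally lives --- in the Levi quotient $\mathfrak{l}_{\theta_x}$ of the parahoric, up to $L_{\theta_x}$-conjugacy --- since that is what is independent of the choice of parahoric trivialization, rather than pushing it into $\mathfrak{g}/\!\!/G$. Finally, since $\mathcal{M}_{\rm Higgs}(X,\mathcal{G}_{\boldsymbol\theta},0)$ is only a coarse moduli space, the claim that a (locally) closed subscheme corepresents the subfunctor is not automatic and needs its own verification. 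As written, your construction does not produce the moduli space of the stated moduli problem because the residue map forgets the nilpotent data.
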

The moduli space $\mathcal{M}_{\rm Dol}(X, \mathcal{G}_{\boldsymbol\theta},\varphi_{\boldsymbol\theta})$ is called the \emph{Dolbeault moduli space}.

\subsection{De Rham Moduli Space}

Define the moduli problem of $R$-stable logahoric $\mathcal{G}_{\boldsymbol\theta}$-connections of type $\mu$ on $X$
\begin{align*}
\widetilde{\mathcal{M}}_{\rm Conn}(X, \mathcal{G}_{\boldsymbol\theta},\mu): {\rm (Sch/\mathbb{C})}^{\rm op}  \rightarrow {\rm Sets}
\end{align*}
as follows. For each $\mathbb{C}$-scheme $S$, the set $\widetilde{\mathcal{M}}_{\rm Conn}(X,\mathcal{G}_{\boldsymbol\theta},\mu)(S)$ is defined as a collection of pairs $(\mathcal{E},\nabla)$ up to isomorphism such that
\begin{itemize}
	\item $\mathcal{E}$ is an $S$-flat family of parahoric $\mathcal{G}_{\boldsymbol\theta}$-torsors on $X$;
	\item $\nabla$ is a connection on $\mathcal{E}$;
	\item for each point $s \in S$, the fiber $(\mathcal{E}|_{X \times s},\varphi|_{X \times s})$ is an $R$-stable logahoric $\mathcal{G}_{\boldsymbol\theta}$-connections of type $\mu$ on $X$.
\end{itemize}

Similar to the case of logahoric Higgs torsors, we have the following result:
\begin{thm}\label{thm exist of logah Dmod}
	There exists a quasi-projective scheme $\mathcal{M}_{\rm Conn}(X,\mathcal{G}_{\boldsymbol\theta},\mu)$ as the moduli space of the moduli problem $\widetilde{\mathcal{M}}_{\rm Conn}(X, \mathcal{G}_{\boldsymbol\theta},\mu)$.
\end{thm}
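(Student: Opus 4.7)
The plan is to mirror the construction of the Dolbeault moduli space carried out in \cite[Theorem 6.1]{KSZparh}, replacing logarithmic Higgs fields by logarithmic connections throughout. The fundamental observation that enables this adaptation is that, for a fixed parahoric $\mathcal{G}_{\boldsymbol\theta}$-torsor $\mathcal{E}$ on $X$, the set of logarithmic connections $\nabla$ on $\mathcal{E}$ with poles along $\boldsymbol D$ is an affine space (torsor) modelled on the vector space $H^0(X,\mathcal{E}(\mathfrak{g})\otimes K_X(\boldsymbol D))$ of logarithmic Higgs fields, since the difference of any two such connections satisfies the $\mathcal{O}_X$-linearity that characterizes Higgs data. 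Thus, once a family of parahoric torsors has been parametrized, the corresponding connections can be parametrized by an affine bundle over the base, and essentially the same GIT machinery that handled the Higgs case will apply.

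First I would construct a parameter scheme $Q$ for R-semistable parahoric $\mathcal{G}_{\boldsymbol\theta}$-torsors of type $\mu$ on $X$, endowed with appropriate rigidification data (for instance, a trivialization on a sufficiently ample divisor together with a level structure at the punctures, exactly as in \cite[\S 6]{KSZparh}). Boundedness of R-semistable parahoric torsors of fixed topological type, established there, guarantees that $Q$ is a quasi-projective scheme of finite type carrying a universal family $\mathcal{E}_Q\to X\times Q$. Second, using the relative Atiyah algebroid adapted to parahoric torsors with logarithmic singularities along $\boldsymbol D$ (cf.\ Appendix~\ref{sect appendix} and Definition~\ref{defn alg parah conn}), I would form the relative parameter space $R\to Q$ whose fiber over $[\mathcal{E}]\in Q$ is the affine space of logarithmic connections on $\mathcal{E}$; this $R$ is an affine bundle over $Q$ (in particular of finite type over $\mathbb{C}$) and parametrizes pairs $(\mathcal{E},\nabla)$ together with the chosen rigidification. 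The subset where the type equals $\mu$ is open-and-closed on $R$, so restricting yields a scheme $R_\mu$.

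Third, I would apply Geometric Invariant Theory. The gauge group $\mathcal{G}$ arising from the rigidification acts on $R_\mu$, and by the translation of R-stability into GIT stability carried out for the Higgs setting in \cite[\S 5--6]{KSZparh} (building on the framework of Ramanathan \cite{Rama19961,Rama19962}), one obtains a suitable $\mathcal{G}$-linearized ample line bundle on $R_\mu$ such that the GIT-stable locus coincides with the locus of $R$-stable logahoric connections in the sense of Definition~\ref{defn alg parah conn}; the compatible reductions of structure group appearing in the stability condition correspond precisely to one-parameter subgroups via the Hilbert--Mumford numerical criterion. The GIT quotient $R_\mu^{\rm s}/\!\!/\mathcal{G}$ is then a quasi-projective scheme that corepresents the moduli functor $\widetilde{\mathcal{M}}_{\rm Conn}(X,\mathcal{G}_{\boldsymbol\theta},\mu)$, and this is the desired $\mathcal{M}_{\rm Conn}(X,\mathcal{G}_{\boldsymbol\theta},\mu)$.

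The main obstacle I anticipate is setting up the relative affine-bundle structure of $R\to Q$ in the parahoric context: one must check that the sheaf of logarithmic connections on the universal family is locally free of the expected rank and that the torsor structure patches correctly under the transition functions defining $\mathcal{G}_{\boldsymbol\theta}$. Once this is done, translating R-stability into a numerical GIT criterion is essentially formal given the analogous result for logahoric Higgs torsors in \cite{KSZparh}, since both stability notions are defined through the parahoric degree and the same classes of reductions. This is why the authors allude to a ``similar argument'' without giving a separate detailed proof.
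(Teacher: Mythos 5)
Your route differs from the paper's: the paper disposes of this theorem in one step by observing that Higgs fields and connections are both special cases of $\Lambda$-modules in the sense of Simpson \cite{Simp2} (see also \cite{Sun202003}), so the GIT construction of \cite[Theorem 6.1, Remark 6.11]{KSZparh} applies verbatim with $\Lambda$ the sheaf of logarithmic differential operators. This is not merely a stylistic shortcut. The $\Lambda$-module (equivalently, $\lambda$-connection/Rees) formalism is precisely the device that \emph{linearizes} the problem: the structure is encoded by an $\mathcal{O}_X$-linear action of $\Lambda$, so the parametrizing scheme is built from linear data and the group acts linearly, making the choice of an equivariant ample linearization and the Hilbert--Mumford analysis genuinely parallel to the Higgs case. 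In your plan, by contrast, the gauge group acts on the affine bundle $R\to Q$ inhomogeneously, $\nabla\mapsto \mathrm{Ad}(g)\nabla + dg\cdot g^{-1}$, and producing a linearized ample line bundle on $R_\mu$ whose GIT-(semi)stable locus matches $R$-(semi)stability is exactly the nontrivial point; calling this step ``essentially formal'' hides the difficulty that the $\Lambda$-module reduction is designed to remove.

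There is also a more concrete gap in your first step. You build $Q$ as a parameter scheme for \emph{$R$-semistable parahoric torsors} and then fiber the connections over it, but the underlying torsor of an $R$-stable logahoric connection need not be an $R$-semistable torsor (just as the underlying bundle of a semistable Higgs bundle or flat connection can be unstable; the relevant stability only tests $\nabla$-compatible reductions). So your $R_\mu$ would omit points of the moduli problem. What must be bounded is the family of semistable \emph{pairs} $(\mathcal{E},\nabla)$, and this boundedness is established through the $\Lambda$-module/Quot-scheme parametrization (bounding the instability of the underlying object in terms of the residue/polar data), not deduced from boundedness of semistable torsors alone. Repairing your approach essentially forces you back to the framework the paper invokes.
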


\begin{proof}
	Both connections and Higgs bundles can be understood as special cases of $\Lambda$-modules (see \cite{Simp2,Sun202003} for instance). Therefore, the construction of the moduli space for logahoric connections is similar to that of parahoric Higgs torsors (see \cite[Remark 6.11]{KSZparh}).
\end{proof}

Similarly, we can define the moduli problem $\widetilde{\mathcal{M}}_{\rm dR}(X, \mathcal{G}_{\boldsymbol\theta}, \nabla_{\boldsymbol\theta})$ for $R$-stable logahoric $\mathcal{G}_{\boldsymbol\theta}$-connections of degree zero with additional residue data $\nabla_{\boldsymbol\theta}$ around punctures. The corresponding moduli space $\mathcal{M}_{\rm dR}(X, \mathcal{G}_{\boldsymbol\theta},\nabla_{\boldsymbol\theta})$ is called the \emph{de Rham moduli space}.

\begin{cor}
	There exists a quasi-projective scheme $\mathcal{M}_{\rm dR}(X, \mathcal{G}_{\boldsymbol\theta} ,\nabla_{\boldsymbol\theta})$ as the moduli space of the moduli problem $\widetilde{\mathcal{M}}_{\rm dR}(X, \mathcal{G}_{\boldsymbol\theta},\nabla_{\boldsymbol\theta})$.
\end{cor}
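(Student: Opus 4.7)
The plan is to deduce this corollary from Theorem \ref{thm exist of logah Dmod} in the same manner in which the Dolbeault corollary was deduced from Theorem \ref{thm exist of parah Higgs moduli}. By Definition \ref{defn degree zero}, a logahoric connection is of degree zero precisely when its associated topological invariant $\mu\in\mathfrak{z}$ is trivial. Hence the ambient moduli space $\mathcal{M}_{\rm Conn}(X,\mathcal{G}_{\boldsymbol\theta},0)$ already exists as a quasi-projective scheme by Theorem \ref{thm exist of logah Dmod}. It will therefore suffice to realize $\widetilde{\mathcal{M}}_{\rm dR}(X,\mathcal{G}_{\boldsymbol\theta},\nabla_{\boldsymbol\theta})$ as a locally closed subfunctor of $\widetilde{\mathcal{M}}_{\rm Conn}(X,\mathcal{G}_{\boldsymbol\theta},0)$, and then appeal to the fact that a locally closed subscheme of a quasi-projective scheme is again quasi-projective.

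First, I would construct, for any $S$-flat family $(\mathcal{E},\nabla)$ of logahoric $\mathcal{G}_{\boldsymbol\theta}$-connections on $X$, the residue of $\nabla$ at each puncture $x\in\boldsymbol D$. Using the local identification of $\nabla$ with an element of $\mathfrak{g}_{\theta_x}(K)\tfrac{dz}{z}$ (Lemma \ref{lem two defn parah Lie alg}) and the trivialization of $\mathcal{E}$ adapted to the parahoric structure at $x$, this gives a section of $\mathcal{E}_{\theta_x}(\mathfrak{g})|_{\{x\}\times S}$, well-defined up to the adjoint action of $\mathcal{G}_{\theta_x}$ along $S$. Restricting to fibres over $s\in S$ and composing with the Jordan-decomposition-plus-Levi projection to the adjoint quotient of $\mathfrak{g}$ then yields a residue morphism from $\mathcal{M}_{\rm Conn}(X,\mathcal{G}_{\boldsymbol\theta},0)$ to the finite product $\prod_{x\in\boldsymbol D}\mathfrak{g}/\!\!/G$ of adjoint quotients indexed by the punctures.

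Second, the locus where each Levi factor of the residue is $G$-conjugate to the prescribed element $\nabla_{\theta_x}$ is cut out by fixing a point in each factor of this product of adjoint quotients: the semisimple part of $\nabla_{\theta_x}$ specifies a closed adjoint orbit (since semisimple orbits are closed), while the nilpotent part restricts to a prescribed nilpotent orbit, which is locally closed inside its closure. The intersection of these conditions over all $x\in\boldsymbol D$ therefore determines a locally closed subscheme of $\mathcal{M}_{\rm Conn}(X,\mathcal{G}_{\boldsymbol\theta},0)$, and this subscheme represents the moduli functor $\widetilde{\mathcal{M}}_{\rm dR}(X,\mathcal{G}_{\boldsymbol\theta},\nabla_{\boldsymbol\theta})$.

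The hard part will be the family-theoretic construction of the residue morphism and, in particular, verifying that the Jordan decomposition behaves well enough in flat families so that the condition of having prescribed Levi factor is genuinely locally closed (rather than merely constructible). For general reductive $G$ the adjoint quotient of $\mathfrak{g}_{\theta_x}$ is more delicate than in the $\mathrm{GL}_n$ case; one resolves this by first stratifying by nilpotent orbits (a locally closed stratification of the nilpotent cone) and then fixing the semisimple part on each stratum. Once this is in place, the remainder of the argument is formal.
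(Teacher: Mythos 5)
Your proposal takes essentially the same route as the paper: the corollary is deduced directly from Theorem \ref{thm exist of logah Dmod} by specializing to the trivial invariant $\mu$ (degree zero) and imposing the residue data $\nabla_{\boldsymbol\theta}$, exactly as the Dolbeault corollary follows from Theorem \ref{thm exist of parah Higgs moduli}. The paper gives no further justification for this step, so your extra discussion of the residue morphism and the local closedness of the prescribed-Levi-factor locus merely fills in details the authors leave implicit and is consistent with their argument.
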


\subsection{Identification of the Moduli Spaces}
We first introduce some notation. A moduli problem $\widetilde{\mathcal{M}}: {\rm (Sch/\mathbb{C})}^{\rm op}  \rightarrow {\rm Sets}$ is defined as a contravariant functor. Now we substitute the category ${\rm Sch/\mathbb{C}}$ by the category of complex analytic spaces, and denote by $\widetilde{\mathcal{M}}^{\rm (an)}$ the new moduli problem. Let $\mathcal{M}^{\rm (an)}$ be the moduli space (as a complex analytic space) that co-represents $\widetilde{\mathcal{M}}^{\rm (an)}$. Finally, denote by $\mathcal{M}^{\rm (top)}$ the topological space underlying $\mathcal{M}^{\rm (an)}$. All of the moduli problems considered in this section can be defined naturally on the category of complex analytic spaces, and the corresponding moduli spaces also exist. Now we can state the main theorem.

\begin{thm}\label{thm main thm in mod space}
	There is an isomorphism of complex analytic spaces
	\begin{align*}
	\mathcal{M}^{\rm (an)}_{\rm B}(X_{\boldsymbol D},G, \boldsymbol \gamma, M_{\boldsymbol \gamma} ) \cong \mathcal{M}^{\rm (an)}_{\rm dR}(X, \mathcal{G}_{\boldsymbol \beta}, \nabla_{\boldsymbol \beta}),
	\end{align*}
	and we also have a homeomorphism of topological spaces
	\begin{align*}
	\mathcal{M}^{\rm (top)}_{\rm Dol}(X, \mathcal{G}_{\boldsymbol \alpha}, \varphi_{\boldsymbol \alpha}) \cong \mathcal{M}^{\rm (top)}_{\rm dR}(X, \mathcal{G}_{\boldsymbol \beta}, \nabla_{\boldsymbol \beta}).
	\end{align*}
\end{thm}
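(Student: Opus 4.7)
The plan is to upgrade the categorical equivalences of Theorem \ref{thm main thm in cat} to the promised isomorphism of moduli spaces in the two regularities (complex analytic and topological). The two statements are proved by separate techniques: the Betti--de~Rham identification is Riemann--Hilbert in families, while the Dolbeault--de~Rham identification is the non-abelian Hodge map coming from the harmonic metric, which is only continuous, not holomorphic, in general.

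For the first isomorphism, I would start by promoting the global tame parahoric Riemann--Hilbert correspondence (Corollary~\ref{cor tame parah RH corr}) to a natural transformation of moduli functors
\[
\mathrm{RH}\colon \widetilde{\mathcal{M}}_{\rm dR}^{\rm (an)}(X,\mathcal{G}_{\boldsymbol\beta},\nabla_{\boldsymbol\beta})\longrightarrow \widetilde{\mathcal{M}}_{\rm B}^{\rm (an)}(X_{\boldsymbol D},G,\boldsymbol\gamma,M_{\boldsymbol\gamma}).
\]
Over an analytic base $S$, a family of logahoric $\mathcal{G}_{\boldsymbol\beta}$-connections restricts to a family of flat $G$-connections on $X_{\boldsymbol D}\times S$, and the monodromy of these connections (computed by integrating along paths in $X_{\boldsymbol D}$ and then base-changing in $S$) depends holomorphically on $S$ by the classical theorem on holomorphic dependence of solutions of linear ODEs on parameters. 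The local study at each puncture (\S\ref{subsect local study}, Theorem~\ref{thm tame parah RH corr Bo}) guarantees that the Levi factors of monodromies stay in the prescribed conjugacy classes, so the target functor is landed in. The inverse natural transformation is constructed analytically from a family of representations by forming the associated flat bundle on $X_{\boldsymbol D}\times S$, and then, around each puncture, invoking Theorem~\ref{thm tame parah RH corr Bo} to canonically extend to a logahoric $\mathcal{G}_{\boldsymbol\beta}$-connection; the canonicity of this extension is what makes the construction work in families. Since $\mathcal{M}_{\rm dR}^{\rm (an)}$ co-represents its functor (the analytification of the scheme from Theorem~\ref{thm exist of logah Dmod}), the isomorphism of functors yields an analytic isomorphism $\mathcal{M}_{\rm B}^{\rm (an)}\cong\mathcal{M}_{\rm dR}^{\rm (an)}$.

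For the second (topological) identification, the plan is to use the Kobayashi--Hitchin correspondence of Theorem~\ref{thm KH corr Higgs} pointwise and check that the resulting map $\mathrm{NH}\colon\mathcal{M}_{\rm Dol}^{\rm (top)}\to\mathcal{M}_{\rm dR}^{\rm (top)}$ and its inverse are continuous. Concretely, given a point $[\mathcal{E},\varphi]\in\mathcal{M}_{\rm Dol}^{\rm (top)}$, Theorem~\ref{thm KH corr Higgs} produces a $\boldsymbol\alpha$-adapted harmonic metric $h$ on the underlying tame $G$-Higgs bundle on $X_{\boldsymbol D}$, unique up to automorphism, and formula~\eqref{induced_flat_connect} produces a flat $G$-connection $D=\partial'_h+\partial''_E+\phi+\phi^\ast$, which, together with Proposition~\ref{prop tame ana Higgs and conn} and the functor $\Xi_{\rm dR}$ of \S\ref{sect alg Higgs and conn}, yields a point of $\mathcal{M}_{\rm dR}^{\rm (top)}$ with the prescribed residues $\nabla_{\boldsymbol\beta}$. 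Continuity follows by the standard argument: the Donaldson functional used in the proof of Theorem~\ref{thm KH corr Higgs} depends continuously on the Higgs data in $\hat L^{2,p}_\delta$, hence so does its minimizer $h$ by the uniform estimates of Proposition~\ref{estimates_regularity}; this gives continuous dependence of $D$ on $(\mathcal{E},\varphi)$. The inverse map $\mathrm{NH}^{-1}$ is constructed in the same way by using the Kobayashi--Hitchin correspondence for $G$-connections (Theorem~\ref{cor KH corr mod}) and is continuous by the same type of estimate.

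The genuinely hard step, and where I expect most of the work to go, is the continuity of the harmonic metric on the moduli space near boundary points where the underlying Higgs torsor or connection only becomes polystable (not stable), or where the residues degenerate within their fixed adjoint orbits. One has to verify that the gluing of the local $\boldsymbol\alpha$-adapted model metrics $h_0$ built in \S\ref{subsect local study} can be performed continuously in families (so that the initial metric depends continuously on $(\mathcal{E},\varphi)$), and then that the uniform a priori bounds underlying Proposition~\ref{estimates_regularity} hold uniformly in families so as to pass to the limit. A second, more technical difficulty is to rule out jumping of the Jordan type of the residue in a continuous family while keeping the Kostant--Rallis triple $(X_\alpha,H_\alpha,Y_\alpha)$ varying continuously; fixing the conjugacy class of $\varphi_{\boldsymbol\alpha}$ in the definition of $\mathcal{M}_{\rm Dol}$ is what makes this manageable. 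Once these points are settled, the two maps $\mathrm{NH}$ and $\mathrm{NH}^{-1}$ are continuous and mutually inverse, and combined with the first part we obtain the asserted homeomorphism.
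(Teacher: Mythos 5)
Your treatment of the first isomorphism is essentially the paper's argument: one shows the Betti and de~Rham moduli functors over complex analytic bases are equivalent and then uses the (co)representability of the de~Rham problem. The paper makes the one point you leave implicit precise, namely a family version of Boalch's local normal-form result (the analogue of Theorem \ref{thm tame parah RH corr Bo} with coefficients in $\mathcal{O}_S$, Lemmas \ref{lem family tame parah RH corr} and \ref{lem family main thm in cat}); your appeal to ``canonicity of the extension'' is exactly the place where that lemma is needed, and with it your first paragraph is fine.

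For the topological statement your route genuinely diverges from the paper's, and it has a gap you yourself flag but do not close. The paper does \emph{not} attempt to prove that the harmonic metric depends continuously on the Higgs data via the Donaldson functional. Instead, after reducing by Theorems \ref{thm ana Dol and DR} and \ref{thm alg Dol and DR} to showing that the categorical equivalence between $\mathcal{C}_{\rm Dol}(X_{\boldsymbol D},G,\boldsymbol\alpha,\phi_{\boldsymbol\alpha})$ and $\mathcal{C}_{\rm dR}(X_{\boldsymbol D},G,\boldsymbol\beta,d'_{\boldsymbol\beta})$ is a homeomorphism, it invokes a compactness statement: any sequence of harmonic bundles has a subsequence converging to a harmonic bundle, which is quoted from \cite[Lemma 8.1]{Simp4}; continuity of both directions of the correspondence follows from this subsequential convergence, exactly as in \cite[\S 7]{Simp3}. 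Your proposed substitute --- ``the Donaldson functional depends continuously on the Higgs data, hence so does its minimizer by Proposition \ref{estimates_regularity}'' --- is not a proof: continuity of a family of functionals does not give continuity of their minimizers, and the constants $C,C'$ in Proposition \ref{estimates_regularity} are produced by a contradiction (Uhlenbeck--Yau) argument for a \emph{fixed} stable object, so they are not uniform in families without substantial extra work; the behavior at strictly polystable points and the matching of Sobolev convergence with the quasi-projective topology are further unaddressed issues. Since your last paragraph explicitly defers all of this, the second half of your argument is incomplete as written; replacing it with the compactness/subsequential-convergence argument (or proving a uniform version of the estimates) is what is actually needed.
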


Before proving the main theorem, we first extend some categorical equivalences studied in previous sections to a family version. In \S\ref{subsect functor Xi}, we define the functor $\Xi : \mathcal{C}(X_{\boldsymbol D}, G, \boldsymbol\theta) \rightarrow \mathcal{C}(X,\mathcal{G}_{\boldsymbol\theta})$. The given collection of weights $\boldsymbol\theta$ is key data to define the functor. Moreover, this construction also works for families. We only give the description over complex analytic spaces, and refer the reader to \cite[\S 4.2]{Yun} for more details. Let $S$ be a complex analytic space. Denote by $\mathcal{C}(X_{\boldsymbol D}, G, \boldsymbol\theta)(S)$ the category of $S$-flat families of metrized $\boldsymbol\theta$-adapted $G$-bundles on $X_S$, and denote by $\mathcal{C}(X,\mathcal{G}_{\boldsymbol\theta})(S)$ the category of $S$-flat families of parahoric $\mathcal{G}_{\boldsymbol\theta}$-torsors on $X_S$. With the same construction as above, we have a natural functor
\begin{align*}
\Xi: \mathcal{C}(X_{\boldsymbol D}, G, \boldsymbol\theta)(S) \rightarrow \mathcal{C}(X,\mathcal{G}_{\boldsymbol\theta})(S).
\end{align*}
Similarly, the functors $\Xi_{\rm Higgs}$ and $\Xi_{\rm Conn}$ (see \S\ref{sect alg Higgs and conn}) are also well-defined for families (over complex analytic spaces). We will use the notation $\mathcal{C}_{\bullet}(X_{\boldsymbol D},G,\boldsymbol\theta)(S)$ and $\mathcal{C}_{\bullet}(X,\mathcal{G}_{\boldsymbol\theta})(S)$ for the corresponding categories of families over $S$, where $\bullet={\rm Higgs} \text{ or } {\rm Conn}$. A family version of Proposition \ref{prop ana=alg Higgs and conn} is given as follows:

\begin{lem}\label{lem family Dol and DR}
	Let $S$ be a complex analytic space. We have
	\begin{align*}
	\mathcal{C}_{\rm Dol}(X_{\boldsymbol D},G,\boldsymbol\alpha,\phi_{\boldsymbol\alpha})(S) & \cong \mathcal{C}_{\rm Dol}(X,\mathcal{G}_{\boldsymbol\alpha},\varphi_{\boldsymbol\alpha})(S),\\
	\mathcal{C}_{\rm dR}(X_{\boldsymbol D},G,\boldsymbol\beta,d'_{\boldsymbol\beta})(S) & \cong \mathcal{C}_{\rm dR}(X,\mathcal{G}_{\boldsymbol\beta},\nabla_{\boldsymbol\beta})(S).
	\end{align*}
\end{lem}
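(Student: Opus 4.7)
The plan is to promote the object-level equivalence from Lemma \ref{lem ang=alg Dol (dR)} to the family setting by carrying out each of its constituent constructions in a parametrized way over the complex analytic base $S$. First, the functors $\Xi_{\rm Higgs}$ and $\Xi_{\rm Conn}$ are defined fiberwise by purely local gluing data (the transition $\Theta_x$ across $\mathbb{D}_x \cap X_{\boldsymbol D}$), and this gluing is natural in $S$, so they promote tautologically to functors between the categories of $S$-flat families. The matching of stability conditions then follows fiberwise from Proposition \ref{prop ana=alg Higgs and conn}, so the functors land in the correct subcategories; this takes care of one direction of the equivalence for both rows.

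For the inverse direction on the Dolbeault row, I would start with an $S$-flat family $(\mathcal{E},\varphi)\in\mathcal{C}_{\rm Dol}(X,\mathcal{G}_{\boldsymbol\alpha},\varphi_{\boldsymbol\alpha})(S)$. Restricting to $X_{\boldsymbol D}\times S$ yields an $S$-flat family $(E,\partial''_E,\phi)$ of tame analytic $\boldsymbol\alpha$-adapted $G$-Higgs bundles. Near each puncture $x$, the explicit local model metric of \S\ref{subsect local study}, namely $h_0 = |z|^{\alpha_x}(-\ln|z|^2)^{H_{\alpha_x}}|z|^{\alpha_x}$, is constructed entirely from the fixed residue datum $\varphi_{\alpha_x}$, so it is constant along $S$. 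Patching these local models with a smooth partition of unity, exactly as in the object-level proof of Lemma \ref{lem ang=alg Dol (dR)}, produces an $S$-flat family of $\boldsymbol\alpha$-adapted initial metrics $h_0$. Fiberwise application of Theorem \ref{thm KH corr Higgs} then yields a harmonic metric $h_s$ for each $s\in S$, giving an object of $\mathcal{C}_{\rm Dol}(X_{\boldsymbol D},G,\boldsymbol\alpha,\phi_{\boldsymbol\alpha})(S)$; its image under $\Xi_{\rm Higgs}$ recovers $(\mathcal{E},\varphi)$ by construction. The same scheme, with Theorem \ref{cor KH corr mod} replacing Theorem \ref{thm KH corr Higgs} and the $\boldsymbol\beta$-adapted model metric built from $\nabla_{\boldsymbol\beta}$, handles the de Rham row.

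Fullness and faithfulness on morphisms follow from the same two ingredients: a morphism of $S$-families on the parahoric side restricts to one on $X_{\boldsymbol D}\times S$, which by Proposition \ref{prop tame ana Higgs and conn} and the uniqueness clause of Theorem \ref{thm KH corr Higgs} lifts to a morphism compatible with the harmonic metrics, up to the automorphism ambiguity already absorbed into the definition of isomorphism classes in $\widetilde{\mathcal{M}}_{\rm Dol}$ and $\widetilde{\mathcal{M}}_{\rm dR}$; conversely, any morphism of metrized families restricts to the trivialization-compatible morphism of logahoric families via $\Xi_{\rm Higgs}$ or $\Xi_{\rm Conn}$.

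The main potential obstacle is the apparent requirement that the harmonic metric $h_s$ vary holomorphically or even continuously in $s\in S$, since the Kobayashi--Hitchin correspondence only produces it pointwise. This is in fact sidestepped by how the categories are defined: an object of $\mathcal{C}_{\rm Dol}(X_{\boldsymbol D},G,\boldsymbol\alpha,\phi_{\boldsymbol\alpha})(S)$ is an $S$-flat family of \emph{underlying} tame analytic $G$-Higgs bundles, with the harmonic metric serving only as the gauge-theoretic certificate of the fiberwise $R_{h_0}$-polystability that, by Proposition \ref{prop ana=alg Higgs and conn}, matches $R$-stability of $(\mathcal{E}|_s,\varphi|_s)$. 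Because the initial model metric $h_0$ varies $S$-flatly (it depends only on fixed data), and because uniqueness of $h_s$ up to Higgs-bundle automorphism means the equivalence class is well-defined, no global regularity of $s\mapsto h_s$ is needed beyond this fiberwise existence.
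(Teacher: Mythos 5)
Your proposal is correct and follows essentially the same route as the paper: promote $\Xi_{\rm Higgs}$ and $\Xi_{\rm Conn}$ to families, check stability and degree fiberwise via Propositions \ref{prop analytic=algebraic degree} and \ref{prop ana=alg Higgs and conn}, and obtain the inverse by restricting to $X_{\boldsymbol D}\times S$ and supplying the adapted/harmonic metric fiberwise as in Lemma \ref{lem ang=alg Dol (dR)}. Your extra remarks on morphisms and on the lack of any needed regularity of $s\mapsto h_s$ only make explicit what the paper's terser argument leaves implicit.
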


\begin{proof}
	We only consider the Dolbeault case. Under the functor $\Xi_{\rm Higgs}$, a family $(E,\phi)$ of $R_h$-stable tame harmonic $G$-Higgs bundles on $(X_{\boldsymbol D})_S$ will be sent to a family $(\mathcal{E},\varphi)$ of logahoric $\mathcal{G}_{\boldsymbol\theta}$-Higgs torsors on $X_S$. For each $s \in S$, the fiber $(E_s,\phi_s)$ corresponds to a logahoric $\mathcal{G}_{\boldsymbol\theta}$-Higgs torsor on $X$, which is $R$-stable by Proposition \ref{prop ana=alg Higgs and conn} and of degree zero by Proposition \ref{prop analytic=algebraic degree}. Therefore, $(\mathcal{E},\varphi)$ is a $S$-flat family of $R$-stable logahoric $\mathcal{G}_{\boldsymbol\theta}$-torsors on $X_S$. Consider the other direction. Given an $S$-flat family $(\mathcal{E},\varphi)$ of $R$-stable logahoric $\mathcal{G}_{\boldsymbol\theta}$-Higgs torsors on $X_S$, we obtain a $S$-flat family of tame metrized $\boldsymbol\theta$-adapted $G$-Higgs bundles on $X_{\boldsymbol D}$ by taking the restriction $(E,\phi):=(\mathcal{E},\varphi)|_{X_{\boldsymbol D} \times S}$. With the help of Proposition \ref{prop analytic=algebraic degree} and \ref{prop ana=alg Higgs and conn} again, each fiber of $(E,\phi)$ is an $R_h$-stable tame harmonic $G$-Higgs bundles on $X_{\boldsymbol D}$. This finishes the proof of the proposition.
\end{proof}

The tame parahoric Riemann--Hilbert correspondence (Theorem \ref{thm tame parah RH corr Bo}) also has a family version. We first review the idea of the proof of Theorem \ref{thm tame parah RH corr Bo}. Let $A$ be an element in $\mathfrak{g}_{\beta}(K)$, and denote by $a_0$ the residue of $A\frac{dz}{z}$. Let $a_0=s_0 + n_0$ be the Jordan decomposition of $a_0$, and furthermore, we write $s_0=\nabla_\beta+\sigma_\beta$, where $\nabla_\beta$ is the real part of $s_0$ and $\sigma_\beta$ is the imaginary part. The key part of the proof (see \cite[Proof of Theorem 6]{Bo}) is that given such an element $A \in \mathfrak{g}_{\beta}(K)$, we can find an element $g \in G_\beta(K)$ such that
\begin{align*}
B \frac{dz}{z}= {\rm Ad}(g) A \frac{dz}{z}+ dg \cdot g^{-1},
\end{align*}
where $B=\sum_{i=0} b_i z^i$, $b_0=\nabla_\beta + \sigma_\beta$ and
\begin{align*}
[\nabla_\beta, b_i]=i b_i, \quad [\sigma_\beta, b_i]=0.
\end{align*}

Now let $S$ be an affine complex variety. Let $G_\beta(\mathcal{O}_S)$ be the parahoric group with coefficient in $\mathcal{O}_S \otimes_{\mathbb{C}} K$, and denote by $\mathfrak{g}_\beta(\mathcal{O}_S)$ its parahoric Lie algebra. Proving a family version of Theorem \ref{thm tame parah RH corr Bo} is equivalent to proving the following lemma, and a similar statement in characteristic $p$ is given in \cite[Lemma 4.3]{LS}:
\begin{lem}
	Let $S$ be an affine complex analytic space. Let $A \in \mathfrak{g}_{\beta}(\mathcal{O}_S)$ be such that the residue $a_0 \in \mathfrak{g}$ and the semisimple part of $a_0$ is $\nabla_\beta+\sigma_\beta$. Then, there exists an element $g \in G_\beta(\mathcal{O}_S)$ such that
	\begin{align*}
	B \frac{dz}{z}= {\rm Ad}(g) A \frac{dz}{z}+ dg \cdot g^{-1},
	\end{align*}
	where $B=\sum_{i=0} b_i z^i$, $b_0=\nabla_\beta + \sigma_\beta$ and
	\begin{align*}
	[\nabla_\beta, b_i]=i b_i, \quad [\sigma_\beta, b_i]=0.
	\end{align*}
\end{lem}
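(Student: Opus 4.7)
The plan is to adapt the iterative gauge-reduction argument from \cite[Proof of Theorem 6]{Bo} to the relative setting, exploiting the fact that the residue $a_0$ and its semisimple/nilpotent pieces $s_0 = \nabla_\beta + \sigma_\beta$ and $n_0$ are constant in $S$, while only the higher-order coefficients of $A = a_0 + \sum_{i\geq 1} a_i z^i$ carry $\mathcal{O}_S$-dependence. Consequently the only $S$-dependent operations are addition and multiplication in $\mathcal{O}_S$, whereas the crucial step of inverting a linear operator on $\mathfrak{g}$ happens over $\mathbb{C}$ and can be applied coefficient-wise.

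First I would set up a joint eigenspace decomposition $\mathfrak{g} = \bigoplus_{\mu,\nu} \mathfrak{g}^{(\mu,\nu)}$ under the commuting operators $\mathrm{ad}(\nabla_\beta)$ and $\mathrm{ad}(\sigma_\beta)$ (which commute since $\nabla_\beta, \sigma_\beta$ both lie in $\mathfrak{t}$), with $\mu \in \mathbb{R}$ and $\nu \in \sqrt{-1}\,\mathbb{R}$. Since $\beta$ also lies in $\mathfrak{t}$, this refines compatibly into the $\mathrm{ad}(\beta)$-grading, so the parahoric condition $A \in \mathfrak{g}_\beta(\mathcal{O}_S)$ translates into vanishing of $a_i^{(\mu,\nu)}$ on negative $(i + \mathrm{ad}(\beta)\text{-weight})$.

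Next I would construct $g$ as a $z$-adically convergent infinite product $g = \cdots g_2 g_1$ with $g_N = \exp(h_N z^N)$, $h_N \in \mathfrak{g} \otimes \mathcal{O}_S$, chosen inductively. A direct calculation of the gauge-change formula on the $z^N$-coefficient yields
\[
b_N^{\mathrm{new}} = b_N^{\mathrm{old}} + \bigl(N \cdot \mathrm{id} - \mathrm{ad}(a_0)\bigr) h_N,
\]
modulo terms affecting only powers $z^i$ with $i > N$ (which will be cleaned up at later stages). Since $\mathrm{ad}(a_0) = \mathrm{ad}(\nabla_\beta) + \mathrm{ad}(\sigma_\beta) + \mathrm{ad}(n_0)$, on any piece $\mathfrak{g}^{(\mu,\nu)}$ with $(\mu,\nu) \neq (N,0)$ the operator $N - \mathrm{ad}(a_0)$ acts as $(N - \mu - \nu) \cdot \mathrm{id} - \mathrm{ad}(n_0)$, i.e.\ a nonzero scalar plus a commuting nilpotent, hence invertible over $\mathbb{C}$; the inverse is given by a finite Neumann series and therefore extends to $\mathcal{O}_S$ coefficients. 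I take $h_N$ to be the unique solution on the complement of $\mathfrak{g}^{(N,0)}$ that kills the bad bigraded components of $b_N^{\mathrm{old}}$, and set $h_N=0$ on $\mathfrak{g}^{(N,0)}$.

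Parahoric compatibility and convergence follow because both $N - \mathrm{ad}(a_0)$ and its inverse preserve the $\mathrm{ad}(\beta)$-grading, so $h_N^{(\mu,\nu)}$ inherits the vanishing on negative $\mathrm{ad}(\beta)$-weights from $b_N^{\mathrm{old},(\mu,\nu)}$; thus $h_N z^N$ satisfies the parahoric condition and $g_N \in G_\beta(\mathcal{O}_S)$. The infinite product converges in the $z$-adic topology on $\mathcal{O}_S \otimes K$ since $g_N \equiv 1 \pmod{z^N}$, and the limit $g$ lies in $G_\beta(\mathcal{O}_S)$. The main obstacle I expect is the careful bookkeeping needed to verify that the nonlinear tails from $\mathrm{Ad}(g_N)$ together with the $\mathrm{ad}(n_0)$-contributions do not propagate backwards to spoil already-corrected coefficients $b_i$ for $i<N$; this is the technical heart of Boalch's iteration and hinges on the triangular feature that each $g_N$ affects only $z^i$-coefficients for $i \geq N$, a structural property that transfers verbatim to the relative setting once the bigraded framework above is in place.
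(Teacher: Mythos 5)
Your overall strategy is the same as the paper's, which simply invokes the first part of Boalch's proof and observes that, since $a_0$ is constant along $S$, the only operators that must be inverted are the $\mathbb{C}$-linear ones $N-\mathrm{ad}(a_0)$, so the recursion runs with $\mathcal{O}_S$-coefficients; your bigraded analysis of $N-\mathrm{ad}(a_0)$ (nonzero scalar plus the commuting nilpotent $\mathrm{ad}(n_0)$, using that $n_0$ lies in the joint centralizer of $\nabla_\beta,\sigma_\beta$) is correct. However, there is a genuine gap in how you set up the induction: you write $A=a_0+\sum_{i\ge 1}a_iz^i$ and use gauge factors $g_N=\exp(h_Nz^N)\equiv 1\pmod{z}$. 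A general element of $\mathfrak{g}_\beta(\mathcal{O}_S)$ has a principal part: terms $a_iz^i$ with $i<0$ are allowed whenever the $\mathrm{ad}(\beta)$-weight $\lambda$ satisfies $i+\lambda\ge 0$, and removing them is part of what the lemma asserts, since the target $B$ has only nonnegative powers. Your factors lie in the congruence subgroup of $G(\mathcal{O}_S[[z]])$ and cannot touch the lowest-order polar coefficient; worse, once a term $a_{-m}z^{-m}$ is present the triangularity your whole scheme rests on fails, because $[h_Nz^N,a_{-m}z^{-m}]$ feeds back into degree $N-m<N$ (for $N=m$ it even changes the residue), and the degree-$N$ formula $b_N^{\rm new}=b_N^{\rm old}+(N-\mathrm{ad}(a_0))h_N$ acquires further terms such as $\tfrac12[h_N,[h_N,a_{-N}]]$. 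The induction must instead be organized by the parahoric (Moy--Prasad) level $i+\lambda$ rather than by the power of $z$ --- equivalently, one works after twisting by $z^\beta$ --- with gauge factors drawn from the corresponding filtration subgroups of the full parahoric group $G_\beta(\mathcal{O}_S)$; this is precisely the part of Boalch's argument that the paper imports and that your write-up skips, so the claim that the structural property ``transfers verbatim'' from your $z$-adic setup is not justified.

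Two smaller points. First, $\mathrm{ad}(n_0)$ need not preserve the $\mathrm{ad}(\beta)$-grading (it only preserves the filtration by $\beta$-weights, since $a_0$ has nonnegative $\beta$-weights), so your parahoric-compatibility argument for $h_N$ should be phrased in terms of that filtration, not the grading. Second, your recursion never modifies the residue, so it yields $b_0=a_0=\nabla_\beta+\sigma_\beta+n_0$ rather than the stated $b_0=\nabla_\beta+\sigma_\beta$; you should address explicitly how the constant term is normalized (the nilpotent part of the residue cannot be gauged away in general, so at best the semisimple part of $b_0$ equals $\nabla_\beta+\sigma_\beta$, with $n_0$ in the joint centralizer), since as written your construction does not produce the literal conclusion.
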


\begin{proof}
	The proof of this lemma is the same as the first part of the proof of \cite[Theorem 7.9]{Bo}, and the only difference is that the coefficient is taken from $\mathcal{O}_S$ rather than $\mathbb{C}$. We omit the proof here.
\end{proof}

Thus, a family version of Theorem \ref{thm tame parah RH corr Bo} and Corollary \ref{cor tame parah RH corr} follows directly.

\begin{lem}\label{lem family tame parah RH corr}
	The categories $\mathcal{C}_{\rm Loc}(X_{\boldsymbol D}, G,\boldsymbol\gamma, M_{\boldsymbol\gamma})(S)$ and $\mathcal{C}_{\rm Conn}(X,\mathcal{G}_{\boldsymbol\beta},\nabla_{\boldsymbol\beta})(S)$ are equivalent.
\end{lem}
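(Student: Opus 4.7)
The plan is to promote the categorical equivalence of Corollary \ref{cor tame parah RH corr} to an equivalence of fibered categories over the site of complex analytic spaces, using the relative normal form of logahoric connection forms established in the preceding lemma. Throughout, it suffices to work Zariski-locally on $S$, so we may reduce to the case of an affine $S$ and use $\mathcal{O}_S$-coefficients.

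First, I would construct the functor $\mathrm{RH}_S:\mathcal{C}_{\rm Conn}(X,\mathcal{G}_{\boldsymbol\beta},\nabla_{\boldsymbol\beta})(S)\to \mathcal{C}_{\rm Loc}(X_{\boldsymbol D}, G,\boldsymbol\gamma, M_{\boldsymbol\gamma})(S)$ by restriction: given an $S$-flat family $(\mathcal{V},\nabla)$ of logahoric $\mathcal{G}_{\boldsymbol\beta}$-connections on $X\times S$, restrict to $X_{\boldsymbol D}\times S$ to obtain a family $(V,d')$ of flat $G$-connections, and define $\mathscr{L}$ as the sheaf of flat sections along the fibers of $X_{\boldsymbol D}\times S\to S$. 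Since $S$ is a complex analytic space and $\pi_1(X_{\boldsymbol D})$ is finitely generated, $\mathscr{L}$ is a family of $G$-local systems over $S$, i.e.\ a continuous representation $\rho:\pi_1(X_{\boldsymbol D})\to G(\mathcal{O}_S)$ up to conjugation.

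Next, I would verify that this family lands in the subcategory cut out by $\boldsymbol\gamma$ and $M_{\boldsymbol\gamma}$. This is the content of the relative normal form lemma: around each puncture $x\in\boldsymbol D$ the connection form of $\nabla$ lies in $\mathfrak{g}_{\beta_x}(\mathcal{O}_S)\tfrac{dz}{z}$, and the lemma produces $g\in G_{\beta_x}(\mathcal{O}_S)$ gauging it to $B\tfrac{dz}{z}$ with $B=\sum_{i\ge 0}b_iz^i$, $b_0=\nabla_{\beta_x}+\sigma_{\beta_x}$, and $[\nabla_{\beta_x},b_i]=ib_i$, $[\sigma_{\beta_x},b_i]=0$. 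The same argument as in the proof of \cite[Theorem D]{Bo} (repeated verbatim but with coefficients in $\mathcal{O}_S$) then shows that on any small loop $c_x$ the monodromy of $\rho$ lies in the parabolic $P_{\gamma_x}\subset G$ attached to the weight $\gamma_x$, with Levi part conjugate (over $\mathcal{O}_S$) to $M_{\gamma_x}$. So $\mathrm{RH}_S$ is well defined.

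For the inverse, I would reverse the procedure: given a family $\mathscr{L}\in\mathcal{C}_{\rm Loc}(X_{\boldsymbol D},G,\boldsymbol\gamma,M_{\boldsymbol\gamma})(S)$, the associated family of flat $G$-bundles on $X_{\boldsymbol D}\times S$ carries a canonical flat connection $d'$. The relative normal form lemma, applied in the other direction, gauges $d'$ near each puncture into the explicit model form with residue $\nabla_{\beta_x}$ and weight $\beta_x$; this model extends canonically to a logahoric $\mathcal{G}_{\beta_x}$-connection on the formal disc $\mathbb{D}_x\times S$. Gluing these local extensions to $d'$ on $X_{\boldsymbol D}\times S$ via the identity transition function on the intersection produces an $S$-flat family $(\mathcal{V},\nabla)\in \mathcal{C}_{\rm Conn}(X,\mathcal{G}_{\boldsymbol\beta},\nabla_{\boldsymbol\beta})(S)$. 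One verifies in a straightforward way that this construction is functorial in $S$ and in morphisms of families, and that the two functors are quasi-inverse: on fibers over each geometric point $s\in S$ we recover the equivalence of Corollary \ref{cor tame parah RH corr}, and the natural isomorphisms upgrade to the relative setting because the gauge element $g$ of the preceding lemma is unique up to the stabilizer of the normal form, which acts trivially on monodromy data and on the canonical extension.

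The main obstacle is the bookkeeping in the inverse direction: showing that the local models on each $\mathbb{D}_x\times S$ glue to $d'$ on $X_{\boldsymbol D}\times S$ to give a bona fide $S$-flat parahoric torsor together with a logahoric connection, with the gluing parameters depending holomorphically on $S$. This is handled precisely by the relative gauge lemma, whose uniqueness-up-to-stabilizer gives the required descent of transition data.
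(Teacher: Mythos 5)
Your proposal is correct and follows essentially the same route as the paper: the paper derives this lemma directly from the preceding relative normal-form (gauge) lemma with $\mathcal{O}_S$-coefficients, which is exactly the key ingredient you use to promote Corollary \ref{cor tame parah RH corr} to families. You merely spell out the two functors and the quasi-inverse check that the paper leaves implicit in its phrase ``follows directly.''
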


As a conclusion, we have:

\begin{lem}\label{lem family main thm in cat}
	There is an equivalence of categories
	\begin{align*}
	\mathcal{C}_{\rm dR}(X,\mathcal{G}_{\boldsymbol\beta},\nabla_{\boldsymbol\beta})(S) \cong \mathcal{C}_{\rm B}(X_{\boldsymbol D}, G,\boldsymbol\gamma, M_{\boldsymbol\gamma})(S).
	\end{align*}
\end{lem}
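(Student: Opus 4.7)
The plan is to obtain Lemma \ref{lem family main thm in cat} as a direct corollary of the family version of the tame parahoric Riemann--Hilbert correspondence proved as Lemma \ref{lem family tame parah RH corr}. That lemma already gives an equivalence $\mathcal{C}_{\rm Loc}(X_{\boldsymbol D}, G,\boldsymbol\gamma, M_{\boldsymbol\gamma})(S) \cong \mathcal{C}_{\rm Conn}(X,\mathcal{G}_{\boldsymbol\beta},\nabla_{\boldsymbol\beta})(S)$ between families of $\boldsymbol\gamma$-filtered $G$-local systems with prescribed monodromy data and families of logahoric $\mathcal{G}_{\boldsymbol\beta}$-connections with prescribed residues. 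The categories appearing in the statement of Lemma \ref{lem family main thm in cat} are cut out of these larger ones by imposing, on each geometric fiber, the $R$-stability condition and the condition of degree zero. It therefore suffices to check that the equivalence of Lemma \ref{lem family tame parah RH corr} restricts to these subcategories, which by definition only requires a fiberwise verification.

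For the fiberwise check, fix a point $s \in S$ and let $\mathscr{L}$ be an $S$-flat family of $\boldsymbol\gamma$-filtered $G$-local systems corresponding under Lemma \ref{lem family tame parah RH corr} to an $S$-flat family $(\mathcal{V},\nabla)$ of logahoric $\mathcal{G}_{\boldsymbol\beta}$-connections. The restrictions $\mathscr{L}|_s$ and $(\mathcal{V}|_s,\nabla|_s)$ are Riemann--Hilbert corresponding pairs in the sense of Corollary \ref{cor tame parah RH corr}. Applying Corollary \ref{cor Betti=dR} to each fiber then yields that $\mathscr{L}|_s$ is $R$-stable of degree zero if and only if $(\mathcal{V}|_s,\nabla|_s)$ is, since $R$-stability is formulated in terms of compatible reductions of structure group and anti-dominant characters, and the fiberwise bijection between compatible reductions together with the degree identity $\deg^{\rm loc} \mathscr{L}(\tau,\chi) = parh\deg \mathcal{V}(\varsigma,\kappa)$ from Lemma \ref{lem loc=ana degree} ensures that the two notions of stability and of degree match. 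Hence the equivalence of Lemma \ref{lem family tame parah RH corr} restricts to an equivalence between the Betti and de Rham subcategories of families, which is precisely the statement.

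The main obstacle to address is making sure that the construction underlying Lemma \ref{lem family tame parah RH corr} is compatible with arbitrary base change $S'\to S$ of complex analytic spaces, so that the restriction to fibers which we used above is well-defined and the $S$-flatness assumption is preserved on both sides. This comes down to observing that the parahoric gauge transformation $g \in G_\beta(\mathcal{O}_S)$ produced in the family version of Boalch's argument is built by an algebraic procedure, adjusting the connection form order by order in the local parameter $z$, whose coefficients genuinely lie in $\mathcal{O}_S$ and are therefore compatible with pullback along any morphism $S'\to S$. Once this compatibility is recorded, the subcategory equivalence follows immediately by combining Lemma \ref{lem family tame parah RH corr} with the fiberwise identification of stability and degree described above.
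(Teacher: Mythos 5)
Your argument is correct and follows essentially the same route as the paper: restrict the family Riemann--Hilbert equivalence of Lemma \ref{lem family tame parah RH corr} to the $R$-stable degree-zero subcategories via the fiberwise matching of compatible reductions, degrees and stability already established in Lemma \ref{lem loc=ana degree} and Corollary \ref{cor Betti=dR}. The paper's own proof is a one-line reduction to Lemmas \ref{lem family Dol and DR} and \ref{lem family tame parah RH corr} ``with the same idea as Theorem \ref{thm main thm in cat}'', so your fiberwise check and the base-change compatibility remark merely make explicit what the paper leaves implicit.
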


\begin{proof}
	With the same idea as in the proof of Theorem \ref{thm main thm in cat}, this lemma follows from Lemma \ref{lem family Dol and DR} and \ref{lem family tame parah RH corr}.
\end{proof}

Now we are ready to prove the main Theorem \ref{thm main thm in mod space}. The proof is similar to that in \cite[\S 7]{Simp3}. Instead of following the proof step by step, we only give the key ideas.

\begin{proof}[Proof of the first statement]
	Since the moduli space $\mathcal{M}^{\rm (an)}_{\rm dR}(X,\mathcal{G}_{\boldsymbol\beta},\nabla_{\boldsymbol\beta})$ only parameterizes stable logahoric $\mathcal{G}_{\boldsymbol\beta}$-connections, it represents the moduli problem $\widetilde{\mathcal{M}}^{\rm (an)}_{\rm dR}(X,\mathcal{G}_{\boldsymbol\beta},\nabla_{\boldsymbol\beta})$. Thus, it is enough to prove that the moduli problems $\widetilde{\mathcal{M}}^{\rm (an)}_{\rm B}(X_{\boldsymbol D},G, \boldsymbol \gamma, M_{\boldsymbol \gamma} )$ and $\widetilde{\mathcal{M}}^{\rm (an)}_{\rm dR}(X, \mathcal{G}_{\boldsymbol \beta}, \nabla_{\boldsymbol \beta})$ are equivalent. More precisely, for each complex analytic space $S$, we have to show
	\begin{align*}
	\widetilde{\mathcal{M}}^{\rm (an)}_{\rm B}(X_{\boldsymbol D},G, \boldsymbol \gamma, M_{\boldsymbol \gamma} )(S) \cong \widetilde{\mathcal{M}}^{\rm (an)}_{\rm dR}(X, \mathcal{G}_{\boldsymbol \beta}, \nabla_{\boldsymbol \beta})(S).
	\end{align*}
	The equivalence is already given in Lemma \ref{lem family main thm in cat}, and this finishes the proof of the first statement.
\end{proof}

\begin{proof}[Proof of the second statement]
	Under the equivalence of Theorem \ref{thm ana Dol and DR} and Theorem \ref{thm alg Dol and DR}, it is enough to show that the equivalence of categories $\mathcal{C}_{\rm Dol}(X_{\boldsymbol D},G,\boldsymbol\alpha,\phi_{\boldsymbol\alpha})$ and $\mathcal{C}_{\rm dR}(X_{\boldsymbol D},G,\boldsymbol\beta,d'_{\boldsymbol\beta})$ induces a homeomorphism of the corresponding topological spaces. Note that the equivalence of categories is established under the existence of harmonic metrics. Therefore, we only have to show that given a sequence $\{(V_i,h_i)\}_{i \in I}$ of harmonic bundles, there is a harmonic bundle $(V,h)$, to which a subsequence $(V_{i'},h_{i'})_{i' \in I'}$ converges. This is a direct result of \cite[Lemma 8.1]{Simp4}.
\end{proof}

\vspace{2mm}

\textbf{Acknowledgments}.
We would like to kindly thank Oscar Garc\'{i}a-Prada and Mao Sheng for very helpful discussions, as well as Philip Boalch and Carlos Simpson for their comments on a first draft of the paper, which resulted in several improvements and corrections. We also thank Thomas Haines and Qiongling Li for their interest. P. Huang acknowledges funding by the Deutsche Forschungsgemeinschaft (DFG, German Research Foundation) – Project-ID 281071066 – TRR 191. H. Sun is partially supported by NSFC12101243.
\vspace{2mm}

\appendix

\section{Connections and Lie algebroids}\label{sect appendix}
In the appendix, we discuss the notion of connection on principal bundles and torsors. There are many equivalent definitions. In this paper, we use two of them, the usual one as a first order differential operator and the one involving the Lie algebroid of infinitesimal symmetries. There are many good references about the topic and we refer the reader to \cite{Atiyah,Chenzhu1,CraSau,Mac,Torte} for more details.

\subsection{Connections on Principal Bundles -- Analytically}
Let $X$ be a complex manifold. Let $G$ be a connected complex reductive group, and let $E$ be a $G$-bundle on $X$. We consider the following sheaves
\begin{itemize}
	\item $\mathscr{A}^p$: sheaf of $C^{\infty}$ $p$-forms;
	\item $\mathscr{A}^p(\bullet)$: sheaf of $C^{\infty}$ $\bullet$-valued $p$-forms;
	\item $\mathscr{A}^{p,q}$: sheaf of $C^{\infty}$ forms of type $(p,q)$;
	\item $\mathscr{A}^{p,q}(\bullet)$: sheaf of $C^{\infty}$ $\bullet$-valued forms of type $(p,q)$,
\end{itemize}
where $\bullet$ represents a vector bundle or a $G$-bundle on $X$. We use the notation $\mathscr{Z}$ for closed forms. Denote by $d = \partial + \bar{\partial}$ the usual exterior differential operator.

\begin{defn}
	A \emph{connection} on $E$ is a first order differential operator $D: \mathscr{A}^0(E) \rightarrow \mathscr{A}^1(E)$ satisfying the Leibniz rule
	\begin{align*}
	D(fs) = df \otimes s + f D(s),
	\end{align*}
	where $f \in C^{\infty}(U)$ and $s \in \mathscr{A}^0(E)(U)$ for any open set $U$. It is \emph{integrable} if $D \circ D=0$.
\end{defn}
As usual, a connection $D$ can be written as a sum $D=d'+d''$, where $d'$ and $d''$ are the operators of type $(1,0)$ and $(0,1)$ respectively.

There is an equivalent definition of connections in the viewpoint of Lie algebroids. Let $T_X$ be the tangent bundle. Denote by ${\rm At}(E)$  the \emph{Atiyah algebroid} of $E$. A section of ${\rm At}(E)$ is a pair $(v,v')$, where $v \in T_X$ and $v' \in T_E$, such that
\begin{enumerate}
	\item the restriction of $v'$ to $X$ is $v$;
	\item $v'$ is $G$-invariant.
\end{enumerate}
We have a short exact sequence for ${\rm At}(E)$
\begin{align*}
0 \rightarrow E(\mathfrak{g}) \rightarrow {\rm At}(E) \xrightarrow{pr} T_X \rightarrow 0,
\end{align*}
where $pr: {\rm At}(E) \rightarrow T_X$ is the projection map.
\begin{defn}
	A \emph{connection} on $E$ is a map $\nabla: T_X \rightarrow {\rm At}(E)$ such that $pr \circ \nabla = {\rm id}$. It is \emph{integrable} if the map $\nabla$ is a homomorphism of Lie algebroids.
\end{defn}

\begin{exmp}\label{exmp connection analytic}
The second definition is very useful to understand the structure of connections on $G$-bundles locally. We first consider the trivial $G$-bundle $E=G \times X$. Then, the sheaf of connections on $E$ is isomorphic to the sheaf $\mathscr{A}^1(E(\mathfrak{g}))$ via $D \mapsto D - d$, where $d$ is the usual exterior differential operator. For integrable connections, it is isomorphic to $\mathscr{Z}^1(E(\mathfrak{g}))$. Under this isomorphism, the corresponding element of a connection in $\mathscr{A}^1(E(\mathfrak{g}))$ is called the \emph{connection form} in this paper.

Now we consider a general $G$-bundle $E$. The example above actually shows that for any connection $D$ on $E$, it can be regarded as an element of $\mathscr{A}^1(E(\mathfrak{g}))$ locally. More precisely, let $U \subseteq X$ be an open subset such that $E|_U$ is the trivial $G$-bundle. Then, the restriction $D|_U$ corresponds to an element in $\mathscr{A}^1(E(\mathfrak{g}))(U)$. Furthermore, given an element $\phi$ in $\mathscr{A}^1(E(\mathfrak{g}))$, we obtain a connection on $E$ by gluing local data $D|_U+\phi_U$, and this connection is denoted by $D + \theta$.
\end{exmp}

\begin{defn}\label{defn holomorphic struct}
	A \emph{holomorphic structure} on $E$ is a first order differential operator $\partial''_E: \mathscr{A}^{0}(E) \rightarrow \mathscr{A}^{0,1}(E)$ satisfying
	\begin{enumerate}
		\item the Leibniz rule
		\begin{align*}
		\partial''_E (fs) = \bar{\partial}(f) \otimes s + f \partial''_E(s),
		\end{align*}
		where $f \in C^{\infty}(U)$ and $s \in \mathscr{A}^0(E)(U)$ for any open set $U$,
		\item the integrability condition $\partial''_E \circ \partial''_E = 0$.
	\end{enumerate}	
\end{defn}

\subsection{Connections on Torsors -- Algebraically}
Let $X$ be a smooth projective variety over $\mathbb{C}$. Denote by $T_X$ (resp. $\Omega_X$) the tangent sheaf (resp. cotangent sheaf). Let $\mathcal{G}$ be an affine smooth group scheme on $X$ equipped with an integrable connection $\nabla_{\mathcal{G}}:\mathcal{O}_{\mathcal{G}} \rightarrow \mathcal{O}_{\mathcal{G}} \otimes \Omega_X$ compatible with the structure of $\mathcal{G}$. We first review the construction for connections on $\mathcal{G}$-torsors and we refer the reader to \cite[Appendix]{Chenzhu1} for more details. 

Let $\mathcal{E}$ be a $\mathcal{G}$-torsor. An \emph{(integrable) connection} on $\mathcal{E}$ is an (integrable) connection $\nabla: \mathcal{O}_{\mathcal{E}} \rightarrow \mathcal{O}_{\mathcal{E}} \otimes \Omega_X$ such that the diagram
\begin{center}
	\begin{tikzcd}
	\mathcal{O}_{\mathcal{E}} \arrow[rr,"\nabla"] \arrow[d,"a"] & & \mathcal{O}_{\mathcal{E}} \otimes \Omega_X \arrow[d,"a \otimes 1"] \\
	\mathcal{O}_{\mathcal{E}} \otimes \mathcal{O}_{\mathcal{G}} \arrow[rr,"\nabla \otimes 1 + 1 \otimes \nabla_{\mathcal{G}}"] & & (\mathcal{O}_{\mathcal{E}} \otimes \mathcal{O}_{\mathcal{G}}) \otimes \Omega_X
	\end{tikzcd}
\end{center}
commutes, where $a: \mathcal{O}_{\mathcal{E}} \rightarrow \mathcal{O}_{\mathcal{E}} \otimes \mathcal{O}_{\mathcal{G}}$ is the co-action map.

Similar to the analytic case, we have an equivalent definition from the point of view of Lie algebroids. Denote by ${\rm At}(\mathcal{E})$ the Lie algebroid of infinitesimal symmetry of $\mathcal{E}$, of which the sections are pairs $(v,v')$, where $v \in T_X$ and $v' \in T_{\mathcal{E}}$, such that
\begin{enumerate}
	\item the restriction of $v'$ to $\mathcal{O}_X$ is equal to $v$;
	\item $v'$ is $\mathcal{G}$-invariant.
\end{enumerate}
Then, there exists a short exact sequence
\begin{align*}
0 \rightarrow \mathcal{E}(\mathfrak{g}) \rightarrow {\rm At}(\mathcal{E}) \rightarrow T_{X} \rightarrow 0.
\end{align*}
Then, a \emph{connection} on $\mathcal{E}$ is a splitting $\nabla: T_X \rightarrow {\rm At}(\mathcal{E})$ of this exact sequence.

Now we generalize the construction above to \emph{logarithmic connections} \cite[\S 3]{Torte}. Let $\boldsymbol D$ be a smooth divisor on $X$.
\begin{defn}
	A \emph{logarithmic connection (with poles along $\boldsymbol D$)} on $\mathcal{E}$ is defined as a  $\mathbb{C}$-linear map $\nabla: \mathcal{O}_{\mathcal{E}} \rightarrow \mathcal{O}_{\mathcal{E}} \otimes \Omega_X({\rm log}\boldsymbol{D})$ such that
	\begin{enumerate}
		\item $\nabla$ satisfies the Leibniz rule;
		\item the diagram
		\begin{center}
			\begin{tikzcd}
			\mathcal{O}_\mathcal{E} \arrow[rr,"\nabla"] \arrow[d,"a"] & & \mathcal{O}_\mathcal{E} \otimes \Omega_X({\rm log}\boldsymbol{D}) \arrow[d,"a \otimes 1"] \\
			\mathcal{O}_\mathcal{E} \otimes \mathcal{O}_{\mathcal{G}} \arrow[rr,"\nabla \otimes 1 + 1 \otimes \nabla_{\mathcal{G}}"] & & (\mathcal{O}_\mathcal{E} \otimes \mathcal{O}_{\mathcal{G}}) \otimes \Omega_X({\rm log}\boldsymbol{D})
			\end{tikzcd}
		\end{center}
		commutes.
	\end{enumerate}
	A connection $\nabla$ is \emph{integrable} if $\nabla \circ \nabla =0$.
\end{defn}

We define a Lie algebroid ${\rm At}(\mathcal{E})_{\boldsymbol D}$, of which the sections are pairs $(v,v')$, where $v \in T_X(-{\rm log}\boldsymbol{D})$ and $v' \in T_{\mathcal{E}}(-{\rm log}\boldsymbol{D})$, such that
\begin{enumerate}
	\item the restriction of $v'$ to $\mathcal{O}_X$ is equal to $v$;
	\item $v'$ is $\mathcal{G}$-invariant.
\end{enumerate}
Then, we obtain a short exact sequence
\begin{align*}
0 \rightarrow \mathcal{E}(\mathfrak{g}) \rightarrow {\rm At}(\mathcal{E})_{\boldsymbol D} \rightarrow T_{X}(-{\rm log}\boldsymbol{D}) \rightarrow 0.
\end{align*}

\begin{defn}
A \emph{logarithmic connection} on $\mathcal{E}$ is a splitting $\nabla: T_{X}(-{\rm log}\boldsymbol{D}) \rightarrow {\rm At}(\mathcal{E})_{\boldsymbol D}$ of the exact sequence. If $\nabla$ is a homomorphism of Lie algebroids, it is an \emph{integrable connection}.
\end{defn}

Similar to the discussion in Example \ref{exmp connection analytic}, a logarithmic connection on $\mathcal{E}$ can be regarded as a section of $\mathcal{E}(\mathfrak{g}) \otimes \Omega_X({\rm log} \boldsymbol{D})$ locally.

In this paper, we are considering parahoric group schemes. Let $X$ be a smooth projective curve. Fixing a parahoric group scheme $\mathcal{G}_{\boldsymbol\theta}$ on $X$, it is an affine smooth group scheme on $X$ together with a canonical connection $\nabla_{\mathcal{G}_{\boldsymbol\theta}}$ on $\mathcal{G}_{\boldsymbol\theta}$. Thus, the above construction applies directly to parahoric $\mathcal{G}_{\boldsymbol\theta}$-torsors.

\bigskip
\noindent\small{\textsc{Ruprecht-Karls-Universit\"{a}t Heidelberg}\\
		Mathematisches Institut Universit\"{a}t Heidelberg, Im Neuenheimer Feld 205, Heidelberg 69120, Germany}\\
\emph{E-mail address}:  \texttt{pfhwang@mathi.uni-heidelberg.de}
	
\bigskip
\noindent\small{\textsc{Alexander von Humboldt-Stiftung \& Ruprecht-Karls-Universit\"{a}t Heidelberg}\\
Mathematisches Institut Universit\"{a}t Heidelberg, Im Neuenheimer Feld 205, Heidelberg 69120, Germany}\\
\emph{E-mail address}:  \texttt{gkydonakis@mathi.uni-heidelberg.de}
	
\bigskip
\noindent\small{\textsc{Department of Mathematics, South China University of Technology}\\
		381 Wushan Rd, Tianhe Qu, Guangzhou, Guangdong, China}\\
\emph{E-mail address}:  \texttt{hsun71275@scut.edu.cn}
	
\bigskip
	
\noindent\small{\textsc{Department of Mathematics, University of Maryland, College Park}\\
		4176 Campus Drive - William E. Kirwan Hall,
		College Park, MD 20742-4015, USA}\\
\emph{E-mail address}: \texttt{ltzhao@umd.edu}
	
\end{document}